\documentclass[reqno]{amsart} 
\usepackage{amsthm, amssymb, amsfonts}
\usepackage{lmodern}
\usepackage{color} 
\usepackage[T5]{fontenc}
\usepackage{amscd,amssymb}
\usepackage[v2,cmtip]{xy}
\usepackage{mathrsfs}
\theoremstyle{plain}
\newtheorem{thm}{Theorem}[section]
\newtheorem{prop}[thm]{Proposition}
\newtheorem{lem}[thm]{Lemma}
\newtheorem{con}[thm]{Conjecture}
\newtheorem{corl}[thm]{Corollary}
\theoremstyle{definition}
\newtheorem{defn}[thm]{Definition}

\newtheorem{nota}[thm]{Notation}

\theoremstyle{plain}
\newtheorem{thms}{Theorem}[subsection]
\newtheorem{props}[thms]{Proposition}
\newtheorem{lems}[thms]{Lemma}
\newtheorem{corls}[thms]{Corollary}
\theoremstyle{definition}

\allowdisplaybreaks

\begin{document}

\title[The squaring operation and the Singer algebraic transfer]
{The squaring operation and\\ the Singer algebraic transfer}

 \author{Nguy\~\ecircumflex n Sum}

\address{Department of Mathematics and Application, S\`ai G\`on University,
273 An D\uhorn \ohorn ng V\uhorn \ohorn ng, District 5, H\`\ocircumflex\ Ch\'i Minh city, Viet Nam}

\email{nguyensum@qnu.edu.vn}


\subjclass[2010]{Primary 55S10; Secondary 55S05, 55T15}


\keywords{Steenrod algebra, Peterson hit problem, algebraic transfer, polynomial algebra}

\begin{abstract}
Let $P_k$ be the graded polynomial algebra $\mathbb F_2[x_1,x_2,\ldots ,x_k]$, with the degree of each $x_i$ being 1, regarded as a module over the mod-2 Steenrod algebra $\mathcal A$, and let $GL_k$ be the general linear group over the prime field $\mathbb F_2$ which acts regularly on $P_k$. We study the algebraic transfer constructed by Singer using the technique of the \textit{hit problem}. This transfer is a homomorphism from the homology of the mod-2 Steenrod algebra, $\text{Tor}^{\mathcal A}_{k,k+d} (\mathbb F_2,\mathbb F_2)$, to the subspace of $\mathbb F_2{\otimes}_{\mathcal A}P_k$ consisting of all the $GL_k$-invariant classes of degree $d$. 

In this paper, we extend a result of H\uhorn ng on the relation between the Singer algebraic transfer and the classical squaring operation on the cohomology of the Steenrod algebra. Using this result, we show that Singer's conjecture for the algebraic transfer is true in the case $k=5$ and the degree $5(2^{s} -1)$ with $s$ an arbitrary positive integer.
\end{abstract}

\maketitle

\section{Introduction}\label{s1} 
\setcounter{equation}{0}

Let $V_k$ be an elementary abelian 2-group of rank $k$ and let $BV_k$ be the classifying space of $V_k$.  Then, 
$$P_k:= H^*(BV_k) \cong \mathbb F_2[x_1,x_2,\ldots ,x_k],$$ a polynomial algebra in  $k$ generators $x_1, x_2, \ldots , x_k$, each of degree 1. Here the cohomology is taken with coefficients in the prime field $\mathbb F_2$ of two elements. 

Being the cohomology of a topological space,  $P_k$ is a module over the mod-2 Steenrod algebra, $\mathcal A$.  The action of $\mathcal A$ on $P_k$ is determined by the elementary properties of the Steenrod squares $Sq^i$ and subject to the Cartan formula (see Steenrod and Epstein~\cite{st}).

A polynomial $g$ in $P_k$ is called \textit{hit} if it can be written as a finite sum $g = \sum_{j > 0}Sq^{j}(g_j)$ for suitable polynomials $g_j \in P_k$. That means $g$  belongs to  $\mathcal{A}^+P_k$, where $\mathcal{A}^+$ is the augmentation ideal of $\mathcal A$. 

The \textit{hit problem} is to find a minimal generating set for $P_k$ regarded as a module over the  mod-2 Steenrod algebra. Equivalently, we want to find a vector space basis for $\mathbb F_2 \otimes_{\mathcal A} P_k$ in each degree $d$. Such a basis may be represented by a list of monomials of degree $d$.

The hit problem was first studied by Peterson~\cite{pe}, Wood~\cite{wo}, Singer~\cite {si1}, and Priddy \cite{pr}, who showed its relation to several classical problems in the homotopy theory. Then, this problem  was investigated by Carlisle and Wood~\cite{cw}, Crabb and Hubbuck~\cite{ch}, Janfada and Wood~\cite{jw1}, Kameko~\cite{ka}, Mothebe \cite{mo}, Nam~\cite{na}, Ph\'uc and Sum \cite{sp,sp2}, Silverman~\cite{sl}, Silverman and Singer~\cite{ss}, Singer~\cite{si2}, Walker and Wood~\cite{wa1,wa2,wa3}, Wood~\cite{wo,wo2} and others.

The vector space $\mathbb F_2 \otimes_{\mathcal A} P_k$ was explicitly calculated by Peterson~\cite{pe} for $k=1, 2,$ by Kameko~\cite{ka} for $k=3$, and recently by the present author \cite{su3,su1}  for $k=4$, unknown in general. 

\smallskip
Let $GL_k$ be the general linear group over the field $\mathbb F_2$. Since $V_k$ is an $\mathbb F_2$-vector space of dimension $k$, this group acts naturally on $V_k$ and therefore on the cohomology of $BV_k$. The two actions of $\mathcal A$ and $GL_k$ upon $P_k$ commute with each other. Hence, there is an inherited action of $GL_k$ on $\mathbb F_2\otimes_{\mathcal A}P_k$. 

For a non-negative integer $d$, denote by $(P_k)_d$ the subspace of $P_k$ consisting of all the homogeneous polynomials of degree $d$ in $P_k$ and by $(\mathbb F_2\otimes_{\mathcal A}P_k)_d$ the subspace of $\mathbb F_2\otimes_{\mathcal A}P_k$ consisting of all the classes represented by the elements in $(P_k)_d$. 
In \cite{si1}, Singer defined the algebraic transfer,  which is a homomorphism
$$\varphi_k :\text{Tor}^{\mathcal A}_{k,k+d} (\mathbb F_2,\mathbb F_2) \longrightarrow  (\mathbb F_2\otimes_{\mathcal A}P_k)_d^{GL_k}$$
from the homology of the Steenrod algebra, $\text{Tor}^{\mathcal A}_{k,k+d} (\mathbb F_2,\mathbb F_2)$, to the subspace of $(\mathbb F_2{\otimes}_{\mathcal A}P_k)_d$ consisting of all the $GL_k$-invariant classes. 
The Singer algebraic transfer is a useful tool in describing the homology groups of the Steenrod algebra. It was studied by many authors (see Boardman ~\cite{bo}, Bruner-H\`a-H\uhorn ng ~\cite{br}, H\`a ~\cite{ha}, H\uhorn ng ~\cite{hu2, hu3}, Ch\ohorn n-H\`a~ \cite{cha,cha1,cha2}, Minami ~\cite{mi}, Nam~ \cite{na2}, H\uhorn ng-Qu\`ynh~ \cite{hq}, Qu\`ynh~ \cite{qh}, the present author \cite{su5} and others).

It was shown that the algebraic transfer is an isomorphism for $k=1,2$ by Singer in \cite{si1} and for $k=3$ by Boardman in \cite{bo}. However, for any $k\geqslant 4$,  $\varphi_k$ is not a monomorphism in infinitely many degrees (see Singer \cite{si1}, H\uhorn ng \cite{hu3}.) Singer made the following conjecture.
\begin{con}[Singer \cite{si1}] \label{gtsing} The algebraic transfer $\varphi_k$ is an epimorphism for any $k \geqslant 0$.
\end{con}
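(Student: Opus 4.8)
The plan is to exploit the relation between the Singer transfer $\varphi_k$ and the classical squaring operation $Sq^0$ on the cohomology of the Steenrod algebra, in the extended form recorded above, so as to reduce the conjecture to a family of distinguished ``generic'' degrees and then to treat those degrees by a direct analysis of the hit problem. Both sides of $\varphi_k$ carry squaring operations: the Kameko operation $\widetilde{Sq}^0_*$ on the hit quotient $\mathbb{F}_2\otimes_{\mathcal A}P_k$, which lowers the degree by the rule $d\mapsto (d-k)/2$, and the dual operation $Sq^0$ on $\text{Tor}^{\mathcal A}_{k,k+d}(\mathbb{F}_2,\mathbb{F}_2)$, and $\varphi_k$ is compatible with them. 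First I would organise the degrees into orbits under repeated halving $d\mapsto (d-k)/2$ and observe that, in every degree in which $\widetilde{Sq}^0_*$ is an isomorphism on the $GL_k$-invariants, surjectivity of $\varphi_k$ is inherited from the next lower degree of the orbit through the compatibility square. This isolates the content of the conjecture in the bottom degree of each orbit, where $\widetilde{Sq}^0_*$ fails to be injective.

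Next I would attack those orbit-bottom degrees directly. For each such $d$ the programme is: (i) solve the hit problem, i.e. determine a monomial basis of $(\mathbb{F}_2\otimes_{\mathcal A}P_k)_d$; (ii) compute the subspace $(\mathbb{F}_2\otimes_{\mathcal A}P_k)_d^{GL_k}$ of invariants from this basis; and (iii) produce, for each invariant class, an explicit element of $\text{Tor}^{\mathcal A}_{k,k+d}(\mathbb{F}_2,\mathbb{F}_2)$ --- represented in the lambda algebra or in a minimal free resolution of $\mathbb{F}_2$ over $\mathcal A$ --- whose image under $\varphi_k$ is that class. Assembling (i)--(iii) over all orbit-bottom degrees, together with the propagation of the previous paragraph, would yield surjectivity of $\varphi_k$ in every degree, hence the conjecture for the given $k$.

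The hard part, and precisely the reason the conjecture remains open for $k\geq 4$, is that the descent along the squaring operation does \emph{not} reduce the problem to finitely many cases: halving $d\mapsto (d-k)/2$ partitions the degrees into infinitely many orbits, and at the bottom of each orbit genuinely new invariant classes appear in the cokernel of $\widetilde{Sq}^0_*$. Thus step (iii) must be carried out infinitely often, and for $k\geq 5$ we possess no uniform description of either the hit quotient or the relevant part of $\text{Tor}^{\mathcal A}_{k,k+d}(\mathbb{F}_2,\mathbb{F}_2)$ that would accomplish this in one stroke; already the hit problem for $P_k$ is only partially understood when $k=5$ and is open for $k\geq 6$. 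The realistic line of progress is therefore to fix $k$ small and to resolve the orbit-bottom degrees within a structured infinite family in which the invariant theory can be controlled --- which is exactly the strategy pursued in this paper for $k=5$ in the degrees $5(2^{s}-1)$, rather than a resolution of the conjecture in full generality.
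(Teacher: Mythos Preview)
You have correctly recognised that the statement in question is a \emph{conjecture}, not a theorem, and the paper makes no attempt to prove it in general; indeed, the paper explicitly notes that for $k\geqslant 5$ the conjecture remains open. Your proposal is therefore not so much a proof as an accurate diagnosis of why no proof exists, together with a faithful sketch of the paper's strategy for the partial result it \emph{does} establish (Theorem~\ref{pthm1}, the case $k=5$ in degrees $5(2^s-1)$).

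Your account of that strategy is essentially the paper's own. The paper uses Theorem~\ref{cthm} to show that iterated Kameko squaring $(\widetilde{Sq}^0_*)^{s-3}$ is an isomorphism from degree $5(2^s-1)$ down to degree $35$ for all $s\geqslant 3$, reducing the problem to the three bottom degrees $5$, $15$, $35$; it then computes $(\mathbb F_2\otimes_{\mathcal A}P_5)_d^{GL_5}$ explicitly in each of these via the hit problem (Theorem~\ref{pthm3}); and finally it matches the dimensions against the known structure of $\text{Tor}^{\mathcal A}_{5,5\cdot 2^s}(\mathbb F_2,\mathbb F_2)$, invoking the results of H\`a and Singer to exhibit a nonzero image $\varphi_5((h_{s-1}d_{s-2})^*)\ne 0$. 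This is exactly your steps (i)--(iii) carried out for one orbit. Your closing paragraph correctly explains why the method does not extend to a proof of the full conjecture: the orbits are infinite in number and no uniform treatment of their bottom degrees is available.
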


 The conjecture is true for $k\leqslant 3$. Based on the results in \cite{su3,su1}, we are verifying this conjecture for $k=4$. We hope that it is also true in this case. However, for $k \geqslant 5$, the conjecture is still open.

There is a classical operator, known as Kameko's squaring operation
$$\widetilde{Sq}^0_*: (\mathbb F_2 \otimes_{\mathcal A} P_k)_{2d+k} \longrightarrow (\mathbb F_2 \otimes_{\mathcal A} P_k)_d,$$ 
which is induced by an $\mathbb F_2$-linear map $\phi: P_k \to P_k$ given by
$$
\phi(x) = 
\begin{cases}y, &\text{if }x=x_1x_2\ldots x_ky^2,\\  
0, & \text{otherwise,} \end{cases}
$$
for any monomial $x \in P_k$. Note that $\phi$ is not an $\mathcal A$-homomorphism. However, 
$\phi Sq^{2i} = Sq^{i}\phi$ and $\phi Sq^{2i+1} = 0$
for any non-negative integer $i$.
Since $\widetilde{Sq}^0_*$ is a homomorphism of $GL_k$-modules, it induces a homomorphism which is also denoted by
$\widetilde{Sq}^0_*: (\mathbb F_2 \otimes_{\mathcal A} P_k)^{GL_k}_{2d+k} \longrightarrow (\mathbb F_2 \otimes_{\mathcal A} P_k)^{GL_k}_d.$
 It was recognized by Boardman \cite{bo} for k = 3 and by Minami \cite{mi} for general $k$ that Kameko's squaring operation commutes with the dual of the classical squaring operation on the cohomology of the Steenrod algebra, $Sq^0: \text{Ext}_{\mathcal A}^{k,d+k}(\mathbb F_2,\mathbb F_2) \to \text{Ext}_{\mathcal A}^{k,2d+2k}(\mathbb F_2,\mathbb F_2)$, through the Singer algebraic transfer. This means that the following diagram is commutative:
$$
\xymatrix{\text{Tor}^{\mathcal A}_{k,2d+2k}(\mathbb F_2,\mathbb F_2)\ar[rr]^{\varphi_k} \ar[d]^{{Sq}^0_*}&&  (\mathbb F_2{\otimes}_{\mathcal A} P_k)_{2d+k}^{GL_k} \ar[d]^{\widetilde{Sq}_*^0}\\
\text{Tor}^{\mathcal A}_{k,d+k}(\mathbb F_2,\mathbb F_2) \ar[rr]^{\varphi_k} && (\mathbb F_2{\otimes}_{\mathcal A} P_k)_{d}^{GL_k}.}
$$

For a positive integer $n$, by $\mu(n)$ one means the smallest number $r$ for which it is possible to write $n = \sum_{1\leqslant i\leqslant r}(2^{u_i}-1),$ where $u_i >0$. 

\begin{thm}[Kameko~\cite{ka}]\label{dlk} 
Let $d$ be a non-negative integer. If $\mu(2d+k)=k$, then 
$$\widetilde{Sq}^0_*: (\mathbb F_2 \otimes_{\mathcal A} P_k)_{2d+k}\longrightarrow (\mathbb F_2 \otimes_{\mathcal A} P_k)_d$$
is an isomorphism of $GL_k$-modules. 
\end{thm}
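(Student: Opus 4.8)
The plan is to realise $\widetilde{Sq}^0_*$ as a split surjection and then reduce the isomorphism assertion to a single statement about representatives of cohit classes. First I would record that the operation is well defined: if $u=\sum_{i\geq 1}Sq^i(w_i)\in\mathcal A^+P_k$, then the relations $\phi Sq^{2i}=Sq^i\phi$ and $\phi Sq^{2i+1}=0$ give $\phi(u)=\sum_{j\geq 1}Sq^{j}\phi(w_{2j})\in \mathcal A^+P_k$, so $\phi$ descends to $\widetilde{Sq}^0_*$. Surjectivity is then automatic and needs no hypothesis on $d$: for a monomial $y$ of degree $d$ one has $\phi(x_1x_2\cdots x_k y^2)=y$, hence $\widetilde{Sq}^0_*[x_1x_2\cdots x_ky^2]=[y]$, and such classes span the target. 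Next I would bring in Kameko's ``up'' operation, induced by the $\mathbb F_2$-linear map $\psi\colon P_k\to P_k$, $\psi(y)=x_1x_2\cdots x_ky^2$ (this is linear because squaring is additive over $\mathbb F_2$); granting that $\psi$ carries $\mathcal A^+P_k$ into itself, it induces $\widetilde{Sq}^0\colon(\mathbb F_2\otimes_{\mathcal A}P_k)_d\to(\mathbb F_2\otimes_{\mathcal A}P_k)_{2d+k}$ with $\widetilde{Sq}^0_*\widetilde{Sq}^0=\mathrm{id}$, since $\phi\psi=\mathrm{id}$ already at the chain level. Thus $\widetilde{Sq}^0_*$ is a split epimorphism, and it is an isomorphism if and only if its section $\widetilde{Sq}^0$ is onto, i.e.\ if and only if every cohit class of degree $2d+k$ is represented by a polynomial each of whose monomials has all exponents odd.

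Everything then comes down to the combinatorial statement where the hypothesis is used, which I will call the \emph{Main Lemma}: if $\mu(2d+k)=k$, then every monomial of degree $2d+k$ having at least one even exponent is hit. Here is where $\mu(2d+k)=k$ enters. A spike $\prod_i x_i^{2^{s_i}-1}$ of degree $n=2d+k$ exhibits $n$ as a sum of exactly $\#\{i:s_i\geq 1\}$ terms of the form $2^s-1$; since $\mu(n)=k$ forces this count to be $k$, every spike of degree $n$ must use all $k$ variables, and therefore all its exponents $2^{s_i}-1$ are odd. To handle a non-spike carrying an even exponent I would use the Cartan formula together with the identity $Sq^{j}(x_1x_2\cdots x_k)=x_1x_2\cdots x_k\,e_j(x_1,\dots,x_k)$ (with $e_j$ the elementary symmetric polynomial), rewriting the monomial modulo $\mathcal A^+P_k$ as a sum of monomials that are strictly closer to a spike in the standard $\omega$-weight ordering; the minimality encoded in $\mu(n)=k$ forbids the creation of any new even-exponent spike, so iterating the reduction should land in the span of all-odd monomials.

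The main obstacle is precisely this Main Lemma: converting the informal ``there is no room for an even exponent once $\mu(2d+k)=k$'' into a rigorous terminating reduction, for which the correct bookkeeping is the $\omega$-weight of a monomial and its controlled behaviour under the Cartan and Kameko manoeuvres. A second, lighter technical point is the well-definedness of $\widetilde{Sq}^0$, i.e.\ that $\psi$ preserves hit elements; this is cleanest to verify on the dual $H_*(BV_k)$, where the up operation commutes with the $\mathcal A$-action on the nose. Finally, for the $GL_k$-module statement I would invoke that the induced map $\widetilde{Sq}^0_*$ is already $GL_k$-equivariant, as noted before the theorem, so the $\mathbb F_2$-linear isomorphism produced above is automatically an isomorphism of $GL_k$-modules.
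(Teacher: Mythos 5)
Your reduction of the theorem to the claim you call the Main Lemma (every monomial of degree $2d+k$ with at least one even exponent is hit) is the right skeleton, and note for context that the paper gives no proof of this statement at all --- it is quoted from Kameko's thesis --- so your argument has to stand alone. It does not, and the most serious problem is a step that is actually false: you assert, as a ``lighter technical point'' checkable ``on the dual'', that $\psi(y)=x_1\cdots x_ky^2$ carries $\mathcal A^+P_k$ into $\mathcal A^+P_k$, so that the section $\widetilde{Sq}^0$ exists on cohits in all degrees. The operation on $H_*(BV_k)$ that commutes with the $\mathcal A$-action on the nose is the doubling map dual to the \emph{down} operator $\phi$; dualizing it proves the well-definedness of $\widetilde{Sq}^0_*$ (which you already had directly), not of $\widetilde{Sq}^0$. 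In fact $\psi$ does not preserve hit elements in general: in $P_3$, $x_1^2=Sq^1(x_1)$ is hit, yet
$$\psi(x_1^2)=x_1^5x_2x_3=Sq^2(x_1^3x_2x_3)+Sq^1(x_1^4x_2x_3)+x_1^3x_2^2x_3^2,$$
and further Cartan manipulations give $x_1^3x_2^2x_3^2\equiv x_1^4x_2x_3^2\equiv x_1^2x_2x_3^4\equiv x_1x_2^2x_3^4$ modulo $\mathcal A^+P_3$; since $x_1x_2^2x_3^4$ is admissible (item 12 of $B_4(7)$ in Subsection \ref{ss62}, and a class hit in $P_3$ would be hit in $P_4$), $\psi(x_1^2)$ is \emph{not} hit. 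Here $\mu(7)=1<3$, which is precisely the point: $\psi$ preserves hits only in the degrees covered by the hypothesis, and the proof of that fact is itself an application of the Main Lemma, via the Cartan identity
$$\psi(Sq^iw)=Sq^{2i}(\psi(w))+\sum_{m\geqslant 1}x_1\cdots x_k\,e_{2m}(x_1,\ldots,x_k)\,(Sq^{i-m}w)^2,$$
whose correction terms all have an even exponent. So the existence of the section cannot be quarantined as an independent verification; it must come \emph{after} the Main Lemma, and your logical order is circular-free only once this dependence is made explicit.

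The second gap is that the Main Lemma --- the entire content of Kameko's theorem --- is never proved: ``iterating the reduction should land in the span of all-odd monomials'' specifies neither the reduction nor why it terminates. Within this paper's own toolkit the lemma is immediate, and you should take that route: by Lemma \ref{bd1}, $\mu(2d+k)=k$ means $2d+k=\sum_{i=1}^k(2^{v_i}-1)$ with every $v_i>0$, so the minimal spike $z=x_1^{2^{v_1}-1}\cdots x_k^{2^{v_k}-1}$ has all exponents odd and hence $\omega_1(z)=k$; a monomial $x$ of this degree with an even exponent (including exponent $0$) has $\omega_1(x)<k$, so $\omega(x)<\omega(z)$ in the left-lexicographic order, and Theorem \ref{dlsig} (Singer's criterion) says $x$ is hit. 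With the Main Lemma established this way, your split-surjection argument, the displayed Cartan identity for the well-definedness of the section in these degrees, and the $GL_k$-equivariance already noted in Section \ref{s1} do assemble into a complete proof.
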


From the result of Carlisle and Wood \cite{cw} on the boundedness conjecture, H\uhorn ng observes in \cite{hu3} that, for any degree $d$, there exists a non-negative integer $t$ such that
$$(\widetilde{Sq}^0_*)^{s-t}: (\mathbb F_2 \otimes_{\mathcal A} P_k)_{k(2^s-1) + 2^sd} \longrightarrow (\mathbb F_2 \otimes_{\mathcal A} P_k)_{k(2^t-1) + 2^td}$$
is an isomorphism of $GL_k$-modules for every $s \geqslant t$. However, this result does not
confirm how large $t$ should be.

Denote by  $\alpha(n)$ the number of ones in dyadic expansion of a positive integer $n$ and by $\zeta(n)$ the greatest integer $u$ such that $n$ is divisible by $2^u$. That means $n = 2^{\zeta(n)}m$ with $m$ an odd integer.  We set 
$$t(k,d) = \max\{0,k- \alpha(d+k) -\zeta(d+k)\}.$$ 
The following is one of our main results.
\begin{thm}\label{cthm} Let $d$ be an arbitrary non-negative integer. Then
$$(\widetilde{Sq}^0_*)^{s-t}: (\mathbb F_2 \otimes_{\mathcal A} P_k)_{k(2^s-1) + 2^sd} \longrightarrow (\mathbb F_2 \otimes_{\mathcal A} P_k)_{k(2^t-1) + 2^td}$$
is an isomorphism of $GL_k$-modules for every $s \geqslant t$ if and only if $t \geqslant  t(k,d)$.
\end{thm}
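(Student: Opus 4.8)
The plan is to reduce the statement to a single numerical lemma about $\mu$, supplemented by two structural facts. Write $n=d+k$ and $m_j=k(2^j-1)+2^jd=2^jn-k$, so that $m_{j+1}=2m_j+k$ and $(\widetilde{Sq}^0_*)^{s-t}$ is the $(s-t)$-fold composite of the single Kameko steps $\widetilde{Sq}^0_*\colon(\mathbb F_2\otimes_{\mathcal A}P_k)_{m_j}\to(\mathbb F_2\otimes_{\mathcal A}P_k)_{m_{j-1}}$ for $t<j\le s$. Each $\widetilde{Sq}^0_*$ is split surjective, a section being the map $[f]\mapsto[x_1\cdots x_kf^2]$ (which $\widetilde{Sq}^0_*$ sends back to $[f]$, since $\phi(x_1\cdots x_kf^2)=f$); hence every composite above is surjective. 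Since a composite of surjections is injective exactly when each factor is, and since all maps are $GL_k$-equivariant (so bijectivity is the same as being an isomorphism of $GL_k$-modules), it suffices to locate precisely the levels at which the single steps are isomorphisms.

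The core is therefore the computation of $\mu(m_j)$. I will use the elementary identity $\mu(N)=\min\{r\ge0:\alpha(N+r)\le r\}$ together with $\alpha(m_j+k)=\alpha(2^jn)=\alpha(n)$. If $\alpha(n)>k$, a short argument using subadditivity of $\alpha$ shows $\mu(m_j)>k$ for every $j$: were $m_j$ a sum of at most $k$ terms $2^{a}-1$, then $2^jn=m_j+k$ would satisfy $\alpha(n)=\alpha(2^jn)\le k$. If $\alpha(n)\le k$, then $\mu(m_j)\le k$ for every $j$, and the decisive input is the identity
\[
\alpha(2^jn-1)=\alpha(n)-1+j+\zeta(n),
\]
valid for all $j\ge0$. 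Taking $r=k-1$ this gives $\mu(m_j)\le k-1$ precisely when $j\le k-\alpha(n)-\zeta(n)=t(k,d)$; a subadditivity estimate on $\alpha(2^jn-b)$ for $2\le b\le k$ (using $b-\alpha(b-1)\ge1$) shows that no smaller value of $r$ enlarges this range. Hence $\mu(m_j)=k$ for $j\ge t(k,d)+1$, while $\mu(m_j)\le k-1$ for $j\le t(k,d)$.

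For the ``if'' direction, suppose $t\ge t(k,d)$. When $\alpha(n)\le k$, every step with $t<j\le s$ has $j\ge t(k,d)+1$, so $\mu(m_j)=k$ and Theorem~\ref{dlk} makes each factor an isomorphism; the composite is then an isomorphism for all $s\ge t$. When $\alpha(n)>k$ we have $t(k,d)=0$ and $\mu(m_j)>k$ for all $j$, so by the affirmative solution of the Peterson conjecture (Wood) every space $(\mathbb F_2\otimes_{\mathcal A}P_k)_{m_j}$ vanishes and all the maps are trivially isomorphisms.

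For the ``only if'' direction, suppose $t<t(k,d)$, so $t(k,d)\ge1$ and $\alpha(n)\le k$. Since $\mu(m_{t(k,d)})\le k-1$, write $m_{t(k,d)}=\sum_{i=1}^{r}(2^{c_i}-1)$ with $r\le k-1$ and each $c_i\ge1$, and set $\omega=x_1^{2^{c_1}-1}\cdots x_r^{2^{c_r}-1}$, a spike (a monomial whose exponents all have the form $2^c-1$) in fewer than $k$ variables. Then $\omega$ is not divisible by $x_1\cdots x_k$, so $\phi(\omega)=0$ and $\widetilde{Sq}^0_*[\omega]=0$; but a spike is never hit, so $[\omega]\ne0$. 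Thus the step at level $j=t(k,d)$ is not injective, and taking $s=t(k,d)$ (this step being the first factor of the composite) makes $(\widetilde{Sq}^0_*)^{s-t}$ non-injective. I expect the main obstacle to be the numerical lemma of the second paragraph, namely pinning the threshold $t(k,d)$ sharply on both sides, since this is where the exact constant $k-\alpha(d+k)-\zeta(d+k)$ must be produced and where the boundary behaviour for small $j$ demands the most care.
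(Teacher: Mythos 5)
Your proof is correct, and its skeleton matches the paper's: dispose of the case $\alpha(d+k)>k$ by Wood's theorem, reduce to the single Kameko steps, and decide each step by Kameko's isomorphism theorem (Theorem \ref{dlk}) on one side and a spike in fewer than $k$ variables on the other (which is exactly how the paper proves its Corollary \ref{mmd2}). Where you genuinely diverge is in the two technical supports. First, your reduction to single steps rests on surjectivity of each Kameko map plus the elementary fact that a composite of surjections is injective exactly when every factor is; the paper instead propagates $\mu=k$ upward through the levels via the stability property $\mu(2n+\mu(n))=\mu(n)$ of Corollary \ref{mmd1}(iii). Second, and more substantially, you pin the threshold with the closed-form identity $\alpha(2^jn-1)=\alpha(n)+j+\zeta(n)-1$ (with $n=d+k$) combined with the characterization $\mu(N)=\min\{r\ge 0:\alpha(N+r)\le r\}$, which is equivalent to the paper's Corollary \ref{mmd1}(i) and hence legitimate input; the paper instead expands $d+k$ in binary and runs a three-case analysis ($q=k$; $q<k$ with $q+r\ge k$; $q<k$ with $q+r<k$) against the uniqueness statement of Lemma \ref{bd1}. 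Your route is more streamlined and produces the constant $k-\alpha(d+k)-\zeta(d+k)$ in one stroke, at the cost of the upward-closedness argument for the set $\{r:\alpha(N+r)\le r\}$, which you correctly sketch; the paper's route keeps Lemma \ref{bd1} at center stage. Two small blemishes, neither a gap: calling $[f]\mapsto[x_1\cdots x_kf^2]$ a ``section'' presumes this assignment is well defined on $\mathbb F_2\otimes_{\mathcal A}P_k$, which you neither need nor prove --- surjectivity only requires that each class have some preimage, and your parenthetical already supplies one; and the clause ``$\mu(m_j)\le k-1$ for $j\le t(k,d)$'' should refer to $k-\alpha(n)-\zeta(n)$ rather than to $t(k,d)=\max\{0,k-\alpha(n)-\zeta(n)\}$, since it can fail at $j=0$ when the former is negative (e.g.\ $k=2$, $d=4$, where $\mu(d)=2=k$); you only ever invoke it when $t(k,d)\ge 1$, where the two coincide, so the argument stands.
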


It is easy to see that $t(k,d) \leqslant k-2$ for every $d$ and $k \geqslant 2$. Hence, one gets the following.
\begin{corl}[H\uhorn ng \cite{hu3}]\label{chq} Let $d$ be an arbitrary non-negative integer. If $k \geqslant 2$, then
$$(\widetilde{Sq}^0_*)^{s-k+2}: (\mathbb F_2 \otimes_{\mathcal A} P_k)_{k(2^s-1) + 2^sd} \longrightarrow (\mathbb F_2 \otimes_{\mathcal A} P_k)_{k(2^{k-2}-1) + 2^{k-2}d}$$
is an isomorphism of $GL_k$-modules for every $s \geqslant k-2$.
\end{corl}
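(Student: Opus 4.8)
The plan is to derive this corollary directly from Theorem~\ref{cthm} by exhibiting the particular value $t = k-2$ as one that always satisfies the hypothesis $t \geq t(k,d)$. Since Theorem~\ref{cthm} asserts that $(\widetilde{Sq}^0_*)^{s-t}$ is an isomorphism of $GL_k$-modules for every $s \geq t$ precisely when $t \geq t(k,d)$, the entire task collapses to verifying the elementary inequality $t(k,d) \leq k-2$, valid for all $d \geq 0$ and all $k \geq 2$.

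To establish this, I set $n = d+k$ and recall that $t(k,d) = \max\{0, k - \alpha(n) - \zeta(n)\}$. Because $k \geq 2$ forces $k-2 \geq 0$, it suffices to prove the single inequality $\alpha(n) + \zeta(n) \geq 2$: this yields $k - \alpha(n) - \zeta(n) \leq k-2$ and hence $t(k,d) \leq \max\{0, k-2\} = k-2$. Since $d \geq 0$, we have $n = d+k \geq k \geq 2$, and I would argue by parity. If $n$ is even, then $\zeta(n) \geq 1$, while $\alpha(n) \geq 1$ because every positive integer has at least one $1$ in its dyadic expansion; hence $\alpha(n) + \zeta(n) \geq 2$. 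If $n$ is odd, then $n \geq 3$, and since the only odd power of $2$ is $1$, the integer $n$ cannot be a power of $2$; consequently $\alpha(n) \geq 2$, and the inequality again holds (with $\zeta(n) = 0$).

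With the bound $t(k,d) \leq k-2$ in hand, I would simply instantiate Theorem~\ref{cthm} at $t = k-2$. This choice meets the hypothesis $t \geq t(k,d)$, so the theorem gives that
$$
(\widetilde{Sq}^0_*)^{s-(k-2)} : (\mathbb F_2 \otimes_{\mathcal A} P_k)_{k(2^s-1) + 2^sd} \longrightarrow (\mathbb F_2 \otimes_{\mathcal A} P_k)_{k(2^{k-2}-1) + 2^{k-2}d}
$$
is an isomorphism of $GL_k$-modules for every $s \geq k-2$. As $s-(k-2) = s-k+2$, this is exactly the assertion of the corollary.

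There is essentially no obstacle to overcome here, since all the deductive content has already been absorbed into Theorem~\ref{cthm} and only a short arithmetic check remains. The one place demanding a moment of care is the odd case of the parity argument: one must observe that $n \geq 2$ together with $n$ odd forces $n \geq 3$, which rules out $n$ being a power of $2$ and thereby secures $\alpha(n) \geq 2$.
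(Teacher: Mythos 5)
Your proposal is correct and follows exactly the paper's route: the paper also deduces the corollary from Theorem \ref{cthm} via the bound $t(k,d) \leqslant k-2$ for $k \geqslant 2$, which it dismisses as ``easy to see'' and which you verify with a valid parity argument showing $\alpha(d+k)+\zeta(d+k) \geqslant 2$. Nothing is missing; you have simply made explicit the arithmetic the paper leaves to the reader.
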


Corollary \ref{chq} shows that the number $t = k -2$ commonly serves for every degree $d$. H\uhorn ng stated in \cite{hu3} that $t = k - 2$ is the minimum number for this purpose and proved it for $k=5$. It is easy to see that for $d = 2^k-k+1$, we have $t(k,d) = k-2$. So, his statement is true for all $k \geqslant 2$.
 
An application of Theorem \ref{cthm} is the case $k = 5$ and  $d=0$. Then we have $t(5,0) = 3$. So, Theorem \ref{cthm} implies that
$$(\widetilde{Sq}^0_*)^{s-3}: (\mathbb F_2 \otimes_{\mathcal A} P_5)_{5(2^s-1)} \longrightarrow (\mathbb F_2 \otimes_{\mathcal A} P_5)_{35}$$
is an isomorphism of $GL_5$-modules for every $s \geqslant 3$. Hence, by computing the space $(\mathbb F_2 \otimes_{\mathcal A} P_5)_{5(2^s-1)}$ for $s=1,2,3$, we obtain the following.


\begin{thm}\label{dlbs} For $s$ an arbitrary positive integer, we have
$$\dim(\mathbb F_2{\otimes}_{\mathcal A}P_5)_{5(2^s-1)} = \begin{cases}46, &\text{ if $s=1$},\\ 432, &\text{ if $s=2$},\\ 1117  &\text{ if $s \geqslant 3$}. 
\end{cases}$$ 
\end{thm}

 This theorem has been proved in \cite{su5} for $s = 2$. In \cite{hu3}, H\uhorn ng also proved this theorem for $s=2,3$ by using a computer program of S. Shpectorov written in GAP. However, the detailed proof was unpublished at the time of the writing.

 Theorem \ref{dlbs} is proved by determining the admissible monomials of degree $5(2^s-1)$ in $P_5$. The computations are based on some results in \cite{ka}, \cite{si1} and \cite{su1} on the admissible monomials and the hit monomials (see Section \ref{s2}). 

To verify Conjecture \ref{gtsing} for $k = 5$ and the degree $d= 5(2^s-1)$, we need the following. 

\begin{thm}\label{pthm3} For any positive integer $s$, we have 
$$\dim(\mathbb F_2 \otimes_{\mathcal A} P_5)_{5(2^s-1)}^{GL_5} = \begin{cases}0, &\text{if } s=1,\\ 2, &\text{if } s=2,\\ 1, &\text{if } s \geqslant 3.\end{cases}$$ 
\end{thm}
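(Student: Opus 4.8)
The argument splits into a stable range and two low-degree cases. For the stable range $s \geq 3$ I would first record that, since $d = 0$ gives $d+k = 5$ with $\alpha(5) = 2$ and $\zeta(5) = 0$, the quantity $t(5,0) = \max\{0,\, 5 - 2 - 0\} = 3$. Theorem \ref{cthm} then says precisely that
$$(\widetilde{Sq}^0_*)^{s-3}: (\mathbb F_2 \otimes_{\mathcal A} P_5)_{5(2^s-1)} \longrightarrow (\mathbb F_2 \otimes_{\mathcal A} P_5)_{35}$$
is an isomorphism of $GL_5$-modules for every $s \geq 3$. Restricting to invariants, $\dim(\mathbb F_2 \otimes_{\mathcal A} P_5)_{5(2^s-1)}^{GL_5}$ is constant on $s \geq 3$ and equals its value at $s = 3$. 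Hence the theorem reduces to three explicit computations, in the degrees $5$, $15$, and $35$, whose invariant spaces I must show have dimensions $0$, $2$, and $1$.

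Each of the three computations has the same two-step shape. First I would solve the hit problem in the degree at hand, producing an explicit $\mathbb F_2$-basis of the cohit space $(\mathbb F_2 \otimes_{\mathcal A} P_5)_d$ made of admissible monomials; I would group the degree-$d$ monomials by weight vector $\omega$, use Kameko's admissibility criterion to eliminate inadmissible monomials, and reduce the remaining ones modulo the hit relations furnished by the Cartan formula. Second, I would compute the $GL_5$-invariants of this space. Because $GL_5 = GL_5(\mathbb F_2)$ is generated by the symmetric group $\Sigma_5$ permuting $x_1, \ldots, x_5$ together with the single transvection $\rho$ sending $x_1 \mapsto x_1 + x_2$ and fixing $x_2, \ldots, x_5$, I would first pass to the $\Sigma_5$-invariants, spanned by the orbit sums of the admissible basis monomials, and then impose the one further relation $(\rho - \mathrm{id}) f \equiv 0$ in the cohit space. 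Solving this linear system over $\mathbb F_2$ yields $(\mathbb F_2 \otimes_{\mathcal A} P_5)_d^{GL_5}$.

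For $s = 1$ (degree $5$) and $s = 2$ (degree $15 = 2^4 - 1$) the cohit spaces are small enough that the two steps can be carried out directly: in degree $5$ no nonzero $\Sigma_5$-invariant orbit sum is compatible with the transvection relation, giving dimension $0$, while the single-spike degree $15$ leaves a two-dimensional invariant space. The main obstacle is the degree-$35$ computation, where the cohit module is large and a brute-force invariant calculation is unwieldy. To control it I would exploit the Kameko exact sequence of $GL_5$-modules
$$0 \longrightarrow (\ker \widetilde{Sq}^0_*)_{35} \longrightarrow (\mathbb F_2 \otimes_{\mathcal A} P_5)_{35} \stackrel{\widetilde{Sq}^0_*}{\longrightarrow} (\mathbb F_2 \otimes_{\mathcal A} P_5)_{15} \longrightarrow 0,$$
together with the $\omega$-vector grading, so as to split the admissible basis into manageable blocks: the kernel part collects the admissible classes represented by monomials not of Kameko's form $x_1 x_2 x_3 x_4 x_5\, y^2$, while the quotient reproduces the already-understood degree-$15$ data. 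Left exactness of the invariants functor then constrains $(\mathbb F_2 \otimes_{\mathcal A} P_5)_{35}^{GL_5}$, and the remaining work is to determine the $GL_5$-invariants inside the kernel block explicitly. The delicate point is that even after this reduction the transvection relation must be checked against the full hit structure, and I expect verifying that exactly one independent invariant survives to be the most computation-intensive step of the proof.
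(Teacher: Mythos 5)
Your reduction to $s=3$ via Theorem \ref{cthm} and your treatment of degrees $5$ and $15$ match the paper, but your plan for degree $35$ has a genuine gap: it can only produce the bound $\dim(\mathbb F_2\otimes_{\mathcal A}P_5)_{35}^{GL_5}\leqslant 2$, not the exact value $1$. Left exactness of invariants applied to the Kameko sequence gives an injection of $(\mathbb F_2\otimes_{\mathcal A}P_5)_{35}^{GL_5}$ into $(\mathbb F_2\otimes_{\mathcal A}P_5)_{15}^{GL_5}$ once one knows $\text{Ker}(\widetilde{Sq}^0_*)_{(5,15)}^{GL_5}=0$ (which the paper proves weight vector by weight vector, and which already contradicts your expectation that ``exactly one independent invariant survives'' in the kernel block --- none do). But that injection lands in a $2$-dimensional space, and nothing formal tells you which invariants of degree $15$ actually lift. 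The section $\psi(y)=x_1x_2x_3x_4x_5y^2$ splitting the Kameko epimorphism is \emph{not} $GL_5$-equivariant, so you cannot conclude that $\lambda_1\psi(p)+\lambda_2\psi(q)$ is invariant whenever $\lambda_1[p]+\lambda_2[q]$ is. The paper's key computation (relations (5.1)--(5.4)) determines $g_i(\psi(p))+\psi(p)$ explicitly, and a case analysis using the vanishing of $QP_5(\omega_{(j)})^{GL_5}$ for $j=1,4,5$ then shows that any nonzero invariant $[f]$ must have $\lambda_2=1$; this is what cuts the dimension from $2$ down to at most $1$, and it is absent from your proposal.

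Second, you have no argument for the lower bound $\dim\geqslant 1$, i.e.\ that a nonzero invariant exists at all. The paper does not get this from the hit-problem computation; it imports it from the Singer transfer: $h_i\in\operatorname{im}\varphi_1^*$ (Singer), $d_i\in\operatorname{im}\varphi_4^*$ (H\`a), and multiplicativity of $\varphi^*=\bigoplus_k\varphi_k^*$ give $\varphi_5((h_2d_1)^*)\neq 0$, hence a nonzero class in $(\mathbb F_2\otimes_{\mathcal A}P_5)_{35}^{GL_5}$. Without this (or an explicit construction of an invariant representative, which you do not sketch), your method is compatible with the answer being $0$ in degree $35$. A minor additional caution: $\Sigma_5$-invariants of the cohit space are not simply spanned by orbit sums of admissible monomials, since permuting an admissible monomial generally yields an inadmissible one whose admissible expansion mixes orbits; the paper's invariants such as $p_3$ are indeed not full orbit sums, so the linear systems $g_j(f)\equiv f$ must be solved against the hit relations, as the paper does.
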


This result confirms the one of H\uhorn ng in \cite{hu3}, which was also proved by using a computer calculation. 

From the results of  Chen \cite{che}, Lin \cite{wl} and Tangora \cite{ta}, we have
$$\text{Tor}_{5,5.2^s}^{\mathcal A}(\mathbb F_2,\mathbb F_2) =\begin{cases}0, &\text{if } s=1,\\ \langle (h_0^4h_4)^*,(h_1d_0)^*\rangle, &\text{if } s=2,\\ \langle (h_{s-1}d_{s-2})^*\rangle, &\text{if } s \geqslant 3, \end{cases}$$
and $h_{s-1}d_{s -2} \ne 0$, where $h_{s-1}$ denote the Adams element in $\text{\rm Ext}_{\mathcal A}^{1,2^{s-1}}(\mathbb F_2, \mathbb F_2)$ and $d_{s-2} \in \text{\rm Ext}_{\mathcal A}^{4,2^{s+2}+2^{s-1}}(\mathbb F_2, \mathbb F_2)$ for $s \geqslant 2$. 
In \cite{su5}, we have proved that 
$$\varphi_5: \text{Tor}_{5,20}^{\mathcal A}(\mathbb F_2,\mathbb F_2) \longrightarrow (\mathbb F_2{\otimes}_{\mathcal A}P_5)_{15}^{GL_5}$$
is an isomorphism. Combining the results of  H\`a \cite{ha} and Singer \cite{si1}, we have $\varphi_5((h_{s-1}d_{s -2})^*) \ne 0$.  Hence, Theorem \ref{pthm3} implies that 
$$\varphi_5: \text{Tor}_{5,5.2^s}^{\mathcal A}(\mathbb F_2,\mathbb F_2) \longrightarrow (\mathbb F_2{\otimes}_{\mathcal A}P_5)_{5(2^s-1)}^{GL_5}$$
 is also an isomorphism for $s \geqslant 3$. So, we get the following.

\begin{corl}\label{pthm1} Singer's conjecture is true for $k=5$ and the degree $5(2^{s} - 1)$,  with $s$ an arbitrary positive integer.
\end{corl}

For $d=2$, we have $t(5,2) = 2$ and $5(2^s-1) + 2^sd = 7.2^s-5$. Hence, by Theorem \ref{cthm},
$(\widetilde{Sq}^0_*)^{s-2}: (\mathbb F_2 \otimes_{\mathcal A} P_5)_{7.2^s-5} \longrightarrow (\mathbb F_2 \otimes_{\mathcal A} P_5)_{23}$
is an isomorphism of $GL_5$-modules for every $s \geqslant 2$. So, by an explicit computation of $(\mathbb F_2 \otimes_{\mathcal A} P_5)_{7.2^s-5}^{GL_5}$ for $s=1,2$, T\'in proved in \cite{tin} the following.

\begin{thm}[T\'in \cite{tin}]\label{pthm2} Let  $s$ be a positive integer. Then,
 $(\mathbb F_2 \otimes_{\mathcal A} P_5)_{7.2^s-5}^{GL_5} = 0$.
\end{thm}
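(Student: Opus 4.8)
The plan is to reduce the infinite family of degrees to two explicit ones and then to compute the two resulting spaces of invariants directly from the admissible bases. As noted just above, for $d=2$ one has $t(5,2)=\max\{0,5-\alpha(7)-\zeta(7)\}=2$ and $5(2^s-1)+2^s\cdot 2=7\cdot2^s-5$, so Theorem \ref{cthm} gives that
$$(\widetilde{Sq}^0_*)^{s-2}\colon (\mathbb F_2 \otimes_{\mathcal A} P_5)_{7\cdot2^s-5}\longrightarrow (\mathbb F_2 \otimes_{\mathcal A} P_5)_{23}$$
is an isomorphism of $GL_5$-modules for every $s\geq 2$. Since an isomorphism of $GL_5$-modules carries invariants isomorphically onto invariants, this yields $(\mathbb F_2 \otimes_{\mathcal A} P_5)_{7\cdot2^s-5}^{GL_5}\cong (\mathbb F_2 \otimes_{\mathcal A} P_5)_{23}^{GL_5}$ for all $s\geq 2$. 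For $s=1$ the degree is $9$, and since $1<t(5,2)$ Theorem \ref{cthm} gives no reduction here. Hence it suffices to establish the two vanishing statements
$$(\mathbb F_2 \otimes_{\mathcal A} P_5)_{9}^{GL_5}=0 \qquad\text{and}\qquad (\mathbb F_2 \otimes_{\mathcal A} P_5)_{23}^{GL_5}=0.$$

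For each $d\in\{9,23\}$ I would first produce an explicit monomial basis of the cohit space $(\mathbb F_2 \otimes_{\mathcal A} P_5)_d$. Starting from all monomials of degree $d$ in $P_5$, one discards the hit and strictly inadmissible monomials by means of the criteria of Kameko and Singer recalled in Section \ref{s2}, and checks that the surviving monomials are admissible; these then form a basis of the quotient. It is convenient to sort the list by the number of variables occurring with positive exponent: monomials supported on a proper subset of $\{x_1,\ldots,x_5\}$ are handled through the known descriptions of the cohit spaces for smaller $k$, so that the real effort concentrates on the monomials in which all five variables appear.

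With such a basis in hand I would compute the invariants in two stages, using that $GL_5$ is generated by the symmetric group $\Sigma_5$ of permutation matrices together with a single transvection $\rho$ (for instance $x_1\mapsto x_1+x_2$, $x_i\mapsto x_i$ for $2\leq i\leq 5$). Since $(\mathbb F_2 \otimes_{\mathcal A} P_5)_d^{GL_5}\subseteq (\mathbb F_2 \otimes_{\mathcal A} P_5)_d^{\Sigma_5}$, I would first determine the $\Sigma_5$-invariants by grouping the admissible basis into $\Sigma_5$-orbits, rewriting each permuted basis monomial in the admissible basis modulo the hit relations, and solving the resulting linear system over $\mathbb F_2$; this gives a small explicit basis of $\Sigma_5$-invariants. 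I would then impose invariance under $\rho$ on a general $\Sigma_5$-invariant, again reducing $\rho$ applied to each monomial to the admissible basis and equating coefficients. In both degrees I expect these constraints to force every coefficient to vanish, which gives the two equalities above and, with the first paragraph, proves the theorem.

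The main obstacle is the computation in degree $23$. The set of admissible monomials of degree $23$ in five variables is large, and the genuine bottleneck is not the enumeration but the repeated reduction to the admissible basis: each application of a group element produces monomials that must be rewritten modulo the hit submodule through explicit, delicate uses of the Steenrod squares and the inadmissibility relations, and an error at any step corrupts the linear system. To keep this manageable I would exploit the weight-vector filtration of the cohit space, which is $GL_5$-stable with $GL_5$-module associated graded, so that the single large system splits into several smaller ones indexed by weight vectors; the degree $9$ case, being much smaller, also serves as a warm-up for exactly the same bookkeeping.
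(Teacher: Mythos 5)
Your proposal follows essentially the same route as the paper: it invokes Theorem \ref{cthm} with $d=2$, $t(5,2)=2$ to reduce the whole family to the two degrees $9$ ($s=1$) and $23$ ($s=2$), and then settles those by explicit computation of invariants from admissible bases — which is precisely what the paper does, deferring the two explicit computations to T\'in \cite{tin} (the $s=1$ case going back to Singer \cite{si1}). Your outlined method for the computations (weight-vector decomposition, $\Sigma_5$-invariants first, then a transvection) is also the same machinery the paper uses for its analogous degree-$35$ computation, so the plan is sound, with the understanding that the actual linear algebra in degree $23$ still has to be carried out.
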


The theorem has been proved by Singer \cite{si1} for $s=1$.
In \cite{hu3}, H\uhorn ng  also proved this theorem for by using a computer calculation. However, the detailed proof was also unpublished. 

From the results of Tangora \cite{ta}, Lin \cite{wl} and Chen \cite{che}, we can see that for any $s \geqslant 1$,
$\dim\text{Tor}_{5,7.2^s}^{\mathcal A}(\mathbb F_2,\mathbb F_2) = 1$. 
Hence, by Theorem \ref{pthm2}, the homomorphism
$\varphi_5$ is an epimorphism in the degree $7.2^s-5$. However, it is not a monomorphism. This result confirms the one of H\uhorn ng.

\begin{corl}[H\uhorn ng \cite{hu3}]  There are infinitely many degrees in which $\varphi_5$ is not a monomorphism.
\end{corl}

This paper is organized as follows. In Section \ref{s2}, we recall  some needed information on the admissible monomials in $P_k$ and Singer's criterion on the hit monomials.  Theorem \ref{cthm} is proved in Section \ref{s3}. In Section \ref{s4}, we explicitly determine a system of $\mathcal A$-generators for  $P_5$ in degree $5(2^s-1)$. Theorem \ref{pthm3} is proved in Section \ref{s5} by using the results in Section \ref{s4}. 

Theorems \ref{cthm} and \ref{pthm2} have already been announced in \cite{tsu}.

\section{Preliminaries}\label{s2}
\setcounter{equation}{0}

In this section, we recall some needed information from Kameko~\cite{ka} and Singer \cite{si2}, which will be used in the next sections.
\begin{nota} 
Let $\alpha_i(a)$ denote the $i$-th coefficient  in dyadic expansion of a non-negative integer $a$. That means
$a= \alpha_0(a)2^0+\alpha_1(a)2^1+\alpha_2(a)2^2+ \ldots ,$ for $ \alpha_i(a) =0$ or 1 with $i\geqslant 0$. 

For a set of integers $\mathbb J = \{j_1,j_2,\ldots , j_s\}$ with $1\leqslant j_u \leqslant k$, $1 \leqslant u \leqslant s$, we define the monomial $X_{\mathbb J} \in P_k$ by setting
$X_{\mathbb J} = \prod_{j\ne j_u, \forall u}x_j = x_1\ldots \hat x_{j_1}\ldots \hat x_{j_s}\ldots x_k .$

Let $x=x_1^{\nu_1(x)}x_2^{\nu_2(x)}\ldots x_k^{\nu_k(x)} \in P_k$. Set 
$$\mathbb J_t(x) = \{j : 1 \leqslant j \leqslant k, \ \alpha_t(\nu_j(x)) =0\},$$
for $t\geqslant 0$. Then, we have
$x = \prod_{t\geqslant 0}X_{\mathbb J_t(x)}^{2^t}.$ 
\end{nota}
\begin{defn}
For a monomial  $x$ in $P_k$,  define two sequences associated with $x$ by
\begin{align*} 
\omega(x)=(\omega_1(x),\omega_2(x),\ldots , \omega_i(x), \ldots),\ \
\sigma(x) = (\nu_1(x),\nu_2(x),\ldots ,\nu_k(x)),
\end{align*} 
where
$\omega_i(x) = \sum_{1\leqslant j \leqslant k} \alpha_{i-1}(\nu_j(x))= \deg X_{\mathbb J_{i-1}(x)},\ i \geqslant 1.$
The sequence $\omega(x)$ is called  the weight vector of $x$. 

The sequence $\omega=(\omega_1,\omega_2,\ldots , \omega_i, \ldots)$ of non-negative integers is called  the weight vector if $\omega_i = 0$ for $i \gg 0$.
\end{defn}

The sets of the weight vectors and the exponent vectors are given the left lexicographical order. 

For a  weight vector $\omega$,  we define $\deg \omega = \sum_{i > 0}2^{i-1}\omega_i$.  
Denote by   $P_k(\omega)$ the subspace of $P_k$ spanned by all monomials $y$ such that
$\deg y = \deg \omega$, $\omega(y) \leqslant \omega$, and by $P_k^-(\omega)$ the subspace of $P_k$ spanned by all monomials $y \in P_k(\omega)$  such that $\omega(y) < \omega$. 

\begin{defn}\label{dfn2} Let $\omega$ be a weight vector and $f, g$ two polynomials  of the same degree in $P_k$. 

i) $f \equiv g$ if and only if $f + g \in \mathcal A^+P_k$. If $f \equiv 0$ then $f$ is called {\it hit}.

ii) $f \equiv_{\omega} g$ if and only if $f + g \in \mathcal A^+P_k+P_k^-(\omega)$. 
\end{defn}

Obviously, the relations $\equiv$ and $\equiv_{\omega}$ are equivalence ones. Denote by $QP_k(\omega)$ the quotient of $P_k(\omega)$ by the equivalence relation $\equiv_\omega$. Then, we have 
$$QP_k(\omega)= P_k(\omega)/ ((\mathcal A^+P_k\cap P_k(\omega))+P_k^-(\omega)).$$  

For a  polynomial $f \in  P_k$, we denote by $[f]$ the class in $\mathbb F_2\otimes_{\mathcal A}P_k = P_k/\mathcal A^+P_k$ represented by $f$. If  $\omega$ is a weight vector, then denote by $[f]_\omega$ the class in the space $P_k/(\mathcal A^+P_k+P_k^-(\omega))$ represented by $f$ . 

Denote by $|S|$ the cardinal of a set $S$. If $S$ is a subset of a vector space, then we denote by $\langle S\rangle$ the subspace spanned by $S$.

It is easy to see that
$$ QP_k(\omega) \cong QP_k^\omega := \langle\{[x] \in QP_k : x \text{ \rm  is admissible and } \omega(x) = \omega\}\rangle. $$
So, we get
$$(\mathbb F_2{\otimes}_{\mathcal A}P_k)_n = \bigoplus_{\deg \omega = n}QP_k^\omega \cong \bigoplus_{\deg \omega = n}QP_k(\omega).$$
Hence, we can identify the vector space $QP_k(\omega)$ with $QP_k^\omega \subset QP_k$. 

We note that the weight vector of a monomial is invariant under the permutation of the generators $x_i$, hence $QP_k(\omega)$ is an $\Sigma_k$-module, where $\Sigma_k \subset GL_k$ is the symmetric group. 

Note that $V_k \cong \langle x_1, x_2,\ldots, x_k\rangle \subset P_k$. For $1 \leqslant i \leqslant k$, define the $\mathbb F_2$-linear map $\rho_i:V_k \to V_k$, which is determined by $\rho_i(x_i) = x_{i+1}, \rho_i(x_{i+1}) = x_i$, $\rho_i(x_j) = x_j$ for $j \ne i, i+1,\ 1 \leqslant i < k$, and $\rho_k(x_1) = x_1+x_2$,  $\rho_k(x_j) = x_j$ for $j > 1$.   The general linear group $GL_k\cong GL(V_k)$ is generated by $\rho_i,\ 1\leqslant i \leqslant k,$ and the symmetric group $\Sigma_k$ is generated by $\rho_i,\ 1 \leqslant i < k$. The $\mathbb F_2$-linear map $\rho_i$ induces a homomorphism of $\mathcal A$-algebras which is also denoted by $\rho_i: P_k \to P_k$. So, an element $[f]_\omega \in QP_k(\omega)$ is an $GL_k$-invariant if and only if $\rho_i(f) \equiv_\omega f$ for $1 \leqslant i\leqslant k$.  It is an  $\Sigma_k$-invariant if and only if $\rho_i(f) \equiv_\omega f$ for $1 \leqslant i < k$.

\begin{lem}\label{bdm} Let $\omega$ be a weight vector. Then, $QP_k(\omega)$ is an $GL_k$-module. 
\end{lem}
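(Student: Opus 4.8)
The plan is to exploit the fact that $GL_k$ is generated by the symmetric group $\Sigma_k$ together with a single transvection, reducing the statement to a combinatorial control of the weight vector under one elementary substitution. Concretely, let $\rho\colon P_k\to P_k$ be the algebra automorphism determined by $\rho(x_1)=x_1+x_2$ and $\rho(x_j)=x_j$ for $j\geq 2$; over $\mathbb F_2$ every elementary transvection is conjugate by a permutation to $\rho$, and these transvections together with $\Sigma_k$ generate $GL_k$. Since it is already noted that $QP_k(\omega)$ is a $\Sigma_k$-module (permuting the $x_i$ permutes the multiset $\{\nu_j(x)\}$ and hence leaves $\omega(x)$ unchanged), it suffices to show that $\rho$ preserves the two subspaces $P_k(\omega)$ and $P_k^-(\omega)$. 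The action of $GL_k$ on $P_k$ commutes with $\mathcal A$, so $GL_k$ preserves $\mathcal A^+P_k$; granting the two preservation statements, $GL_k$ will then preserve $(\mathcal A^+P_k\cap P_k(\omega))+P_k^-(\omega)$ and therefore act on the quotient $QP_k(\omega)$.

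The heart of the argument is the claim that every monomial $y$ occurring in $\rho(x)$ satisfies $\omega(y)\leq\omega(x)$. Writing $x=x_1^{a_1}x_2^{a_2}\cdots x_k^{a_k}$ and applying Lucas' theorem, one has $\rho(x)=\sum_{S}x_1^{a_1-s(S)}x_2^{a_2+s(S)}x_3^{a_3}\cdots x_k^{a_k}$, where $S$ runs over the subsets of $\{t:\alpha_t(a_1)=1\}$ and $s(S)=\sum_{t\in S}2^t$. Only the first two exponents change, so comparing $\omega(y)$ with $\omega(x)$ in the left lexicographic order amounts to comparing, bit position by bit position $t=0,1,2,\dots$ (these correspond to $\omega_1,\omega_2,\dots$), the digit sums $\alpha_t(a_1)+\alpha_t(a_2)$ against $\alpha_t(a_1-s(S))+\alpha_t(a_2+s(S))$. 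Removing the bits in $S$ from $a_1$ produces no borrows, so below the lowest position $t^\ast\in S$ at which $a_2$ already carries a one, each removed bit of $a_1$ reappears cleanly in $a_2$ (a position where $a_2$ had a zero) and the digit sum is unchanged; at $t^\ast$ the collision forces a carry upward, dropping that digit sum by two, while carries never reach positions below $t^\ast$. Hence the first position at which the two digit-sum sequences differ is a strict decrease, giving $\omega(y)<\omega(x)$; if no such collision occurs the swap is carry-free and $\omega(y)=\omega(x)$. In either case $\omega(y)\leq\omega(x)$.

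With the claim in hand the conclusion is immediate: if $\deg y=\deg\omega$ and $\omega(y)\leq\omega$, then every monomial of $\rho(y)$ has weight vector $\leq\omega(y)\leq\omega$, so $\rho(P_k(\omega))\subseteq P_k(\omega)$, and likewise $\rho(P_k^-(\omega))\subseteq P_k^-(\omega)$; since $\rho^2=\mathrm{id}$ over $\mathbb F_2$ these inclusions are equalities. Combining this with the $\Sigma_k$-invariance of $\omega$ shows that the whole group $GL_k$ stabilizes both $P_k(\omega)$ and $P_k^-(\omega)$, and together with the fact that $GL_k$ preserves $\mathcal A^+P_k$ we obtain that $GL_k$ fixes the subspace defining the quotient $QP_k(\omega)$, which is exactly the assertion.

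I expect the main obstacle to be the carry analysis in the second paragraph: one must argue carefully, in the left lexicographic order, that the first position where the weight vector can change is the lowest colliding bit and that the change there is a decrease, ruling out any spurious increase caused by carry propagation to higher positions. The reduction to a single transvection and the verification that $GL_k$ preserves $\mathcal A^+P_k$ are routine.
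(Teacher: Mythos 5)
Your proof is correct and follows essentially the same route as the paper's: both reduce the statement to the single transvection $x_1 \mapsto x_1 + x_2$ (the paper's $g_k$) together with the $\Sigma_k$-action, and both establish the key claim by the same expansion of $(x_1+x_2)^{a_1}$ and the same lowest-colliding-bit carry analysis, concluding that every monomial in the image has weight vector at most $\omega(x)$, with a strict decrease (by $2$ at the collision position) exactly when a collision occurs. Your write-up is merely more explicit about the routine reductions (preservation of $\mathcal A^+P_k$ and of $P_k^-(\omega)$, and the generation of $GL_k$), which the paper leaves implicit.
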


\begin{proof} Since $QP_k(\omega)$ is an $\Sigma_k$-module, we prove the lemma by proving the fact that if $x$ is a monomial in $P_k$, then $\rho_k(x) \in P_{k}(\omega(x))$.

 If $\nu_1(x) = 0$, then $x = \rho_k(x)$ and $\omega(\rho_k(x)) = \omega(x).$ Suppose $\nu_1(x) >0$ and $\nu_1(x) = 2^{t_1} + \ldots + 2^{t_b}$, where $0 \leqslant t_1 <  \ldots < t_b,\ b\geqslant 1$. 

Since $x = \prod_{t\geqslant 0}X_{\mathbb J_t(x)}^{2^t} \in P_k$ and $\rho_k$ is a homomorphism of algebras, we have
$$\rho_k(x) = \prod_{t\geqslant 0}(\rho_k(X_{\mathbb J_t(x)}))^{2^t}= \Bigg(\prod_{u=1}^b\big((x_1+x_2)X_{\mathbb J_{t_u}(x)\cup 1}\big)^{2^{t_u}} \Bigg)\Bigg(\prod_{t \ne t_1, t_2,\ldots, t_b}X_{\mathbb J_{t}(x)}^{2^t}\Bigg).$$

Then, $\rho_k(x)$ is a sum of monomials of the form
$$ \bar y = \Bigg(\prod_{j=1}^c\big(x_2X_{\mathbb J_{t_{u_j}}(x)\cup 1}\big)^{2^{t_u}} \Bigg)\Bigg(\prod_{t \ne t_{u_1}, \ldots, t_{u_c}}X_{\mathbb J_{t}(x)}^{2^t}\Bigg),$$
where $0 \leqslant c \leqslant b$. If $c= 0$, then $\bar y = x$ and $\omega(\bar y) = \omega(x).$ Suppose $c >0$.
 
If $2 \in \mathbb J_{t_{u_j}}(x)$ for all $j$, $1 \leqslant j \leqslant c$, then $\omega (\bar y) = \omega (x)$  and $\bar y \in P_{k}(\omega(x))$. Suppose there is an index $j$ such that $2 \not\in \mathbb J_{t_{u_j}}(x)$. Let $j_0$ be the smallest index such that $2\not\in \mathbb J_{t_{u_{j_0}}}(x)$. Then, we have 
$$ \omega_i(\bar y) =  \begin{cases}\omega _i(x), &\text{if }  i \leqslant t_{u_{j_0}},\\ \omega _i(x)-2, &\text{if } i  = t_{u_{j_0}}+1.\end{cases}$$
Hence $\omega (\bar y) < \omega(x)$ and $\bar y \in P_{k}(\omega(x))$. The lemma is proved.
\end{proof}

\begin{defn}\label{defn3} 
Let $x, y$ be monomials of the same degree in $P_k$. We say that $x <y$ if and only if one of the following holds:  

i) $\omega (x) < \omega(y)$;

ii) $\omega (x) = \omega(y)$ and $\sigma(x) < \sigma(y).$
\end{defn}

\begin{defn}
A monomial $x$ is said to be inadmissible if there exist monomials $y_1,y_2,\ldots, y_t$ such that $y_j<x$ for $j=1,2,\ldots , t$ and $x + \sum_{j=1}^ty_j \in \mathcal A^+P_k.$ 
A monomial $x$ is said to be admissible if it is not inadmissible.
\end{defn}

Obviously, the set of all the admissible monomials of degree $n$ in $P_k$ is a minimal set of $\mathcal{A}$-generators for $P_k$ in degree $n$. 

\begin{defn} 
 A monomial $x$ in $P_k$ is said to be strictly inadmissible if and only if there exist monomials $y_1,y_2,\ldots, y_t$ such that $y_j<x,$ for $j=1,2,\ldots , t$ and 
$$x = \sum_{j=1}^t y_j + \sum_{u=1}^{2^s-1}Sq^u(q_u)$$ 
with $s = \max\{i : \omega_i(x) > 0\}$ and suitable polynomials $q_u \in P_k$.
\end{defn}

It is easy to see that if $x$ is strictly inadmissible, then it is inadmissible.

\begin{thm}[See Kameko \cite{ka}]\label{dlcb1}  
 Let $x, y, w$ be monomials in $P_k$ such that $\omega_i(x) = 0$ for $i > r>0$, $\omega_s(w) \ne 0$ and   $\omega_i(w) = 0$ for $i > s>0$.

{\rm i)}  If  $w$ is inadmissible, then  $xw^{2^r}$ is also inadmissible.

{\rm ii)}  If $w$ is strictly inadmissible, then $wy^{2^{s}}$ is also strictly inadmissible.
\end{thm}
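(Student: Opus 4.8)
The plan is to take a relation witnessing the (strict) inadmissibility of $w$, multiply it by the relevant power, and then control the order of every monomial that appears. Two instances of the Cartan formula do all the work. Since $y^{2^{s}}$ is a $2^{s}$-th power, $Sq^{j}(y^{2^{s}})=0$ unless $2^{s}\mid j$; hence for $0<u<2^{s}$ the only surviving summand of $Sq^{u}(q\,y^{2^{s}})=\sum_{i+j=u}Sq^{i}(q)Sq^{j}(y^{2^{s}})$ is the one with $j=0$, so
$$Sq^{u}(q\,y^{2^{s}})=Sq^{u}(q)\,y^{2^{s}}\qquad(0<u<2^{s}).$$
Dually $(Sq^{u}p)^{2^{r}}=Sq^{2^{r}u}(p^{2^{r}})$, and expanding $Sq^{2^{r}u}(x\,p^{2^{r}})$ by Cartan (again using $Sq^{j}(p^{2^{r}})=0$ unless $2^{r}\mid j$) gives
$$x\,Sq^{2^{r}u}(p^{2^{r}})=Sq^{2^{r}u}(x\,p^{2^{r}})+\sum_{i=1}^{u}Sq^{2^{r}i}(x)\,(Sq^{u-i}p)^{2^{r}}.$$
I will also use repeatedly that adding a multiple of $2^{r}$ to an exponent does not alter its lowest $r$ binary digits: for any monomial $z$ and any $M$, $\omega_{i}(z\,M^{2^{r}})=\omega_{i}(z)$ for $1\leq i\leq r$, and if moreover $\omega_{i}(z)=0$ for $i>r$ then $\omega_{i}(z\,M^{2^{r}})=\omega_{i-r}(M)$ for $i>r$.

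For (ii), write the strict inadmissibility of $w$ as $w=\sum_{j}y_{j}+\sum_{u=1}^{2^{s}-1}Sq^{u}(q_{u})$ with $y_{j}<w$ and $s=\max\{i:\omega_{i}(w)>0\}$, and multiply through by $y^{2^{s}}$. The first identity converts each $Sq^{u}(q_{u})\,y^{2^{s}}$ into $Sq^{u}(q_{u}y^{2^{s}})$, giving
$$w\,y^{2^{s}}=\sum_{j}y_{j}\,y^{2^{s}}+\sum_{u=1}^{2^{s}-1}Sq^{u}(q_{u}y^{2^{s}}).$$
Since $\omega_{s}(w\,y^{2^{s}})=\omega_{s}(w)>0$ we have $\max\{i:\omega_{i}(w\,y^{2^{s}})>0\}\geq s$, so the range $1\leq u\leq 2^{s}-1$ is permissible for the definition applied to $w\,y^{2^{s}}$. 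It remains to check $y_{j}\,y^{2^{s}}<w\,y^{2^{s}}$. If $\omega(y_{j})=\omega(w)$ then $y_{j}$ also has $\omega_{i}=0$ for $i>s$, the two products share one weight vector, and $\sigma(y_{j})<\sigma(w)$ (entries $<2^{s}$) propagates to $\sigma(y_{j}y^{2^{s}})<\sigma(w\,y^{2^{s}})$. If $\omega(y_{j})<\omega(w)$, their first discrepancy sits at some $i_{0}\leq s$ (degrees agree and $\omega_{i}(w)=0$ for $i>s$), and since $\omega_{i}(\cdot\,y^{2^{s}})=\omega_{i}(\cdot)$ for $i\leq s$ the same discrepancy persists between $\omega(y_{j}y^{2^{s}})$ and $\omega(w\,y^{2^{s}})$. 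Either way $y_{j}y^{2^{s}}<w\,y^{2^{s}}$, so $w\,y^{2^{s}}$ is strictly inadmissible.

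For (i), start from $w=\sum_{j}z_{j}+\sum_{u}Sq^{u}(p_{u})$ with $z_{j}<w$. Raising to the $2^{r}$-th power (Frobenius is additive mod $2$) and using $(Sq^{u}p_{u})^{2^{r}}=Sq^{2^{r}u}(p_{u}^{2^{r}})$ gives $w^{2^{r}}\equiv\sum_{j}z_{j}^{2^{r}}$; multiplying by $x$ and applying the second identity,
$$x\,w^{2^{r}}=\sum_{j}x\,z_{j}^{2^{r}}+\sum_{u}Sq^{2^{r}u}(x\,p_{u}^{2^{r}})+\sum_{u}\sum_{i\geq 1}Sq^{2^{r}i}(x)\,(Sq^{u-i}p_{u})^{2^{r}}.$$
The middle sum is hit. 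For the first sum, $z_{j}<w$ together with the weight rule (now $x$ has no digits above position $r$) gives $x\,z_{j}^{2^{r}}<x\,w^{2^{r}}$, the discrepancy lying past position $r$ or in $\sigma$, exactly as in (ii). The crux is the last sum: I claim every monomial there is $<x\,w^{2^{r}}$. Such a monomial has the form $A\,C^{2^{r}}$ with $A=\prod_{l}x_{l}^{a_{l}+c_{l}}$ a monomial of $Sq^{2^{r}i}(x)$, so $a_{l}<2^{r}$, $c_{l}\preceq a_{l}$ digitwise, and $\sum_{l}c_{l}=2^{r}i\geq 2^{r}$. Its first $r$ weight components agree with those of $A$, and the first $r$ components of $\omega(x\,w^{2^{r}})$ are $\omega_{1}(x),\ldots,\omega_{r}(x)$. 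Let $p$ be the lowest binary position at which some $c_{l}$ is nonzero; then $p\leq r-1$, no $c_{l}$ has a bit below $p$, so $\omega_{1},\ldots,\omega_{p}$ are unchanged, while at digit $p$ the relation $c_{l}\preceq a_{l}$ forces $1+1$ with a carry out for each contributing variable, so $\omega_{p+1}$ strictly decreases (and $p+1\leq r$). Hence $\omega(A\,C^{2^{r}})<\omega(x\,w^{2^{r}})$ already within the first $r$ entries, whence $A\,C^{2^{r}}<x\,w^{2^{r}}$. Thus $x\,w^{2^{r}}$ is congruent modulo $\mathcal{A}^{+}P_{k}$ to a sum of strictly smaller monomials, i.e. it is inadmissible.

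The routine ingredients are the two Cartan identities and the bookkeeping $\omega_{i}(z\,M^{2^{r}})=\omega_{i}(z)$ for $i\leq r$. The real content, and the step I expect to demand the most care, is the weight estimate for the error terms in (i): one must verify that the carries produced by $c_{l}\preceq a_{l}$ with $\sum_{l}c_{l}\geq 2^{r}$ always strike within the first $r$ binary positions and strictly lower the weight there, uniformly over all monomials $A$ of $Sq^{2^{r}i}(x)$ and all $i\geq 1$. Once this carry lemma is secured, both parts follow by assembling the displayed relation and reading off the order.
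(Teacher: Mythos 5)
Your proposal is correct, and there is nothing in the paper to measure it against: the paper states this result as a quotation from Kameko's thesis \cite{ka} and gives no proof at all, so you have in effect supplied the missing argument. Both halves check out. In (ii), the identity $Sq^{u}(q_u\,y^{2^{s}})=Sq^{u}(q_u)\,y^{2^{s}}$ for $0<u<2^{s}$ is exactly right, and your observation that $\max\{i:\omega_i(w\,y^{2^{s}})>0\}\geq s$ makes the range $1\leq u\leq 2^{s}-1$ legitimate (padding with $q_u=0$) is the one point a careless writer would miss; your two-case comparison $y_j\,y^{2^{s}}<w\,y^{2^{s}}$ is also complete, including the remark that the first weight discrepancy must occur at a position $i_0\leq s$. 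In (i), the crux you flagged — the carry lemma — holds as you state it: writing a monomial of $Sq^{2^{r}i}(x)$ as $\prod_l x_l^{a_l+c_l}$ with $c_l\preceq a_l$ (Lucas) and $\sum_l c_l=2^{r}i\geq 2^{r}$, the minimal set bit $p\leq r-1$ of the $c_l$ produces no change in $\omega_1,\dots,\omega_p$ (no carries can arise below $p$) and a strict drop in $\omega_{p+1}$, since every variable contributing a bit at $p$ has $\alpha_p(a_l)=1$ and the sum digit becomes $0$; adding $2^{r}\nu_l(C)$ afterwards cannot disturb positions $\leq r$, so the error monomials are strictly smaller than $x\,w^{2^{r}}$ in the first $r$ weight entries. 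This is essentially the standard proof of Kameko's theorem in the literature, so no comparison of approaches is needed.
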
 

Now, we recall a result of Singer \cite{si2} on the hit monomials in $P_k$. 

\begin{defn}\label{spi}  A monomial $z$ in $P_k$   is called a spike if $\nu_j(z)=2^{t_j}-1$ for $t_j$ a non-negative integer and $j=1,2, \ldots , k$. If $z$ is a spike with $t_1>t_2>\ldots >t_{r-1}\geqslant t_r>0$ and $t_j=0$ for $j>r,$ then it is called the minimal spike.
\end{defn}

In \cite{si2}, Singer showed that if $\mu(n) \leqslant k$, then there exists uniquely a minimal spike of degree $n$ in $P_k$. 


The following is a criterion for the hit monomials in $P_k$.

\begin{thm}[See Singer~\cite{si2}]\label{dlsig} Suppose $x \in P_k$ is a monomial of degree $n$, where $\mu(n) \leqslant k$. Let $z$ be the minimal spike of degree $n$. If $\omega(x) < \omega(z)$, then $x$ is hit.
\end{thm}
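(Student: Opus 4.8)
The plan is to prove the equivalent reformulation: for every weight vector $\omega$ with $\deg\omega = n$ and $\omega < \omega(z)$, every monomial of weight $\omega$ is hit, i.e. $QP_k(\omega)=0$ for all such $\omega$. I would establish this by a double induction, the outer one on the degree $n$ and the inner one on the weight vector $\omega$ in the left lexicographic order, where at the step for $\omega$ one assumes that every monomial of weight strictly less than $\omega$ is already hit. For $n\leq 1$ there is no weight vector strictly below $\omega(z)$, so the base of the outer induction is vacuous.

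For the inductive step fix a monomial $x$ of weight $\omega<\omega(z)$ and use the canonical factorization $x = X_{\mathbb J_0(x)}\,y^2$, where $X_{\mathbb J_0(x)}$ is the square-free product of the variables occurring to an odd power (of degree $\omega_1(x)$) and $y=\prod_{t\geq 1}X_{\mathbb J_t(x)}^{2^{t-1}}$ satisfies $\omega_i(y)=\omega_{i+1}(x)$ and $\deg y=(n-\omega_1(x))/2$. The minimal spike factors the same way as $z = X_{\{1,\dots,\mu(n)\}}\,w^2$, with $w$ the minimal spike of degree $(n-\mu(n))/2$ and $\omega(w)=(\omega_2(z),\omega_3(z),\dots)$; since $\mu\big((n-\mu(n))/2\big)\leq\mu(n)\leq k$, the outer inductive hypothesis is available in degree $\deg y$. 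Because $\omega(z)$ is weakly decreasing (Lemma \ref{bdbs}), I split on the first index $r$ at which $\omega_r<\omega_r(z)$.

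If $r\geq 2$ then $\omega_1(x)=\mu(n)$, and the shifted vectors first differ at $r-1$, so $\omega(y)=(\omega_2(x),\dots)<\omega(w)$; hence $y$ is hit by the outer hypothesis, say $y=\sum_u Sq^u(p_u)$. Applying the Cartan formula together with the identity $Sq^{2u}(p^2)=(Sq^u p)^2$ gives $X_{\mathbb J_0(x)}\,y^2 \equiv E \pmod{\mathcal A^+P_k}$, where $E$ is a sum of terms $Sq^{2m}(X_{\mathbb J_0(x)})\cdot(Sq^c p_u)^2$ with $m\geq 1$. Since $Sq^{2m}$ only squares variables already present in $X_{\mathbb J_0(x)}$ and the second factor is a square, each monomial of $E$ has first weight coordinate $\omega_1(x)-2m<\omega_1(x)$; thus every monomial of $E$ has weight strictly below $\omega(x)$, and being still below $\omega(z)$ it is hit by the inner hypothesis. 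Therefore $x$ is hit.

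The case $r=1$, i.e. $\omega_1(x)<\mu(n)$, is the crux and the main obstacle. Now fewer than $\mu(n)$ variables carry odd exponents, the bottom level of the spike cannot be matched, and the Cartan correction terms no longer lower the first weight coordinate: they remain inside the same class $\omega$. Here, for each such $\omega$, I would peel off suitable squares $Sq^{j}$ to rewrite the monomials of weight $\omega$ modulo $P_k^-(\omega)$ and show that the whole space $QP_k(\omega)$ collapses; concretely this means checking that the several $Sq^{2^i}$-relations available at the bottom level span the entire weight-$\omega$ subspace (for example, the three monomials of weight $(1,2)$ in degree $5$ are all hit using $Sq^1$ and $Sq^2$ together). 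The difficulty is exactly that weight is not additive under multiplication, so carries in the exponents can raise it and one cannot reduce one monomial at a time; one must argue throughout with the relation $\equiv_{\omega}$, treating all monomials of weight $\omega$ simultaneously and exploiting the weakly-decreasing shape of $\omega(z)$ together with Kameko's criterion (Theorem \ref{dlcb1}) to guarantee that enough relations exist. Once $QP_k(\omega)=0$ is verified at the bottom level, the double induction closes and the criterion follows.
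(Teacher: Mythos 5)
First, a point of comparison: the paper itself contains no proof of Theorem \ref{dlsig} --- it is imported from Singer \cite{si2} and used as a black box --- so your proposal has to be judged entirely on its own merits. On those merits, your case $r\geqslant 2$ is sound: when $\omega_1(x)=\mu(n)$, the factorization $x=X_{\mathbb J_0(x)}y^2$, the identities $Sq^{2c+1}(q^2)=0$ and $Sq^{2c}(q^2)=(Sq^cq)^2$, and the Cartan formula do rewrite $x$, modulo hit elements, as a sum of monomials whose first weight entry is strictly smaller, and these are covered by your two inductive hypotheses. The genuine gap is the case $r=1$, where $\omega_1(x)<\mu(n)$. This is not a residual technical case: it is the entire content of Singer's criterion, and what you offer there (``peel off suitable squares'', ``check that the $Sq^{2^i}$-relations at the bottom level span the entire weight-$\omega$ subspace'') is a restatement of the goal $QP_k(\omega)=0$, not an argument; one worked example in degree $5$ gives no indication of how the spanning claim would be established for general $k$, $n$ and $\omega$. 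Worse, the lexicographically smallest weight vectors of degree $n$ (for instance $\omega(x_1^n)$, whose first entry is $\alpha_0(n)\leqslant 1$) fall into the case $r=1$ whenever $\mu(n)\geqslant 2$, so the unproved case contains the base of your inner induction: as structured, the induction never gets started.

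The missing case is precisely where the literature uses a different idea, not a weight-by-weight spanning argument. Let $\chi$ denote the conjugation (antipode) of $\mathcal A$; the $\chi$-trick states that $u\,Sq^t(v)\equiv \chi(Sq^t)(u)\,v$ modulo $\mathcal A^+P_k$ for all $u,v\in P_k$. Write $x=Xy^2$ with $X=X_{\mathbb J_0(x)}$ and $d=\deg y$, so that $y^2=Sq^{d}(y)$ and hence $x\equiv \chi(Sq^{d})(X)\,y$ modulo hit elements. If $\mu(d)\leqslant\omega_1(x)$, then writing $d=\sum_{i=1}^{\mu(d)}(2^{u_i}-1)$ gives $n=2d+\omega_1(x)=\sum_{i=1}^{\mu(d)}(2^{u_i+1}-1)+(\omega_1(x)-\mu(d))(2^1-1)$, a sum of $\omega_1(x)$ terms of the form $2^v-1$ with $v\geqslant 1$, forcing $\mu(n)\leqslant\omega_1(x)$; so in your case $r=1$ one has $\mu(d)>\omega_1(x)=\deg X$. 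Since every admissible monomial occurring in $\chi(Sq^{d})$ has excess at least $\mu(d)$ --- the classical computation about the antipode that underlies Wood's proof of Theorem \ref{dlmd1} --- and an admissible monomial of excess $e$ annihilates all polynomials of degree less than $e$, we get $\chi(Sq^{d})(X)=0$, so $x$ is hit outright. Note this also disposes of the correction terms in your case $r\geqslant 2$ (their first weight entry is less than $\mu(n)$, so they again fall under case $r=1$), which means the inner induction on weight vectors can be discarded altogether: the proof is an induction on degree alone. To complete your write-up, replace the speculative paragraph by this argument, supplying proofs or references for the $\chi$-trick and for the excess bound on $\chi(Sq^{d})$.
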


This result implies the one of Wood, which originally is a conjecture of Peterson~\cite{pe}.
 
\begin{thm}[See Wood~\cite{wo}]\label{dlmd1} 
If $\mu(n) > k$, then $(\mathbb F_2 \otimes_{\mathcal A}P_k)_n = 0$.
\end{thm}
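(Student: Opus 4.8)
The plan is to deduce Wood's theorem from Singer's hit criterion (Theorem~\ref{dlsig}) by passing to a larger polynomial algebra in which a minimal spike of degree $n$ is available, applying the criterion there, and then descending back to $P_k$ along a retraction.

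The fact that drives the argument is that the minimal spike $z$ of degree $n$ in $P_{k'}$ (which exists whenever $\mu(n)\leq k'$) satisfies $\omega_1(z)=\mu(n)$. Indeed, writing $\nu_j(z)=2^{t_j}-1$, one has $\alpha_0(2^{t_j}-1)=1$ precisely when $t_j\geq 1$, so $\omega_1(z)=\sum_{j}\alpha_0(\nu_j(z))$ equals the number of positive exponents $t_j$, that is, the number of nonzero summands in the expression $n=\sum_j(2^{t_j}-1)$. Since $z$ is the smallest spike of degree $n$ in the left lexicographic order on weight vectors, this number is as small as possible among all spikes, and the least number of terms in any such expression is by definition $\mu(n)$; hence $\omega_1(z)=\mu(n)$.

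Now let $x$ be an arbitrary monomial of degree $n$ in $P_k$, and assume $\mu(n)>k$. Put $k'=\mu(n)$, so that $\mu(n)\leq k'$ and the minimal spike $z$ of degree $n$ exists in $P_{k'}$. Regarding $x$ as a monomial in $P_{k'}$ (with the extra variables carrying exponent $0$) leaves its weight vector unchanged, and since $x$ involves only $x_1,\dots,x_k$ we get $\omega_1(x)=\sum_{j=1}^{k}\alpha_0(\nu_j(x))\leq k<\mu(n)=\omega_1(z)$. Comparing first coordinates in the left lexicographic order yields $\omega(x)<\omega(z)$, so Theorem~\ref{dlsig}, applied in $P_{k'}$, shows that $x$ is hit in $P_{k'}$; that is, $x\in\mathcal A^+P_{k'}$.

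It remains to descend to $P_k$. The $\mathbb F_2$-algebra map $\pi\colon P_{k'}\to P_k$ defined by $\pi(x_j)=x_j$ for $j\leq k$ and $\pi(x_j)=0$ for $j>k$ commutes with every $Sq^i$: this need only be checked on the degree-one generators, after which the Cartan formula and multiplicativity give it in general. Hence $\pi$ is a map of $\mathcal A$-modules and carries $\mathcal A^+P_{k'}$ into $\mathcal A^+P_k$. Since $\pi(x)=x$, we conclude $x\in\mathcal A^+P_k$, i.e.\ $x$ is hit in $P_k$. As $x$ was an arbitrary monomial of degree $n$, every monomial of degree $n$ is hit, and therefore $(\mathbb F_2\otimes_{\mathcal A}P_k)_n=0$. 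I expect the only delicate point to be the identification $\omega_1(z)=\mu(n)$; once that is in hand, both the application of Theorem~\ref{dlsig} and the descent along $\pi$ are routine.
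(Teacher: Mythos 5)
Your proof is correct, and it is worth noting that the paper itself contains no proof of Theorem~\ref{dlmd1}: the result is quoted from Wood, with only the remark that it is implied by Singer's criterion (Theorem~\ref{dlsig}). Your argument supplies precisely that implication, and the two ideas that make it work --- enlarging the number of variables to $k'=\mu(n)$ so that a minimal spike of degree $n$ actually exists, and then descending along the $\mathcal A$-linear retraction $\pi\colon P_{k'}\to P_k$ that kills the extra variables --- are exactly what is needed, since Theorem~\ref{dlsig} cannot be invoked in $P_k$ itself when $\mu(n)>k$ (its hypothesis $\mu(n)\leq k$ fails and no spike of degree $n$ exists there). One small imprecision: you justify $\omega_1(z)=\mu(n)$ by describing the minimal spike as the spike whose weight vector is smallest in the left lexicographic order, which is not the paper's definition; Definition~\ref{spi} defines it by the shape of its exponents, $t_1>t_2>\dots>t_{r-1}\geq t_r>0$. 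This does not affect the proof: for the comparison $\omega(x)<\omega(z)$ you only need $\omega_1(z)>k$, and the inequality $\omega_1(z)\geq\mu(n)$ is immediate from the definition of $\mu$, because the $\omega_1(z)$ odd exponents of any spike $z$ of degree $n$ yield a decomposition $n=\sum_j(2^{t_j}-1)$ with positive exponents; the full equality $\omega_1(z)=\mu(n)$ also follows from Lemma~\ref{bd1} applied to the exponent shape in Definition~\ref{spi}. With that repair the argument is complete: every monomial of degree $n$ in $P_k$ is hit, hence $(\mathbb F_2\otimes_{\mathcal A}P_k)_n=0$.
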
 

Now, we recall some notations and definitions in \cite{su1}, which will be used in the next sections. We set 
\begin{align*} 
P_k^0 &=\langle\{x=x_1^{a_1}x_2^{a_2}\ldots x_k^{a_k} \ : \ a_1a_2\ldots a_k=0\}\rangle,
\\ P_k^+ &= \langle\{x=x_1^{a_1}x_2^{a_2}\ldots x_k^{a_k} \ : \ a_1a_2\ldots a_k>0\}\rangle. 
\end{align*}

It is easy to see that $P_k^0$ and $P_k^+$ are the $\mathcal{A}$-submodules of $P_k$. Furthermore, we have the following.

\begin{prop}\label{2.7} We have a direct summand decomposition of the $\mathbb F_2$-vector spaces
$\mathbb F_2\otimes_{\mathcal A}P_k =QP_k^0 \oplus  QP_k^+.$
Here $QP_k^0 = \mathbb F_2\otimes_{\mathcal A}P_k^0$ and  $QP_k^+ = \mathbb F_2\otimes_{\mathcal A}P_k^+$.
\end{prop}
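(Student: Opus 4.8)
The plan is to reduce everything to a direct sum decomposition of $\mathcal A$-modules and then apply the right-exact functor $\mathbb F_2\otimes_{\mathcal A}(-)$. First I would observe that, as a graded $\mathbb F_2$-vector space, $P_k = P_k^0\oplus P_k^+$: the monomial basis of $P_k$ splits into the monomials $x_1^{a_1}\cdots x_k^{a_k}$ with $a_1a_2\cdots a_k = 0$, which form a basis of $P_k^0$, and those with $a_1a_2\cdots a_k>0$, which form a basis of $P_k^+$. Since these two sets of monomials are disjoint and exhaust the basis, the sum is direct.

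Next I would use the fact, recalled just before the statement, that $P_k^0$ and $P_k^+$ are $\mathcal A$-submodules of $P_k$. If one wishes to verify this directly: for a monomial with all exponents positive the Cartan formula together with $Sq^j(x_i^{a_i}) = \binom{a_i}{j}x_i^{a_i+j}$ only raises exponents, so $Sq^i$ preserves $P_k^+$; for a monomial with some $a_i = 0$ the identity $Sq^j(1) = 0$ for $j>0$ forces the $i$-th exponent to remain $0$, so $Sq^i$ preserves $P_k^0$. Consequently $P_k = P_k^0\oplus P_k^+$ is a direct sum decomposition of $\mathcal A$-modules.

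It then remains to pass to the quotient by $\mathcal A^+P_k$. Since $P_k^0$ and $P_k^+$ are submodules, we have $\mathcal A^+P_k^0\subseteq P_k^0$ and $\mathcal A^+P_k^+\subseteq P_k^+$, so these two subspaces lie in complementary summands and their sum $\mathcal A^+P_k^0 + \mathcal A^+P_k^+$ is direct; moreover it equals $\mathcal A^+P_k$ because $\mathcal A^+(P_k^0\oplus P_k^+) = \mathcal A^+P_k^0 + \mathcal A^+P_k^+$. Quotienting the direct sum $P_k^0\oplus P_k^+$ by the compatibly split submodule $\mathcal A^+P_k^0\oplus \mathcal A^+P_k^+$ yields
$$\mathbb F_2\otimes_{\mathcal A}P_k = \frac{P_k}{\mathcal A^+P_k} \cong \frac{P_k^0}{\mathcal A^+P_k^0}\oplus\frac{P_k^+}{\mathcal A^+P_k^+} = QP_k^0\oplus QP_k^+,$$
which is the asserted decomposition.

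There is no serious obstacle here; the only point demanding a moment's care is confirming that the decomposition of $P_k$ descends to the quotient, i.e. that $\mathcal A^+P_k$ splits as $\mathcal A^+P_k^0\oplus \mathcal A^+P_k^+$. This is immediate once one knows $P_k^0$ and $P_k^+$ are $\mathcal A$-submodules, so the entire argument is formal and relies on nothing beyond the right-exactness of $\mathbb F_2\otimes_{\mathcal A}(-)$ and its compatibility with finite direct sums.
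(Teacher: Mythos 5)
Your proof is correct. The paper states Proposition \ref{2.7} without proof, treating it as immediate from the observation that $P_k^0$ and $P_k^+$ are $\mathcal A$-submodules of $P_k$; your argument---splitting the monomial basis, verifying the submodule property via the Cartan formula, and noting that $\mathcal A^+P_k = \mathcal A^+P_k^0 \oplus \mathcal A^+P_k^+$ so the decomposition descends to the quotient---is exactly the standard justification the paper leaves implicit.
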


\begin{defn}For $1 \leqslant i \leqslant k$, define the homomorphism $f_i: P_{k-1} \to P_k$ of algebras by substituting
$$f_i(x_j) = \begin{cases} x_j, &\text{ if } 1 \leqslant j <i,\\
x_{j+1}, &\text{ if } i \leqslant j <k.
\end{cases}$$
\end{defn}
Obviously, we have the following.
 \begin{prop}\label{2.8} If $B$ is a minimal set of generators for $\mathcal A$-module $P_{k-1}$ in degree $n$, then $f(B):=\bigcup_{1 \leqslant i \leqslant k}f_i(B)$ is also a minimal set of generators for $\mathcal A$-module $P_{k}^0$ in degree $n$.
\end{prop}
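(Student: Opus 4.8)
The plan is to reduce the statement to the claim that the substitution maps $f_i$ preserve and reflect admissibility, so that $f(B)$ is literally the set of admissible monomials of degree $n$ lying in $P_k^0$. Throughout I take $B$ to be the minimal generating set consisting of the admissible monomials of degree $n$ in $P_{k-1}$ (the canonical choice used throughout the paper); the spanning step below is insensitive to this choice. The two tools I rely on are, first, that each $f_i\colon P_{k-1}\to P_k$ is an injective homomorphism of $\mathcal A$-modules whose image is the span of the monomials not involving $x_i$; and second, the retraction $p_i\colon P_k\to P_{k-1}$ defined by $p_i(x_i)=0$, $p_i(x_j)=x_j$ for $j<i$ and $p_i(x_j)=x_{j-1}$ for $j>i$. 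The map $p_i$ is an $\mathcal A$-algebra homomorphism with $p_if_i=\mathrm{id}_{P_{k-1}}$, and both $f_i$ and $p_i$ commute with the Steenrod squares, so each carries $\mathcal A^+P$ into $\mathcal A^+P$.

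First I would establish spanning. Every monomial $x$ of $(P_k^0)_n$ has $\nu_i(x)=0$ for some $i$, hence $x=f_i(p_i(x))$ lies in the image of $f_i$. Writing $p_i(x)\equiv\sum_{b\in B}\lambda_b b \pmod{\mathcal A^+P_{k-1}}$ and applying $f_i$, the inclusion $f_i(\mathcal A^+P_{k-1})\subseteq\mathcal A^+P_k$ gives $x\equiv\sum_b\lambda_b f_i(b)$ in $\mathbb F_2\otimes_{\mathcal A}P_k$. Thus $f(B)=\bigcup_i f_i(B)$ spans $(QP_k^0)_n$.

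The heart of the argument is the following equivalence for a monomial $b\in P_{k-1}$: \emph{$b$ is admissible if and only if $f_i(b)$ is admissible.} For the forward direction, if $b$ is inadmissible, write $b=\sum_j y_j+h$ with $y_j<b$ and $h\in\mathcal A^+P_{k-1}$; applying the $\mathcal A$-homomorphism $f_i$ yields $f_i(b)=\sum_j f_i(y_j)+f_i(h)$ with $f_i(h)\in\mathcal A^+P_k$, and since $f_i$ merely inserts a zero exponent in position $i$ of every monomial, it preserves the order of Definition \ref{defn3} (the weight vector is unchanged and the exponent vectors are compared after inserting the same $0$), so $f_i(y_j)<f_i(b)$ and $f_i(b)$ is inadmissible. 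For the converse I would apply $p_i$ to an inadmissibility relation for $f_i(b)$: from $f_i(b)=\sum_j y_j+h'$ with $y_j<f_i(b)$ and $h'\in\mathcal A^+P_k$ we get $b=p_if_i(b)=\sum_j p_i(y_j)+p_i(h')$, where $p_i(h')\in\mathcal A^+P_{k-1}$ and the terms with $x_i\mid y_j$ vanish. For the surviving terms $p_i$ deletes a zero exponent common to $y_j$ and $f_i(b)$, so $y_j<f_i(b)$ forces $p_i(y_j)<b$; hence $b$ is inadmissible.

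Granting this equivalence, $f_i$ restricts to a bijection from the admissible monomials of $(P_{k-1})_n$ onto the admissible monomials of $(P_k)_n$ not involving $x_i$. Taking the union over $i$ identifies $f(B)$ with precisely the set of admissible monomials of $(P_k)_n$ that miss at least one variable, i.e. the admissible monomials lying in $P_k^0$; the apparent multiplicity of a monomial that misses several variables disappears because $f(B)$ is formed as a union of sets. By Proposition \ref{2.7} the admissible monomials of $P_k$ contained in $P_k^0$ form a basis of $(QP_k^0)_n=(\mathbb F_2\otimes_{\mathcal A}P_k^0)_n$, so $f(B)$ is a minimal set of $\mathcal A$-generators for $P_k^0$ in degree $n$. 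I expect the only real obstacle to be the bookkeeping in the admissibility equivalence, namely checking carefully that $f_i$ and $p_i$ respect the order $<$ via the weight and exponent vectors; everything else is formal.
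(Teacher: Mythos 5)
The paper itself offers no proof of Proposition \ref{2.8}: it is prefaced by ``Obviously, we have the following'' and is imported from \cite{su1}, so there is no argument of record to compare yours against; your proposal supplies the missing proof, and for the case it actually treats it is correct. All three steps are sound: the spanning argument via the retraction $p_i$ (note this is not the paper's $p_{(i;j)}$, which sends $x_i$ to another variable rather than to $0$, but it is equally a homomorphism of $\mathcal A$-algebras, since any algebra map sending each variable to a linear form or to zero commutes with the total squaring operation); the equivalence ``$b$ admissible $\Leftrightarrow$ $f_i(b)$ admissible'', which works exactly as you say because inserting, or deleting, a zero exponent shared by the two monomials being compared changes neither $\omega$ nor the outcome of the left-lexicographic comparison of the $\sigma$'s in Definition \ref{defn3}; and the conclusion via Proposition \ref{2.7}. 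Two small repairs: the spanning congruence should be asserted modulo $\mathcal A^+P_k^0$ rather than $\mathcal A^+P_k$, since generation of the module $P_k^0$ is what is required (your computation in fact gives this, the error term being $f_i$ of an element of $\mathcal A^+P_{k-1}$); and in the converse direction of the equivalence you must allow the degenerate case in which every $y_j$ is divisible by $x_i$, where one concludes that $b$ is hit and needs the convention that hit monomials are inadmissible (empty sum of smaller monomials).

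The point that deserves more than your passing remark is the opening reduction to $B=B_{k-1}(n)$. This is not a harmless normalization: the proposition as literally stated, for an arbitrary minimal set of generators $B$, is false, so the restriction is forced. Concretely, for $k=4$ and $n=3$ take $B=\{x_1^3,x_2^3,x_3^3,x_1^2x_2,x_1x_3^2,x_2x_3^2,x_1x_2x_3\}$; since $x_1^2x_2+x_1x_2^2=Sq^1(x_1x_2)$, this is a minimal set of \emph{monomial} generators for $P_3$ in degree $3$, yet $f(B)$ contains the two distinct monomials $f_1(x_1^2x_2)=x_2^2x_3$ and $f_4(x_2x_3^2)=x_2x_3^2$, whose classes coincide because $x_2^2x_3+x_2x_3^2=Sq^1(x_2x_3)\in\mathcal A^+P_4^0$; hence $f(B)$ generates $P_4^0$ in degree $3$ but is not minimal. (Dropping the monomial requirement it is even easier: $B=\{x_1,\,x_1+x_2\}\subset(P_2)_1$ gives $|f(B)|=5>3=\dim(QP_3^0)_1$.) So what you prove -- that $f(B_{k-1}(n))$ is exactly $B_k^0(n)$ -- is the statement the paper actually uses (e.g.\ $B_5^0(16)=f(B_4(16))$ in Subsection \ref{ss64}) and the correct statement to prove; but your phrasing, that only the spanning step is ``insensitive to this choice'', suggests the minimality step might extend to general $B$ with more bookkeeping. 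It cannot, and a complete write-up should say so explicitly.
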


\begin{defn}
For any $1 \leqslant i < j \leqslant k$, we define the homomorphism $p_{(i;j)}: P_k \to P_{k-1}$ of algebras by substituting
$$p_{(i;j)}(x_u) =\begin{cases} x_u, &\text{ if } 1 \leqslant u < i,\\
x_{j-1}, &\text{ if }  u = i,\\  
x_{u-1},&\text{ if } i< u \leqslant k.
\end{cases}$$
Then, $p_{(i;j)}$ is a homomorphism of $\mathcal A$-modules.  In particular, $p_{(i;j)}(f_i(y)) = y$ for any $y \in P_{k-1}$. 
\end{defn}

\begin{lem}[see \cite{sp}]\label{bdm1} Let $x$ be a monomial in $P_k$. Then, $p_{(i;j)}(x) \in P_{k-1}(\omega(x))$. 
\end{lem}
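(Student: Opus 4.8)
The plan is to compute $p_{(i;j)}(x)$ explicitly and then compare its weight vector with that of $x$ digit by digit. Writing $x = x_1^{a_1}x_2^{a_2}\cdots x_k^{a_k}$, the map $p_{(i;j)}$ sends each generator to a generator, so $p_{(i;j)}(x)$ is again a single monomial of the same degree as $x$; in particular $\deg p_{(i;j)}(x) = \deg x = \deg\omega(x)$, which settles the degree requirement in the definition of $P_{k-1}(\omega(x))$. Inspecting the definition of $p_{(i;j)}$, the variables $x_u$ with $u \ne i,j$ are merely reindexed (their exponents are unchanged), while both $x_i$ and $x_j$ are sent to $x_{j-1}$. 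Hence the exponent of $x_{j-1}$ in $p_{(i;j)}(x)$ is $a_i + a_j$, and the multiset of exponents of $p_{(i;j)}(x)$ is obtained from that of $x$ by replacing the pair $a_i, a_j$ with the single value $a_i + a_j$. It then remains to prove $\omega(p_{(i;j)}(x)) \leq \omega(x)$ in the left lexicographic order.

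Since $\omega_t(y) = \sum_{l} \alpha_{t-1}(\nu_l(y))$ counts, at each position, the total number of ones in the $(t-1)$-st binary digit of the exponents, the two weight vectors share the common contribution $\sum_{l \ne i,j}\alpha_{t-1}(a_l)$ and differ only in the terms coming from $a_i$ and $a_j$. Concretely,
$$\omega_t(x) - \omega_t(p_{(i;j)}(x)) = \alpha_{t-1}(a_i) + \alpha_{t-1}(a_j) - \alpha_{t-1}(a_i+a_j), \qquad t \geq 1.$$
Thus the comparison reduces to a purely arithmetic statement about binary addition: setting $v_r = \alpha_r(a_i) + \alpha_r(a_j) \in \{0,1,2\}$ and $u_r = \alpha_r(a_i + a_j) \in \{0,1\}$ for $r \geq 0$, I must show that the sequence $(u_0, u_1, u_2, \ldots)$ is $\leq (v_0, v_1, v_2, \ldots)$ in the left lexicographic order, since the positions $r = 0,1,2,\ldots$ correspond to $\omega_1, \omega_2, \omega_3, \ldots$ respectively.

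The key step, and the only place where anything genuinely has to be checked, is the carry analysis for $a_i + a_j$. If $a_i$ and $a_j$ have disjoint binary supports then no carries occur, $u_r = v_r$ for all $r$, and $\omega(p_{(i;j)}(x)) = \omega(x)$. Otherwise let $r_0$ be the least position with $\alpha_{r_0}(a_i) = \alpha_{r_0}(a_j) = 1$, that is, the least $r$ with $v_r = 2$. By induction on $r < r_0$ no carry is generated below $r_0$, so $u_r = v_r$ for all $r < r_0$; at $r_0$ the incoming carry is $0$ and the column sum is $2$, whence $u_{r_0} = 0 < 2 = v_{r_0}$. Therefore $r_0$ is the first position at which the two sequences differ, and there $u_{r_0} < v_{r_0}$, so $(u_r) < (v_r)$ and consequently $\omega(p_{(i;j)}(x)) < \omega(x)$. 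In either case $\omega(p_{(i;j)}(x)) \leq \omega(x)$, giving $p_{(i;j)}(x) \in P_{k-1}(\omega(x))$, as required. The main (and rather modest) obstacle is simply the bookkeeping: one must verify that merging the two exponents at position $j-1$ is the only change to the exponent multiset, and that the left lexicographic order is controlled by the lowest differing binary digit through the carry mechanism above, exactly as in the weight-decrease computation already used in the proof of Lemma \ref{bdm}.
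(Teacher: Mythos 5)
Your proof is correct. Note that the paper itself contains no proof of Lemma \ref{bdm1} (it defers to \cite{sp}); your reduction to the exponent merge $a_i,a_j \mapsto a_i+a_j$ followed by the lowest-carry analysis of binary addition is exactly the weight-decrease mechanism the paper employs in its own proof of Lemma \ref{bdm} (there phrased through the decomposition $x=\prod_{t\geqslant 0}X_{\mathbb J_t(x)}^{2^t}$), so your argument is a sound, self-contained version of the intended one.
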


Lemma \ref{bdm1} implies that if $\omega$ is a weight vector and $x \in P_k(\omega)$, then $p_{(i;j)}(x) \in P_{k-1}(\omega)$.
Moreover, $p_{(i;j)}$ passes to a homomorphism from $QP_k(\omega)$ to $QP_{k-1}(\omega)$. 

For a subset $B \subset P_k$ and a weight vector $\omega$, we denote $[B] = \{[f] : f \in B\}$ and $[B]_\omega = \{[f]_\omega : f \in B\}$. From Theorem \ref{dlsig}, we see that if $\omega$ is the weight vector of a minimal spike in $P_k$, then $[B]_\omega = [B].$

From now on, we denote by $B_{k}(n)$ the set of all admissible monomials of degree $n$  in $P_k$, $B_{k}^0(n) = B_{k}(n)\cap P_k^0$, $B_{k}^+(n) = B_{k}(n)\cap P_k^+$. For a weight vector $\omega$ of degree $n$, we set $B_k(\omega) = B_{k}(n)\cap P_k(\omega)$, $B_k^+(\omega) = B_{k}^+(n)\cap P_k(\omega)$. 
Then, $[B_k(\omega)]_\omega$ and $[B_k^+(\omega)]_\omega$, are respectively the basses of the $\mathbb F_2$-vector spaces $QP_k(\omega)$ and $QP_k^+(\omega) := QP_k(\omega)\cap QP_k^+$.

 For any monomials $z, z_1, z_2, \ldots, z_m$ in $P_k(\omega)$ with $m\geqslant 1$, and for a subgroup $G\subset GL_k$, we denote 
 $ [G(z_1, z_2, \ldots, z_m)]_\omega$ the $G$-submodule of $QP_k(\omega)$ generated by the set $\{[z_1]_\omega, [z_2]_\omega , \ldots , [z_m]_\omega\}$. 

\section{Proof of Theorem \ref{cthm}}\label{s3}
\setcounter{equation}{0}

To make the paper self-contained, we give here a proof for the following lemma, which is an elementary property of the $\mu$-function.
\begin{lem}\label{bd1} Let $n$ be a positive integer. Then, $\mu(n) = s$ if and only if there exists uniquely a sequence of integers $v_1 > v_2 >\ldots > v_{s-1}\geqslant v_s>0$ such that
\begin{equation} \label{ct1.1}n =  2^{v_1} + 2^{v_2}+ \ldots + 2^{v_{s-1}}+ 2^{v_{s}} - s= \sum_{i=1}^s(2^{v_i}-1).
\end{equation}
\end{lem}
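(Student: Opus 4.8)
The plan is to recast the statement in terms of the integer $m=n+s$ and to prove existence, uniqueness, and the identification of the length with $\mu(n)$ separately, by strong induction on $n$. Writing $n=\sum_{i=1}^s(2^{v_i}-1)$ is equivalent to $m:=n+s=\sum_{i=1}^s 2^{v_i}$, so a sequence as in \eqref{ct1.1} is exactly a way of expressing $n+s$ as a sum of $s$ powers of $2$, each at least $2$, whose exponents are pairwise distinct except that the two smallest are allowed to coincide. I will call such a representation of $n$ \emph{canonical}.

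First, for existence I would run the greedy procedure: let $v_1=\lfloor\log_2(n+1)\rfloor$ be the exponent of the largest repunit $2^{v_1}-1\leq n$, put $n'=n-(2^{v_1}-1)$, and appeal to the inductive hypothesis for $n'$. The point to check is that $n'\leq 2^{v_1}-1<2^{v_1}$, so that the top exponent $v_2$ of the inductively obtained canonical representation of $n'$ satisfies $v_2\leq v_1$; moreover $v_2=v_1$ forces $n'=2^{v_1}-1$, which occurs only at the last step and produces exactly the single allowed equality $v_{s-1}=v_s$. Thus prepending $v_1$ yields a canonical representation of $n$. For uniqueness I would show that the top exponent of any canonical representation is forced by $n$ alone: using that the exponents are almost distinct one gets $m=\sum_{i=1}^s 2^{v_i}\leq 2^{v_1+1}$, whence in either case $2^{v_1}-1\leq n\leq 2^{v_1+1}-2$ and so $v_1=\lfloor\log_2(n+1)\rfloor$. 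Removing the top term and inducting on $n'$ then gives uniqueness of the whole sequence.

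It remains to identify the length $s$ of the canonical representation with $\mu(n)$, which is where the real content lies. The clean tool is the auxiliary fact that, for $1\leq r\leq n$, the number $n$ admits a representation as a sum of $r$ repunits $2^u-1$ with $u\geq 1$ if and only if $\alpha(n+r)\leq r$: the forward direction is immediate since a sum of $r$ powers of $2$ has binary digit-sum at most $r$, while the converse follows by starting from the binary expansion of $n+r$ and repeatedly splitting a power $2^a$ with $a\geq 2$ into $2^{a-1}+2^{a-1}$, the only obstruction (all summands equal to $2$) being excluded by $r\leq n$. Granting this, $\mu(n)=\min\{r:\alpha(n+r)\leq r\}$, and the canonical length satisfies $\alpha(n+s)\leq s$ (with equality in the all-distinct case and a strict drop when the two smallest exponents coincide).

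The hard part will be the minimality $\alpha(n+r)>r$ for $r<s$, i.e. that the greedy/canonical length cannot be beaten. I expect to obtain it by the same strong induction: with $v_1,n'$ as above, the inductive hypothesis gives $\mu(n')=s-1$, and one must propagate this through the identity $n+r=(n'+(r-1))+2^{v_1}$, controlling how $\alpha$ behaves when $2^{v_1}$ is added. If bit $v_1$ of $n'+(r-1)$ is clear the digit-sum only rises and the estimate is immediate; the delicate case is when that bit is already set, where a carry can lower $\alpha$. Handling it needs the bounds $n'<2^{v_1}$ and $r-1\leq s-2\leq v_1-1$ (so that $n'+(r-1)<2^{v_1+1}$) together with merging identities such as $(2^a-1)+(2^a-1)+(2^1-1)=2^{a+1}-1$, which exhibit how a representation of length below $s$ would admit a further reduction and thereby contradict the minimality of $\mu(n')$. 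Once this lower bound is established, combining it with the trivial upper bound $\mu(n)\leq s$ completes the identification and hence the lemma.
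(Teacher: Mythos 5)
There are two genuine gaps, both at the points where your plan leans on the function $\alpha$.

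First, your auxiliary fact is false as stated. Take $n=3$, $r=2$: then $\alpha(n+r)=\alpha(5)=2\leq r$ and $r\leq n$, yet $3$ is not a sum of exactly two repunits (sums of two repunits are $2,4,6,8,10,\dots$). The obstruction you missed is parity: if $n=\sum_{i=1}^r(2^{u_i}-1)$ with all $u_i\geq 1$, then $n+r=\sum_i 2^{u_i}$ is even, so no such representation can exist when $n+r$ is odd; correspondingly, when the binary expansion of $n+r$ contains $2^0$, that summand can neither be split nor be used, and your splitting process fails --- ``all summands equal to $2$'' is not the only way to get stuck. The fact you actually need is only the case $r=\beta(n):=\min\{u:\alpha(n+u)\leq u\}$, and there the obstruction disappears: if $2^0$ occurred in the expansion of $n+\beta(n)$, deleting it would give $\alpha(n+\beta(n)-1)\leq\beta(n)-1$, contradicting minimality. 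This repair is exactly the step ``$c_r>t-r$'' in the paper's proof, which is needed there for the same reason (to keep all exponents positive during the splitting); without some such argument your identity $\mu(n)=\min\{r:\alpha(n+r)\leq r\}$ is unproved.

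Second, the ``hard part'' is genuinely left open: in Case B (bit $v_1$ of $n'+(r-1)$ already set) the inductive hypothesis only yields $\alpha(n+r)=\alpha(n'+(r-1))\geq r$, not the strict inequality you need, and the proposed appeal to merging identities and ``a further reduction contradicting $\mu(n')=s-1$'' is not an argument --- a hypothetical short representation of $n$ need not contain the repunit $2^{v_1}-1$, so there is no evident way to strip it off and contradict the minimality of $\mu(n')$. The good news is that Case B is vacuous, but seeing this requires a sharper bound than your $n'<2^{v_1}$: since the exponents below $v_1$ are strictly decreasing except possibly the last two, one has $n'\leq 2^{v_1}-(s-1)$, hence $n'+(r-1)\leq 2^{v_1}-(s-r)<2^{v_1}$ for every $r<s$, so the carry never happens and only your easy Case A occurs. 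Alternatively you can argue as the paper does and avoid carries altogether: its telescoping split of the lowest binary summand produces a \emph{canonical} representation of length exactly $t=\mu(n)$; the top exponent of any canonical representation is forced to be $\lfloor\log_2(n+1)\rfloor=v_1$ (your own uniqueness observation), so subtracting $2^{v_1}-1$ exhibits $n'$ as a sum of $t-1$ repunits and the inductive hypothesis $\mu(n')=s-1$ gives $t\geq s$ at once. Apart from these two holes your architecture (greedy existence, forced top exponent, and reduction of $\mu$ to digit sums) is sound and is essentially the paper's.
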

 \begin{proof}  Assume that $\mu(n) = s$. Set
$\beta(n) = \min\{u\in \mathbb N: \alpha(n+u) \leqslant u\}.$ We prove $\mu(n) = \beta(n).$

Suppose $\beta(n) = t$. Then $\alpha(n+t) = r \leqslant t$, and
$ n = 2^{c_1} + 2^{c_2} + \ldots +2^{c_{r}} -t,$
where $c_1> c_2 >  \ldots >  c_{r} \geqslant 0$. 

If  $c_r \leqslant t-r$ then 
$$\alpha(n+t-1) = \alpha(2^{c_1} + 2^{c_2} + \ldots +2^{c_{r-1}}+ 2^{c_r}-1) = r-1 + c_r \leqslant t-1.$$
Hence $\beta(n) \leqslant t-1$. This contradicts the fact that $\beta(n) = t$.
So, $c_r > t-r$. 

If $r=t$ then $c_t =c_r > t-r = 0$. Set $v_i= c_i, \ i= 1,2,\ldots , t$. We obtain
$$ n = 2^{v_1} + 2^{v_2} + \ldots + 2^{v_{t-1}} + 2^{v_t} - t = \sum_{i=1}^t(2^{v_i}-1),$$
where $ v_1>v_2 > \ldots> v_{t-2} > v_{t-1} > v_t>0$. Hence, $\mu(n) \leqslant t = \beta(n)$.

Suppose $r<t$. Obviously, 
$$2^{c_r} = 2^{c_r-1} + \ldots + 2^{c_r -t+r+1}+2^{c_r -t+r}+2^{c_r -t+r}.$$
Set 
\begin{align*} 
&v_i = c_i, \  i=1,2,\ldots, r-1,\\
&v_{r+\ell} =c_r-\ell- 1> 0, \ \ell = 0,1, \ldots ,t-r-2,\\
&v_{t-1}= v_t = c_r -t+r > 0.
\end{align*}
Then, we get
$$n = 2^{v_1} + 2^{v_2} + \ldots + 2^{v_{t-1}} + 2^{v_t} - t = \sum_{i=1}^t(2^{v_i}-1), $$
with $v_1>v_2 > \ldots >v_{t-2} > v_{t-1} = v_t>0$. Hence $\mu(n) \leqslant t = \beta(n)$.
 
Since $\mu(n) = s$, $n = \sum_{i=1}^s(2^{h_i}-1)$ with $h_i$ positive integers. Then, $\alpha(n+s) = \alpha(\sum_{i=1}^s2^{h_i})\leqslant s$. So, we get $\mu(n) = s \geqslant \beta(n)$. Hence, $t =\beta(n) =\mu(n)  = s$. Thus, $n$ is of the form (\ref{ct1.1}). 

Now, assume that $n$ is of the form (\ref{ct1.1}). Then, $\mu(n) \leqslant s$.
We prove $\mu(n) = s$ by induction on $s$.

If  $s=1$, then $\mu(n) =1$, since $\mu(n) > 0$. If $s = 2$, then $\alpha(n+1) = \alpha (2^{v_1} + 2^{v_2}-1)  = 1 + v_2 >1$. Hence,  $\mu(n) = \beta (n) \geqslant 2$. So, $\mu(n) =2$.

Suppose $s > 2$. By the inductive hypothesis, $\mu(n+1-2^{v_1}) = s-1$.

It is well-known that there exists uniquely an integer $d$ such that $2^d \leqslant n+1 < 2^{d+1}$. Since $v_1>v_2>\ldots > v_{s-1} \geqslant v_s> 0$, we have
$$2^{v_1} \leqslant n+1 < 2^{v_1}+ 2^{v_1-1}+\ldots + 2^{v_1-s+2}+2^{v_1-s+2} = 2^{v_1+1}.$$ 
So, we get $v_1 = d.$ Set $\mu(n) = t \leqslant s$.  There exists $u_1 > u_2 > \ldots > u_{t-1}\geqslant u_t > 0$ such that $n = 2^{u_1}+2^{u_2} +  \ldots + 2^{u_t}-t$. Then, $u_1 = d = v_1$ and $\alpha(n+1-2^d +t-1) \leqslant t-1$. Hence,  $t-1 \geqslant \beta(n+1-2^d) =  \mu(n+1-2^d) = s-1$. This implies $ t\geqslant s$ and $\mu(n) = t = s$. 

By induction on $i$, we get $u_i = v_i$ for $1\leqslant i \leqslant s$. The lemma is proved.
\end{proof}

From this lemma we easily obtain the following.
\begin{corl}[See Kameko \cite{ka}]\label{mmd1} Let $n,k $ be  positive integers. Then

\medskip
{\rm i)} $\mu(n) > k $ if and only if $\alpha(n+k) > k$.

{\rm ii)} If $n > \mu(n)$, then $n-\mu(n)$ is even and $\mu\big(\frac{n-\mu(n)}2\big) \leqslant \mu(n)$.

 {\rm iii)} $\mu\big(2n + \mu(n)\big) = \mu(n).$
\end{corl}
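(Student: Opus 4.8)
The plan is to derive all three parts from the canonical representation furnished by Lemma \ref{bd1}, namely $n = \sum_{i=1}^{s}(2^{v_i}-1)$ with $s = \mu(n)$ and $v_1 > \cdots > v_{s-1} \geq v_s > 0$, supplemented for part i) by the identity $\mu(n) = \beta(n)$ established inside the proof of that lemma, where $\beta(n) = \min\{u \in \mathbb N : \alpha(n+u) \leq u\}$. The only external facts I would invoke are the subadditivity $\alpha(a+b) \leq \alpha(a) + \alpha(b)$ and the bound $\alpha(m) \leq m$ for every $m \geq 0$.

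For part i) I would prove the contrapositive form $\mu(n) \leq k \iff \alpha(n+k) \leq k$ using $\mu = \beta$. If $\mu(n) > k$, then $k$ fails the defining inequality for $\beta(n)$, so $\alpha(n+k) > k$ at once. Conversely, assume $\beta(n) = u_0 \leq k$, so $\alpha(n+u_0) \leq u_0$; writing $n+k = (n+u_0) + (k-u_0)$ and applying subadditivity gives
$$\alpha(n+k) \leq \alpha(n+u_0) + \alpha(k-u_0) \leq u_0 + (k-u_0) = k,$$
using $\alpha(k-u_0) \leq k-u_0$. Thus $\alpha(n+k) > k$ forces $\mu(n) = \beta(n) > k$, completing the equivalence.

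For parts ii) and iii) I would simply substitute the canonical expansion and shift exponents. Since every $v_i > 0$, each $2^{v_i}$ is even, so $n - \mu(n) = \sum_{i=1}^{s}2^{v_i} - 2s$ is even; halving yields $\frac{n-\mu(n)}{2} = \sum_{i=1}^{s}(2^{v_i-1}-1) = \sum_{i:\,v_i>1}(2^{v_i-1}-1)$, a sum of at most $s$ admissible terms, whence $\mu\big(\tfrac{n-\mu(n)}{2}\big) \leq s = \mu(n)$; the hypothesis $n > \mu(n)$ guarantees some $v_i > 1$, so the argument is a genuine positive integer. For iii), the same expansion gives $2n + \mu(n) = \sum_{i=1}^{s}(2^{v_i+1}-1)$, and the shifted exponents satisfy $v_1+1 > \cdots > v_{s-1}+1 \geq v_s+1 > 0$; this is precisely a representation of the form required by Lemma \ref{bd1} with exactly $s$ terms, so $\mu(2n+\mu(n)) = s = \mu(n)$.

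I expect the only delicate point to be the converse in part i): it genuinely requires the equality $\mu = \beta$ rather than mere existence of the canonical form, and one must be careful that the subadditivity estimate closes exactly because $\alpha(k-u_0) \leq k-u_0$. Parts ii) and iii) are then purely formal, turning on the observation that the strict/weak inequality pattern among the $v_i$ is preserved under the shifts $v_i \mapsto v_i - 1$ and $v_i \mapsto v_i + 1$, so I anticipate no real obstacle there.
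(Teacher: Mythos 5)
Your proposal is correct, and it is essentially the derivation the paper intends: the paper states this corollary with no separate proof, remarking only that it follows easily from Lemma \ref{bd1}, and your argument is precisely that derivation, using the canonical expansion $n=\sum_{i=1}^{s}(2^{v_i}-1)$ for parts ii) and iii) and the identity $\mu=\beta$ (established inside the proof of Lemma \ref{bd1}) together with subadditivity of $\alpha$ for part i). The exponent shifts $v_i\mapsto v_i\pm 1$ and the closing estimate $\alpha(k-u_0)\leqslant k-u_0$ are exactly the details the paper leaves to the reader.
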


Suppose that $d$ is a non-negative integer such that $\mu(2d + k) =s < k$. By Lemma \ref{bd1}, there exists a sequence of integers $v_1 > v_2 >\ldots > v_{s-1}\geqslant v_s>0$ such that
$2d+k =  \sum_{i=1}^s(2^{v_i}-1)$. Set $z = x_1^{2^{v_1}-1}x_2^{2^{v_2}-1}\ldots x_s^{2^{v_s}-1} \in (P_k)_{2d+k}$. Since $z$ is a spike and $s<k$, we have $[z] \ne 0$ and $\widetilde{Sq}^0_*([z]) = 0$. So, one gets the following.
\begin{corl}\label{mmd2} 
Let $d$ be an arbitrary non-negative integer. If $\mu(2d+k)<k$, then 
$$(\widetilde{Sq}^0_*)_{(k,d)}:= \widetilde{Sq}^0_* : (\mathbb F_2 \otimes_{\mathcal A} P_k)_{2d+k}\longrightarrow (\mathbb F_2 \otimes_{\mathcal A} P_k)_d$$
is not a monomorphism. 
\end{corl}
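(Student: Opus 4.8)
The plan is to produce an explicit nonzero class in the kernel of $(\widetilde{Sq}^0_*)_{(k,d)}$; exhibiting one such class immediately shows the map is not a monomorphism.

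First I would set $s = \mu(2d+k)$ and use the hypothesis $s < k$ together with Lemma \ref{bd1} to obtain the (unique) sequence $v_1 > v_2 > \ldots > v_{s-1} \geq v_s > 0$ with $2d+k = \sum_{i=1}^{s}(2^{v_i}-1)$. Using this sequence I define the monomial $z = x_1^{2^{v_1}-1}x_2^{2^{v_2}-1}\cdots x_s^{2^{v_s}-1}$, which by construction has degree $2d+k$ in $P_k$ and, by Definition \ref{spi}, is a spike (indeed the minimal spike of that degree).

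Next I would verify the two properties $[z] \neq 0$ and $(\widetilde{Sq}^0_*)_{(k,d)}([z]) = 0$. For the first, Lemma \ref{bdbs} guarantees that every spike is admissible; since the admissible monomials of degree $2d+k$ form a minimal set of $\mathcal A$-generators of $P_k$ in that degree, equivalently a basis of $(\mathbb F_2 \otimes_{\mathcal A} P_k)_{2d+k}$, the class $[z]$ is nonzero. For the second, recall that $\widetilde{Sq}^0_*$ is induced by the linear map $\phi$, so $(\widetilde{Sq}^0_*)_{(k,d)}([z]) = [\phi(z)]$. A monomial is sent by $\phi$ to a nonzero element only if it has the form $x_1x_2\cdots x_k y^2$, which forces each of the $k$ variables to occur with a positive (necessarily odd) exponent. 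But $z$ involves only $x_1,\ldots,x_s$, so $\nu_j(z) = 0$ for every $j$ with $s < j \leq k$, and since $s < k$ at least one such $j$ exists. Hence $z$ is not of the required shape, so $\phi(z) = 0$ and $(\widetilde{Sq}^0_*)_{(k,d)}([z]) = 0$.

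Combining these, $[z]$ is a nonzero element of $\ker (\widetilde{Sq}^0_*)_{(k,d)}$, so the homomorphism is not injective. This argument has no genuine obstacle: the only points needing care are that the decomposition from Lemma \ref{bd1} places $z$ in exactly degree $2d+k$ and that the strict inequality $s < k$ leaves at least one variable unused, both of which are immediate. The essential input is the standard dictionary between spikes, admissibility, and the vanishing of Kameko's operation on any monomial that fails to be divisible by $x_1x_2\cdots x_k$.
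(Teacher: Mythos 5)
Your proposal is correct and follows essentially the same argument as the paper: both use Lemma \ref{bd1} to write $2d+k=\sum_{i=1}^{s}(2^{v_i}-1)$ with $s=\mu(2d+k)<k$, form the spike $z = x_1^{2^{v_1}-1}\cdots x_s^{2^{v_s}-1}$, and observe that $[z]\neq 0$ while $\widetilde{Sq}^0_*([z])=0$ because $z$ is not divisible by $x_1x_2\cdots x_k$. You simply spell out the justifications (admissibility of spikes via Lemma \ref{bdbs}, and the definition of $\phi$) that the paper leaves implicit.
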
 

Now, we are ready to prove Theorem \ref{cthm}.
\begin{proof}[Proof of Theorem \ref{cthm}] Set $q = \alpha(d+k)$, $r = \zeta(d+k)$ and $m = k(2^t-1) + 2^td$. From the proof of Lemma \ref{bd1} and Corollary \ref{mmd1}, we see that if $q > k$, then 
$$\mu(k(2^s-1) + 2^sd) \geqslant \mu(d)>k$$ 
for any $s \geqslant 0 = t(k,d)$. By Theorem \ref{dlmd1}, 
$$(\mathbb F_2 \otimes_{\mathcal A}P_k)_{k(2^s-1) + 2^sd} =0,\ (\mathbb F_2 \otimes_{\mathcal A}P_k)_{d} =0.$$ 
So, the theorem holds. 

Assume that $q \leqslant k$. Using Theorem \ref{dlk}, Corollaries \ref{mmd1} and  \ref{mmd2}, we see that the homomorphism
$$(\widetilde{Sq}^0_*)^{s-t}: (\mathbb F_2 \otimes_{\mathcal A} P_k)_{k(2^s-1) + 2^sd} \longrightarrow (\mathbb F_2 \otimes_{\mathcal A} P_k)_{k(2^t-1) + 2^td}$$
is an isomorphism of $GL_k$-modules for every $s \geqslant t$ if and only if $\mu(2m+k) = k$.  

Since $\alpha(d+k) = q$ and $\zeta(d+k) = r$, there exists a sequence of integers $c_1 > c_2 > \ldots > c_{q-1} > c_q = r\geqslant 0$ such that
$$ d+k = 2^{c_1} + 2^{c_2} + \ldots + 2^{c_q}.$$

If $q=k$, then 
\begin{align*} 2m +k &= k(2^{t+1}-1) + 2^{t+1}d = k(2^{t+1}-1)  + 2^{t+1}(2^{c_1} + 2^{c_2} + \ldots + 2^{c_k}-k)\notag\\
&= 2^{c_1+t+1} + 2^{c_2+t+1} +\ldots + 2^{c_{k}+t+1}  - k.
\end{align*}
By Lemma \ref{bd1}, $\mu(2m+k) = k$ for any $t \geqslant 0 = t(k,d)$. Hence, the theorem holds.

Suppose that $q < k$. Then, we have
\begin{align}\label{ctmthm}2m +k &= 2^{c_1+t+1} + 2^{c_2+t+1} +\ldots + 2^{c_{q-1}+t+1} + 2^{r+t+1} - k\notag\\
&= 2^{c_1+t+1} + 2^{c_2+t+1} +\ldots + 2^{c_{q-1}+t+1}\notag\\
&\quad  + 2^{r+t}+ 2^{r+t-1} + \ldots + 2^{r+t- (k-q-1)} +  2^{r+t- (k-q-1)} - k.
\end{align}

If $q+ r \geqslant  k$, then $r+t- (k-q-1) = q+r - k + 1 +t > 0$ for any $t \geqslant 0 = t(k,d)$. By Lemma \ref{bd1}, $\mu(2m+k) = k$. So, the theorem is true. 

If $q+r < k$, then  from Lemma \ref{bd1} and the relation (\ref{ctmthm}), we see that $\mu(2m+k) = k$ if and only if $t \geqslant k-q-r = t(k,d)$. The theorem is completely proved.
\end{proof}

\section{$\mathcal A$-generators for $P_5$ in degree $5(2^s-1)$}\label{s4}
\setcounter{equation}{0}

In this section, we prove Theorem \ref{dlbs} by determining the admissible monomials of $P_5$ in degree $5(2^s-1)$.
From Theorem \ref{cthm}, we see that
$$(\widetilde{Sq}^0_*)^{s-3}: (\mathbb F_2 \otimes_{\mathcal A} P_5)_{5(2^s-1)} \longrightarrow (\mathbb F_2 \otimes_{\mathcal A} P_k)_{35}$$
is an isomorphism of $GL_5$-modules for every $s \geqslant 3$. So, we need only to determine $\mathcal A$-generators for $P_5$ in degree $5(2^s-1)$ for $s=1,2,3$.

\subsection{The Cases $s=1, 2$}\

\medskip
By using a result in \cite{su1}, we can easily obtain the following.
\begin{props}\label{dl10} There exist exactly $46$ admissible monomials of degree $5$ in $P_5$. Consequently $\dim (\mathbb F_2 \otimes_{\mathcal A} P_5)_5 = 46$.
\end{props}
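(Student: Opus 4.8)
The plan is to reduce the count to the pieces in which every variable occurs, and then assemble them using the splitting $\mathbb F_2\otimes_{\mathcal A}P_k = QP_k^0\oplus QP_k^+$ of Proposition \ref{2.7}. The key structural observation is that an admissible monomial $x$ of degree $5$ in $P_5$ is completely determined by its support $\{i:\nu_i(x)>0\}$ together with an admissible monomial in $P_j^+$, where $j$ is the size of the support. Indeed, the order-preserving identification of $\{1,\dots,j\}$ with the support (a composite of the inclusions $f_i$ of Proposition \ref{2.8}) is an $\mathcal A$-algebra isomorphism onto the sub-polynomial-algebra on those variables; since every $Sq^i$ maps that subalgebra into itself, it carries hit relations to hit relations and respects the order of Definition \ref{defn3}. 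Hence $x$ is admissible in $P_5$ if and only if its preimage is admissible in $P_j$. Iterating Propositions \ref{2.7} and \ref{2.8} in this way yields
\begin{equation*}
|B_5(5)| = \sum_{j=0}^{5}\binom{5}{j}\,|B_j^+(5)|,
\end{equation*}
so the problem is reduced to computing $|B_j^+(5)|$ for $0\le j\le 5$.

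I would then dispose of the terms one by one. Since $\mu(5)=3$, for $j=0,1,2$ we have $\mu(5)>j$, so $(\mathbb F_2\otimes_{\mathcal A}P_j)_5=0$ by Wood's theorem (Theorem \ref{dlmd1}) and $|B_j^+(5)|=0$. For $j=5$ the only monomial of degree $5$ with all exponents positive is $x_1x_2x_3x_4x_5$, the spike with every $t_j=1$, which is admissible by Lemma \ref{bdbs}; thus $|B_5^+(5)|=1$. For $j=3$ the monomials in $P_3^+$ fall into the three spikes of type $(3,1,1)$, all admissible by Lemma \ref{bdbs}, and the three of type $(2,2,1)$; the latter have weight vector $(1,2)$, strictly below the weight vector $(3,1)$ of the minimal spike $x_1^3x_2x_3$, and so are hit by Singer's criterion (Theorem \ref{dlsig}). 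Hence $|B_3^+(5)|=3$.

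The one genuinely computational term is $j=4$. Here the four monomials of type $(2,1,1,1)$ all share the weight vector $(3,1)$ of the minimal spike, so Singer's criterion gives no information. The decisive relation is
\begin{equation*}
Sq^1(x_1x_2x_3x_4)=x_1^2x_2x_3x_4+x_1x_2^2x_3x_4+x_1x_2x_3^2x_4+x_1x_2x_3x_4^2,
\end{equation*}
which shows their sum is hit; as $x_1^2x_2x_3x_4$ is the largest of the four in the order of Definition \ref{defn3} (all four have weight $(3,1)$, and its exponent vector $(2,1,1,1)$ is lexicographically greatest), it is inadmissible, while the remaining three are admissible once one checks no further relation exists. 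Substituting the five values gives
\begin{equation*}
|B_5(5)| = \binom{5}{3}\cdot 3 + \binom{5}{4}\cdot 3 + \binom{5}{5}\cdot 1 = 30+15+1 = 46,
\end{equation*}
and since the admissible monomials of degree $5$ form a basis of $(\mathbb F_2\otimes_{\mathcal A}P_5)_5$, we conclude $\dim(\mathbb F_2\otimes_{\mathcal A}P_5)_5=46$.

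The main point requiring care is completeness, that is, ruling out any hit relation beyond the ones used, which could either collapse a claimed admissible monomial or produce an extra one. For $P_4^+$ in degree $5$ this is controlled by the observation that $Sq^i$ never introduces a variable absent from its argument, so any element of $\mathcal A^+P_4$ landing in $P_4^+$ of degree $5$ must come from a source involving all four variables of degree $5-i$; this forces $i=1$ and the source $x_1x_2x_3x_4$, making the displayed $Sq^1$-relation the unique relation. The analogous bookkeeping underlies the structural formula above and is exactly what is supplied by the results of \cite{su1}.
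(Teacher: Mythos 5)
Your proposal is correct, and it is worth noting that the paper itself offers no computation here: its entire proof of Proposition \ref{dl10} is the appeal to \cite{su1} together with the explicit list of the $46$ monomials in Subsection \ref{ss61}. What you have written reconstructs, self-containedly, what that citation stands in for, and it follows the same structural skeleton the paper relies on: the splitting of Proposition \ref{2.7} and the support decomposition behind Proposition \ref{2.8}, giving $|B_5(5)|=\sum_{j}\binom{5}{j}|B_j^+(5)|$. Your evaluation of the pieces is sound: Theorem \ref{dlmd1} kills $j\leqslant 2$ since $\mu(5)=3$; for $j=3$ the three spikes are admissible by Lemma \ref{bdbs} while the $(2,2,1)$-monomials have weight vector $(1,2)<(3,1)$ and are hit by Theorem \ref{dlsig}; for $j=5$ the only candidate $x_1x_2x_3x_4x_5$ is a spike; and for $j=4$ your uniqueness claim for the relation $Sq^1(x_1x_2x_3x_4)$ is valid precisely because $P_4=P_4^0\oplus P_4^+$ is a splitting of $\mathcal A$-modules, so the $P_4^+$-component of any element of $\mathcal A^+P_4$ in degree $5$ can only be $Sq^1$ applied to the unique degree-$4$ monomial of $P_4^+$; since that relation identifies the sum of all four monomials as hit, only the largest one, $x_1^2x_2x_3x_4$, is inadmissible. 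The one step you should tighten is the \emph{if and only if} in your structural claim: the inclusion of the subalgebra on a given support carries hit relations into $P_5$, which only shows that inadmissibility ascends from $P_j$ to $P_5$; for the converse (admissibility in $P_j$ implies admissibility in $P_5$) you need the $\mathcal A$-algebra projection of $P_5$ onto that subalgebra obtained by setting the outside variables to zero, which sends a witnessing relation in $P_5$ --- whose smaller monomials and Steenrod sources may involve the outside variables --- to one in $P_j$, the order of Definition \ref{defn3} being preserved because deleting common zero entries of exponent vectors does not change lexicographic comparison. With that supplement your argument is complete, and it buys something the paper does not provide: a verification of the count that can be read without consulting \cite{su1}.
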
 

For $s=2$, we have $5(2^s - 1) = 15$. The space $(\mathbb F_2 \otimes_{\mathcal A} P_5)_{15}$ has been computed in \cite{su5}.

\begin{props}[See \cite{su5}]\label{dl11} There exist exactly $432$ admissible monomials of degree $15$ in $P_5$. Consequently $\dim (\mathbb F_2 \otimes_{\mathcal A} P_5)_{15} = 432.$ 
\end{props}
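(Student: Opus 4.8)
The degree $n=15=2^4-1$ has $\mu(15)=1\le 5$, so the minimal spike of degree $15$ in $P_5$ is $z=x_1^{15}$, with weight vector $\omega(z)=(1,1,1,1)$. By Theorem~\ref{dlsig} every admissible monomial $x\in P_5$ of degree $15$ satisfies $\omega(x)\geq (1,1,1,1)$, so it suffices to work one weight vector at a time. A direct enumeration shows that there are exactly fourteen weight vectors $\omega$ of degree $15$ with $\omega_1\le 5$ and $\omega\geq(1,1,1,1)$, and via the identification $(\mathbb F_2\otimes_{\mathcal A}P_5)_{15}=\bigoplus_{\deg\omega=15}QP_5(\omega)$ the total $432$ is assembled as $\sum_\omega\dim QP_5(\omega)$. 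For each such $\omega$ I would split $QP_5(\omega)$ by Proposition~\ref{2.7} into its $QP_5^0(\omega)$ and $QP_5^+(\omega)$ summands and count admissible monomials in each.

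For the $QP_5^0$ part the plan is to invoke Proposition~\ref{2.8}: a minimal set of $\mathcal A$-generators for $P_5^0$ in degree $15$ is obtained from a minimal set for $P_4$ in degree $15$ by applying the five maps $f_i$. The admissible monomials of degree $15$ in $P_4$ are already tabulated in the appendix, so $|B_5^0(15)|$ is read off by forming $\bigcup_{i=1}^5 f_i(B_4(15))$ and counting, taking care of the overlaps coming from monomials missing more than one variable.

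The $QP_5^+$ part carries the bulk of the work. Here I would first exploit Kameko's operation: the weight vectors with $\omega_1=5$ consist of monomials divisible by $x_1x_2\cdots x_5$ with square quotient, and the section $[y]\mapsto[x_1x_2\cdots x_5\,y^2]$ embeds $(\mathbb F_2\otimes_{\mathcal A}P_5)_{5}$ isomorphically onto this subspace; hence its total dimension is $46$ by Proposition~\ref{dl10}. For the remaining weight vectors, those with $\omega_1\in\{1,3\}$, there is no such shortcut and one must proceed by hand: list the candidate monomials of each weight vector, delete the inadmissible ones using Theorem~\ref{dlcb1} together with explicit strictly-inadmissible relations among the $Sq^j$, and retain the survivors as the proposed admissible set.

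The main obstacle is the final and genuinely delicate step: proving that the surviving monomials are \emph{linearly independent} in $QP_5$, i.e.\ that the spanning set is minimal and that no hidden hit relations remain. The tool I would use is the family of projections $p_{(i;j)}\colon P_5\to P_4$, which send $P_5(\omega)$ into $P_4(\omega)$ by Lemma~\ref{bdm1}: a putative relation $\sum c_x x\equiv 0$ in $P_5$ must push forward under every $p_{(i;j)}$ to a relation in $P_4$, where the hit structure in degree $15$ is already known, and combining this with the $\Sigma_5$-action and Singer's criterion forces all coefficients $c_x$ to vanish. Summing the dimensions obtained over all fourteen weight vectors then yields the stated total $432$; this is precisely the computation carried out in \cite{su5}.
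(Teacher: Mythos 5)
Your proposal is correct and is essentially the paper's own approach: the paper quotes the result from \cite{su5} and records the outcome in its appendix, where $B_5(15)$ is assembled as $f(B_4(15))\cup B_5(1,1,3)\cup B_5^+(3,2,2)\cup B_5(3,4,1)\cup \psi(B_5(5))$, i.e.\ $270+1+75+40+46=432$. The tools are exactly those you invoke: Proposition \ref{2.8} for the $P_5^0$ part, weight-vector analysis via Theorems \ref{dlsig} and \ref{dlcb1}, the Kameko section accounting for the $46$ classes with $\omega_1=5$, and the projections $p_{(i;j)}$ for the linear independence of the surviving monomials.
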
 
The admissible monomials of degrees $5$ and $15$ are explicitly determined as in Subsections \ref{ss61} and \ref{ss62}.

\subsection{The admissible monomials of degree $16$ in $P_5$}\label{subs51}\

\medskip
To determine the admissible monomials of $P_5$ in degree $5(2^s-1)$ for $s=3$, we need to determine the admissible monomials of degree 16 in $P_5$. 

\begin{lems}\label{bdday61}
If $x$  is an admissible monomial of degree $16$ in $P_5$, then $\omega(x)$ is one of the following sequences:
$$(2,1,1,1),\ (2,1,3),\ (2,3,2),\ (4,2,2), \ (4,4,1). $$
\end{lems}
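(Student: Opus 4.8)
The plan is to squeeze the possible weight vectors down to a short finite list using Singer's hit criterion together with a parity--and--size analysis, and then to discard the spurious candidates by showing that every monomial carrying them is inadmissible. First I would locate the minimal spike of degree $16$ in $P_5$. Since $16 = (2^4-1)+(2^1-1)$, Lemma \ref{bd1} gives $\mu(16)=2$, and the minimal spike is $z = x_1^{15}x_2$, whose weight vector is $\omega(z) = (2,1,1,1)$. By Theorem \ref{dlsig}, any monomial $x$ with $\omega(x) < (2,1,1,1)$ is hit, hence is $0$ in $QP_5$ and cannot be admissible; so every admissible $x$ of degree $16$ satisfies $\omega(x) \geq (2,1,1,1)$ in the left lexicographic order.

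Next I would record the numerical constraints that a weight vector $\omega$ of degree $16$ in $P_5$ must satisfy. Setting $n_1 = 16$ and $n_{i+1} = (n_i - \omega_i)/2$, one needs $0 \leq \omega_i \leq 5$, $\omega_i \equiv n_i \pmod 2$, and $n_i \geq 0$ with the sequence terminating at some $n_i = 0$; this recursion enumerates all admissible weight vectors of degree $16$ with entries at most $5$. Imposing in addition the lower bound $\omega \geq (2,1,1,1)$ forces $\omega_1 \in \{2,4\}$ and leaves exactly ten candidates: $(2,1,1,1)$, $(2,1,3)$, $(2,3,2)$, $(2,3,0,1)$, $(2,5,1)$, $(4,2,2)$, $(4,4,1)$, $(4,0,3)$, $(4,0,1,1)$, $(4,2,0,1)$. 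Five of these are the asserted ones; it remains to rule out the other five.

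For each of $(2,5,1)$, $(2,3,0,1)$, $(4,0,3)$, $(4,0,1,1)$, $(4,2,0,1)$ I would prove that every monomial $x$ with $\omega(x)=\omega$ is inadmissible, so that no such $\omega$ occurs. The engine is Kameko's Theorem \ref{dlcb1}: for each bad $\omega$ I would exhibit a small set of strictly inadmissible monomials, each written explicitly as $\sum_j y_j + \sum_u Sq^u(q_u)$ with every $y_j$ smaller in the order of Definition \ref{defn3}, and then apply part (ii) to conclude that any monomial of the form (block)$\cdot(\text{square})^{2^s}$ sharing the weight vector $\omega$ is again strictly inadmissible. Where it shortens the work I would also invoke the projections $p_{(i;j)}$, which preserve weight vectors by Lemma \ref{bdm1}, and the splitting $\mathbb F_2\otimes_{\mathcal A}P_5 = QP_5^0 \oplus QP_5^+$ of Proposition \ref{2.7} together with Proposition \ref{2.8}, to reduce the $QP_5^0$-part of the count to monomials coming from $P_4$, where the corresponding weight vectors are easier to discard. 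As a guide to choosing the strictly inadmissible blocks I would use the observation that four of the five bad vectors, namely $(2,3,0,1)$, $(4,0,3)$, $(4,0,1,1)$, $(4,2,0,1)$, are exactly those whose sequence has an interior zero (a vanishing coordinate followed later by a nonzero one), while the fifth, $(2,5,1)$, is distinguished by the saturated coordinate $\omega_2 = 5 = k$, which forces every variable to carry the bit in position $1$.

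The main obstacle is this elimination step: unlike the numerical reduction it is not formal, and for each discarded weight vector one must actually produce the Steenrod-square relations witnessing strict inadmissibility of the generating monomials before Theorem \ref{dlcb1} can propagate them over the whole weight class. Organizing these monomials into $\Sigma_5$-orbits, so that one relation disposes of many monomials at once, and checking that each auxiliary term $y_j$ is genuinely smaller, is where the bulk of the bookkeeping lies; I expect the saturated case $(2,5,1)$, in which the factor $x_1x_2\cdots x_5$ appears squared inside every monomial, to demand the most care.
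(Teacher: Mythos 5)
Your opening steps are correct and coincide with the paper's: the minimal spike of degree $16$ is $z=x_1^{15}x_2$ with $\omega(z)=(2,1,1,1)$, Singer's criterion (Theorem \ref{dlsig}) forces $\omega(x)\geqslant (2,1,1,1)$ for an admissible $x$, and your enumeration of the ten numerically possible weight vectors with $\omega_1\in\{2,4\}$ is complete. After that the routes diverge. The paper factors $x=x_ix_jy^2$ (if $\omega_1(x)=2$) or $x=X_jy_1^2$ (if $\omega_1(x)=4$), applies Theorem \ref{dlcb1}(i) to see that the inner factor is admissible, and reads off its possible weight vectors from the classification of admissible monomials in degrees $7$ and $6$; you propose instead to eliminate the five spurious vectors directly in degree $16$. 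For four of them your plan does succeed, and in fact collapses back to the paper's mechanism: for $(4,0,3)$, $(4,0,1,1)$, $(4,2,0,1)$ the degree-$6$ factor $y_1$ has weight $(0,3)$, $(0,1,1)$, $(2,0,1)$ respectively, all $<(2,2)=\omega(x_1^3x_2^3)$, so $y_1$ is hit by Theorem \ref{dlsig} and Theorem \ref{dlcb1}(i) makes $x$ inadmissible; for $(2,3,0,1)$ the degree-$7$ factor has weight $(3,0,1)$ and is strictly inadmissible by the relation $x_ax_bx_cx_d^4=Sq^2(x_ax_bx_cx_d^2)+x_a^2x_b^2x_cx_d^2+x_a^2x_bx_c^2x_d^2+x_ax_b^2x_c^2x_d^2$, whose correction terms have weight $(1,3)<(3,0,1)$.

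The genuine gap is the vector $(2,5,1)$, and the tools you name cannot dispose of it as described. Every monomial of weight $(2,5,1)$ is $x=x_ix_jy^2$ with $y=x_1x_2x_3x_4x_5x_k^2$, and these $y$ are exactly the five elements of $B_5^+(5,1)$, which are \emph{admissible} (they are listed in Subsection \ref{ss62}); so Theorem \ref{dlcb1}(i) never applies, since no factorization of $x$ has an inadmissible square factor. The only way to invoke Theorem \ref{dlcb1}(ii) is through $x=w\cdot x_k^4$ with $w=x_ix_j(x_1x_2x_3x_4x_5)^2$ of weight $(2,5)$, and this requires proving $w$ \emph{strictly} inadmissible, i.e., exhibiting a relation using only $Sq^1,Sq^2,Sq^3$ whose correction terms are smaller than $w$. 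Singer's criterion does show $w$ is hit (its weight $(2,5)$ is less than $(4,2,1)$, the weight of the minimal spike $x_1^7x_2^3x_3x_4$ of degree $12$), but hitness permits arbitrarily high squares and is strictly weaker than what Theorem \ref{dlcb1}(ii) needs; indeed the obvious Cartan expansion of $w=x_1^3x_2^3(x_3x_4x_5)^2$ via $Sq^3(x_1^3x_2^3x_3x_4x_5)$ produces correction terms of the \emph{larger} weight $(4,2,1)$. So an explicit Steenrod-square computation is unavoidable here, and your proposal defers exactly this computation; you correctly flag $(2,5,1)$ as the hardest case, but without those relations the lemma is not proved. (For what it is worth, the paper's own proof compresses the same point into ``a routine computation shows $\omega(y)\in\{(1,1,1),(1,3),(3,2)\}$,'' which, read literally as a statement about admissible $y$ of degree $7$, conflicts with its own appendix; the real content, for both you and the paper, is showing that $x_ix_jy^2$ is inadmissible whenever $\omega(y)=(5,1)$.)
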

\begin{proof}  Observe that $z = x_1^{15}x_2$ is the minimal spike of degree $16$ in $P_5$ and  $\omega(z) =  (2,1,1,1)$. Since $[x] \ne 0$, by Theorem \ref{dlsig}, either $\omega_1(x) =2$ or $\omega_1(x) =2$. If $\omega_1(x) =2$, then $x = x_ix_jy^2$ with $y$ a monomial of degree $7$ in $P_5$. Since $x$ is admissible, by Theorem \ref{dlcb1}, $y$ is admissible. A routine computation shows that either $\omega(y) =(1,1,1)$ or $\omega(y) =(1,3)$ or $\omega(y) =(3,2)$. If $\omega_1(x) =4$, then $x = X_jy_1^2$ with $y_1$ an admissible monomial of degree $6$ in $P_5$. It is easy to see that either $\omega(y_1) =(2,2)$ or $\omega(y_1) =(4,1)$. The lemma is proved.
\end{proof}

We have $(\mathbb F_2 \otimes_{\mathcal A} P_5)_{16}  = (QP_5^0)_{16}\bigoplus (QP_5^+)_{16}$. By Lemma \ref{bdday61}, 
\begin{align*} (QP_5^+)_{16}  &\cong QP_5^+(2,1,1,1) \bigoplus QP_5^+(2,1,3)\\ 
&\quad\bigoplus QP_5^+(2,3,2)\bigoplus QP_5^+(4,2,2)\bigoplus QP_5^+(4,4,1).
\end{align*}

From a result in \cite{su1}, we easily obtain $\dim(QP_5^0)_{16} = 255$.
\begin{props}\label{md611} $(QP_5^+)_{16}$ is the $\mathbb F_2$-vector space of dimension $188$ with a basis consisting of all the classes represented by the monomials $a_t = a_{16,t},\ 1 \leqslant t \leqslant 188$, which are determined as in Subsection \ref{ss64}.
\end{props}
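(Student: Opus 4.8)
The plan is to compute $(QP_5^+)_{16}$ summand by summand, using the direct-sum decomposition into the five pieces $QP_5^+(\omega)$, $\omega\in\{(2,1,1,1),(2,1,3),(2,3,2),(4,2,2),(4,4,1)\}$, furnished by Lemma \ref{bdday61}, and then adding the five dimensions to reach $188$. Throughout I would use that each $QP_5(\omega)$ is a $\Sigma_5$-module, so that monomials may be organized into $\Sigma_5$-orbits and handled one representative at a time; for an orbit representative $z$ this is encoded by the polynomial $p(z)=\sum_{y\in B_5(16)\cap\Sigma_5(z)}y$ introduced in Section \ref{s2}.

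First I would pin down the admissible monomials in each summand $QP_5^+(\omega)$. For the spanning direction I would list every monomial $x\in P_5^+$ of degree $16$ with $\omega(x)=\omega$ and discard the inadmissible ones. Singer's criterion (Theorem \ref{dlsig}) already guarantees, through Lemma \ref{bdday61}, that no admissible monomial has weight below that of the minimal spike $z=x_1^{15}x_2$, so only these five weight vectors occur. The remaining eliminations rest on Theorem \ref{dlcb1}, by which a monomial $xw^{2^r}$ with $w$ (strictly) inadmissible is again (strictly) inadmissible, supplemented by hand-made strictly-inadmissible reductions $x=\sum_j y_j+\sum_u Sq^u(q_u)$ with $y_j<x$ for the monomials that the factorization criterion does not reach. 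The survivors constitute the claimed family $\{a_t\}$.

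Next I would establish that the classes $[a_t]_\omega$ are linearly independent in $QP_5(\omega)$. Assume a relation $\sum_t\gamma_t\,a_t\equiv_\omega 0$ with $\gamma_t\in\mathbb F_2$ and apply the $\mathcal A$-homomorphisms $p_{(i;j)}:QP_5(\omega)\to QP_4(\omega)$ of Lemma \ref{bdm1} for a suitable collection of pairs $(i;j)$. Because the admissible monomials of degree $16$ in $P_4$ are explicitly known (they are tabulated in the appendix), each image $p_{(i;j)}\big(\sum_t\gamma_t\,a_t\big)$ is a combination of a fixed $\mathbb F_2$-basis of $QP_4(\omega)$, and equating its coefficients to zero yields a homogeneous linear system in the $\gamma_t$. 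Choosing enough pairs forces $\gamma_t=0$ for all $t$, so $[B_5^+(\omega)]_\omega$ is a basis and the dimensions add to $188$.

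The main obstacle is the completeness of the admissible list in the spanning step. Inadmissibility coming from Theorem \ref{dlcb1} or Theorem \ref{dlsig} is mechanical, but for the monomials those criteria miss one must exhibit explicit Steenrod-square witnesses of strict inadmissibility by hand and verify that no admissible monomial has been overlooked; this is where essentially all of the case analysis lives, most heavily in the largest summand $QP_5^+(2,1,1,1)$ attached to the minimal spike. By contrast, the linear-independence step, though it involves a sizeable linear system, is routine once the $p_{(i;j)}$-images of the $a_t$ have been recorded against the known basis of $QP_4$ in degree $16$.
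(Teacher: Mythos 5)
Your proposal is correct and follows essentially the same route as the paper: decompose $(QP_5^+)_{16}$ by the five weight vectors of Lemma \ref{bdday61}, obtain each spanning set by factoring candidates and eliminating the ones that admit a strictly inadmissible factor via Theorem \ref{dlcb1}, then prove linear independence by pushing a putative relation through the homomorphisms $p_{(i;j)}$ into $P_4$ and comparing coefficients against the known admissible basis of $QP_4$ in degree $16$. One factual slip in your commentary (not a gap in the argument): the largest and most laborious summand is $QP_5^+(4,2,2)$, of dimension $110$, not $QP_5^+(2,1,1,1)$, which is the one attached to the minimal spike but has dimension only $4$.
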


To prove this proposition, we need the following lemma which is easily proved by a direct computation.

\begin{lems}\label{inad16} The following monomials are strictly inadmissible:

\medskip
{\rm i)} $x_j^2x_\ell x_t$,   $j < \ell < t$; $x_j^2x_\ell x_t^2x_u^3$, $j < \ell$; $x_j^2x_\ell x_tx_u^2x_v^2$, $j < \ell$;
$x_j^3x_\ell^{12} x_t$, $x_j^3x_\ell^4 x_t^9$, $x_j^3x_\ell^5 x_t^8$, $x_j^3x_\ell^4 x_tx_u^8$,  $j < \ell < t$;  $x_j^2x_\ell^3 x_t^3$.

{\rm ii)} $x_j^3x_\ell^4 x_t^4x_u^5$,\ $x_j^3x_\ell^4 x_tx_u^4x_v^4,\ j < \ell < t.$

{\rm iii)} $x_jx_\ell^6 x_t^3x_u^6$, $x_jx_\ell^6 x_tx_u^2x_v^6$, $j < \ell < t$; $x_jx_\ell^2 x_t^2x_u^5x_v^6, u \geqslant 4$; $x_jx_\ell^2 x_t^3x_u^4x_v^6, t \geqslant 3$; $x_jx_\ell^2 x_t^6x_u^7$, $x_jx_\ell^2 x_t^2x_u^4x_v^7$; $x_1^3x_2^4 x_3x_4^2x_5^6$, $x_1^3x_2^4 x_3x_4^6x_5^2$.

{\rm iv)} $x_j^2x_\ell x_t^3x_u^3x_v^3,\  j < \ell;\ x_j^2x_\ell x_tx_ux_v^3,\ j < \ell< t < u;\ x_1^3x_2^4x_3^3x_4^3x_5^3.$

Here $(j,\ell, t,u,v)$ is a permutation of $(1,2,3,4,5)$.
\end{lems}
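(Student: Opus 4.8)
The plan is to verify strict inadmissibility for each monomial directly from the definition: writing $s=\max\{i:\omega_i(x)>0\}$ for the monomial $x$ under consideration, I must exhibit monomials $y_j<x$ in the order of Definition~\ref{defn3} together with polynomials $q_u$ for which $x=\sum_j y_j+\sum_{u=1}^{2^s-1}Sq^u(q_u)$. The first step is therefore purely organizational: for every listed monomial I read off its weight vector, and hence the ceiling $2^s-1$ on the Steenrod degrees I am allowed to use. For instance $x_j^2x_\ell x_t$ has $\omega=(2,1)$, so $s=2$ and I may use $Sq^1,Sq^2,Sq^3$; the remaining degree-$8$ and degree-$12$ entries also have top weight index $2$, the entries of ii) and iii) have index $3$, and the degree-$16$ entries of i) have index $4$.

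The workhorse is a single direct Cartan-formula computation, the prototype being
\[
Sq^1(x_jx_\ell x_t)=x_j^2x_\ell x_t+x_jx_\ell^2x_t+x_jx_\ell x_t^2,
\]
valid for $j<\ell<t$: the last two monomials share the weight vector $(2,1)$ with the first but have strictly smaller exponent vectors, so this already exhibits $x_j^2x_\ell x_t$ as strictly inadmissible. Every primitive entry is treated the same way. One chooses $q_u$ by lowering a suitable ``carrying'' exponent of $x$ by $u$, expands $Sq^u(q_u)$ by the Cartan formula together with the mod-$2$ rule $Sq^a(y^b)=\binom{b}{a}y^{a+b}$, solves for $x$, and checks that every other term is strictly smaller. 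The weight-$(2,3)$ monomials $x_j^2x_\ell^3x_t^3$, $x_j^2x_\ell x_t^2x_u^3$, $x_j^2x_\ell x_tx_u^2x_v^2$ of i) and the weight-$(4,2)$ and $(4,4)$ monomials of iv) are all handled by such direct computations; for example $x_j^2x_\ell^3x_t^3=Sq^1(x_jx_\ell^3x_t^3)+x_jx_\ell^4x_t^3+x_jx_\ell^3x_t^4$, where the two correction terms have the strictly smaller weight vector $(2,1,1)$.

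To avoid repeating these computations in high degree I would invoke Theorem~\ref{dlcb1}(ii): if $w$ is strictly inadmissible with $\omega_s(w)\neq0$ and $\omega_i(w)=0$ for $i>s$, then $wz^{2^s}$ is strictly inadmissible for every monomial $z$. Thus whenever a degree-$16$ entry factors as $w\cdot z^{2^s}$ with $w$ one of the primitive blocks already established and $z^{2^s}$ absorbing the high-weight part, strict inadmissibility is inherited at once. A substantial portion of the weight-$(2,1,1,1)$, $(2,1,3)$ and $(2,3,2)$ monomials of i)--iii) should fall to this reduction as soon as the corresponding lower-degree block is in hand.

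The main obstacle is the small collection of monomials with completely fixed indices that admit no clean $2^s$-th-power factorization, namely $x_1^3x_2^4x_3x_4^2x_5^6$ and $x_1^3x_2^4x_3x_4^6x_5^2$ in iii) and $x_1^3x_2^4x_3^3x_4^3x_5^3$ in iv), together with those members of the $(2,3,2)$ families whose squared part is unavailable. For these there is no shortcut: one must guess the polynomials $q_u$ by hand, expand each $Sq^u(q_u)$ completely, and isolate the combination whose total equals the target modulo $P_k^-(\omega)$ and strictly smaller monomials, all while respecting $u\leqslant 2^s-1$. Producing the correct $q_u$ and then verifying, term by term, both the ordering inequalities $y_j<x$ and the degree ceiling is the genuinely laborious part of the argument; everything else is the mechanical propagation of the few base identities above through Theorem~\ref{dlcb1}(ii).
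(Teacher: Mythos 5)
Your setup (the definition of strict inadmissibility, the $2^s-1$ ceiling, and both prototype identities) is correct, and since the paper offers no proof of Lemma~\ref{inad16} beyond the words ``easily proved by a direct computation'', your sketch must stand on its own; it does not, for two concrete reasons. First, the claim that $x_j^2x_\ell x_tx_u^2x_v^2$, $j<\ell$, is ``handled the same way'' by $Sq^1(x_jx_\ell x_tx_u^2x_v^2)$ fails whenever $t<j$: the correction term $x_jx_\ell x_t^2x_u^2x_v^2$ then first differs from the target at position $t$, where it has exponent $2$ against $1$, so it is \emph{larger}, not smaller. This is not repaired by a cleverer choice of $q_u$, because for $t<j$ these monomials are in fact admissible: taking $(j,\ell,t)=(2,3,1)$ gives $x_1x_2^2x_3x_4^2x_5^2$, and the paper's own appendix (Subsection~\ref{ss64}) lists $a_{13}=x_1x_2^2x_3x_4^6x_5^6=(x_1x_2^2x_3x_4^2x_5^2)(x_4x_5)^4$ and $a_{14}=(x_1x_2^2x_3x_4^2x_5^2)(x_3x_5)^4$ as elements of $B_5^+(2,3,2)$, which by Theorem~\ref{dlcb1}(ii) would be impossible if $x_1x_2^2x_3x_4^2x_5^2$ were strictly inadmissible. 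So that family can only be verified under the reading $j<\ell$, $j<t$, and applying your ``same computation'' across the family as literally stated would purport to prove a false statement.

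Second, and this is the main gap, the propagation by Theorem~\ref{dlcb1}(ii) on which you rely for ``a substantial portion'' of the degree-$16$ entries does essentially no work, because those monomials admit no factorization $w\,z^{2^s}$ with $w$ strictly inadmissible. For $x_j^3x_\ell^{12}x_t$ the only candidates are $w=x_jx_t$ ($s=1$), $w=x_j^3x_t$ ($s=2$) and $w=x_j^3x_\ell^4x_t$ ($s=3$): the first two are a product of distinct variables and a spike, hence admissible (Lemma~\ref{bdbs}), and the third is also admissible when $j<\ell<t$ --- the only monomials $q$ with $x_j^3x_\ell^4x_t$ occurring in some $Sq^u(q)$, $u\le 7$, are $x_j^3x_\ell^3x_t$ (via $Sq^1$) and $x_j^3x_\ell^2x_t$ (via $Sq^2$); the first forces the strictly larger term $x_j^3x_\ell^3x_t^2$, which no other operation produces, and the second forces $x_j^5x_\ell^2x_t$, whose only other sources $Sq^1(x_j^5x_\ell x_t)$ and $Sq^3(x_j^3x_\ell x_t)$ both carry $x_j^6x_\ell x_t$, which nothing can cancel. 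The same happens for $x_j^3x_\ell^4x_t^9$, $x_j^3x_\ell^5x_t^8$, $x_j^3x_\ell^4x_tx_u^8$, and for all of ii), e.g.\ $x_j^3x_\ell^4x_t^4x_u^5=(x_j^3x_u)(x_\ell x_tx_u)^4$ with spike bottom. Consequently all of these entries --- not merely the three fixed-index monomials and the stray $(2,3,2)$ cases you single out --- require explicit multi-operation expressions (using $Sq^4$, and $Sq^8$ where $s=4$) of the kind the paper displays for Lemmas~\ref{inab61} and \ref{inad62}; your proposal supplies none of them, so the bulk of the lemma is left unproved.
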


\begin{proof}[Proof of Proposition \ref{md611}] From Lemma \ref{bdday61}, we have
$$B_5^+(16) = B_5^+(2,1,1,1)\cup B_5^+(2,1,3)\cup B_5^+(2,3,2)\cup B_5^+(4,2,2)\cup B_5^+(4,4,1).$$
We prove $|B_5^+(2,1,1,1)| = 4$, $|B_5^+(2,1,3)| = 5$, $|B_5^+(2,3,2)| = 20$, $|B_5^+(4,2,2)| = 110$ and $|B_5^+(4,4,1)| = 49$. For simplicity, we prove $|B_5^+(2,3,2)|  = 20$ by showing that $B_5^+(2,3,2) = \{a_t = a_{16,t}: 10 \leqslant t \leqslant 29\}$. The others are proved by the similar computations.

Let $x$ be an admissible monomial in $P_5^+$ such that $\omega(x) = (2,3,2):= \omega$. Then $x = x_jx_\ell y^2$ with $1 \leqslant j < \ell \leqslant 5$ and $y$ a monomial of degree 7 in $P_5$. Since $x$ is admissible, by Theorem \ref{dlcb1}, $y \in B_5(3,2)$. 

Let $z \in B_5(3,2)$ such that $x_jx_\ell z^2 \in P_5^+$. By a direct computation, we see that if $x_jx_\ell z^2\ne a_t$, for all $t,\ 10 \leqslant t \leqslant 29$, then there is a monomial $w$ which is given in Lemma \ref{inad16} such that $x_jx_\ell z^2= wz_1^{2^{u}}$ with suitable monomial $z_1 \in P_5$, and $u = \max\{j \in \mathbb Z : \omega_j(w) >0\}$. By Theorem \ref{dlcb1}, $x_jx_\ell z^2$ is inadmissible. Since $x = x_jx_\ell y^2$ with $y \in B_5(3,2)$ and $x$ admissible, one obtain $x= a_t$ for some $t,\ 10 \leqslant t \leqslant 29$. Hence, $QP_5^+(\omega)$ is spanned by the set $\{[a_t]: 10 \leqslant t \leqslant 29\}$. 

We now prove the set $\{[a_t]: 10 \leqslant t \leqslant 29\}$ is linearly independent in $QP_5(\omega)$. Suppose there is a linear relation
$\mathcal S = \sum_{t = 10}^{29}\gamma_ta_t \equiv_\omega 0$, where $\gamma_t \in \mathbb F_2$.

From a result in \cite{su1}, $\dim QP_4^+(2,3,2) = 4$, with the basis $\{[w_u]: 1 \leqslant u \leqslant 4\}$, where
$$w_1 = x_1x_2^3x_3^6x_4^6,\ w_2 = x_1^3x_2x_3^6x_4^6,\ w_3 = x_1^3x_2^5x_3^2x_4^6,\ w_4 = x_1^3x_2^5x_3^6x_4^2.$$

By a direct computation using Lemma \ref{bdm1}, we get
\begin{align*}
p_{(1;2)}(\mathcal S) &\equiv_\omega \gamma_{13}w_{2} +  \gamma_{14}w_{3} +   \gamma_{15}w_{4} \equiv_\omega 0,\\
p_{(1;3)}(\mathcal S) &\equiv_\omega  \gamma_{10}w_{1} +  \gamma_{18}w_{3} +  \gamma_{19}w_{4}  \equiv_\omega 0.
\end{align*}
The above relations imply $\gamma_t = 0$ for $t = 10, 13, 14, 15, 18, 19$. Then,
\begin{align*}
p_{(1;4)}(\mathcal S) &\equiv_\omega \gamma_{11}w_{1} + \gamma_{16}w_{2} + \gamma_{21}w_{4}  \equiv_\omega 0,\\ 
p_{(1;5)}(\mathcal S) &\equiv_\omega \gamma_{12}w_{1} + \gamma_{17}w_{2} + \gamma_{20}w_{3} \equiv_\omega 0,\\  
p_{(2;3)}(\mathcal S) &\equiv_\omega \gamma_{24}w_{3} + \gamma_{25}w_{4} \equiv_\omega 0,\\ 
p_{(2;4)}(\mathcal S) &\equiv_\omega \gamma_{11}w_{1} + \gamma_{22}w_{2} + \gamma_{27}w_{4} \equiv_\omega 0,\\  
p_{(2;5)}(\mathcal S) &\equiv_\omega \gamma_{12}w_{1} + \gamma_{23}w_{2} + \gamma_{26}w_{3} \equiv_\omega 0.
\end{align*}
From the last equalities, we obtain $\gamma_t = 0$ for $t \ne 28, 29$. Then,
\begin{align*}
p_{(3;4)}(\mathcal S) \equiv_\omega \gamma_{29}w_{4}  \equiv_\omega 0,\ p_{(3;5)}(\mathcal S) \equiv_\omega \gamma_{28}w_{3}  \equiv_\omega 0.
\end{align*}
So, $\gamma_t =0$ for all $t, \ 10 \leqslant t \leqslant 29$, completing the proof.
\end{proof} 
By combining the above results, one gets the following.
\begin{corls} There exist exactly $443$ admissible monomials of degree $16$ in $P_5$. Consequently $\dim (\mathbb F_2 \otimes_{\mathcal A} P_5)_{16} = 443.$ 
\end{corls}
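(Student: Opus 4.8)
The plan is to reduce everything to the two computations that precede this statement and then add. By Proposition \ref{2.7} there is a direct sum decomposition
$$(\mathbb F_2 \otimes_{\mathcal A} P_5)_{16} = (QP_5^0)_{16} \oplus (QP_5^+)_{16},$$
so the dimension of the left-hand side is the sum of the dimensions of the two summands, and a basis for it is the disjoint union of bases for the summands. In particular the set $B_5(16)$ of admissible monomials of degree $16$ splits as $B_5^0(16)\sqcup B_5^+(16)$, and it suffices to count each piece.

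The summand $(QP_5^+)_{16}$ carries all the real content: Proposition \ref{md611} already exhibits an explicit basis of $188$ classes $[a_{16,t}]$, so $\dim (QP_5^+)_{16} = 188$ and $|B_5^+(16)| = 188$. For the summand $(QP_5^0)_{16}$ I would invoke Proposition \ref{2.8}: if $B$ is a minimal set of $\mathcal A$-generators for $P_4$ in degree $16$, then $f(B) = \bigcup_{1\leqslant i \leqslant 5} f_i(B)$ is one for $P_5^0$. Taking $B = B_4(16)$, the admissible monomials of degree $16$ in $P_5^0$ are exactly the elements of $f(B_4(16))$; since $f_i$ produces monomials omitting the variable $x_i$, the images $f_i(B_4(16))$ overlap only on monomials that omit two or more variables, and counting the resulting union against the list of $B_4(16)$ recorded in the appendix yields $\dim(QP_5^0)_{16} = |B_5^0(16)| = 255$, in agreement with \cite{su1}.

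Adding the two contributions gives
$$\dim(\mathbb F_2 \otimes_{\mathcal A} P_5)_{16} = 255 + 188 = 443,$$
and, since the admissible monomials of degree $16$ form a basis of this space, there are exactly $443$ of them. The corollary is thus essentially bookkeeping: no genuine obstacle remains once Proposition \ref{md611} and the degree-$16$ data for $P_4$ are in hand. The only mildly delicate point is the inclusion--exclusion implicit in the $(QP_5^0)_{16}$ count, but this is routine given the explicit appendix list, so I would keep the argument at the level of citing the two dimension formulas and performing the addition.
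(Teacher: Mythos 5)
Your proposal is correct and follows essentially the same route as the paper: decompose $(\mathbb F_2\otimes_{\mathcal A}P_5)_{16}=(QP_5^0)_{16}\oplus (QP_5^+)_{16}$, take $\dim(QP_5^+)_{16}=188$ from Proposition \ref{md611}, take $\dim(QP_5^0)_{16}=255$ (the paper cites \cite{su1}, while you recover it via Proposition \ref{2.8} and the appendix list of $B_4(16)$, which is exactly how that count is obtained), and add to get $443$.
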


 \subsection{The Case $s=3$}\

\medskip
For $s=3$, we have $5(2^s-1) = 35$. Since Kameko's squaring operation 
$$(\widetilde{Sq}^0_*)_{(5,15)}: (\mathbb F_2 \otimes_{\mathcal A} P_5)_{35} \longrightarrow (\mathbb F_2 \otimes_{\mathcal A} P_k)_{15}$$
is an epimorphism, we have 
$(\mathbb F_2 \otimes_{\mathcal A} P_5)_{35} \cong \text{Ker}(\widetilde{Sq}^0_*)_{(5,15)} \bigoplus (\mathbb F_2 \otimes_{\mathcal A} P_5)_{15}$. Hence, we need only to compute $\text{Ker}(\widetilde{Sq}^0_*)_{(5,15)}$.

\begin{lems}\label{bdds62}
If $x$  is an admissible monomial of degree $35$ in $P_5$ and $[x] \in \text{\rm Ker}(\widetilde{Sq}^0_*)_{(5,15)}$, then
$\omega(x)$ is one of the following sequences:
\begin{align*}&\omega_{(1)} = (3,2,1,1,1),\ \omega_{(2)} = (3,2,1,3),\ \omega_{(3)} = (3,2,3,2),\\ 
&\omega_{(4)} = (3,4,2,2), \ \omega_{(5)} = (3,4,4,1).
\end{align*}
\end{lems}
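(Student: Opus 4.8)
The plan is to reduce everything to the degree-$16$ classification already obtained in Lemma~\ref{bdday61}, by stripping off the odd part of $x$. First I would pin down $\omega_1(x)$. Since $\deg x = 35$ is odd, the quantity $\omega_1(x)$ — the number of variables occurring to an odd power in $x$ — is odd, and it is at most $k = 5$; thus $\omega_1(x) \in \{1,3,5\}$. The minimal spike of degree $35$ in $P_5$ is $z = x_1^{31}x_2^{3}x_3$, whose weight vector is $\omega(z) = (3,2,1,1,1)$.

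Next I would exclude the values $1$ and $5$. If $\omega_1(x) = 1$, then $\omega(x) < \omega(z)$ in the left lexicographic order, so by Singer's criterion (Theorem~\ref{dlsig}) $x$ is hit, i.e.\ $[x] = 0$, contradicting the admissibility of $x$. If $\omega_1(x) = 5$, then every exponent of $x$ is odd, so $x = x_1x_2x_3x_4x_5\,y^2$ for a monomial $y$ of degree $15$; in this case $\phi(x) = y$, whence $(\widetilde{Sq}^0_*)_{(5,15)}([x]) = [\phi(x)] = [y]$. By Theorem~\ref{dlcb1}(i) (applied with $r = 1$), if $y$ were inadmissible then $x = (x_1\cdots x_5)\,y^{2}$ would be inadmissible; hence $y$ is admissible and $[y] \neq 0$. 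This contradicts $[x] \in \text{Ker}\,(\widetilde{Sq}^0_*)_{(5,15)}$. Therefore $\omega_1(x) = 3$.

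With $\omega_1(x) = 3$ I would write $x = x_ix_jx_\ell\,y^2$ with $1 \leq i < j < \ell \leq 5$ and $y$ a monomial of degree $16$ in $P_5$. Applying Theorem~\ref{dlcb1}(i) once more (with $r = 1$), the admissibility of $x$ forces $y$ to be admissible, so by Lemma~\ref{bdday61} the weight vector $\omega(y)$ is one of $(2,1,1,1)$, $(2,1,3)$, $(2,3,2)$, $(4,2,2)$, $(4,4,1)$. Finally, the elementary identity $\omega(x) = (3,\omega_1(y),\omega_2(y),\ldots)$ — which holds because multiplying the square $y^2$ by the squarefree monomial $x_ix_jx_\ell$ inserts a leading entry $3$ and shifts every dyadic digit of the remaining exponents up by one position — converts these five possibilities for $\omega(y)$ into precisely the five sequences $\omega_{(1)},\ldots,\omega_{(5)}$ listed in the statement.

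The argument is short because the genuine combinatorial work is concentrated in the degree-$16$ classification of Lemma~\ref{bdday61}; here the only points needing care are the two exclusions $\omega_1(x) \neq 1$ and $\omega_1(x) \neq 5$, together with the bookkeeping identity $\omega(x) = (3,\omega(y))$. I expect the subtle step to be the exclusion $\omega_1(x) = 5$: this is exactly the case in which $x$ lies over the Kameko ``up'' image, and it is the sole place where the hypothesis $[x] \in \text{Ker}\,(\widetilde{Sq}^0_*)_{(5,15)}$ — rather than mere admissibility — is actually used. No further degree-$35$ computation is required beyond invoking Lemma~\ref{bdday61}.
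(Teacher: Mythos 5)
Your proposal is correct and follows essentially the same route as the paper's proof: constrain $\omega_1(x)$ via the minimal spike $x_1^{31}x_2^3x_3$ and Singer's criterion (Theorem \ref{dlsig}), rule out $\omega_1(x)=5$ using the kernel hypothesis together with Theorem \ref{dlcb1}, and then reduce to the degree-$16$ classification of Lemma \ref{bdday61} by writing $x = x_ix_jx_\ell y^2$. The only difference is presentational: you make explicit the parity argument excluding $\omega_1(x)\in\{2,4\}$ and the identity $\omega(x)=(3,\omega(y))$, which the paper leaves implicit.
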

\begin{proof}  Note that $z = x_1^{31}x_2^3x_3$ is the minimal spike of degree $35$ in $P_5$ and  $\omega(z) =  (3,2,1,1,1)$. Since $[x] \ne 0$, by Theorem \ref{dlsig}, either $\omega_1(x) =3$ or $\omega_1(x) =5$. If $\omega_1(x) =5$, then $x = X_\emptyset y^2$ with $y$ a monomial of degree $15$ in $P_5$. Since $x$ is admissible, by Theorem \ref{dlcb1}, $y$ is admissible. Hence, $(\widetilde{Sq}^0_*)_{(5,15)}([x]) = [y] \ne 0.$ This contradicts the fact that $[x] \in \text{Ker}(\widetilde{Sq}^0_*)_{(5,15)}$, so $\omega_1(x) =3$. Then, we have $x = x_ix_jx_\ell y_1^2$ with $y_1$ an admissible monomial of degree $16$ in $P_5$. Now, the lemma follows from Lemma \ref{bdday61}.
\end{proof}

From Lemma \ref{bdds62} and a result in \cite{su1}, we obtain
\begin{align*}
&\text{\rm Ker}(\widetilde{Sq}^0_*)_{(5,15)} =  \bigoplus_{j=1}^5 QP_5(\omega_{(j)}),\\
&QP_5(\omega_{(1)}) = (QP_5^0)_{35}\bigoplus QP_5^+(\omega_{(1)}),\\ 
&QP_5(\omega_{(j)}) = QP_5^+(\omega_{(j)}),\ j = 2,3,4,5,\\
&\dim(QP_5^0)_{35} = 460.
\end{align*}

\begin{props}\label{mdd61} There exist exactly $160$ admissible monomials in $P_5^+$ such that  their weight vectors are $\omega_{(1)}$. Consequently $\dim QP_5^+(\omega_{(1)})= 160.$ 
\end{props}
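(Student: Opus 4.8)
The plan is to follow exactly the template established in the proof of Proposition \ref{md611}: first produce an explicit spanning set for $QP_5^+(\omega_{(1)})$ by eliminating inadmissible monomials, then prove that the surviving classes are linearly independent. Since $\omega_{(1)} = (3,2,1,1,1)$ and any admissible $x \in P_5^+$ with $\omega(x) = \omega_{(1)}$ has $\omega_1(x) = 3$, I would write $x = x_ix_jx_\ell y^2$ with $\{i,j,\ell\} \subset \{1,\dots,5\}$ and $y$ a monomial of degree $16$; by Theorem \ref{dlcb1} applied to the leading weight component, $y$ must be admissible of degree $16$, so $\omega(y)$ is constrained by Lemma \ref{bdday61}. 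Because we need $x \in P_5^+$, every variable must appear, which forces $\omega(y)$ to begin with the component matching $(2,1,1,1)$, i.e. the admissible degree-$16$ monomials built on the minimal-spike weight vector. This sharply limits the candidate $y$'s.

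The spanning step is the first half: I would enumerate the products $x_ix_jx_\ell y^2$ arising this way, and for each one check whether it equals one of the $160$ claimed generators $a_{35,t}$. For those that do not, I would exhibit a strictly inadmissible monomial $w$ (of the kind catalogued in a lemma analogous to Lemma \ref{inad16}, which I would need to state and verify first) such that $x_ix_jx_\ell y^2 = w\,z_1^{2^u}$ with $u = \max\{i : \omega_i(w) > 0\}$; Theorem \ref{dlcb1}(ii) then forces $x$ inadmissible, contradicting our choice. This establishes that $\{[a_{35,t}] : 1 \leqslant t \leqslant 160\}$ spans $QP_5^+(\omega_{(1)})$.

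For the independence step, I would assume a relation $\mathcal S = \sum_t \gamma_t a_{35,t} \equiv_{\omega_{(1)}} 0$ and apply the homomorphisms $p_{(i;j)} : QP_5(\omega_{(1)}) \to QP_4(\omega_{(1)})$ from Lemma \ref{bdm1}. Each $p_{(i;j)}(\mathcal S)$ is a relation among classes in the known space $QP_4^+(\omega_{(1)})$ (or $QP_4(\omega_{(1)})$), whose explicit basis I would import from \cite{su1}. Running through the ten maps $p_{(i;j)}$, $1 \leqslant i < j \leqslant 5$, in a carefully chosen order should successively kill all coefficients $\gamma_t$, exactly as in Proposition \ref{md611}; the residual coefficients not detected by projections would be handled by a final explicit $\mathcal A$-reduction of the remaining short subsum.

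The main obstacle is bookkeeping scale rather than conceptual depth: with $160$ generators the spanning calculation requires a complete and correct classification of strictly inadmissible degree-$35$ monomials on this weight vector, and the linear-independence argument must track $160$ unknowns through the projections $p_{(i;j)}$, ensuring that the images in $QP_4(\omega_{(1)})$ are correctly expressed in the \cite{su1} basis after applying the hit relations $\equiv_{\omega_{(1)}}$. The delicate point is verifying that no admissible generator is inadvertently discarded in the spanning step and that the projections collectively separate all $160$ classes; any class surviving every $p_{(i;j)}$ would signal a missing relation to be resolved by direct computation in $QP_5(\omega_{(1)})$ itself.
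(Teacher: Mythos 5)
Your proposal is correct and follows the paper's own proof essentially step for step: factor each admissible $x$ with $\omega(x)=\omega_{(1)}$ as $x_jx_\ell x_ty^2$ with $y\in B_5(2,1,1,1)$, eliminate every product not among the $160$ listed monomials by exhibiting a strictly inadmissible factor $w$ (the paper's Lemmas \ref{bdd21}, \ref{inadb61}, \ref{inab61}) and invoking Theorem \ref{dlcb1}, and prove independence of the surviving classes by computing all projections $p_{(i;j)}(\mathcal S)$, $1\leqslant i<j\leqslant 5$, in the known admissible basis of $P_4$ from \cite{su1}. One small repair to your setup: the restriction $\omega(y)=(2,1,1,1)$ does not follow from $x\in P_5^+$ (monomials in $P_5^+$ of degree $35$ also realize $\omega_{(4)}$ and $\omega_{(5)}$); it is immediate from the hypothesis $\omega(x)=\omega_{(1)}=(3,2,1,1,1)$, since $\omega_{i}(y)=\omega_{i+1}(x)$ for all $i\geqslant 1$.
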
 
We denote the monomials in $B_5(\omega_{(1)})$ by $a_t = a_{35,t},\ 1 \leqslant t \leqslant 160$, as given in Subsection \ref{ss65}. We need some lemmas for the proof of this proposition.

By a simple computation, one gets the following.

\begin{lems}\label{bdd21} If $(j,\ell, t,u,v)$ is a permutation of $(1,2,3,4,5)$, then the following monomials are strictly inadmissible:

\medskip
{\rm i)} $x_j^2x_\ell x_tx_u^3,\  j < \ell < t.$

{\rm ii)} $x_j^2x_\ell x_tx_ux_v^2,\ j < \ell< t < u,$
 $x_1x_2^2x_3^2x_4x_5.$
\end{lems}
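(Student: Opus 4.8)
The plan is to observe first that all three families of monomials are homogeneous of degree $7$ with weight vector $(3,2)$, so that $s = \max\{i : \omega_i(x) > 0\} = 2$ and the definition of strict inadmissibility permits Steenrod operations $Sq^u$ with $1 \leq u \leq 2^s-1 = 3$. For each monomial $x$ I would then exhibit an explicit polynomial $q$ (of degree one or two less than $7$) and compute $\sum_u Sq^u(q_u)$ by the Cartan formula, arranging matters so that this sum equals $x$ modulo monomials strictly smaller than $x$ in the order of Definition \ref{defn3}. The whole argument thus reduces to a fixed recipe plus a comparison of a handful of error monomials against $x$.

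For parts (i) and (ii) a single $Sq^1$ suffices, using that $Sq^1$ is a derivation with $Sq^1(x_i)=x_i^2$, $Sq^1(x_i^3)=x_i^4$ and $Sq^1(x_i^2)=0$. In case (i) I would apply $Sq^1$ to $q = x_jx_\ell x_t x_u^3$, obtaining $x_j^2x_\ell x_t x_u^3$ together with the three error terms $x_jx_\ell^2 x_t x_u^3$, $x_jx_\ell x_t^2 x_u^3$ and $x_jx_\ell x_t x_u^4$. The first two still have weight vector $(3,2)$ and, since $j<\ell<t$ while every other position agrees, they differ from $x$ first at position $j$, where $x$ carries the larger exponent; the third has weight vector $(3,0,1)<(3,2)$. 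Hence all three lie strictly below $x$. In case (ii), applying $Sq^1$ to $q = x_jx_\ell x_t x_u x_v^2$ (with $Sq^1(x_v^2)=0$) gives $x$ plus $x_jx_\ell^2 x_t x_u x_v^2$, $x_jx_\ell x_t^2 x_u x_v^2$ and $x_jx_\ell x_t x_u^2 x_v^2$; each has weight vector $(3,2)$ and, because $j$ is least among $j,\ell,t,u$ while the squared position $v$ agrees throughout, differs from $x$ first at $j$ in favour of $x$.

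Case (iii) is the genuinely delicate one and I expect it to be the main obstacle, since no single $Sq^1$ produces $x_1x_2^2x_3^2x_4x_5$. The natural replacement is $Sq^2(x_1x_2x_3x_4x_5) = \sum_{a<b} x_a^2x_b^2\prod_{c\ne a,b}x_c$. Here the pair $(a,b)=(2,3)$ reproduces $x$, the remaining five pairs avoiding the index $1$ yield monomials strictly below $x$, but the four pairs containing $1$ yield monomials strictly \emph{above} $x$ and so must be cancelled. The key observation is that these four oversized terms sum to $x_1^2\,Sq^1(x_2x_3x_4x_5)$, and since $Sq^1$ is a derivation with $Sq^1(x_1^2)=0$ this equals $Sq^1(x_1^2x_2x_3x_4x_5)$. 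Adding this $Sq^1$-term removes exactly the oversized monomials, yielding $x = Sq^1(x_1^2x_2x_3x_4x_5) + Sq^2(x_1x_2x_3x_4x_5) + (\text{strictly smaller monomials})$, the required decomposition with $s=2$.

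What then remains is pure bookkeeping: computing the weight vectors to confirm $s=2$ in each family, and running the left-lexicographic comparisons of Definition \ref{defn3} on the small list of error monomials. The only nonobvious step in the whole proof is the identification of the cancelling term $Sq^1(x_1^2x_2x_3x_4x_5)$ in case (iii); once that is in hand, every verification is immediate.
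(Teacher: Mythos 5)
Your proposal is correct: all three decompositions are exact in $P_5$, the error terms are verified to be strictly smaller in the order of Definition \ref{defn3}, and the operations used ($Sq^1$, $Sq^2$) stay within the allowed range $u\leqslant 2^s-1=3$ for weight vector $(3,2)$, which is exactly what strict inadmissibility demands. This matches the paper, which states Lemma \ref{bdd21} as the result of ``a simple computation'' without writing it out; your argument (a single $Sq^1$ for (i) and (ii), and the $Sq^2(x_1x_2x_3x_4x_5)+Sq^1(x_1^2x_2x_3x_4x_5)$ cancellation for (iii)) is the natural way to carry out that omitted computation.
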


The following is a corollary of a result in \cite{su1}.

\begin{lems}\label{inadb61} If $x$ is one of the following monomials then $f_i(x)$, $ 1 \leqslant i \leqslant 5$, are strictly inadmissible:
\begin{align*}
&x_1^{3}x_2^{28}x_3x_4^{3},\  x_1^{3}x_2^{28}x_3^{3}x_4,\  x_1^{3}x_2^{7}x_3^{24}x_4,\  x_1^{7}x_2^{3}x_3^{24}x_4,\  x_1^{3}x_2^{5}x_3^{25}x_4^{2},\  x_1^{3}x_2^{4}x_3^{25}x_4^{3},\\  
&x_1^{3}x_2^{5}x_3^{24}x_4^{3},\  x_1^{3}x_2^{4}x_3^{9}x_4^{19},\  x_1^{3}x_2^{4}x_3^{11}x_4^{17},\  x_1^{3}x_2^{5}x_3^{8}x_4^{19},\  x_1^{3}x_2^{5}x_3^{9}x_4^{18},\  x_1^{3}x_2^{5}x_3^{10}x_4^{17},\\  
&x_1^{3}x_2^{5}x_3^{11}x_4^{16},\  x_1^{3}x_2^{7}x_3^{8}x_4^{17},\  x_1^{7}x_2^{3}x_3^{8}x_4^{17},\  x_1^{3}x_2^{7}x_3^{9}x_4^{16},\  x_1^{7}x_2^{3}x_3^{9}x_4^{16}.
\end{align*}
\end{lems}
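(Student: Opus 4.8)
The plan is to derive the strict inadmissibility of each $f_i(x)$ in $P_5$ from the strict inadmissibility of $x$ itself in $P_4$, the latter being precisely the computation recorded in \cite{su1}. The mechanism is that every embedding $f_i\colon P_4\to P_5$, $1\leqslant i\leqslant 5$, is compatible with all of the structure entering the notion of strict inadmissibility.

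First I would isolate the relevant properties of $f_i$. Since $f_i$ substitutes degree-one variables for degree-one variables, it commutes with the Steenrod squares by the Cartan formula and is therefore a homomorphism of $\mathcal A$-algebras; in particular $f_i Sq^u = Sq^u f_i$ and $f_i$ carries $\mathcal A^+P_4$ into $\mathcal A^+P_5$. Moreover $f_i$ merely relabels variables and leaves every exponent unchanged, so for each monomial $z\in P_4$ one has $\omega(f_i(z))=\omega(z)$, while $\sigma(f_i(z))$ is obtained from $\sigma(z)$ by inserting a single $0$ in the $i$-th slot. From this I would deduce that $f_i$ strictly preserves the order of Definition \ref{defn3}: if $z'<z$, then either $\omega(z')<\omega(z)$, whence $\omega(f_i(z'))<\omega(f_i(z))$, or else $\omega(z')=\omega(z)$ and $\sigma(z')<\sigma(z)$ in the left lexicographic order, and inserting the same entry $0$ at the common $i$-th position of both exponent vectors preserves that comparison, so $\sigma(f_i(z'))<\sigma(f_i(z))$. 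In either case $f_i(z')<f_i(z)$.

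These properties transport strict inadmissibility from $P_4$ to $P_5$. Suppose $x\in P_4$ is strictly inadmissible, so that $x=\sum_{j}y_j+\sum_{u=1}^{2^{r}-1}Sq^u(q_u)$ with each $y_j<x$ and $r=\max\{\ell:\omega_\ell(x)>0\}$. Applying the $\mathcal A$-homomorphism $f_i$ gives $f_i(x)=\sum_{j}f_i(y_j)+\sum_{u=1}^{2^{r}-1}Sq^u(f_i(q_u))$ in $P_5$, where each $f_i(y_j)$ is a monomial with $f_i(y_j)<f_i(x)$ by the order-preservation above, and $\max\{\ell:\omega_\ell(f_i(x))>0\}=\max\{\ell:\omega_\ell(x)>0\}=r$ because $f_i$ preserves weight vectors. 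This identity exhibits $f_i(x)$ as strictly inadmissible in $P_5$. Hence the lemma reduces to the single assertion that each of the seventeen listed monomials is strictly inadmissible in $P_4$.

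That reduction is exactly where I would invoke \cite{su1}: each of the seventeen monomials has degree $35$ and weight vector $(3,2,1,1,1)$ — the weight vector of the minimal spike $x_1^{31}x_2^3x_3$ of degree $35$ — and each occurs among the strictly inadmissible monomials arising in the determination of the admissible basis of $P_4$ in degree $35$ there. I expect the only genuine labour to lie inside $P_4$, namely the bookkeeping of matching each monomial with its explicit hit relation, i.e.\ producing the polynomials $q_u$ and the smaller monomials $y_j$ witnessing strict inadmissibility; once such a relation is available in $P_4$, the passage to $P_5$ through each $f_i$ is automatic and requires no further Steenrod-square computation.
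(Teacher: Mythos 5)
Your proposal is correct and coincides with the paper's own justification: the paper gives no computation for this lemma, stating it only as ``a corollary of a result in \cite{su1}''. Your transport argument --- that each $f_i$ commutes with the Steenrod squares, preserves weight vectors, and preserves the order of Definition \ref{defn3} (inserting a zero exponent at the same slot respects the left lexicographic comparison), so that strict inadmissibility passes from $P_4$ to $P_5$ --- combined with the citation of \cite{su1} for the strict inadmissibility of the seventeen listed monomials in $P_4$, is exactly that corollary made explicit.
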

\begin{lems}\label{inab61} The following monomials are strictly inadmissible:
\begin{align*}
&x_1x_2^{6}x_3^{3}x_4^{8}x_5^{17}\ \  x_1x_2^{6}x_3^{3}x_4^{9}x_5^{16}\ \  x_1x_2^{6}x_3^{3}x_4^{24}x_5\ \  x_1^{3}x_2^{4}x_3x_4^{8}x_5^{19}\ \  x_1^{3}x_2^{4}x_3x_4^{9}x_5^{18}\\ 
&x_1^{3}x_2^{4}x_3x_4^{10}x_5^{17}\ \  x_1^{3}x_2^{4}x_3x_4^{11}x_5^{16}\ \  x_1^{3}x_2^{4}x_3x_4^{24}x_5^{3}\ \  x_1^{3}x_2^{4}x_3x_4^{25}x_5^{2}\ \  x_1^{3}x_2^{4}x_3^{3}x_4^{8}x_5^{17}\\ 
&x_1^{3}x_2^{4}x_3^{3}x_4^{9}x_5^{16}\ \  x_1^{3}x_2^{4}x_3^{3}x_4^{24}x_5\ \  x_1^{3}x_2^{4}x_3^{8}x_4x_5^{19}\ \  x_1^{3}x_2^{4}x_3^{8}x_4^{3}x_5^{17}\ \  x_1^{3}x_2^{4}x_3^{8}x_4^{17}x_5^{3}\\ 
&x_1^{3}x_2^{4}x_3^{8}x_4^{19}x_5\ \  x_1^{3}x_2^{4}x_3^{9}x_4x_5^{18}\ \  x_1^{3}x_2^{4}x_3^{9}x_4^{2}x_5^{17}\ \  x_1^{3}x_2^{4}x_3^{9}x_4^{3}x_5^{16}\ \  x_1^{3}x_2^{4}x_3^{9}x_4^{16}x_5^{3}\\ 
&x_1^{3}x_2^{4}x_3^{9}x_4^{17}x_5^{2}\ \  x_1^{3}x_2^{4}x_3^{9}x_4^{18}x_5\ \  x_1^{3}x_2^{4}x_3^{11}x_4x_5^{16}\ \  x_1^{3}x_2^{4}x_3^{11}x_4^{16}x_5\ \  x_1^{3}x_2^{4}x_3^{24}x_4x_5^{3}\\ 
&x_1^{3}x_2^{4}x_3^{24}x_4^{3}x_5\ \  x_1^{3}x_2^{4}x_3^{25}x_4x_5^{2}\ \  x_1^{3}x_2^{4}x_3^{25}x_4^{2}x_5\ \  x_1^{3}x_2^{5}x_3x_4^{8}x_5^{18}\ \  x_1^{3}x_2^{5}x_3x_4^{10}x_5^{16}\\ 
&x_1^{3}x_2^{5}x_3x_4^{24}x_5^{2}\ \  x_1^{3}x_2^{5}x_3^{8}x_4x_5^{18}\ \  x_1^{3}x_2^{5}x_3^{8}x_4^{2}x_5^{17}\ \  x_1^{3}x_2^{5}x_3^{8}x_4^{3}x_5^{16}\ \  x_1^{3}x_2^{5}x_3^{8}x_4^{16}x_5^{3}\\ 
&x_1^{3}x_2^{5}x_3^{8}x_4^{17}x_5^{2}\ \  x_1^{3}x_2^{5}x_3^{8}x_4^{18}x_5\ \  x_1^{3}x_2^{5}x_3^{9}x_4^{2}x_5^{16}\ \  x_1^{3}x_2^{5}x_3^{9}x_4^{16}x_5^{2}\ \  x_1^{3}x_2^{5}x_3^{10}x_4x_5^{16}\\ 
&x_1^{3}x_2^{5}x_3^{10}x_4^{16}x_5\ \  x_1^{3}x_2^{5}x_3^{24}x_4x_5^{2}\ \  x_1^{3}x_2^{5}x_3^{24}x_4^{2}x_5\ \  x_1^{3}x_2^{7}x_3^{8}x_4x_5^{16}\ \  x_1^{3}x_2^{7}x_3^{8}x_4^{16}x_5\\ 
&x_1^{3}x_2^{28}x_3x_4x_5^{2}\ \  x_1^{3}x_2^{28}x_3x_4^{2}x_5\ \  x_1^{7}x_2^{3}x_3^{8}x_4x_5^{16}\ \  x_1^{7}x_2^{3}x_3^{8}x_4^{16}x_5.
\end{align*}
\end{lems}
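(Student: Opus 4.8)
The plan is to verify, for each monomial $x$ in the list, the defining condition of strict inadmissibility directly. Every listed monomial has weight vector $\omega(x) = \omega_{(1)} = (3,2,1,1,1)$, so $s = \max\{i : \omega_i(x) > 0\} = 5$ throughout; hence it suffices to produce monomials $y_1,\ldots,y_m$ with $y_j < x$ in the order of Definition \ref{defn3}, together with polynomials $q_u \in P_5$, such that
$$x = \sum_{j=1}^m y_j + \sum_{u=1}^{31} Sq^u(q_u).$$
Every such relation is manufactured from the Cartan formula: one chooses $q_u$ so that $x$ is the largest Cartan summand of $Sq^u(q_u)$ and absorbs the remaining summands into $\sum_j y_j$. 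The prototype is $x_i^2x_j = x_ix_j^2 + Sq^1(x_ix_j)$ for $i<j$, in which the error term $x_ix_j^2$ is smaller because $\sigma(x_ix_j^2) < \sigma(x_i^2x_j)$; all the relations needed are higher-degree descendants of this prototype, obtained by raising exponents through the admissible operations $Sq^2, Sq^4, Sq^8, Sq^{16}$ and by squaring.

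The organizing device is Theorem \ref{dlcb1}(ii). For each $x$ I would first split off its top dyadic part as a square, writing $x = w\cdot y^{2^{r}}$, where $w=\prod_{0\leqslant t<r}X_{\mathbb J_t(x)}^{2^t}$ collects the lowest $r$ dyadic layers of $x$ and $y^{2^r}$ the higher ones, with $r$ taken so that $\omega_r(w)\ne 0$ and $\omega_i(w)=0$ for $i>r$. Whenever $w$ is already known to be strictly inadmissible---most often when $w$ belongs to one of the primitive families of Lemmas \ref{inad16} and \ref{bdd21}---Theorem \ref{dlcb1}(ii) gives at once that $x = wy^{2^{r}}$ is strictly inadmissible. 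Sorting the list by the type of its bottom factor $w$ therefore collapses the forty-odd monomials onto a short list of genuinely distinct cases.

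For the factors $w$ not reachable from the earlier lemmas I would compute a fresh hit equation by hand. The essential point is that the full degree-$35$ monomial admits reductions unavailable to its bottom factor alone, because the operations $Sq^u$ with $u$ as large as $31$ interact with the high powers sitting inside $y^{2^r}$; indeed a monomial such as $x_1^3x_2^{28}x_3x_4x_5^2$ has bottom factor $x_1^3x_3x_4x_5^2$, whose power-$2$ variable carries the largest index and so is not covered by Lemma \ref{bdd21}(i), forcing a genuine five-variable computation. Concretely, I choose a variable and an admissible $Sq^{2^a}$ so that applying it to the monomial obtained from $x$ by lowering that variable's exponent returns $x$ as the leading Cartan term, then check that every other term $y_j$ satisfies $\omega(y_j)<\omega(x)$, or $\omega(y_j)=\omega(x)$ with $\sigma(y_j)<\sigma(x)$.

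The main obstacle is combinatorial rather than conceptual. There are roughly fifty monomials, and each Cartan expansion spawns several error terms whose weight vectors and (when these equal $\omega_{(1)}$) exponent vectors must be recomputed to certify $y_j < x$; it is easy for a single error term to tie or exceed $x$ unless the Steenrod operation is chosen with care. The discipline that keeps this under control is to reduce as many monomials as possible to the primitive families via Theorem \ref{dlcb1}(ii), so that only a handful of independent five-variable hit equations---precisely those whose bottom factor is not itself already strictly inadmissible---need be written out in full.
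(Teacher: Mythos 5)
Your fallback device---exhibiting for each monomial an explicit relation $x=\sum_j y_j+\sum_{u\leqslant 31}Sq^u(q_u)$ with every $y_j<x$, computed modulo terms of smaller weight vector---is indeed what the paper does. But the organizing step you build the proof around is vacuous for this particular list, and that is a genuine gap. The monomials of Lemma \ref{inab61} are by construction the \emph{new primitives} of the degree-$35$, $\omega_{(1)}$ classification: in the proof of Proposition \ref{mdd61}, any monomial whose bottom dyadic layers form a factor $w$ covered by an earlier lemma is dispatched there via Theorem \ref{dlcb1}, and only the monomials admitting no such factorization are collected into Lemma \ref{inab61}. One can check this concretely. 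The only possible factorization $x=wy^{2^s}$ meeting the hypotheses of Theorem \ref{dlcb1}(ii) with $w$ from an earlier lemma would have $s=2$ and $w$ equal to the two lowest dyadic layers of $x$ (Lemma \ref{inadb61} is excluded because its monomials have degree $35$ and omit a variable, while every monomial of Lemma \ref{inad16} has $\omega_1\in\{2,4\}$, whereas here $\omega_1(x)=3$ forces $\omega_1(w)=3$). Running through all $49$ monomials, that bottom factor $w$ is in every case either of the form $x_a^3x_b^3x_c$, $x_a^3x_bx_c^3$, or of the form $x_j^2x_\ell x_tx_u^3$ in which the squared variable has \emph{larger} index than a linear one---exactly the configuration excluded by the hypothesis $j<\ell<t$ of Lemma \ref{bdd21}(i). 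Indeed each such $w$ is an $f_i$-image of an admissible monomial of $B_4(7)$ (items 23, 28--35 of Subsection \ref{ss62}), hence is admissible by Proposition \ref{2.8}. So the promised collapse onto ``a handful of genuinely distinct cases'' never happens: every one of the $49$ monomials requires its own five-variable hit equation, which is precisely the content of the lemma.

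The secondary problem is your recipe for producing those equations: choosing one variable and one $Sq^{2^a}$ so that $x$ is the leading Cartan term of $Sq^{2^a}$ applied to a single lowered monomial, with all other terms smaller. For these monomials no such one-operation, one-monomial relation exists. The paper's equations for the two representatives $x_1x_2^{6}x_3^{3}x_4^{8}x_5^{17}$ and $x_1^{3}x_2^{4}x_3^{3}x_4^{9}x_5^{16}$ each combine $Sq^1$, $Sq^2$, $Sq^4$ and $Sq^8$ applied to polynomials with up to two dozen terms, assembled so that the error terms of the different operations that are \emph{not} smaller than $x$ cancel one another modulo $P_5^-(\omega_{(1)})$; finding such combinations is an iterative correction process, not a single Cartan expansion. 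Since the reduction that was supposed to make the workload light is unavailable, and the proposal offers no mechanism for constructing these coupled relations, the proof as planned does not go through.
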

\begin{proof} We prove the lemma for the monomials $x = x_1x_2^{6}x_3^{3}x_4^{8}x_5^{17}$, $y= x_1^{3}x_2^{4}x_3^{3}x_4^{9}x_5^{16}$. The others can be proved by the similar computations. By a direct computation, we have
\begin{align*} x &= x_1x_2^{3}x_3^{5}x_4^{2}x_5^{24} + x_1x_2^{3}x_3^{5}x_4^{8}x_5^{18} + x_1x_2^{3}x_3^{6}x_4x_5^{24} + x_1x_2^{3}x_3^{6}x_4^{8}x_5^{17} + x_1x_2^{3}x_3^{8}x_4x_5^{22}\\
&\quad + x_1x_2^{3}x_3^{8}x_4^{2}x_5^{21} + x_1x_2^{4}x_3^{2}x_4x_5^{27} + x_1x_2^{4}x_3^{3}x_4x_5^{26} + x_1x_2^{4}x_3^{3}x_4^{2}x_5^{25} + x_1x_2^{4}x_3^{10}x_4x_5^{19}\\ 
&\quad + x_1x_2^{6}x_3^{2}x_4x_5^{25} + x_1x_2^{6}x_3^{3}x_4x_5^{24} + Sq^1(x_1^2x_2^{5}x_3^{5}x_4x_5^{21}) + Sq^2(x_1x_2^{6}x_3^{3}x_4^{2}x_5^{21}\\
&\quad + x_1x_2^{5}x_3^{5}x_4x_5^{21} + x_1x_2^{3}x_3^{6}x_4x_5^{22} + x_1x_2^{3}x_3^{6}x_4^{2}x_5^{21} + x_1x_2^{3}x_3^{5}x_4^{2}x_5^{22} + x_1x_2^{6}x_3^{3}x_4x_5^{22}\\
&\quad + x_1x_2^{6}x_3^{6}x_4x_5^{19} + x_1x_2^{6}x_3^{2}x_4x_5^{23}) + Sq^4(x_1x_2^{10}x_3^{3}x_4^{4}x_5^{13} + x_1x_2^{4}x_3^{3}x_4^{2}x_5^{21}\\
&\quad + x_1x_2^{3}x_3^{10}x_4^{4}x_5^{13} + x_1x_2^{3}x_3^{9}x_4^{4}x_5^{14} + x_1x_2^{4}x_3^{3}x_4x_5^{22} + x_1x_2^{10}x_3^{4}x_4x_5^{15}\\
&\quad + x_1x_2^{4}x_3^{2}x_4x_5^{23} + x_1x_2^{4}x_3^{6}x_4x_5^{19}) + Sq^8(x_1x_2^{6}x_3^{3}x_4^{4}x_5^{13} + x_1x_2^{3}x_3^{6}x_4^{4}x_5^{13}\\
&\quad + x_1x_2^{3}x_3^{5}x_4^{4}x_5^{14} + x_1x_2^{6}x_3^{4}x_4x_5^{15}) \ \ \text{ mod}\big(P_5^-(\omega_{(1)})\big)
\end{align*}
Hence, $x$ is strictly inadmissible. By a similar computation, we obtain
 \begin{align*} y &= x_1^{2}x_2x_3^{2}x_4^{5}x_5^{25} + x_1^{2}x_2x_3^{3}x_4^{9}x_5^{20} + x_1^{2}x_2x_3^{3}x_4^{12}x_5^{17} + x_1^{2}x_2x_3^{4}x_4^{3}x_5^{25}\\
&\quad + x_1^{2}x_2x_3^{5}x_4^{9}x_5^{18} + x_1^{2}x_2x_3^{5}x_4^{10}x_5^{17} + x_1^{2}x_2x_3^{10}x_4^{5}x_5^{17} + x_1^{2}x_2x_3^{12}x_4^{3}x_5^{17}\\
&\quad + x_1^{3}x_2x_3^{2}x_4^{4}x_5^{25} + x_1^{3}x_2x_3^{2}x_4^{5}x_5^{24} + x_1^{3}x_2x_3^{4}x_4^{3}x_5^{24} + x_1^{3}x_2x_3^{4}x_4^{10}x_5^{17}\\
&\quad + x_1^{3}x_2x_3^{5}x_4^{10}x_5^{16} + x_1^{3}x_2x_3^{8}x_4^{3}x_5^{20} + x_1^{3}x_2x_3^{8}x_4^{5}x_5^{18} + x_1^{3}x_2x_3^{10}x_4^{4}x_5^{17}\\
&\quad + x_1^{3}x_2^{2}x_3^{4}x_4^{9}x_5^{17} + x_1^{3}x_2^{2}x_3^{5}x_4^{8}x_5^{17} + x_1^{3}x_2^{2}x_3^{8}x_4^{5}x_5^{17} + x_1^{3}x_2^{4}x_3^{2}x_4^{9}x_5^{17}\\
&\quad + x_1^{3}x_2^{4}x_3^{3}x_4^{8}x_5^{17} + Sq^1(q_1) + Sq^2(q_2) + Sq^4(q_3) + Sq^8(q_4) \ \ \text{ mod}\big(P_5^-(\omega_{(1)})\big),
\end{align*}
where
\begin{align*}
q_1 &= x_1^{3}x_2x_3^{2}x_4^{3}x_5^{25} + x_1^{3}x_2x_3^{3}x_4^{9}x_5^{18} + x_1^{3}x_2x_3^{3}x_4^{10}x_5^{17}\\
&\quad  + x_1^{3}x_2x_3^{5}x_4^{5}x_5^{20} + x_1^{3}x_2x_3^{10}x_4^{3}x_5^{17} + x_1^{3}x_2^{4}x_3^{5}x_4^{5}x_5^{17},\\
q_2 &= x_1^{2}x_2x_3^{2}x_4^{3}x_5^{25} + x_1^{2}x_2x_3^{3}x_4^{9}x_5^{18} + x_1^{2}x_2x_3^{3}x_4^{10}x_5^{17} + x_1^{2}x_2x_3^{10}x_4^{3}x_5^{17} + x_1^{5}x_2x_3^{2}x_4^{3}x_5^{22}\\
&\quad + x_1^{5}x_2x_3^{6}x_4^{3}x_5^{18} + x_1^{5}x_2^{2}x_3^{2}x_4^{3}x_5^{21} + x_1^{5}x_2^{2}x_3^{3}x_4^{5}x_5^{18} + x_1^{5}x_2^{2}x_3^{3}x_4^{6}x_5^{17} + x_1^{5}x_2^{2}x_3^{6}x_4^{3}x_5^{17},\\
q_3 &= x_1^{3}x_2x_3^{2}x_4^{3}x_5^{22} + x_1^{3}x_2x_3^{6}x_4^{3}x_5^{18} + x_1^{3}x_2x_3^{8}x_4^{5}x_5^{14} + x_1^{3}x_2x_3^{9}x_4^{6}x_5^{12}\\
&\quad + x_1^{3}x_2^{2}x_3^{2}x_4^{3}x_5^{21} + x_1^{3}x_2^{2}x_3^{3}x_4^{5}x_5^{18} + x_1^{3}x_2^{2}x_3^{3}x_4^{6}x_5^{17} + x_1^{3}x_2^{2}x_3^{6}x_4^{3}x_5^{17}\\
&\quad + x_1^{3}x_2^{2}x_3^{8}x_4^{5}x_5^{13} + x_1^{3}x_2^{8}x_3^{2}x_4^{5}x_5^{13} + x_1^{3}x_2^{8}x_3^{3}x_4^{5}x_5^{12} + x_1^{3}x_2^{8}x_3^{4}x_4^{3}x_5^{13}\\
q_4 &= x_1^{3}x_2x_3^{4}x_4^{5}x_5^{14} + x_1^{3}x_2x_3^{5}x_4^{6}x_5^{12} + x_1^{3}x_2^{2}x_3^{4}x_4^{5}x_5^{13}\\
&\quad + x_1^{3}x_2^{4}x_3^{2}x_4^{5}x_5^{13} + x_1^{3}x_2^{4}x_3^{3}x_4^{5}x_5^{12} + x_1^{3}x_2^{4}x_3^{4}x_4^{3}x_5^{13}.
\end{align*}
Hence, $x$ is strictly inadmissible. 
\end{proof}
\begin{proof}[Proof of Proposition \ref{mdd61}] Let $x$ be an admissible monomial such that $\omega(x) = \omega_{(1)}$. Then, $x = x_jx_\ell x_ty^2$ with $1 \leqslant j < \ell < t \leqslant 5$ and $y \in B_5(2,1,1,1)$. 

Let $z \in B_5(2,1,1,1)$ such that $x_jx_\ell x_tz^2 \in P_5^+$.
By a direct computation using the results in Subsection \ref{subs51}, we see that if $x_jx_\ell x_tz^2 \ne a_{t}, \forall t, \ 1 \leqslant t \leqslant 160$, then there is a monomial $w$ which is given in one of Lemmas \ref{bdd21}, \ref{inadb61} and \ref{inab61} such that $x_jx_\ell x_tz^2= wz_1^{2^{u}}$ with suitable monomial $z_1 \in P_5$, and $u = \max\{j \in \mathbb Z : \omega_j(w) >0\}$. By Theorem \ref{dlcb1}, $x_jx_\ell x_tz^2$ is inadmissible. Since $x = x_jx_\ell x_ty^2$ and $x$ is admissible, one gets $x= a_{t}$ for some $t,\ 1 \leqslant t \leqslant 160$. This implies $B_5^+(\omega_{(1)}) \subset \{a_{t} : \ 1 \leqslant t \leqslant 160\}$. 

We now prove the set $\{[a_{t}] : \ 1 \leqslant t \leqslant 160\}$ is linearly independent in $(\mathbb F_2 \otimes_{\mathcal A} P_5)_{35}$. Suppose there is a linear relation
$$\mathcal S = \sum_{t = 1}^{160}\gamma_ta_{t} \equiv 0,$$ 
where $\gamma_t \in \mathbb F_2$. For $1 \leqslant i < j \leqslant 5$, we explicitly compute $p_{(i;j)}(\mathcal S)$ in terms of the admissible monomials in $P_4$ (mod$(\mathcal A^+P_4)$). By a direct computation from the relations $p_{(i;j)}(\mathcal S) \equiv 0$ with $1 \leqslant i < j \leqslant 5$, we obtain $\gamma_t = 0$ for $1 \leqslant t \leqslant 160$.
The proposition follows. 
\end{proof}
\begin{props}\label{mdd62} $QP_5(\omega_{(2)})= 0.$ 
\end{props}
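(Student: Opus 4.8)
The plan is to prove that $QP_5(\omega_{(2)}) = 0$ by showing that there is no admissible monomial of degree $35$ in $P_5$ whose weight vector equals $\omega_{(2)} = (3,2,1,3)$. Since $[B_5(\omega_{(2)})]_{\omega_{(2)}}$ is a basis of $QP_5(\omega_{(2)})$, this is equivalent to the assertion $B_5(\omega_{(2)}) = \emptyset$. In contrast to Proposition \ref{mdd61}, there will be no linear-independence step: once the candidate list is shown to be empty, the space is automatically zero.

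First I would reduce to a degree-$16$ computation exactly as in the proof of Proposition \ref{mdd61}. If $x$ is a monomial with $\omega(x) = \omega_{(2)}$, then $\omega_1(x) = 3$, so $x = x_ix_jx_\ell\, y^2$ with $1 \leqslant i < j < \ell \leqslant 5$ and $y$ a monomial of degree $16$ satisfying $\omega(y) = (2,1,3)$. Since $\omega_r(x_ix_jx_\ell) = 0$ for $r > 1$, Theorem \ref{dlcb1}(i) shows that if $y$ is inadmissible then $x$ is inadmissible; hence it suffices to let $y$ range over the admissible monomials. The admissible monomials of degree $16$ with $\omega(y) = (2,1,3)$ form the short explicit list already extracted in Subsection \ref{subs51}: the $P_5^+$ part consists of the $5$ elements counted in the proof of Proposition \ref{md611}, and the $P_5^0$ part is obtained from the corresponding $P_4$ monomials via the maps $f_i$ of Proposition \ref{2.8}.

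The core of the argument is then to show that every product $x = x_ix_jx_\ell\, y^2$ arising from this list is inadmissible. I would establish a small library of strictly inadmissible monomials $w$, each proved strictly inadmissible by exhibiting an explicit decomposition $w = \sum_s y_s + \sum_u Sq^{u}(q_u)$ modulo $P_5^-(\omega_{(2)})$, in the same style as Lemmas \ref{inad16}, \ref{bdd21}, \ref{inadb61} and \ref{inab61}. Then, for each candidate $x$, I would exhibit a factorization $x = w\, z_1^{2^u}$ with $u = \max\{r : \omega_r(w) > 0\}$ and $w$ one of the library monomials; by Theorem \ref{dlcb1}(ii) such an $x$ is strictly inadmissible, hence inadmissible. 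Since $x$ was assumed admissible, this contradiction shows $B_5(\omega_{(2)}) = \emptyset$, and therefore $QP_5(\omega_{(2)}) = 0$.

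The main obstacle is the combinatorial completeness of the seed library: I must guarantee that for every admissible $y$ of degree $16$ with $\omega(y)=(2,1,3)$ and every triple $\{i,j,\ell\}$, the resulting $x_ix_jx_\ell\, y^2$ genuinely contains one of the strictly inadmissible factors $w$ in the required form $w z_1^{2^u}$. Particular care is needed for the split into the $P_5^0$ and $P_5^+$ cases, since monomials with a missing variable must be matched against $f_i$-images of $P_4$-seeds rather than against the $P_5^+$ seeds. The individual verifications that each $w$ is strictly inadmissible are routine but numerous explicit $Sq$-manipulations; the real difficulty is organizing the seeds so that no admissible candidate escapes the argument.
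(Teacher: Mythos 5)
Your proposal is correct and follows essentially the same route as the paper: there, an admissible $x$ with $\omega(x)=\omega_{(2)}$ is likewise written as $x = x_jx_\ell x_t y^2$ with $y \in B_5(2,1,3)$, and a direct computation shows every such product either factors as $wz_1^{2^u}$ through a strictly inadmissible $w$ from Lemma \ref{bdd21} or is a permutation of one of the five degree-$35$ monomials of Lemma \ref{inad62}, which are themselves proved strictly inadmissible by explicit $Sq$-decompositions modulo $P_5^-(\omega_{(2)})$. Your ``library'' is exactly this combination (the Lemma \ref{bdd21} seeds plus full-degree seeds with trivial factor $z_1 = 1$), and your concern about the $P_5^0$ case dissolves because all admissible weight-$(2,1,3)$ monomials of degree $16$ lie in $P_5^+$, so every candidate $x_jx_\ell x_t y^2$ automatically lies in $P_5^+$.
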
 

We need the following lemma.
\begin{lems}\label{inad62} All permutations of the following monomials are strictly inadmissible:
$$x_1^{3}x_2^{4}x_3^{8}x_4^{9}x_5^{11} ,\  x_1^{3}x_2^{4}x_3^{9}x_4^{9}x_5^{10} ,\  x_1^{3}x_2^{5}x_3^{8}x_4^{8}x_5^{11} ,\  x_1^{3}x_2^{5}x_3^{8}x_4^{9}x_5^{10} ,\  x_1^{3}x_2^{7}x_3^{8}x_4^{8}x_5^{9}.$$
\end{lems}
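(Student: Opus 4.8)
The plan is to exhibit, for each of the five monomials $x$ in the list, an explicit strict-inadmissibility witness: polynomials $q_u \in P_5$ together with monomials $y_1,\dots,y_t$ with $y_j < x$ realizing
$$x = \sum_{j=1}^{t} y_j + \sum_{u=1}^{15} Sq^u(q_u),$$
where $15 = 2^4-1$ because each of the five monomials has weight vector $\omega_{(2)} = (3,2,1,3)$, so that $s = \max\{i : \omega_i(x) > 0\} = 4$. I would then deduce the statement for all permutations from the $\Sigma_5$-equivariance of the Steenrod action. This is the exact analogue, for the weight $\omega_{(2)}$, of the computation carried out for $\omega_{(1)}$ in Lemma \ref{inab61}, and it supplies the irreducible strictly inadmissible monomials needed to force $QP_5(\omega_{(2)}) = 0$ in Proposition \ref{mdd62}.

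First I would pass to the quotient modulo $P_5^-(\omega_{(2)})$. Since every term of strictly smaller weight than $\omega_{(2)}$ is automatically $< x$ and may be absorbed, it suffices to produce $q_u$ for which $x + \sum_u Sq^u(q_u)$ is congruent, modulo $P_5^-(\omega_{(2)})$, to a sum of monomials of weight exactly $\omega_{(2)}$ whose exponent vectors are lexicographically smaller than $\sigma(x)$. In the degrees at hand only the squares $Sq^1, Sq^2, Sq^4, Sq^8$ enter the witnesses (as one sees already in Lemma \ref{inab61}), the remaining $Sq^u$ with $u \leq 15$ either vanishing by the instability relations $Sq^i(f)=0$ for $i>\deg f$ or contributing only lower-weight terms. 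The computational core is then the search for the four polynomials $q_1,q_2,q_4,q_8$: using the Cartan formula together with $Sq^{\deg f}(f)=f^2$ to peel off a square that lowers $\sigma$, one determines each witness by a guided (in practice computer-assisted) calculation, since the correct $q_u$ are not canonical. This is the step that demands genuine work and where I expect the main difficulty to lie.

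Having fixed a witness for a chosen orbit representative $x$, the permutation statement follows from equivariance. Each generator $g_i$, $1 \leq i < 5$, of $\Sigma_5$ is an $\mathcal A$-algebra automorphism that commutes with every $Sq^u$ and preserves weight vectors, so applying $\pi \in \Sigma_5$ to the identity above yields $\pi(x) = \sum_j \pi(y_j) + \sum_u Sq^u(\pi(q_u))$. Any $y_j$ of strictly smaller weight remains strictly smaller after permuting, hence stays $< \pi(x)$ automatically; the only thing left to check is that the finitely many same-weight monomials $\pi(y_j)$ still satisfy $\sigma(\pi(y_j)) < \sigma(\pi(x))$. This is a bounded finite check over each $\Sigma_5$-orbit.

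The principal obstacle is precisely this same-weight bookkeeping combined with the witness search. Unlike the favorable situation in which all correction terms have strictly smaller weight (which would make permutation-invariance immediate), here same-weight terms genuinely appear, and a witness valid for $x$ need not display manifestly $\sigma$-smaller terms after an arbitrary permutation. I expect to resolve this either by selecting, for each orbit representative, a witness whose same-weight monomials are simultaneously $\sigma$-smaller across the whole orbit, or, when that fails, by recomputing a witness directly for the offending representatives. One could try to shorten the work by factoring $x = w\,y^{2^{s}}$ and invoking Theorem \ref{dlcb1}(ii), but this merely relocates the verification to a lower-degree monomial $w$ whose strict inadmissibility still requires the same type of explicit expansion, so the unavoidable content remains the lengthy but routine calculation of the displayed $q_u$.
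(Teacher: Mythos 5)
Your plan follows the same outline as the paper's proof --- for each of the five monomials, produce an explicit identity $x \equiv \sum_{u} Sq^{2^u}(q_u) \pmod{P_5^-(\omega_{(2)})}$ using only $Sq^1, Sq^2, Sq^4, Sq^8$, and then transport it by the $\Sigma_5$-action --- but it stops exactly where the actual proof begins. The entire content of this lemma is the exhibition of the witnesses $q_u$; you defer that to ``a guided (in practice computer-assisted) calculation'' and never produce a single one, so nothing is actually proved. The paper's proof consists precisely of such a display: for $x = x_1^{3}x_2^{4}x_3^{8}x_4^{9}x_5^{11}$ it writes $x = Sq^1(x_1^{3}x_2x_3^{2}x_4^{9}x_5^{19}) + Sq^2(x_1^{5}x_2^{2}x_3^{2}x_4^{5}x_5^{19} + x_1^{5}x_2x_3^{2}x_4^{6}x_5^{19}) + Sq^4(\cdots) + Sq^8(\cdots)$ modulo $P_5^-(\omega_{(2)})$, and handles the remaining four monomials by similar explicit computations.

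Beyond the missing computation, your treatment of the permutation step rests on a misdiagnosis that would matter if you carried the plan out. You assert that ``here same-weight terms genuinely appear'' and therefore budget for per-orbit $\sigma$-bookkeeping, with a fallback of recomputing witnesses for offending representatives. In fact the witnesses found in the paper contain \emph{no} same-weight correction terms: $x + \sum_u Sq^{2^u}(q_u)$ lies entirely in $P_5^-(\omega_{(2)})$, so after any permutation $\pi$ every correction term still has weight strictly below $\omega_{(2)} = \omega(\pi(x))$, hence is $< \pi(x)$ by Definition \ref{defn3}(i), and the ``all permutations'' conclusion is immediate with no $\sigma$-comparison at all. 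This is exactly the structural dichotomy in the paper between the lemmas asserting strict inadmissibility of \emph{all permutations} (the present one, Lemma \ref{inad65}, and the relevant parts of Lemmas \ref{inad63} and \ref{inad64}), whose proofs are pure-$Sq$ decompositions, and those asserting it only for the listed monomials (Lemmas \ref{inab61}, \ref{ina64}, \ref{inad651}), whose decompositions do contain same-weight $\sigma$-smaller terms and therefore do not transport across an orbit. Since the statement you must prove is the permutation-invariant one, your obligation is to find witnesses of the first kind; a witness of the second kind would not suffice, and you cannot know which kind exists without doing the computation you postponed. (A minor further point: your reason for restricting to $Sq^1,Sq^2,Sq^4,Sq^8$ is spurious --- instability $Sq^i(f)=0$ for $i>\deg f$ never applies to $Sq^u(q_u)$ with $u\leqslant 15$ in degree $35$; the restriction is harmless only because witnesses built from these operations happen to exist.)
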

\begin{proof} We prove the lemma for the monomial $x = x_1^{3}x_2^{4}x_3^{8}x_4^{9}x_5^{11}$. The others can be proved by the similar computations. By a direct computation, we have
\begin{align*}
x &= Sq^1(x_1^{3}x_2x_3^{2}x_4^{9}x_5^{19}) + Sq^2(x_1^{5}x_2^{2}x_3^{2}x_4^{5}x_5^{19} + x_1^{5}x_2x_3^{2}x_4^{6}x_5^{19})\\
&\quad + Sq^4(x_1^{3}x_2^{8}x_3^{4}x_4^{5}x_5^{11} + x_1^{3}x_2^{2}x_3^{2}x_4^{5}x_5^{19} + x_1^{3}x_2^{8}x_3^{2}x_4^{5}x_5^{13} + x_1^{3}x_2^{2}x_3^{8}x_4^{5}x_5^{13}\\
&\quad + x_1^{3}x_2x_3^{2}x_4^{6}x_5^{19} + x_1^{3}x_2x_3^{8}x_4^{6}x_5^{13}) + Sq^8(x_1^{3}x_2^{4}x_3^{4}x_4^{5}x_5^{11} + x_1^{3}x_2^{4}x_3^{2}x_4^{5}x_5^{13}\\
&\quad + x_1^{3}x_2^{2}x_3^{4}x_4^{5}x_5^{13} + x_1^{3}x_2x_3^{4}x_4^{6}x_5^{13}) \ \ \text{ mod}\big(P_5^-(\omega_{(2)})\big)
\end{align*}
This equality shows that all permutations of $x$ are strictly inadmissible.
\end{proof}
\begin{proof}[Proof of Proposition \ref{mdd62}] Let $x$ be an admissible monomial such that $\omega(x) = \omega_{(2)}$. Then $x = x_jx_\ell x_ty^2$ with $y \in B_5(2,1,3)$. 

Let $z \in B_5(2,1,3)$ such that $x_jx_\ell x_tz^2 \in P_5^+$.
By a direct computation using the results in Subsection \ref{subs51}, we see that if $x_jx_\ell x_tz^2$ is not a permutation of one of monomials as given in Lemma \ref{inad62}, then there is a monomial $w$ which is given in Lemma \ref{bdd21} such that $x_jx_\ell x_tz^2= wz_1^{2^{u}}$ with suitable monomial $z_1 \in P_5$, and $u = \max\{j \in \mathbb Z : \omega_j(w) >0\}$. By Theorem \ref{dlcb1}, $x_jx_\ell x_tz^2$ is inadmissible. Since $x = x_jx_\ell x_ty^2$ and $x$ is admissible, $x$ is a permutation of one of monomials as given in Lemma \ref{inad62}. Now the proposition follows from Lemma \ref{inad62}. 
\end{proof}

\begin{props}\label{mdd63} $QP_5(\omega_{(3)}) = 0.$ 
\end{props}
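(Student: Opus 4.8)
The plan is to reproduce, almost verbatim, the argument used for Proposition \ref{mdd62}. Since the decomposition recorded just before Proposition \ref{mdd61} gives $QP_5(\omega_{(3)}) = QP_5^+(\omega_{(3)})$, it suffices to show that there is no admissible monomial of $P_5^+$ with weight vector $\omega_{(3)} = (3,2,3,2)$. As this weight vector begins with $\omega_1 = 3$, any such monomial $x$ factors as $x = x_jx_\ell x_t y^2$ with $1 \leqslant j < \ell < t \leqslant 5$, and by Theorem \ref{dlcb1} the factor $y$ is an admissible monomial of degree $16$ with $\omega(y) = (2,3,2)$, i.e.\ $y \in B_5(2,3,2)$. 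The candidates for $y$ are therefore exactly the degree-$16$ admissibles of weight $(2,3,2)$, which were already isolated in the proof of Proposition \ref{md611}.

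First I would record a lemma, parallel to Lemma \ref{inad62}, listing a finite family of monomials of degree $35$ and weight vector $\omega_{(3)}$ that are strictly inadmissible. Each entry of this list would be justified by an explicit identity expressing the monomial as $\sum_j y_j + \sum_u Sq^u(q_u)$ modulo $P_5^-(\omega_{(3)})$, exactly in the style of the displayed computation in the proof of Lemma \ref{inad62}; the polynomials $q_u$ are found by the usual reduction using the Cartan formula and the Adem relation $Sq^1Sq^1 = 0$.

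Next, ranging over all $z \in B_5(2,3,2)$ and all triples $j < \ell < t$ with $x_jx_\ell x_t z^2 \in P_5^+$, I would examine each such product. Using the strictly inadmissible monomials from Lemmas \ref{inad16} and \ref{bdd21} together with the new lemma, I expect that every product is either a permutation of one of the listed monomials, or of the form $wz_1^{2^u}$ with $w$ strictly inadmissible and $u = \max\{i : \omega_i(w) > 0\}$; in the second case Theorem \ref{dlcb1} forces $x_jx_\ell x_t z^2$ to be inadmissible. Consequently no product $x_jx_\ell x_t z^2$ can be admissible, so $B_5^+(\omega_{(3)}) = \emptyset$ and hence $QP_5(\omega_{(3)}) = 0$. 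In contrast to Proposition \ref{mdd61}, no linear-independence computation is needed here, since the conclusion is simply that the set of admissible monomials of this weight is empty.

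The main obstacle is the combinatorial bookkeeping: the new strictly inadmissible lemma must be chosen large enough that every one of the products $x_jx_\ell x_t z^2$ is covered by one of the three lemmas, and for each listed monomial one must exhibit the explicit hit-equation modulo $P_5^-(\omega_{(3)})$. Assembling the correct family and verifying that nothing escapes it is the real work; each individual identity, once the decomposition into $Sq^u$-terms is guessed, is routine to check.
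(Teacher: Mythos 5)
Your proposal follows essentially the same route as the paper's own proof: the paper likewise factors every candidate as $x_jx_\ell x_t y^2$ with $y \in B_5(2,3,2)$ (drawing on Proposition \ref{md611}), proves a dedicated lemma of strictly inadmissible monomials (its Lemma \ref{inad63}, playing the role of your ``new lemma,'' including a permutation-closed list of degree-$35$ monomials verified by explicit hit equations modulo $P_5^-(\omega_{(3)})$), and then invokes Theorem \ref{dlcb1} to conclude that every such product is inadmissible, whence $QP_5(\omega_{(3)}) = 0$. The only cosmetic difference is organizational: the paper folds the lower-degree strictly inadmissible prefixes into Lemma \ref{inad63} (together with Lemma \ref{bdd21}) rather than citing Lemma \ref{inad16}.
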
 

The following lemma is needed for the proof of the proposition.

\begin{lems}\label{inad63} The following monomials are strictly inadmissible:

{\rm i)} $x_j^3x_\ell^4 x_t^5x_u^7,\  x_j^3x_\ell^5 x_t^5x_u^6.$

{\rm ii)} $x_j^3x_\ell^4 x_tx_u^4x_v^7,\ x_j^3x_\ell^4 x_tx_u^5x_v^6, \  x_j^3x_\ell^4 x_t^2x_u^5x_v^5, \ j < \ell < t$; $x_j^3x_\ell^4 x_t^3x_u^4x_v^5$, $j< \ell,\ t > 3.$

Here $(j,\ell, t,u,v)$ is a permutation of $(1,2,3,4,5)$.

{\rm iii)} All permutations of the monomials: 
\begin{align*}
&x_1x_2^{2}x_3^{7}x_4^{12}x_5^{13}\quad x_1x_2^{3}x_3^{6}x_4^{12}x_5^{13}\quad x_1x_2^{3}x_3^{7}x_4^{12}x_5^{12}\quad x_1x_2^{6}x_3^{11}x_4^{4}x_5^{13} \\  
& x_1x_2^{7}x_3^{10}x_4^{4}x_5^{13}\quad x_1x_2^{7}x_3^{11}x_4^{4}x_5^{12}\quad x_1x_2^{6}x_3^{11}x_4^{5}x_5^{12}\quad x_1x_2^{7}x_3^{10}x_4^{5}x_5^{12} \\  
& x_1^{3}x_2^{5}x_3^{2}x_4^{12}x_5^{13}\quad x_1^{3}x_2^{3}x_3^{4}x_4^{12}x_5^{13}\quad x_1^{3}x_2^{3}x_3^{5}x_4^{12}x_5^{12}\quad x_1^{3}x_2^{4}x_3^{11}x_4^{4}x_5^{13} \\  
& x_1^{3}x_2^{5}x_3^{10}x_4^{4}x_5^{13}\quad x_1^{3}x_2^{4}x_3^{11}x_4^{5}x_5^{12}\quad x_1^{3}x_2^{7}x_3^{9}x_4^{4}x_5^{12}\quad x_1^{3}x_2^{5}x_3^{10}x_4^{5}x_5^{12}.
\end{align*}
\end{lems}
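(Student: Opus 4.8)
The plan is to verify the definition of strict inadmissibility directly for each listed monomial $x$, that is, to produce monomials $y_j < x$ and polynomials $q_u \in P_5$ with
$$x = \sum_{j} y_j + \sum_{u=1}^{2^s-1} Sq^u(q_u), \qquad s = \max\{i : \omega_i(x) > 0\}.$$
For the monomials of parts (i) and (ii) one has $\omega(x) = (3,2,3)$, so $s = 3$ and only $Sq^1, Sq^2, Sq^4$ can occur; for those of part (iii) one has $\omega(x) = \omega_{(3)} = (3,2,3,2)$, so $s = 4$ and $Sq^1, Sq^2, Sq^4, Sq^8$ occur. Since every monomial of $P_5^-(\omega(x))$ has weight vector strictly below $\omega(x)$ and so is $< x$ in the order of Definition \ref{defn3}, it is enough to exhibit $q_u$ for which the Cartan expansion of $\sum_u Sq^u(q_u)$ reproduces $x$ together with an error that is a sum of monomials each lying in $P_5^-(\omega(x))$ or otherwise below $x$; the error terms are then collected as the $y_j$.

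Next I would cut down the case count using the symmetry of the weight vector under $\Sigma_5 \subset GL_5$. Each $g \in \Sigma_5$ is an $\mathcal A$-algebra map that commutes with the Steenrod squares and preserves $P_5^-(\omega)$, so the subspace $\mathcal A^+P_5 + P_5^-(\omega)$ is $\Sigma_5$-stable. For the monomials of parts (i) and (iii) the computation will in fact place $x$ inside $\mathcal A^+P_5 + P_5^-(\omega(x))$ with no correction term of the same weight, exactly as in the proofs of Lemmas \ref{inad62} and \ref{inab61}; this membership is preserved by every permutation, which is why all permutations of those monomials are simultaneously strictly inadmissible and only one representative per shape needs to be treated. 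In part (ii), by contrast, the defining relation genuinely uses monomials $y_j$ of the same weight as $x$, and the order hypotheses on $(j,\ell,t)$ are precisely what guarantees that $x$ is the maximal monomial occurring, so that $y_j < x$; the permutations violating these hypotheses are excluded because for them $x$ would appear among the $y_j$ rather than as the leading term.

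The real content is the explicit choice of the $q_u$, which I would obtain by the standard descent of the hit-problem calculus: beginning with $x$, one repeatedly applies the Cartan formula to rewrite $x$ as $Sq^{2^i}$ of a monomial with one exponent lowered by $2^i$, arranging the auxiliary summands so that all unwanted monomials of weight $\omega(x)$ cancel in pairs and every surviving discrepancy falls to strictly smaller weight or below $x$. A useful shortcut is Theorem \ref{dlcb1}(ii): several degree-$35$ monomials of part (iii) factor as $w\,y^{2^3}$ with $w$ one of the strictly inadmissible weight-$(3,2,3)$ monomials of parts (i)--(ii) satisfying $\omega_3(w)\ne 0$ and $\omega_i(w)=0$ for $i>3$, whence their strict inadmissibility follows at once; only the shapes that do not so factor require a hand computation, and the degree-$19$ monomials of (i)--(ii) serve as the reusable building blocks.

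The main obstacle is this construction of the $q_u$ rather than its verification: checking a proposed identity is a routine Cartan expansion, but finding $q_u$ that annihilate every weight-$\omega(x)$ monomial other than $x$ requires a careful case analysis. I expect the longest computations to be the degree-$35$ monomials of part (iii) that do not factor through a block of parts (i)--(ii), where the four corrective families $Sq^1, Sq^2, Sq^4, Sq^8$ interact; the degree-$19$ monomials of parts (i) and (ii) should be short and, once established, will simplify the remaining cases.
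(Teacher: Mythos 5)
Your core strategy is the paper's own: for one representative $x$ of each shape, produce an explicit relation $x=\sum_{u}Sq^{2^u}(q_u)$ modulo $P_5^-(\omega(x))$, note that the lower-weight error terms are $<x$, and then use $\Sigma_5$-equivariance (permutations commute with the $Sq^u$ and preserve $\mathcal A^+P_5+P_5^-(\omega)$) to conclude that all permutations are strictly inadmissible. This is exactly how the published proof runs: it writes out such expansions for the two representatives $x_1x_2^{2}x_3^{7}x_4^{12}x_5^{13}$ and $x_1x_2^{3}x_3^{6}x_4^{12}x_5^{13}$ using $Sq^1,Sq^2,Sq^4,Sq^8$ modulo $P_5^-(\omega_{(3)})$, concludes that all their permutations are strictly inadmissible, and leaves the remaining shapes to ``similar computations''. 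Your reading of why parts (i) and (iii) propagate to all permutations while part (ii) must carry ordering hypotheses is also sound.

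However, your proposed case-reduction via Theorem \ref{dlcb1}(ii) is a genuine error: \emph{none} of the sixteen monomials listed in part (iii) factors as $w\,z^{2^3}$ with $w$ covered by parts (i)--(ii). Since no exponent reaches $16$, the only candidate factorization subtracts $8$ from the two exponents that are $\geqslant 8$, and in every case the resulting degree-$19$ factor is not among the asserted monomials. Either its exponent multiset is wrong --- e.g.\ $x_1x_2^{2}x_3^{7}x_4^{12}x_5^{13}$ yields $x_1x_2^{2}x_3^{7}x_4^{4}x_5^{5}$, with exponents $\{1,2,4,5,7\}$ matching no shape in (i)--(ii) --- or the multiset matches but the forced permutation violates the ordering hypotheses: $x_1x_2^{3}x_3^{6}x_4^{12}x_5^{13}=\big(x_1x_2^{3}x_3^{6}x_4^{4}x_5^{5}\big)(x_4x_5)^{8}$, and the factor realizes the pattern $x_j^3x_\ell^4x_tx_u^5x_v^6$ only with $(j,\ell,t)=(2,4,1)$, contradicting $j<\ell<t$, so part (ii) asserts nothing about it. The same check fails for all sixteen representatives, and this is no accident: these monomials are listed in part (iii) precisely because they cannot be reached from previously established strictly inadmissible monomials via Theorem \ref{dlcb1} (otherwise the lemma would not need them, as its whole purpose is to feed the factorization argument in Proposition \ref{mdd63}). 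Consequently the shortcut saves nothing; every shape in (i)--(iii) requires the hand construction of the $q_u$, which your proposal only describes how to search for but never carries out --- and those explicit expansions are the entire mathematical content of the lemma.
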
 
\begin{proof} We prove the lemma for $x = x_1x_2^{2}x_3^{7}x_4^{12}x_5^{13}$ and $y = x_1x_2^{3}x_3^{6}x_4^{12}x_5^{13}$. The others are proved by the similar computations. A direct computation shows
\begin{align*} x &= Sq^1(x_1^{2}x_2x_3^{7}x_4^{5}x_5^{19} + x_1^{2}x_2x_3^{9}x_4^{3}x_5^{19} + x_1^{2}x_2x_3^{7}x_4^{3}x_5^{21})\\
&\quad + Sq^2(x_1x_2^{4}x_3^{7}x_4^{10}x_5^{11} + x_1x_2x_3^{7}x_4^{5}x_5^{19} + x_1x_2x_3^{9}x_4^{3}x_5^{19} + x_1x_2x_3^{7}x_4^{3}x_5^{21})\\
&\quad + Sq^4(x_1x_2^{2}x_3^{11}x_4^{6}x_5^{11} + x_1x_2^{2}x_3^{6}x_4^{3}x_5^{19} + x_1x_2^{2}x_3^{11}x_4^{3}x_5^{14})\\
&\quad + Sq^8(x_1x_2^{2}x_3^{7}x_4^{6}x_5^{11} + x_1x_2^{2}x_3^{7}x_4^{3}x_5^{14}) \ \ \text{ mod}\big(P_5^-(\omega_{(3)})\big)
\end{align*}
Hence, all permutations of $x$ are strictly inadmissible. We have
\begin{align*} 
y &= Sq^1\big(x_1x_2^{3}x_3^{2}x_4^{9}x_5^{19} + x_1x_2^{3}x_3^{3}x_4^{12}x_5^{15} + x_1x_2^{3}x_3^{5}x_4^{10}x_5^{15}\\
&\quad + x_1x_2^{3}x_3^{9}x_4^{2}x_5^{19} + x_1x_2^{3}x_3^{10}x_4^{5}x_5^{15} + x_1x_2^{3}x_3^{12}x_4^{3}x_5^{15} + x_1x_2^{6}x_3^{3}x_4^{3}x_5^{21}\\
&\quad + x_1x_2^{8}x_3^{3}x_4^{3}x_5^{19} + x_1x_2^{12}x_3^{3}x_4^{3}x_5^{15} + x_1^{2}x_2^{5}x_3^{3}x_4^{9}x_5^{15} + x_1^{2}x_2^{5}x_3^{9}x_4^{3}x_5^{15}\\
&\quad + x_1^{4}x_2^{3}x_3^{3}x_4^{9}x_5^{15} + x_1^{4}x_2^{3}x_3^{9}x_4^{3}x_5^{15} + x_1^{4}x_2^{5}x_3^{3}x_4^{3}x_5^{19} + x_1^{4}x_2^{9}x_3^{3}x_4^{3}x_5^{15}\big)\\
&\quad +  Sq^2\big(x_1x_2^{2}x_3^{3}x_4^{12}x_5^{15} + x_1x_2^{2}x_3^{5}x_4^{10}x_5^{15} + x_1x_2^{4}x_3^{10}x_4^{3}x_5^{15}\\
&\quad + x_1x_2^{5}x_3^{2}x_4^{10}x_5^{15} + x_1x_2^{5}x_3^{6}x_4^{6}x_5^{15} + x_1x_2^{5}x_3^{6}x_4^{10}x_5^{11} + x_1x_2^{5}x_3^{10}x_4^{2}x_5^{15}\\
&\quad + x_1x_2^{10}x_3^{2}x_4^{5}x_5^{15} + x_1x_2^{10}x_3^{5}x_4^{2}x_5^{15} + x_1^{2}x_2^{2}x_3^{5}x_4^{9}x_5^{15} + x_1^{2}x_2^{2}x_3^{9}x_4^{5}x_5^{15}\\
&\quad + x_1^{2}x_2^{3}x_3^{3}x_4^{10}x_5^{15} + x_1^{2}x_2^{3}x_3^{10}x_4^{3}x_5^{15} + x_1^{2}x_2^{5}x_3^{2}x_4^{9}x_5^{15} + x_1^{2}x_2^{5}x_3^{9}x_4^{2}x_5^{15}\\
&\quad + x_1^{2}x_2^{6}x_3^{3}x_4^{3}x_5^{19} + x_1^{2}x_2^{9}x_3^{2}x_4^{5}x_5^{15} + x_1^{2}x_2^{9}x_3^{5}x_4^{2}x_5^{15} + x_1^{2}x_2^{10}x_3^{3}x_4^{3}x_5^{15}\\
&\quad + x_1^{4}x_2^{3}x_3^{2}x_4^{9}x_5^{15} + x_1^{4}x_2^{3}x_3^{9}x_4^{2}x_5^{15} + x_1^{4}x_2^{6}x_3^{3}x_4^{5}x_5^{15} + x_1^{4}x_2^{6}x_3^{5}x_4^{3}x_5^{15}\big)\\
&\quad + Sq^4\big(x_1x_2^{3}x_3^{2}x_4^{10}x_5^{15} + x_1x_2^{3}x_3^{6}x_4^{6}x_5^{15} + x_1x_2^{3}x_3^{10}x_4^{2}x_5^{15}\\
&\quad + x_1x_2^{3}x_3^{10}x_4^{6}x_5^{11} + x_1x_2^{6}x_3^{2}x_4^{3}x_5^{19} + x_1x_2^{6}x_3^{3}x_4^{2}x_5^{19} + x_1x_2^{10}x_3^{2}x_4^{3}x_5^{15}\\
&\quad + x_1x_2^{10}x_3^{3}x_4^{2}x_5^{15} + x_1x_2^{10}x_3^{3}x_4^{3}x_5^{14} + x_1^{2}x_2^{2}x_3^{3}x_4^{5}x_5^{19} + x_1^{2}x_2^{2}x_3^{3}x_4^{9}x_5^{15}\\ 
&\quad + x_1^{2}x_2^{2}x_3^{5}x_4^{3}x_5^{19} + x_1^{2}x_2^{2}x_3^{9}x_4^{3}x_5^{15} + x_1^{2}x_2^{3}x_3^{2}x_4^{5}x_5^{19} + x_1^{2}x_2^{3}x_3^{3}x_4^{6}x_5^{17}\\
&\quad + x_1^{2}x_2^{3}x_3^{5}x_4^{2}x_5^{19} + x_1^{2}x_2^{3}x_3^{6}x_4^{3}x_5^{17} + x_1^{2}x_2^{5}x_3^{2}x_4^{3}x_5^{19} + x_1^{2}x_2^{5}x_3^{3}x_4^{2}x_5^{19}\\
&\quad + x_1^{2}x_2^{6}x_3^{3}x_4^{3}x_5^{17} + x_1^{2}x_2^{6}x_3^{3}x_4^{5}x_5^{15} + x_1^{2}x_2^{6}x_3^{5}x_4^{3}x_5^{15} + x_1^{2}x_2^{9}x_3^{2}x_4^{3}x_5^{15}\\
&\quad + x_1^{2}x_2^{9}x_3^{3}x_4^{2}x_5^{15}\big) + Sq^8\big(x_1x_2^{3}x_3^{6}x_4^{6}x_5^{11} + x_1x_2^{6}x_3^{3}x_4^{3}x_5^{14}\big) \ \text{ mod}\big(P_5^-(\omega_{(3)})\big).
\end{align*}
This equality implies that all permutations of $y$ are strictly inadmissible.
\end{proof}

\begin{proof}[Proof of Proposition \ref{mdd63}] Let $x$ be an admissible monomial such that $\omega(x) = \omega_{(3)}$. Then $x = x_jx_\ell x_ty^2$ with $1 \leqslant j < \ell < t \leqslant 5$ and $y \in B_5(2,3,2)$. 
By a direct computation using Proposition \ref{md611}, we see that  there is a monomial $w$ which is given in one of Lemmas \ref{bdd21}, \ref{inad63} such that $x_jx_\ell x_ty^2= wz_1^{2^{u}}$ with suitable monomial $z_1 \in P_5$, and $u = \max\{j \in \mathbb Z : \omega_j(w) >0\}$. By Theorem \ref{dlcb1}, $x = x_jx_\ell x_ty^2$ is inadmissible. Hence, $QP_5(\omega_{(3)}) = 0.$ 
\end{proof}

Consider the monomials $a_t = a_{35,t},\ 161 \leqslant t \leqslant 210$ as given in Subsection \ref{ss65}.

\begin{props}\label{mdd64} The $\mathbb F_2$-vector space $QP_5(\omega_{(4)})$ is an $GL_5$-module generated by the class $[a_{203}]_{\omega_{(4)}}$ and
$B_5(\omega_{(4)}) = \{a_{t} : 161 \leqslant t \leqslant 210\}.$
Consequently, $\dim QP_5(\omega_{(4)}) = 50$.
\end{props}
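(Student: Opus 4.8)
The plan is to follow the three-step template already used for Propositions \ref{mdd61}--\ref{mdd63}, and then add a separate argument for the $GL_5$-generation claim; write $\omega = \omega_{(4)} = (3,4,2,2)$ throughout. First I would settle the spanning statement $B_5(\omega_{(4)}) \subset \{a_t : 161 \leqslant t \leqslant 210\}$. Since $\omega_1 = 3$, every admissible $x$ with $\omega(x) = \omega$ factors as $x = x_jx_\ell x_ty^2$ with $1 \leqslant j < \ell < t \leqslant 5$, and Theorem \ref{dlcb1} forces $y$ to be admissible of degree $16$ with $\omega(y) = (4,2,2)$; the set $B_5(4,2,2)$ is already available from Subsection \ref{subs51} and Proposition \ref{md611}. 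I would therefore run over all products $x_jx_\ell x_tz^2 \in P_5^+$ with $z \in B_5(4,2,2)$ and, for each one not appearing among $a_{161},\dots,a_{210}$, exhibit a strictly inadmissible factor $w$ so that $x_jx_\ell x_tz^2 = wz_1^{2^u}$ with $u = \max\{i : \omega_i(w) > 0\}$; by Theorem \ref{dlcb1} this makes $x_jx_\ell x_tz^2$ inadmissible. The factors $w$ come from the strictly inadmissible lists already established (Lemmas \ref{bdd21} and \ref{inad63}), supplemented by a short new list proved by the same explicit $Sq$-computations as in Lemma \ref{inab61}. This shows that $\{[a_t]_\omega : 161 \leqslant t \leqslant 210\}$ spans $QP_5(\omega)$.

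Next I would prove linear independence. Assume a relation $\mathcal S = \sum_{t=161}^{210}\gamma_ta_t \equiv_\omega 0$ with $\gamma_t \in \mathbb F_2$. By Lemma \ref{bdm1} each of the ten homomorphisms $p_{(i;j)}$, $1 \leqslant i < j \leqslant 5$, carries $QP_5(\omega)$ into $QP_4(\omega)$, and I would express every $p_{(i;j)}(\mathcal S)$ in the admissible basis of $QP_4(\omega)$ taken from \cite{su1}. Reading off the resulting relations in turn, exactly as in the $(2,3,2)$ computation inside the proof of Proposition \ref{md611}, forces all $\gamma_t = 0$; should a few totally symmetric combinations survive every $p_{(i;j)}$, I would eliminate them with one explicit relation modulo $\mathcal A^+P_5 + P_5^-(\omega)$. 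Together with the spanning step this yields $B_5(\omega_{(4)}) = \{a_t : 161 \leqslant t \leqslant 210\}$ and $\dim QP_5(\omega_{(4)}) = 50$.

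Finally, for the $GL_5$-module structure I would use that $GL_5$ is generated by $g_1,\dots,g_5$, where $g_1,\dots,g_4$ generate $\Sigma_5$ and $g_5$ sends $x_1 \mapsto x_1+x_2$. First I would compute the $\Sigma_5$-orbit of $[a_{203}]_\omega$, and then the classes $g_5(\sigma a_{203})$ for $\sigma \in \Sigma_5$, reducing each to a sum of basis classes $[a_t]_\omega$ by the admissibility reductions of the spanning step. Partitioning $\{a_{161},\dots,a_{210}\}$ into $\Sigma_5$-orbits, the point is to check that the $g_5$-action links the orbit of $a_{203}$ to every other orbit, so that the $GL_5$-submodule $\langle [GL_5(a_{203})]_\omega\rangle$ contains all fifty basis classes; by the independence step it then equals $QP_5(\omega_{(4)})$.

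The heaviest part is the bookkeeping of the independence step (ten projections against fifty unknowns), but it is routine. The genuinely delicate point is the last paragraph: one must verify that a single $g_5$-image, after reduction to admissibles modulo $\mathcal A^+P_5 + P_5^-(\omega)$, already meets representatives of every $\Sigma_5$-orbit. This is where the explicit hit relations are essential, and it is also what pins down $a_{203}$ as a valid single generator, since a class lying in a too-symmetric orbit would fail to reach the rest; the choice must therefore be justified by the computation itself.
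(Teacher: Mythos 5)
Your spanning step coincides with the paper's: factor an admissible $x$ with $\omega(x)=\omega_{(4)}$ as $x_jx_\ell x_t y^2$ with $y \in B_5(4,2,2)$, and eliminate every product outside $\{a_{161},\dots,a_{210}\}$ by exhibiting a strictly inadmissible factor and invoking Theorem \ref{dlcb1} (the paper's lists are Lemmas \ref{inad64} and \ref{ina64}). Your outline of the generation claim is also close to the paper's, which splits $QP_5(\omega_{(4)})$ into three $\Sigma_5$-summands generated by $a_{162}$, $a_{175}$, $a_{203}$ and links them by explicit relations such as $a_{175} \equiv_{\omega_{(4)}} g_5(a_{203}) + a_{203}$.

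The linear independence step, however, has a fatal gap: your only tool is the family of projections $p_{(i;j)}\colon QP_5(\omega_{(4)}) \to QP_4(\omega_{(4)})$, and the target space is zero. All the monomials $a_t$, $161 \leqslant t \leqslant 210$, lie in $P_5^+$, and the paper records (Section \ref{s4}, Subsection \ref{ss65}, based on \cite{su1}) that $QP_5(\omega_{(4)}) = QP_5^+(\omega_{(4)})$ and that every admissible monomial of degree $35$ in $P_4$ has weight vector $\omega_{(1)} = (3,2,1,1,1)$ --- this is the content of the statement $f(B_4(35)) = QP_5^0(\omega_{(1)})$. Hence $QP_4(\omega_{(4)}) = 0$, so $p_{(i;j)}(\mathcal S) \equiv_{\omega_{(4)}} 0$ holds identically for arbitrary coefficients $\gamma_t$, and your ten projections impose no constraints whatsoever: it is not ``a few totally symmetric combinations'' that survive, but all fifty classes. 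This is exactly why the paper abandons the projection method here (it works for $(2,3,2)$ in Proposition \ref{md611} and for $\omega_{(1)}$ in Proposition \ref{mdd61}, where the $P_4$-targets are nonzero) and instead proves independence by applying elements of $GL_5$ to the relation $\mathcal S \equiv_{\omega_{(4)}} 0$, exploiting transitivity of the $\Sigma_5$-action on the orbit bases, and --- crucially --- Lemma \ref{noz1}, which establishes $[a_{161}]_{\omega_{(4)}} \ne 0$ by a delicate direct analysis of a hypothetical hit equation under $(Sq^2)^3$. Your proposal contains no mechanism for proving that even one class $[a_t]_{\omega_{(4)}}$ is nonzero, and without such a non-vanishing statement the group-action relations (which only yield identities of the form $\gamma_{181}a_{179} \equiv_{\omega_{(4)}} 0$) cannot force any coefficient to vanish; the same defect undercuts your final paragraph, since the generation argument there explicitly invokes the independence step.
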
 

We need the following lemmas.

\begin{lems}\label{inad64} The following monomials are strictly inadmissible:

{\rm i)} $x_j^2x_\ell x_t^2x_u^3x_v^3,\  j < \ell;\ x_j^2x_\ell^3 x_t^3x_u^3.$

Here $(j,\ell, t,u,v)$ is a permutation of $(1,2,3,4,5)$.

{\rm ii)} All permutations of the monomials: 
\begin{align*}
&x_1x_2^{2}x_3^{2}x_4^{15}x_5^{15}\ \  x_1x_2^{2}x_3^{3}x_4^{14}x_5^{15}\ \  x_1x_2^{2}x_3^{7}x_4^{10}x_5^{15}\ \  x_1x_2^{3}x_3^{3}x_4^{14}x_5^{14}\\ 
&x_1x_2^{3}x_3^{6}x_4^{10}x_5^{15}\ \  x_1x_2^{7}x_3^{7}x_4^{10}x_5^{10}\ \  x_1^{2}x_2^{3}x_3^{3}x_4^{13}x_5^{14}\ \  x_1^{2}x_2^{3}x_3^{5}x_4^{10}x_5^{15}\\ 
&x_1^{2}x_2^{2}x_3^{3}x_4^{13}x_5^{15}\ \  x_1^{2}x_2^{2}x_3^{7}x_4^{9}x_5^{15}\ \  x_1^{2}x_2^{7}x_3^{7}x_4^{9}x_5^{10}.
\end{align*} 
\end{lems}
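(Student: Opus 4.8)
The strategy mirrors the proofs of Lemmas \ref{inad62} and \ref{inad63}: for each listed monomial the plan is to exhibit it explicitly as a sum of strictly smaller monomials together with images of Steenrod operations, working modulo $P_5^-(\omega_{(4)})$. Recall that for a monomial $x$ of weight vector $\omega$ with $s = \max\{i : \omega_i(x) > 0\}$, strict inadmissibility means an identity $x = \sum_j y_j + \sum_{u=1}^{2^s-1} Sq^u(q_u)$ in which every $y_j < x$. Since all the monomials in part (ii) have weight vector $\omega_{(4)} = (3,4,2,2)$, one has $s = 4$, and every monomial of $P_5^-(\omega_{(4)})$ has weight vector strictly below $\omega_{(4)}$, hence is automatically $< x$ by clause (i) of Definition \ref{defn3}. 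Thus it suffices to produce, for each $x$, polynomials $q_u$ with $\sum_{u=1}^{15} Sq^u(q_u) \equiv x \pmod{P_5^-(\omega_{(4)})}$; in practice only the operations $Sq^1, Sq^2, Sq^4, Sq^8$ appear, the seeds $q_u$ being chosen in degrees that force all other $Sq^u$ to contribute only lower-weight terms.

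For part (i) I would treat the two families $x_j^2x_\ell x_t^2x_u^3x_v^3$ (with $j<\ell$) and $x_j^2x_\ell^3 x_t^3x_u^3$ directly. Both have degree $11$ and weight vector $(3,4)$, so here $s = 2$ and only $Sq^1, Sq^2, Sq^3$ are involved; the identity is obtained by a short Cartan-formula computation entirely analogous to Lemma \ref{bdd21} and Lemma \ref{inad16}(iv). These serve as local obstructions of weight $(3,4)$: multiplying by $y^{4}$ with $\omega(y) = (2,2)$ produces weight $(3,4,2,2) = \omega_{(4)}$, so by Theorem \ref{dlcb1}(ii) they propagate to the degree-$35$ monomials, which is how they will be used in the proof of Proposition \ref{mdd64}.

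For part (ii) I would verify a single representative of each permutation class, following the bookkeeping of Lemma \ref{inad63}: one writes down candidate summands $Sq^{2^i}(q_{2^i})$ whose Cartan expansion has $x$ as its leading term (largest in the order $<$ among monomials of weight $\omega_{(4)}$) and arranges the remaining summands to cancel in pairs or to fall into $P_5^-(\omega_{(4)})$. Once $x \equiv \sum_{i} Sq^{2^i}(q_{2^i}) \pmod{P_5^-(\omega_{(4)})}$ is established, strict inadmissibility of every permutation $\sigma(x)$ follows immediately: applying $\sigma \in \Sigma_5$ gives $\sigma(x) = \sum_j \sigma(y_j) + \sum_i Sq^{2^i}(\sigma(q_{2^i}))$, and since the weight vector is permutation-invariant we still have $\omega(\sigma(y_j)) < \omega(\sigma(x))$, whence $\sigma(y_j) < \sigma(x)$.

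The genuine difficulty is purely computational: for each of the eleven monomials in part (ii) one must locate the correct seeds $q_{2^i}$, and the resulting expansions are long alternating sums, as the displayed formula for $x_1x_2^3x_3^6x_4^{12}x_5^{13}$ in Lemma \ref{inad63} already illustrates. I expect the main obstacle to be controlling these cancellations so that, after reduction modulo $P_5^-(\omega_{(4)})$, exactly the target monomial survives. This I would manage by organizing the seeds according to which variable carries the odd exponent and repeatedly invoking the unstable relations $Sq^i(z) = z^2$ when $\deg z = i$ and $Sq^i(z) = 0$ when $i > \deg z$, which keep the degrees of the $q_{2^i}$ constrained and make the elimination of unwanted terms systematic.
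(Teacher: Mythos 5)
Your proposal is correct and takes essentially the same route as the paper: the paper proves the lemma by exhibiting, for representative monomials (it treats $x_1x_2^{2}x_3^{7}x_4^{10}x_5^{15}$, $x_1x_2^{3}x_3^{3}x_4^{14}x_5^{14}$ and $x_1x_2^{7}x_3^{7}x_4^{10}x_5^{10}$ explicitly), an identity $x = Sq^1(q_1)+Sq^2(q_2)+Sq^4(q_4)+Sq^8(q_8)$ modulo $P_5^-(\omega_{(4)})$, and then uses exactly your observation that all error terms have strictly smaller weight vector, so strict inadmissibility transfers to every permutation. The only part you leave unexecuted is the explicit determination of the seeds $q_{2^i}$ (and the short degree-$11$ computation for part (i)), which constitutes the bulk of the paper's proof but requires no further ideas.
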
 
\begin{proof} We prove the lemma for $x = x_1x_2^{2}x_3^{7}x_4^{10}x_5^{15}$, $y = x_1x_2^{3}x_3^{3}x_4^{14}x_5^{14}$ and $z = x_1x_2^{7}x_3^{7}x_4^{10}x_5^{10}$. We have
\begin{align*}
x &= Sq^1(x_1^{2}x_2x_3^{7}x_4^{9}x_5^{15}) + Sq^2(x_1x_2x_3^{7}x_4^{9}x_5^{15} + x_1x_2x_3^{3}x_4^{9}x_5^{19} + x_1x_2x_3^{7}x_4^{3}x_5^{21}\\
&\quad + x_1x_2x_3^{3}x_4^{3}x_5^{25}) + Sq^4(x_1x_2x_3^{5}x_4^{9}x_5^{15} + x_1x_2x_3^{11}x_4^{5}x_5^{13} + x_1x_2x_3^{5}x_4^{3}x_5^{21}\\
&\quad + x_1x_2x_3^{3}x_4^{5}x_5^{21}) + Sq^8(x_1x_2x_3^{7}x_4^{5}x_5^{13}) \text{ mod}\big(P_5^-(\omega_{(4)})\big).
\end{align*} 
So, all permutations of $x$ are strictly inadmissible.
\begin{align*}
y &= Sq^1\big(x_1x_2^{3}x_3^{10}x_4^{13}x_5^{7} + x_1^{2}x_2^{3}x_3^{9}x_4^{13}x_5^{7} + x_1x_2^{3}x_3^{3}x_4^{14}x_5^{13} + x_1x_2^{3}x_3^{5}x_4^{14}x_5^{11}\\
&\quad + x_1x_2^{3}x_3^{3}x_4^{13}x_5^{14}\big) + Sq^2\big(x_1x_2^{5}x_3^{3}x_4^{7}x_5^{17} + x_1x_2^{5}x_3^{3}x_4^{11}x_5^{13}\\
&\quad + x_1x_2^{5}x_3^{5}x_4^{11}x_5^{11} + x_1x_2^{5}x_3^{9}x_4^{11}x_5^{7}\big) +  Sq^4\big(x_1x_2^{3}x_3^{3}x_4^{7}x_5^{17}\\
&\quad + x_1x_2^{3}x_3^{3}x_4^{11}x_5^{13} + x_1x_2^{3}x_3^{5}x_4^{11}x_5^{11} + x_1x_2^{3}x_3^{9}x_4^{11}x_5^{7} + x_1x_2^{3}x_3^{5}x_4^{13}x_5^{9}\\
&\quad + x_1^{2}x_2^{3}x_3^{6}x_4^{13}x_5^{7}\big) + Sq^8\big(x_1x_2^{3}x_3^{5}x_4^{9}x_5^{9}\big),\ \text{ mod}\big(P_5^-(\omega_{(4)})\big).
\end{align*} 
Hence, all permutations of $y$ are strictly inadmissible.
\begin{align*}
z &= Sq^1\big(x_1x_2^{7}x_3^{7}x_4^{9}x_5^{10} + x_1x_2^{7}x_3^{11}x_4^{3}x_5^{12} + x_1x_2^{7}x_3^{13}x_4^{3}x_5^{10} + x_1x_2^{11}x_3^{7}x_4^{3}x_5^{12}\\
&\quad + x_1x_2^{13}x_3^{7}x_4^{3}x_5^{10} + x_1^{2}x_2^{7}x_3^{13}x_4^{3}x_5^{9} + x_1^{2}x_2^{13}x_3^{7}x_4^{3}x_5^{9} + x_1^{4}x_2^{7}x_3^{11}x_4^{3}x_5^{9}\\
&\quad + x_1^{4}x_2^{11}x_3^{7}x_4^{3}x_5^{9}\big) + Sq^2\big(x_1x_2^{7}x_3^{11}x_4^{5}x_5^{9} + x_1x_2^{11}x_3^{7}x_4^{5}x_5^{9} + x_1^{2}x_2^{7}x_3^{11}x_4^{3}x_5^{10}\\
&\quad + x_1^{2}x_2^{11}x_3^{7}x_4^{3}x_5^{10}\big) + Sq^4\big(x_1x_2^{7}x_3^{11}x_4^{3}x_5^{9} + x_1x_2^{9}x_3^{13}x_4^{3}x_5^{5} + x_1x_2^{11}x_3^{7}x_4^{3}x_5^{9}\\
&\quad + x_1x_2^{13}x_3^{9}x_4^{3}x_5^{5} + x_1^{2}x_2^{7}x_3^{7}x_4^{5}x_5^{10}\big) +Sq^8\big( x_1x_2^{9}x_3^{9}x_4^{3}x_5^{5}\big),\ \text{ mod}\big(P_5^-(\omega_{(4)})\big).
\end{align*} 
Hence, all permutations of $z$ are strictly inadmissible.
\end{proof}
\begin{lems}\label{ina64} The following monomials are strictly inadmissible:
\begin{align*}
 &x_1x_2^{3}x_3^{6}x_4^{14}x_5^{11}\ \  x_1x_2^{3}x_3^{14}x_4^{6}x_5^{11}\ \  x_1x_2^{3}x_3^{14}x_4^{7}x_5^{10}\ \  x_1x_2^{7}x_3^{10}x_4^{3}x_5^{14}\ \  x_1^{3}x_2x_3^{6}x_4^{14}x_5^{11}\\ 
&x_1^{3}x_2x_3^{14}x_4^{6}x_5^{11}\ \  x_1^{3}x_2x_3^{14}x_4^{7}x_5^{10}\ \  x_1^{3}x_2^{5}x_3^{2}x_4^{14}x_5^{11}\ \  x_1^{3}x_2^{5}x_3^{6}x_4^{10}x_5^{11}\ \  x_1^{3}x_2^{5}x_3^{6}x_4^{11}x_5^{10}\\ 
&x_1^{3}x_2^{5}x_3^{7}x_4^{10}x_5^{10}\ \  x_1^{3}x_2^{5}x_3^{10}x_4^{3}x_5^{14}\ \  x_1^{3}x_2^{5}x_3^{10}x_4^{6}x_5^{11}\ \  x_1^{3}x_2^{5}x_3^{10}x_4^{7}x_5^{10}\ \  x_1^{3}x_2^{5}x_3^{10}x_4^{14}x_5^{3}\\ 
&x_1^{3}x_2^{5}x_3^{14}x_4^{2}x_5^{11}\ \  x_1^{3}x_2^{5}x_3^{14}x_4^{3}x_5^{10}\ \  x_1^{3}x_2^{5}x_3^{14}x_4^{10}x_5^{3}\ \  x_1^{3}x_2^{5}x_3^{14}x_4^{11}x_5^{2}\ \  x_1^{3}x_2^{13}x_3^{2}x_4^{6}x_5^{11}\\ 
&x_1^{3}x_2^{13}x_3^{2}x_4^{7}x_5^{10}\ \  x_1^{3}x_2^{13}x_3^{3}x_4^{6}x_5^{10}\ \  x_1^{3}x_2^{13}x_3^{6}x_4^{2}x_5^{11}\ \  x_1^{3}x_2^{13}x_3^{6}x_4^{3}x_5^{10}\ \  x_1^{3}x_2^{13}x_3^{6}x_4^{10}x_5^{3}\\ 
&x_1^{3}x_2^{13}x_3^{6}x_4^{11}x_5^{2}\ \  x_1^{3}x_2^{13}x_3^{7}x_4^{2}x_5^{10}\ \  x_1^{3}x_2^{13}x_3^{7}x_4^{10}x_5^{2}\ \  x_1^{7}x_2x_3^{10}x_4^{3}x_5^{14}\ \  x_1^{7}x_2^{9}x_3^{2}x_4^{3}x_5^{14}\\ 
&x_1^{7}x_2^{9}x_3^{3}x_4^{2}x_5^{14}\ \  x_1^{7}x_2^{9}x_3^{3}x_4^{6}x_5^{10}\ \  x_1^{7}x_2^{9}x_3^{3}x_4^{14}x_5^{2}.
\end{align*}
\end{lems}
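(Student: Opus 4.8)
The plan is to establish strict inadmissibility for each monomial in the list in exactly the manner of the preceding lemmas (Lemmas \ref{inad64} and \ref{inad63}): for a given monomial $x = x_1^{a_1}x_2^{a_2}x_3^{a_3}x_4^{a_4}x_5^{a_5}$ I would exhibit an explicit congruence
$$x \equiv \sum_{i=0}^{3} Sq^{2^i}(p_i) \pmod{P_5^-(\omega_{(4)})},$$
with suitable $p_i \in P_5$, interpreted as an equality in $P_5/P_5^-(\omega_{(4)})$. Every monomial in the list has weight vector $\omega_{(4)} = (3,4,2,2)$, so $s = \max\{i : \omega_i(x) > 0\} = 4$ and $2^s-1 = 15$; hence the squares $Sq^1, Sq^2, Sq^4, Sq^8$ are all permitted in the definition of strict inadmissibility. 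Because we reduce modulo $P_5^-(\omega_{(4)})$, every monomial of strictly smaller weight produced by the Cartan expansion is automatically discarded, and these lower-weight terms are precisely the monomials $y_j < x$ of the definition (they lie in $P_5^-(\omega_{(4)}) \subset \mathcal A^+P_5 + P_5^-(\omega_{(4)})$ and are $< x$ by clause (i) of Definition \ref{defn3}). Thus it suffices to choose the $p_i$ so that the right-hand side, expanded by the Cartan formula, reproduces $x$ together with terms of strictly smaller weight only — or, should any same-weight term survive, one of strictly smaller exponent vector $\sigma$, which is again $< x$ by clause (ii).

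To locate the polynomials $p_i$ I would read off the dyadic digits of the exponents of $x$ and take each $p_i$ to be a monomial (or short sum) of degree $35 - 2^i$ obtained from $x$ by lowering exponents, arranged so that the ``diagonal'' Cartan term of $Sq^{2^i}(p_i)$ recovers $x$ while the off-diagonal terms either drop the weight or cancel against contributions from the other three squares. For a representative case such as the first entry $x_1x_2^3x_3^6x_4^{14}x_5^{11}$, this amounts to writing out $Sq^1, Sq^2, Sq^4, Sq^8$ applied to explicit polynomials of degrees $34, 33, 31, 27$ and checking by direct expansion that their sum equals $x$ modulo $P_5^-(\omega_{(4)})$; this is the exact format of the displayed identities in the proofs of Lemmas \ref{inad64} and \ref{inad63}. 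I would record one or two such witnesses in full detail and state that the remaining monomials are handled by the same method.

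The main obstacle is that there is no closed formula for the $p_i$: each must be found essentially by hand, and for a given $x$ several trial choices of which exponents to lower may be needed before every spurious Cartan term lands either in $P_5^-(\omega_{(4)})$ or strictly below $x$ under the two-step order of Definition \ref{defn3}. The bookkeeping is the real cost, since a single $Sq^{2^i}(p_i)$ can expand into a dozen or more monomials, each of which must be sorted against $x$ first by weight vector and then by $\sigma$. To keep this manageable I would exploit the evident near-symmetries in the list — many entries differ only by transposing the exponents on $x_1$ and $x_2$ (for instance $x_1x_2^3x_3^6x_4^{14}x_5^{11}$ versus $x_1^3x_2x_3^6x_4^{14}x_5^{11}$), or share a fixed block such as $x_1^3x_2^5$, $x_1^3x_2^{13}$, or the tail $x_4^{14}x_5^{11}$ — so that a witness constructed for one monomial adapts to its neighbours with only minor changes to the $p_i$, thereby reducing the number of genuinely independent Cartan computations that have to be carried out.
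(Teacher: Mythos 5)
Your proposal is correct and takes essentially the same route as the paper: the paper's proof picks representative monomials (namely $x_1x_2^{3}x_3^{14}x_4^{7}x_5^{10}$ and $x_1x_2^{7}x_3^{10}x_4^{3}x_5^{14}$), writes each as a same-weight monomial of strictly smaller exponent vector plus $Sq^1,Sq^2,Sq^4,Sq^8$ of explicit polynomials modulo $P_5^-(\omega_{(4)})$, and disposes of the remaining entries by ``similar computations.'' Your plan — the same Cartan-witness format, discarding lower-weight terms by working mod $P_5^-(\omega_{(4)})$, keeping surviving same-weight terms only when their exponent vector is smaller, and using the list's symmetries to limit the number of independent witnesses — is precisely that argument.
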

\begin{proof} We prove the lemma for $x = x_1x_2^{3}x_3^{14}x_4^{7}x_5^{10}$ and $y = x_1x_2^{7}x_3^{10}x_4^{3}x_5^{14}$. The others can be obtained by the similar computations. We have
\begin{align*}
x &= x_1x_2^{3}x_3^{7}x_4^{14}x_5^{10} + Sq^1\big(x_1^{2}x_2^{3}x_3^{13}x_4^{7}x_5^{9} + x_1^{2}x_2^{3}x_3^{7}x_4^{13}x_5^{9}\big) + Sq^2\big(x_1x_2^{5}x_3^{11}x_4^{7}x_5^{9}\\
&\quad + x_1x_2^{5}x_3^{7}x_4^{11}x_5^{9}\big) + Sq^4\big(x_1x_2^{3}x_3^{11}x_4^{7}x_5^{9} + x_1x_2^{3}x_3^{7}x_4^{11}x_5^{9} + x_1x_2^{3}x_3^{13}x_4^{9}x_5^{5}\\
&\quad + x_1x_2^{3}x_3^{9}x_4^{13}x_5^{5}\big) + Sq^8\big(x_1x_2^{3}x_3^{9}x_4^{9}x_5^{5}\big),\ \text{ mod}\big(P_5^-(\omega_{(4)})\big).
\end{align*}
Hence, $x$ is strictly inadmissible.
\begin{align*}
y &= x_1x_2^{7}x_3^{3}x_4^{10}x_5^{14} + Sq^1\big(x_1x_2^{7}x_3^{9}x_4^{10}x_5^{7} + x_1x_2^{7}x_3^{10}x_4^{3}x_5^{13} + x_1x_2^{7}x_3^{10}x_4^{9}x_5^{7}\\
&\quad + x_1x_2^{7}x_3^{12}x_4^{3}x_5^{11} + x_1x_2^{11}x_3^{3}x_4^{12}x_5^{7} + x_1x_2^{13}x_3^{3}x_4^{10}x_5^{7} + x_1^{2}x_2^{7}x_3^{3}x_4^{9}x_5^{13}\\
&\quad + x_1^{2}x_2^{7}x_3^{5}x_4^{9}x_5^{11} + x_1^{2}x_2^{13}x_3^{3}x_4^{9}x_5^{7} + x_1^{4}x_2^{7}x_3^{9}x_4^{3}x_5^{11} + x_1^{4}x_2^{11}x_3^{3}x_4^{9}x_5^{7}\big)\\
&\quad + Sq^2\big(x_1x_2^{7}x_3^{5}x_4^{9}x_5^{11} + x_1x_2^{11}x_3^{5}x_4^{9}x_5^{7} + x_1^{2}x_2^{7}x_3^{10}x_4^{3}x_5^{11} + x_1^{2}x_2^{11}x_3^{3}x_4^{10}x_5^{7}\big)\\
&\quad + Sq^4\big(x_1x_2^{7}x_3^{3}x_4^{9}x_5^{11} + x_1x_2^{9}x_3^{3}x_4^{5}x_5^{13} + x_1x_2^{11}x_3^{3}x_4^{9}x_5^{7}\\
&\quad + x_1x_2^{13}x_3^{3}x_4^{5}x_5^{9} + x_1^{2}x_2^{7}x_3^{5}x_4^{10}x_5^{7} + x_1^{2}x_2^{11}x_3^{6}x_4^{5}x_5^{7}\big)\\
&\quad + Sq^8\big(x_1x_2^{9}x_3^{3}x_4^{5}x_5^{9} + x_1^{2}x_2^{7}x_3^{6}x_4^{5}x_5^{7}\big),\ \text{ mod}\big(P_5^-(\omega_{(4)})\big).
\end{align*}
This equality implies that $y$ are strictly inadmissible.
\end{proof}
\begin{lems}\label{noz1} The class  $[a_{161}]_{\omega_{(4)}}$ is non-zero in the vector space $QP_5(\omega_{(4)})$.
\end{lems}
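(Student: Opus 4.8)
The plan is to show directly that $a_{161}\notin \mathcal A^+P_5 + P_5^-(\omega_{(4)})$, which, under the identification $QP_5(\omega_{(4)})\cong QP_5^{\omega_{(4)}}$, is precisely the assertion that $[a_{161}]_{\omega_{(4)}}\neq 0$. My principal tool would be the projection homomorphisms $p_{(i;j)}\colon QP_5(\omega_{(4)})\to QP_4(\omega_{(4)})$, which are well defined by Lemma \ref{bdm1} and the remark following it, together with the fact that a complete admissible basis of $QP_4(\omega_{(4)})$ is available from the $k=4$ computations of \cite{su1}. Since each $p_{(i;j)}$ carries $0$ to $0$, it suffices to exhibit a single pair $(i,j)$ for which the image $p_{(i;j)}(a_{161})$, rewritten modulo $\mathcal A^+P_4 + P_4^-(\omega_{(4)})$ in terms of that basis, is nonzero.

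Concretely, I would write down $a_{161}$ explicitly from Subsection \ref{ss65} and compute $p_{(i;j)}(a_{161})$ for the few promising pairs $(i,j)$. Because $a_{161}\in P_5^+$ has all five exponents positive, each $p_{(i;j)}$ merges two of its variables and returns a single monomial of $P_4$ whose weight vector is $\leq \omega_{(4)}$ by Lemma \ref{bdm1}. I would then reduce that monomial against the admissible monomials of weight $\omega_{(4)}$ in $P_4$ and read off the resulting class in $QP_4(\omega_{(4)})$; a single nonzero value settles the lemma.

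The delicate point, and the step I expect to be the genuine obstacle, is that it may happen that every $p_{(i;j)}$ sends $a_{161}$ to a class that is either hit or of strictly smaller weight in $P_4$, in which case the projection argument alone is inconclusive. To guard against this I would keep in reserve a direct argument: assume a relation $a_{161}=\sum_{u\geq 1}Sq^u(q_u)$ modulo $P_5^-(\omega_{(4)})$ for suitable $q_u\in P_5$, expand the right-hand side by the Cartan formula, and track which monomials of weight exactly $\omega_{(4)}$ can be produced. Using that the top square occurring in such a relation is governed by $s=\max\{i:\omega_i(a_{161})>0\}$ (as in the definition of strict inadmissibility) and that the strictly inadmissible monomials already catalogued in Lemmas \ref{inad64} and \ref{ina64} cannot reproduce $a_{161}$, I would force the coefficient of $a_{161}$ on the right-hand side to be $0$, contradicting the supposed relation.

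The logical skeleton is therefore short: reformulate nonvanishing as noninclusion in $\mathcal A^+P_5+P_5^-(\omega_{(4)})$, detect it through $QP_4(\omega_{(4)})$ via a well-chosen $p_{(i;j)}$ when possible, and otherwise fall back on Cartan-formula bookkeeping calibrated by the top weight $s$. The whole difficulty is concentrated in making one of these two detections actually fire for the specific monomial $a_{161}$; once a single nonzero witness is produced, the conclusion $[a_{161}]_{\omega_{(4)}}\neq 0$ is immediate.
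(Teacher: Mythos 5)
Your first route provably cannot succeed, for a reason the paper itself records. By the $k=4$ computation of \cite{su1} that is invoked in Section \ref{s4} (equivalently, visible in the appendix: every monomial of $B_5^0(35)=f(B_4(35))$ has weight vector $\omega_{(1)}$, which is the statement $QP_5(\omega_{(j)}) = QP_5^+(\omega_{(j)})$ for $j = 2,3,4,5$), every admissible monomial of degree $35$ in $P_4$ has weight vector $\omega_{(1)}=(3,2,1,1,1)$. Hence $QP_4(\omega_{(4)}) = 0$, so the induced map $p_{(i;j)}\colon QP_5(\omega_{(4)}) \to QP_4(\omega_{(4)})$ is identically zero for every pair $(i,j)$, and no choice of projection can detect $[a_{161}]_{\omega_{(4)}}$. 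The contingency you flagged as a risk is in fact a certainty, so your entire argument must be carried by the fallback.

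The fallback, as sketched, is not a proof. First, in a hypothetical relation $a_{161} \equiv \sum_{u\geq 1} Sq^u(q_u)$ modulo $P_5^-(\omega_{(4)})$ there is no a priori bound $u < 2^s$: that bound is part of the \emph{definition} of strict inadmissibility, a sufficient criterion for a class to vanish, not a constraint obeyed by arbitrary hit relations; after reducing to generators one must handle all of $Sq^1, Sq^2, Sq^4, Sq^8, Sq^{16}$. Second, Lemmas \ref{inad64} and \ref{ina64} assert that certain monomials \emph{are} strictly inadmissible, i.e.\ that their classes vanish; statements of that kind can never certify that $a_{161}$ does \emph{not} lie in $\mathcal A^+P_5 + P_5^-(\omega_{(4)})$, so ``forcing the coefficient of $a_{161}$ to be $0$'' from them is a non sequitur. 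What is missing is exactly the paper's detecting mechanism: writing $a_{161} = \sum_{u=0}^{4} Sq^{2^u}(B_u)$ modulo $P_5^-(\omega_{(4)})$, one applies $(Sq^2)^3$ and uses the Adem identities $(Sq^2)^3Sq^1 = (Sq^2)^3Sq^2 = 0$ to annihilate the $Sq^1$- and $Sq^2$-contributions, leaving only the images of $Sq^4, Sq^8, Sq^{16}$; the contradiction is then extracted by tracking specific terms such as $f_{4,5} = x_1^{2}x_2^{4}x_3^{7}x_4^{14}x_5^{14} + x_1^{4}x_2^{2}x_3^{7}x_4^{14}x_5^{14}$ modulo $P_5^-(1,4,4,2)$ and an auxiliary subspace $\mathcal B$, through a long but finite case analysis of which polynomials can occur as terms of the $B_u$. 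Without this device, or an equivalent one (for instance an explicit linear functional on $(P_5)_{35}$ vanishing on $\mathcal A^+P_5 + P_5^-(\omega_{(4)})$ but not on $a_{161}$), Cartan-formula bookkeeping over all possible $q_u$ has no handle, and your sketch does not close.
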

\begin{proof} Suppose the contrary that $[a_{161}]_{\omega_{(4)}} = 0$. Then
\begin{align}\label{ctno1} a_{161} = \sum_{u=0}^4Sq^{2^u}(B_u)  \ \text{mod}(P_5^-(\omega_{(4)})),
\end{align}
where $B_u$ are suitable polynomials in $P_5$. We set
\begin{align*}
f_{4,5} &= x_1^{2}x_2^{4}x_3^{7}x_4^{14}x_5^{14} + x_1^{4}x_2^{2}x_3^{7}x_4^{14}x_5^{14},\ y_1 = x_1x_2x_3^7x_4^7x_5^{11},\\
f_{3,5} &= x_1^{2}x_2^{4}x_3^{14}x_4^{7}x_5^{14} + x_1^{4}x_2^{2}x_3^{14}x_4^{7}x_5^{14}, \ y_2 = x_1x_2x_3^7x_4^{11}x_5^{7},\\
f_{3,4} &= x_1^{2}x_2^{4}x_3^{14}x_4^{14}x_5^{7} + x_1^{4}x_2^{2}x_3^{14}x_4^{14}x_5^{7}, \  y_3= x_1x_2x_3^{11}x_4^{7}x_5^{7}.
\end{align*}
By a direct computation, we can see that there are polynomial $\bar B_u,\ u = 0,1,2$ such that
\begin{align}\label{ctno2} Sq^8(y_t)  = \sum_{u=0}^2Sq^{2^u}(\bar B_u)  \ \text{mod}(P_5^-(\omega_{(4)}) + \mathcal B)) 
\end{align}
where $\mathcal B$ is the subspace of $(P_5)_{35}$ spanned by all monomials $x$ such that $\max\{\nu_i(x): 1 \leqslant i \leqslant 5\}> 14$.
By combining (\ref{ctno1}) and (\ref{ctno1}), one gets 
\begin{align}\label{ctno3} a_{161} = \sum_{u=0}^4Sq^{2^u}(A_u) +  \ \text{mod}(P_5^-(\omega_{(4)})+\mathcal B),
\end{align}
where $A_u$ are suitable polynomials such that $y_t$ is not a term of $A_3$ with $t = 1,2,3$ (recall that a monomial $x$ in $P_k$ is called {\it a term} of a polynomial $f$ if it appears in the expression of $f$ in terms of the monomial basis of $P_k$) .

Let $(Sq^2)^3$ acts on the both sides of (\ref{ctno3}). Observe that if $x$ is a monomial in $P_5^-(\omega_{(4)})$ then, $(Sq^2)^3(x) \in P_5^-(1,4,4,2)$ and $(Sq^2)^3Sq^1 = (Sq^2)^3Sq^2 = 0$. So, we get
\begin{align*}(Sq^2)^3(a_{161}) =  \sum_{u=2}^4(Sq^2)^3(Sq^{2^u}(A_u))  \ \text{mod}(P_5^-(1,4,4,2) + (Sq^2)^3(\mathcal B)).
\end{align*}
It is easy to see that $f_{i,j} \notin P_5^-(1,4,4,2) + (Sq^2)^3(\mathcal B)$ and $f_{4,5}$ is a term of $(Sq^2)^3(a_{161})$ (recall that a polynomial $g$ in $P_k$ is called {\it a term} of a polynomial $f$ if  a monomial $x$ is a term of $g$, then it is also a term of $f$). 

By a direct computation, we see that $f_{i,j}$ is not a term of $(Sq^2)^3(Sq^{16}(A_4))$ for any $A_4 \in (P_5)_{19}$. If $f_{i,j}$ is a term of $(Sq^2)^3(Sq^8(A_3))$, then $y_t$ is a term of $A_3$ and $f_{i,j}$ is a term of $(Sq^2)^3(Sq^8(y_t))$ with some $t,\ t = 1,2,3$. However, the polynomial $f_{i,j}$ is not a term of $(Sq^2)^3(Sq^8(y_t))$. 

Since $f_{4,5}$ is a term of $(Sq^2)^3(a_{161})$, it must be a term of $(Sq^2)^3(Sq^4(A_2))$. We observe that $f_{4,5}$ is a term of $(Sq^2)^3(p_{1,j}),\ 1 \leqslant j \leqslant 8$, where   
\begin{align*}
p_{1,1} &= x_1x_2x_3^{7}x_4^{13}x_5^{13},\  p_{1,2} = x_1x_2^{2}x_3^{7}x_4^{11}x_5^{14},\  p_{1,3} = x_1x_2^{2}x_3^{7}x_4^{14}x_5^{11},\\  
p_{1,4} &= x_1^{2}x_2^{2}x_3^{7}x_4^{11}x_5^{13},\  p_{1,5} = x_1^{2}x_2^{2}x_3^{7}x_4^{13}x_5^{11},\
p_{1,6} = x_1x_2^{3}x_3^{7}x_4^{11}x_5^{13} + x_1^{3}x_2x_3^{7}x_4^{11}x_5^{13},\\  
p_{1,7} &= x_1x_2^{3}x_3^{7}x_4^{13}x_5^{11} + x_1^{3}x_2x_3^{7}x_4^{13}x_5^{11},\  
p_{1,8} = x_1x_2^{3}x_3^{7}x_4^{11}x_5^{13} + x_1^{3}x_2x_3^{7}x_4^{13}x_5^{11}.  
\end{align*}
Hence, one of the following polynomials is a term of $A_2$:
\begin{align*}
q_{1,1} &= x_1x_2x_3^{7}x_4^{11}x_5^{11},\  q_{1,2} = x_1x_2^{2}x_3^{7}x_4^{7}x_5^{14},\  q_{1,3} = x_1x_2^{2}x_3^{7}x_4^{14}x_5^{7},\\  
q_{1,4} &= x_1^{2}x_2^{2}x_3^{7}x_4^{7}x_5^{13},\  q_{1,5} = x_1^{2}x_2^{2}x_3^{7}x_4^{13}x_5^{7},\  q_{1,6} = x_1x_2^{3}x_3^{7}x_4^{7}x_5^{13} + x_1^{3}x_2x_3^{7}x_4^{7}x_5^{13},\\
q_{1,7} &= x_1x_2^{3}x_3^{7}x_4^{13}x_5^{7} + x_1^{3}x_2x_3^{7}x_4^{13}x_5^{7},\  q_{1,8} = x_1x_2^{3}x_3^{7}x_4^{13}x_5^{7} + x_1^{3}x_2x_3^{7}x_4^{7}x_5^{13},\\
q_{1,9} &= x_1x_2^{3}x_3^{7}x_4^{13}x_5^{7} + x_1^{3}x_2x_3^{7}x_4^{7}x_5^{13}.
\end{align*}
Suppose $q_{1,9}$ is a term of $A_2$. Then $\bar p = x_1^2x_2^{4}x_3^{14}x_4^{14}x_5^{7} + x_1^{4}x_2^2x_3^{14}x_4^{7}x_5^{14}$ is a term of  $(Sq^2)^3(a_{161} + Sq^4(q_{1,9}))$. This implies $q_{2,9} = x_1^3x_2x_3^{13}x_4^{7}x_5^{7} + x_1x_2^3x_3^{13}x_4^{7}x_5^{7}$  is a term of $A_2$. Then, $\tilde p = x_1^4x_2^2x_3^{14}x_4^{14}x_5^{7} + x_1^2x_2^4x_3^{14}x_4^{7}x_5^{14}$ is a term of $(Sq^2)^3(a_{161} + Sq^4(q_{1,9}+q_{2,9}))$. Hence,
$q_{3,9} = x_1x_2^3x_3^{7}x_4^{13}x_5^{7} + x_1^3x_2x_3^{7}x_4^{7}x_5^{13}$ is a term of $A_2$. Now, $f_{4,5}$ is a term of $(Sq^2)^3(a_{161} + Sq^4(q_{1,9}+q_{2,9}+ q_{3,9}))$ and $q_{i,9}, i=1,2,3,$ are not the terms of $A_2 + q_{1,9}+q_{2,9}+ q_{3,9}$. 

Now, we assume that $q_{i,9}, i=1,2,3,$ and $q_{1,1}$ are the terms of $A_2$. Then, $y^* = x_1x_2x_3^{11}x_4^{11}x_5^{11}$ is a term of $a_{161} + Sq^4(q_{1,1})$, hence $y^*$ is a term of $Sq^4(A_2 + q_{1,1}) + Sq^8(A_3) + Sq^{16}(A_4)$. Since $y_1, y_2, y_3$ are not the terms of $A_3$, $y^*$ is a term of $Sq^4(A_2)$. So, either $q_{2,1} = x_1x_2x_3^{11}x_4^{7}x_5^{11}$ or $q_{3,1} = x_1x_2x_3^{11}x_4^{11}x_5^{7}$ is a term of $A_2$. If $q_{2,1}$ is a term of $A_2$ then $f_{3,5}$ is a term of $(Sq^2)^3(a_{161} + Sq^4(q_{1,1} +  q_{2,1}))$. By an argument analogous to the previous one, we see that one of the following polynomials is a term of $A_2$:
\begin{align*}
q_{2,1} &= x_1x_2x_3^{11}x_4^{7}x_5^{11},\  q_{2,2} = x_1x_2^{2}x_3^{7}x_4^{7}x_5^{14},\  q_{2,3} = x_1x_2^{2}x_3^{14}x_4^{7}x_5^{7},\\
q_{2,4} &= x_1^{2}x_2^{2}x_3^{7}x_4^{7}x_5^{13},\  q_{2,5} = x_1^{2}x_2^{2}x_3^{13}x_4^{7}x_5^{7},\  q_{2,6} = x_1x_2^{3}x_3^{7}x_4^{7}x_5^{13} + x_1^{3}x_2x_3^{7}x_4^{7}x_5^{13},\\  
q_{2,7} &= x_1x_2^{3}x_3^{13}x_4^{7}x_5^{7} + x_1^{3}x_2x_3^{13}x_4^{7}x_5^{7},\  
q_{2,8} = x_1x_2^{3}x_3^{13}x_4^{7}x_5^{7}+ x_1^{3}x_2x_3^{7}x_4^{7}x_5^{13}.
\end{align*}
If $q_{3,1}$ is a term of $A_2$, then $f_{3,4}$ is a term of $(Sq^2)^3(a_{161} + Sq^4(q_{1,1} +  q_{3,1}))$. Hence, one of the following polynomials is a term of $A_2$:
\begin{align*}
q_{3,1} &= x_1x_2x_3^{11}x_4^{11}x_5^{7},\  q_{3,2} = x_1x_2^{2}x_3^{7}x_4^{14}x_5^{7},\  q_{3,3} = x_1x_2^{2}x_3^{14}x_4^{7}x_5^{7},\\
q_{3,4} &= x_1^{2}x_2^{2}x_3^{7}x_4^{13}x_5^{7},\  q_{3,5} = x_1^{2}x_2^{2}x_3^{13}x_4^{7}x_5^{7},\  q_{3,6} = x_1x_2^{3}x_3^{7}x_4^{13}x_5^{7} + x_1^{3}x_2x_3^{7}x_4^{13}x_5^{7},\\  
q_{3,7} &= x_1x_2^{3}x_3^{13}x_4^{7}x_5^{7} + x_1^{3}x_2x_3^{13}x_4^{7}x_5^{7},\  q_{3,8} = x_1x_2^{3}x_3^{13}x_4^{7}x_5^{7} + x_1^{3}x_2x_3^{7}x_4^{13}x_5^{7}.
\end{align*}
By repeating the above argument and after a finite number of steps, we see that $q_{i,j}$ is a term of $A_2$ for $i=1,2,3,\ 1 \leqslant j \leqslant 9$. Then, $f_{4,5}$ is a term of $(Sq^2)^3(a_{161} + \sum_{(i,j)}q_{i,j})$. However, it is not a term of 
$$(Sq^2)^3(Sq^4(A_2 + \sum_{(i,j)}q_{i,j})) +(Sq^2)^3(Sq^8(A_3)) + (Sq^2)^3(Sq^{16}(A_4)).$$
This is a contradiction. The lemma is proved. 
\end{proof}
\begin{proof}[Proof of Proposition \ref{mdd64}] Let $x$ be an admissible monomial in $P_5(\omega_{(4)})$. Then $x = x_jx_\ell x_ty^2$ with $y \in B_5(4,2,2)$. 

Let $z \in B_5(4,2,2)$.
By a direct computation using the results in Subsection \ref{subs51}, we see that if $x_jx_\ell x_tz^2 \ne a_{t}, \ 161 \leqslant t \leqslant 210$, then there is a monomial $w$ which is given in one of Lemmas \ref{inad64} and \ref{ina64} such that $x_jx_\ell x_tz^2= wz_1^{2^{u}}$ with suitable monomial $z_1 \in P_5$, and $u = \max\{j \in \mathbb Z : \omega_j(w) >0\}$. By Theorem \ref{dlcb1}, $x_jx_\ell x_tz^2$ is inadmissible. Since $x = x_jx_\ell x_ty^2$, $x$ is admissible and $y \in B_5(4,2,2)$, one gets $x= a_{t}$ for some $t,\ 161 \leqslant t \leqslant 210$. This implies $B_5(\omega_{(4)}) \subset \{a_{t} : \ 161 \leqslant t \leqslant 210\}$. 

By a direct computation, we see that there is a direct summand decomposition of the $\Sigma_5$-modules:
$$ QP_5(\omega_{(4)}) =  [\Sigma_5(a_{162})]_{\omega_{(4)}}\bigoplus  [\Sigma_5(a_{175})]_{\omega_{(4)}}\bigoplus  [\Sigma_5(a_{203})]_{\omega_{(4)}}.$$
Consider the homomorphis $\rho_i: P_5 \to P_5,\ 1 \leqslant i \leqslant 5,$ as defined in Section \ref{s2}. Let $\rho = \rho_2\rho_1\rho_2\rho_5\rho_2\rho_3\rho_1\rho_2$. Since $a_{162} \equiv_{\omega_{(4)}} g(a_{175}) + a_{175}$ and $a_{175} \equiv_{\omega_{(4)}} \rho_5(a_{203}) + a_{203}$, $QP_5(\omega_{(4)})$ is the $GL_5$-module generated by the class $[a_{203}]_{\omega_{(4)}}$.

We now prove the set $\{[a_{t}]_{\omega_{(4)}} : \ 161 \leqslant t \leqslant 210\}$ is linearly independent in $QP_5(\omega_{(4)})$. Suppose there is a linear relation
\begin{equation}\mathcal S = \sum_{t = 161}^{210}\gamma_ta_{t} \equiv_{\omega_{(4)}} 0,\label{ct541}
 \end{equation}
where $\gamma_t \in \mathbb F_2$. We prove that $\gamma_t = 0, \ \forall t,\ 161 \leqslant t \leqslant 210$.

Set $\mathcal S_1 = \rho_5(\mathcal S) + \mathcal S\equiv_{\omega_{(4)}} 0$, $\mathcal S_2 = (\rho_5\rho_2\rho_3)(\mathcal S_1) + \mathcal S_1 \equiv_{\omega_{(4)}} 0$.  A direct computation using (\ref{ct541}) shows that
\begin{equation}\mathcal S_3 := (\rho_5)(\mathcal S_2) + \mathcal S_2 \equiv_{\omega_{(4)}} \gamma_{181}a_{175} + \gamma_{183}a_{163} \equiv_{\omega_{(4)}} 0.\label{ct542}
 \end{equation}

By applying $\rho_5$ to (\ref{ct542}), we obtain
\begin{equation}(\rho_5\rho_2\rho_1)(\mathcal S_3)+ \mathcal S_3 \equiv_{\omega_{(4)}} \gamma_{181}a_{179} \equiv_{\omega_{(4)}} 0.\label{ct543}
\end{equation}
We have $a_{162} \equiv_{\omega_{(4)}} a_{179} + (\rho_3\rho_2\rho_5\rho_2\rho_3)(a_{179})$ and $a_{162} \equiv_{\omega_{(4)}} \rho_2(a_{161})$. Hence, Lemma \ref{noz1} implies that $[a_{179}]_{\omega_{(4)}} \ne 0$. So, from the relation (\ref{ct543}), we obtain $\gamma_{181} = 0$.

By a simple computation, we can see that the action of $\Sigma_5$ on $QP_5(\omega_{(4)})$ induces the one of it on the set $\{[a_t]_{\omega_{(4)}} : 171 \leqslant t \leqslant 200\}$. Furthermore, this action is transitive. Hence, from the relations $\sigma(\mathcal S) \equiv_{\omega_{(4)}} 0$ with $\sigma \in \Sigma_5$, we get $\gamma_t = 0$ for $171 \leqslant t \leqslant 200$.

Now, using the above equalities, one gets $\mathcal S_2  \equiv_{\omega_{(4)}} \gamma_{208}a_{185} \equiv_{\omega_{(4)}} 0$. This implies $\gamma_{208} = 0$. By using this and the relations $\sigma(\mathcal S) \equiv_{\omega_{(4)}} 0$ with $\sigma \in \Sigma_5$, we obtain $\gamma_t = 0$ for $201 \leqslant t \leqslant 210$. 

By computing $(\rho_5\rho_2\rho_1)(\mathcal S_1) + \mathcal S_1$, we have
$$ (\rho_5\rho_2\rho_1)(\mathcal S_1) + \mathcal S_1 \equiv_{\omega_{(4)}}  \gamma_{165}a_{162} \equiv_{\omega_{(4)}} 0.$$
Since $[a_{162}]_{\omega_{(4)}} = [\rho_2(a_{161})]_{\omega_{(4)}} \ne 0$, we get $\gamma_{165} = 0$.

We observe that the action of $\Sigma_5$ on $QP_5(\omega_{(4)})$ induces the one on the set $\{[a_t]_{\omega_{(4)}} : 161 \leqslant t \leqslant 170\}$. Since this action is transitive, we get $\gamma_t = 0$ for $161 \leqslant t \leqslant 170$.
 The proposition is proved.
\end{proof}

Consider the monomials $a_t = a_{35,t},\ 211 \leqslant t \leqslant 225$ as given in Subsection \ref{ss65}.

\begin{props}\label{mdd65} The $\mathbb F_2$-vector space $QP_5(\omega_{(5)})$ is an $GL_5$-module generated by the class $[a_{225}]_{\omega_{(5)}}$ and $B_5(\omega_{(5)}) = \{a_{t} : 211 \leqslant t \leqslant 225\}.$ Consequently, $\dim QP_5(\omega_{(5)}) = 15.$
\end{props}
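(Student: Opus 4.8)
The plan is to reproduce, for the weight vector $\omega_{(5)} = (3,4,4,1)$, the same four-part scheme already carried out for $\omega_{(4)}$ in Proposition \ref{mdd64}: a spanning set, then $GL_5$-generation, then linear independence, with a non-vanishing lemma feeding the last step. I would begin with the spanning statement. Let $x$ be an admissible monomial with $\omega(x) = \omega_{(5)}$. Since $\omega_1(x) = 3$, I can write $x = x_jx_\ell x_t\,y^2$ with $1 \leq j < \ell < t \leq 5$, and Theorem \ref{dlcb1} forces $y \in B_5(4,4,1)$, a set already determined through Proposition \ref{md611} and Subsection \ref{subs51}. Running over all $z \in B_5(4,4,1)$ and all triples $j<\ell<t$, I expect that whenever $x_jx_\ell x_t z^2$ is not one of the listed monomials $a_t$, $211 \leq t \leq 225$, it admits a factorization $x_jx_\ell x_t z^2 = w z_1^{2^u}$ with $w$ strictly inadmissible (drawn from a preparatory lemma in the style of Lemmas \ref{inad64} and \ref{ina64}) and $u = \max\{i : \omega_i(w) > 0\}$; Theorem \ref{dlcb1} then makes $x_jx_\ell x_t z^2$ inadmissible. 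This yields $B_5(\omega_{(5)}) \subseteq \{a_t : 211 \leq t \leq 225\}$, so the classes $[a_t]_{\omega_{(5)}}$ span $QP_5(\omega_{(5)})$.

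For the $GL_5$-generation I would decompose $QP_5(\omega_{(5)})$ as a direct sum of $\Sigma_5$-orbit summands, as was done for $\omega_{(4)}$, and then use the remaining generator $g_5$ of $GL_5$ to splice these summands together: I expect to exhibit relations expressing each orbit representative, modulo $\mathcal A^+P_5 + P_5^-(\omega_{(5)})$, in terms of $g_5(a_{225})$, $a_{225}$, and compositions of the $g_i$ applied to $a_{225}$, thereby showing that every orbit representative lies in the $GL_5$-submodule generated by $[a_{225}]_{\omega_{(5)}}$. For linear independence I would set $\mathcal S = \sum_{t=211}^{225}\gamma_t a_t \equiv_{\omega_{(5)}} 0$ and recover the coefficients by applying the projections $p_{(i;j)}$ (which pass to $QP_4(\omega_{(5)})$ by Lemma \ref{bdm1}) together with the symmetric-group elements $g_i$; since the fifteen monomials fall into a few $\Sigma_5$-orbits on which $\Sigma_5$ acts transitively, the vanishing of a single coefficient per orbit should propagate across that orbit, exactly as in the treatment of the orbits $\{a_t : 171 \leq t \leq 200\}$ and $\{a_t : 161 \leq t \leq 170\}$ for $\omega_{(4)}$.

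The step I expect to be the genuine obstacle is the non-vanishing of the distinguished class anchoring the independence argument, namely the analogue of Lemma \ref{noz1} asserting that $[a_{225}]_{\omega_{(5)}} \neq 0$ (together with any orbit representatives the bookkeeping above leaves undetermined). This cannot be obtained formally; it requires showing that $a_{225}$ is not congruent to an element of $\mathcal A^+P_5 + P_5^-(\omega_{(5)})$. I would attack it precisely as in Lemma \ref{noz1}: assume a hypothetical expression $a_{225} = \sum_u Sq^{2^u}(B_u) \bmod P_5^-(\omega_{(5)})$, apply a suitable iterated Steenrod square (as $(Sq^2)^3$ was used in Lemma \ref{noz1}) to raise the weight vector out of $P_5^-(\omega_{(5)})$, single out a marker monomial that must survive on the left-hand side, and show that it cannot be produced by any $Sq^{2^u}(B_u)$. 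The delicate bookkeeping---enumerating which monomials can generate the marker term and excluding interference from the subspace of monomials carrying an exponent exceeding the relevant $2$-power, playing the role of the auxiliary subspace $\mathcal B$ in Lemma \ref{noz1}---is where the bulk of the computation will concentrate, and it is the one place where a purely mechanical transcription of the $\omega_{(4)}$ argument will not suffice.
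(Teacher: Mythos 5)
Your first two steps (the spanning argument via $x=x_jx_\ell x_t y^2$ with $y\in B_5(4,4,1)$ and strictly inadmissible monomials, and the $GL_5$-generation via a $\Sigma_5$-orbit decomposition spliced by $g_5$) coincide with the paper's proof. The genuine gap is in your linear-independence step. You propose to recover the coefficients of $\mathcal S=\sum_{t=211}^{225}\gamma_ta_t\equiv_{\omega_{(5)}}0$ by applying the projections $p_{(i;j)}$, but these carry no information here: $p_{(i;j)}$ lands in $QP_4(\omega_{(5)})$, and $QP_4(\omega_{(5)})=0$, because every admissible monomial of degree $35$ in $P_4$ has weight vector $\omega_{(1)}=(3,2,1,1,1)$ (this is exactly the content of $f(B_4(35))=QP_5^0(\omega_{(1)})$ and of the fact that $QP_5(\omega_{(j)})=QP_5^+(\omega_{(j)})$ for $j=2,3,4,5$). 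So every relation $p_{(i;j)}(\mathcal S)\equiv_{\omega_{(5)}}0$ holds vacuously and constrains none of the $\gamma_t$. This is precisely why the paper abandons the projection technique that worked for Propositions \ref{md611} and \ref{mdd61} and, for $\omega_{(4)}$ and $\omega_{(5)}$, instead applies explicit elements of $GL_5$ to $\mathcal S$ (e.g.\ $(g_5g_2g_1g_2)(\mathcal S_1)+\mathcal S_1\equiv_{\omega_{(5)}}\gamma_{215}a_{214}$) to isolate single coefficients, then spreads the vanishing by transitivity of $\Sigma_5$ on each orbit.

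There is also a problem with your choice of anchor. The fifteen classes split into two $\Sigma_5$-orbits: the five monomials divisible by some $x_i^{15}$ (namely $a_{211},\dots,a_{215}$) and the ten remaining ones ($a_{216},\dots,a_{225}$), and the independence argument needs a non-vanishing class in \emph{each} orbit. You propose to prove $[a_{225}]_{\omega_{(5)}}\neq 0$, which can only anchor the second orbit: non-vanishing of $[a_{225}]$ does not imply non-vanishing of any class in the $x_i^{15}$-orbit. The paper's Lemma \ref{noz2} proves $[a_{215}]_{\omega_{(5)}}\neq 0$ instead, and this single fact suffices for both orbits, since $[a_{214}]=[g_1(a_{215})]\neq 0$ anchors the first orbit, while the relation $a_{214}\equiv_{\omega_{(5)}}a_{225}+g_5(a_{225})$ shows that $[a_{225}]=0$ would force $[a_{214}]=0$, so the second orbit is anchored as well; the implication does not run the other way. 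With your choice you would still have to prove a second non-vanishing lemma for the $x_i^{15}$-orbit—essentially Lemma \ref{noz2} itself—so the efficient move is to take the marker class to be $a_{215}=x_1^{15}x_2^3x_3^5x_4^6x_5^6$ (whose $x_1^{15}$ factor is what makes the $(Sq^2)^3$ marker-monomial bookkeeping tractable), not $a_{225}$.
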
 
We prepare some lemmas for the proof of this proposition. The proof of the following lemma is straightforward.
\begin{lems}\label{bdd22} The following monomials are strictly inadmissible:
 $$x_j^2x_\ell x_t^2 x_u^3x_v^3,  i < j; \ \ x_j^2x_\ell^3x_t^3 x_u^3.$$
Here $(j,\ell,t,u,v) $ is a permutation of $(1,2,3,4,5)$.
\end{lems}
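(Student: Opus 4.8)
The plan is to exhibit each monomial directly in the form required by the definition of strict inadmissibility, namely as a single Steenrod square of a lower monomial plus a sum of strictly smaller monomials. First I would record that both families have weight vector $(3,4)$: the exponents $2,1,2,3,3$ of $x_j^2x_\ell x_t^2x_u^3x_v^3$ (resp.\ the exponents $2,3,3,3$ of $x_j^2x_\ell^3x_t^3x_u^3$) contribute $\omega_1=3$ ones in the $2^0$-place and $\omega_2=4$ ones in the $2^1$-place. Hence $s=\max\{i:\omega_i>0\}=2$, so any $Sq^u$ with $1\leqslant u\leqslant 2^s-1=3$ is allowed in the defining expression; in fact $Sq^1$ alone will suffice for both.

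The crux is the elementary Cartan identity $Sq^1(x_am)=x_a^2m+x_a\,Sq^1(m)$, valid for any variable $x_a$ and any monomial $m$ not involving $x_a$, which produces the target as the leading summand $x_a^2m$. For the second family I would take $x_a=x_j$ and $m=x_\ell^3x_t^3x_u^3$, giving
\[
x_j^2x_\ell^3x_t^3x_u^3=Sq^1\big(x_jx_\ell^3x_t^3x_u^3\big)+x_jx_\ell^4x_t^3x_u^3+x_jx_\ell^3x_t^4x_u^3+x_jx_\ell^3x_t^3x_u^4,
\]
using $Sq^1(x_\ell^3)=x_\ell^4$ and the analogues for $x_t,x_u$. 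For the first family I would likewise take $x_a=x_j$ and $m=x_\ell x_t^2x_u^3x_v^3$, obtaining
\[
x_j^2x_\ell x_t^2x_u^3x_v^3=Sq^1\big(x_jx_\ell x_t^2x_u^3x_v^3\big)+x_jx_\ell^2x_t^2x_u^3x_v^3+x_jx_\ell x_t^2x_u^4x_v^3+x_jx_\ell x_t^2x_u^3x_v^4,
\]
where $Sq^1(x_t^2)=0$ removes one potential term.

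It then remains to verify that every tail monomial is $<x$ in the order of Definition \ref{defn3}. The three tail terms in the first display and the two terms $x_jx_\ell x_t^2x_u^4x_v^3,\ x_jx_\ell x_t^2x_u^3x_v^4$ in the second all have weight vector $(3,2,1)$, which is below $(3,4)$ in the left-lexicographic order, so they are $<x$ by part (i) of Definition \ref{defn3}. The only delicate point, and the one place where the hypothesis $j<\ell$ is used, is the remaining term $x_jx_\ell^2x_t^2x_u^3x_v^3$, which shares the weight vector $(3,4)$ with $x$; here I would invoke part (ii) and compare exponent vectors. The two vectors $\sigma$ agree except at indices $j$ and $\ell$, where $x$ carries exponents $(2,1)$ and the tail term carries $(1,2)$, so since $j<\ell$ the first place of disagreement is index $j$, at which $x$ is larger; thus $\sigma(x_jx_\ell^2x_t^2x_u^3x_v^3)<\sigma(x)$ and this term too is $<x$. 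With all tails strictly below $x$ and $1\leqslant 2^s-1$, both displayed equalities certify strict inadmissibility, and since the two identities are stated symbolically they apply verbatim to every permutation $(j,\ell,t,u,v)$ of $(1,2,3,4,5)$. I expect the $\sigma$-comparison of the equal-weight term to be the only step needing care, precisely because it is what forces the restriction $j<\ell$.
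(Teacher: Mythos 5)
Your proof is correct: the single $Sq^1$-identity, together with the check that all tail monomials have weight vector $(3,2,1)<(3,4)$ except for $x_jx_\ell^2x_t^2x_u^3x_v^3$, which is handled by the $\sigma$-comparison using $j<\ell$ (clearly the intended reading of the misprinted ``$i<j$'' in the statement), is exactly the ``straightforward'' verification that the paper omits. This is the same method the paper uses in its other strict-inadmissibility lemmas, so nothing further is needed.
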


\begin{lems}\label{inad65} All permutations of the following monomials are strictly inadmissible:
 $$x_1x_2^{6}x_3^{6}x_4^{7}x_5^{15} ,\  x_1x_2^{6}x_3^{7}x_4^{7}x_5^{14},\ x_1^{7}x_2^{7}x_3^{9}x_4^{6}x_5^{6}.$$
\end{lems}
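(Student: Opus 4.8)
The three listed monomials all have degree $35$ and weight vector $\omega_{(5)} = (3,4,4,1)$: reading the exponents in binary, the unit bits sum to $3$, the $2$- and $4$-bits each sum to $4$, and the $8$-bits sum to $1$ in each of the three cases, so $s := \max\{i : \omega_i > 0\} = 4$ and the definition of strict inadmissibility permits the operations $Sq^1, Sq^2, \ldots, Sq^{15}$. The plan is, for each listed monomial $x$, to produce an explicit congruence
$$x \equiv Sq^1(q_1) + Sq^2(q_2) + Sq^4(q_4) + Sq^8(q_8) \pmod{P_5^-(\omega_{(5)})},$$
with suitable $q_u \in P_5$, $\deg q_u = 35 - u$. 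Once such a congruence is established, the defect lies in $P_5^-(\omega_{(5)})$, which by definition is spanned by monomials $y$ with $\omega(y) < \omega_{(5)} = \omega(x)$, hence with $y < x$ by Definition \ref{defn3}(i); this is exactly a witness that $x$ is strictly inadmissible.

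The construction of the $q_u$ is the computational heart of the argument and is carried out directly via the Cartan formula, in the same spirit as the decompositions displayed in the proofs of Lemmas \ref{inad63} and \ref{inad64}. Concretely, I would start from the top operation $Sq^8$ applied to a monomial whose content reproduces the high powers ($x_5^{15}$, $x_5^{14}$, and the pair $x_1^7,x_2^7$, respectively), then correct the resulting lower-order terms by prescribing in turn the $Sq^4$, $Sq^2$ and $Sq^1$ contributions, arranging that every surviving monomial of weight exactly $\omega_{(5)}$ is accounted for by the image of some $Sq^u$ and that the remainder lands in $P_5^-(\omega_{(5)})$. Each step is a finite $\mathbb F_2$-linear computation, and one verifies at the end that the only monomials left outside the span of the $Sq^u(q_u)$ have weight vector strictly below $\omega_{(5)}$.

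It then remains to pass from a single monomial to all of its permutations, and this is where a decomposition purely into $Sq^u$ images (with no leading monomial of the same weight) is essential. For any $\sigma \in \Sigma_5$ the map $\sigma$ is an algebra automorphism of $P_5$ commuting with every $Sq^u$, and since weight vectors are invariant under permutation of the variables, $\sigma$ stabilizes $P_5^-(\omega_{(5)})$. Applying $\sigma$ to the congruence above therefore yields
$$\sigma(x) \equiv Sq^1(\sigma q_1) + Sq^2(\sigma q_2) + Sq^4(\sigma q_4) + Sq^8(\sigma q_8) \pmod{P_5^-(\omega_{(5)})},$$
so $\sigma(x)$ is strictly inadmissible as well, which gives the claim for all permutations simultaneously.

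I expect the main obstacle to be the explicit decomposition of the third monomial $x_1^7x_2^7x_3^9x_4^6x_5^6$: the exponent $9 = 1001_2$ contributes the isolated high bit accounting for $\omega_4 = 1$, while the two equal pairs $x_1^7x_2^7$ and $x_4^6x_5^6$ generate many same-weight cross terms under the Cartan formula, so forcing all weight-$(3,4,4,1)$ error monomials to cancel against the $Sq^u$ images (leaving only $P_5^-(\omega_{(5)})$) is delicate. I would find the $q_u$ by working backward from the $Sq^{2^u}$ images and matching monomial content, a bounded but intricate search.
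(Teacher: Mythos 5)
Your framework is the paper's framework: each of the three monomials has weight vector $\omega_{(5)}=(3,4,4,1)$, so $s=4$, and a congruence $x \equiv \sum_u Sq^u(q_u) \pmod{P_5^-(\omega_{(5)})}$ certifies strict inadmissibility, since every monomial in $P_5^-(\omega_{(5)})$ has strictly smaller weight vector and is therefore smaller than $x$ in the order of Definition \ref{defn3}. Your observation that the decomposition must consist \emph{purely} of $Sq^u$-images (with no leading monomial of the same weight) in order for $\Sigma_5$-equivariance to handle all permutations at once is also correct, and it is exactly what distinguishes this lemma from Lemma \ref{inad651}, whose decompositions carry a same-weight leading term and whose conclusion is accordingly not permutation-invariant.

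The genuine gap is that you never produce the polynomials $q_u$ for any of the three monomials; you replace the computation with a search strategy ("working backward from the $Sq^{2^u}$ images and matching monomial content") plus the assertion that it terminates with every same-weight error term cancelled. That assertion is precisely what has to be proved, and it cannot be obtained by any soft argument: the admissible monomials $a_{211},\dots,a_{225}$ of Proposition \ref{mdd65} have the \emph{same} degree and the \emph{same} weight vector $\omega_{(5)}$, so whether such a decomposition exists depends on the particular exponent pattern, not on degree and weight data — which is all your proposal actually uses. In a lemma of this kind the explicit identity \emph{is} the proof. For comparison, the paper's proof for $x = x_1x_2^{6}x_3^{6}x_4^{7}x_5^{15}$ is the single verified identity
$$x = Sq^1\big(x_1x_2^{5}x_3^{6}x_4^{7}x_5^{15} + x_1^{4}x_2^{3}x_3^{5}x_4^{7}x_5^{15}\big) + Sq^2\big(x_1^{2}x_2^{3}x_3^{6}x_4^{7}x_5^{15}\big) \quad \text{mod}\big(P_5^-(\omega_{(5)})\big),$$
which, incidentally, uses only $Sq^1$ and $Sq^2$ — not the $Sq^8$-anchored scheme you propose — and the remaining two monomials require their own computations, which your proposal (like the paper, but without the paper's warrant of having done them) leaves unperformed.
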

\begin{proof} We prove the lemma for $x = x_1x_2^{6}x_3^{6}x_4^{7}x_5^{15}$. By a direct computation, we have
\begin{align*}
x &= Sq^1\big(x_1x_2^{5}x_3^{6}x_4^{7}x_5^{15} + x_1^{4}x_2^{3}x_3^{5}x_4^{7}x_5^{15}\big) + Sq^2\big(x_1^{2}x_2^{3}x_3^{6}x_4^{7}x_5^{15}\big),\  \text{ mod}\big(P_5^-(\omega_{(5)})\big).
\end{align*}
So, all permutations of $x$ are strictly inadmissible.
\end{proof}
\begin{lems}\label{inad651} The following monomials are strictly inadmissible:
\begin{align*}
&x_1^{3}x_2^{5}x_3^{6}x_4^{14}x_5^{7} ,\  x_1^{3}x_2^{5}x_3^{14}x_4^{6}x_5^{7} ,\  x_1^{3}x_2^{5}x_3^{14}x_4^{7}x_5^{6} ,\\ 
&  x_1^{3}x_2^{13}x_3^{6}x_4^{6}x_5^{7} ,\  x_1^{3}x_2^{13}x_3^{6}x_4^{7}x_5^{6},\  x_1^{3}x_2^{13}x_3^{7}x_4^{6}x_5^{6}.
\end{align*}
\end{lems}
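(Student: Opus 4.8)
The plan is to bypass the explicit hit-equations used in the preceding lemmas and instead reduce each of the six monomials to a strictly inadmissible core of degree $11$ via Theorem \ref{dlcb1}(ii). Every monomial $x$ in the list has weight vector $\omega(x)=\omega_{(5)}=(3,4,4,1)$, and the idea is to peel off the first two layers, of weight $(3,4)$, as an initial factor $w$ of degree $11$, writing $x=w\,y^{2^{2}}$ with $y$ of degree $6$ and $\omega(w)=(3,4)$, so that $s:=\max\{i:\omega_i(w)>0\}=2$.

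First I would record the factorizations, obtained by assigning the $2^0$- and $2^1$-digit layers of the exponents of $x$ to $w$ and the remaining (divisible-by-$4$) layers to $y^{4}$. For example,
\begin{align*}
x_1^3x_2^5x_3^6x_4^{14}x_5^7 &= (x_1^3x_2x_3^2x_4^2x_5^3)(x_2x_3x_4^3x_5)^4,\\
x_1^3x_2^{13}x_3^6x_4^6x_5^7 &= (x_1^3x_2x_3^2x_4^2x_5^3)(x_2^3x_3x_4x_5)^4,\\
x_1^3x_2^{13}x_3^7x_4^6x_5^6 &= (x_1^3x_2x_3^3x_4^2x_5^2)(x_2^3x_3x_4x_5)^4,
\end{align*}
and the remaining three monomials factor in exactly the same way, with core $x_1^3x_2x_3^2x_4^3x_5^2$ or $x_1^3x_2x_3^2x_4^2x_5^3$. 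In every case the core $w$ has exponent multiset $\{1,2,2,3,3\}$, so $w$ is a permutation of $x_j^2x_\ell x_t^2x_u^3x_v^3$, which is strictly inadmissible by Lemma \ref{bdd22}. Because $\omega(w)=(3,4)$ forces every exponent of $w$ to be smaller than $2^{s}=4$, no digit of $w$ overlaps a digit of $y^4$; this both justifies the displayed identities and gives $\omega(w\,y^4)=(3,4,4,1)$, as it must.

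Then I would simply invoke Theorem \ref{dlcb1}(ii): since $w$ is strictly inadmissible with $\omega_s(w)\neq0$ and $\omega_i(w)=0$ for $i>s=2$, the monomial $w\,y^{2^{s}}=w\,y^4$ is strictly inadmissible for the arbitrary degree-$6$ factor $y$. Applying this to each of the six factorizations completes the proof.

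I expect the only real obstacle to be bookkeeping: one must check that each of the six exponent vectors genuinely splits as $w\cdot y^4$ with $w$ of the prescribed shape, and in particular that the combined $2^0$- and $2^1$-layers form a permutation of $x_j^2x_\ell x_t^2x_u^3x_v^3$ rather than of the second strictly inadmissible type $x_j^2x_\ell^3x_t^3x_u^3$ listed in Lemma \ref{bdd22}; inspecting the digit patterns confirms the former in all six cases. Were some monomial to resist this clean factorization, the fallback would be the direct method of the earlier lemmas, producing $x\equiv\sum_{u}Sq^{2^u}(q_u)$ modulo $P_5^-(\omega_{(5)})$, but the reduction via Theorem \ref{dlcb1}(ii) makes any explicit Steenrod calculation unnecessary.
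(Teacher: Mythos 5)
Your factorizations $x=w\,y^{4}$ are all correct, and Theorem \ref{dlcb1}(ii) would indeed finish the proof \emph{if} the degree-$11$ cores were strictly inadmissible; but they are not, and this is where the argument breaks. In all six cases the core is one of $x_1^{3}x_2x_3^{2}x_4^{2}x_5^{3}$, $x_1^{3}x_2x_3^{2}x_4^{3}x_5^{2}$, $x_1^{3}x_2x_3^{3}x_4^{2}x_5^{2}$, in which the linear variable $x_2$ precedes \emph{both} squared variables. Lemma \ref{bdd22} (whose condition ``$i<j$'' is a misprint for $j<\ell$, as one sees by comparing with Lemma \ref{inad64}) covers the shape $x_j^{2}x_\ell x_t^{2}x_u^{3}x_v^{3}$ only when some squared variable precedes the linear one, i.e.\ $j<\ell$. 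Your cores violate this ordering condition in every case, so the lemma you invoke simply does not apply to them. The side condition is not cosmetic bookkeeping: it is exactly the dividing line between the inadmissible and admissible exponent patterns of this weight.

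In fact no repair along these lines is possible, because the same three cores are the $2^{0}$--$2^{1}$ layers of \emph{admissible} monomials of weight $\omega_{(5)}$ listed in Proposition \ref{mdd65}: for instance $a_{211}=x_1^{3}x_2^{5}x_3^{6}x_4^{6}x_5^{15}=(x_1^{3}x_2x_3^{2}x_4^{2}x_5^{3})(x_2x_3x_4x_5^{3})^{4}$, $a_{216}=x_1^{3}x_2^{5}x_3^{6}x_4^{7}x_5^{14}=(x_1^{3}x_2x_3^{2}x_4^{3}x_5^{2})(x_2x_3x_4x_5^{3})^{4}$, and $a_{217}=x_1^{3}x_2^{5}x_3^{7}x_4^{6}x_5^{14}=(x_1^{3}x_2x_3^{3}x_4^{2}x_5^{2})(x_2x_3x_4x_5^{3})^{4}$. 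If any of the three cores were strictly inadmissible, then Theorem \ref{dlcb1}(ii) would force the corresponding $a_t$ to be strictly inadmissible as well, contradicting $B_5(\omega_{(5)})=\{a_t : 211\leqslant t\leqslant 225\}$. Thus the six monomials of the lemma and the admissible monomials of the same weight have \emph{identical} bottom layers, and no argument that inspects only the degree-$11$ core can distinguish them: the inadmissibility here genuinely lives in the interaction with the higher layers. This is why the paper proves the lemma by explicit hit equations, e.g.\ writing $x_1^{3}x_2^{5}x_3^{14}x_4^{6}x_5^{7}$ as the strictly smaller monomial $x_1^{3}x_2^{5}x_3^{7}x_4^{6}x_5^{14}$ plus $Sq^1(\cdot)+Sq^2(\cdot)+Sq^4(\cdot)+Sq^8(\cdot)$ modulo $P_5^-(\omega_{(5)})$; the ``fallback'' you mention in passing is in fact the only viable route.
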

\begin{proof} We prove the lemma for $x = x_1^{3}x_2^{5}x_3^{14}x_4^{6}x_5^{7}$. By a direct computation, we have
\begin{align*}
x &= x_1^{3}x_2^{5}x_3^{7}x_4^{6}x_5^{14} + Sq^1\big(x_1^{3}x_2^{6}x_3^{7}x_4^{5}x_5^{13} + x_1^{3}x_2^{6}x_3^{13}x_4^{5}x_5^{7}\big) + Sq^2\big(x_1^{5}x_2^{3}x_3^{9}x_4^{3}x_5^{13}\\
&\quad + x_1^{5}x_2^{3}x_3^{13}x_4^{3}x_5^{9} + x_1^{5}x_2^{5}x_3^{7}x_4^{5}x_5^{11} + x_1^{5}x_2^{5}x_3^{7}x_4^{9}x_5^{7} + x_1^{5}x_2^{5}x_3^{11}x_4^{5}x_5^{7}\\
&\quad + x_1^{5}x_2^{9}x_3^{7}x_4^{5}x_5^{7}\big) + Sq^4\big(x_1^{3}x_2^{3}x_3^{9}x_4^{3}x_5^{13} + x_1^{3}x_2^{3}x_3^{13}x_4^{3}x_5^{9} + x_1^{3}x_2^{5}x_3^{7}x_4^{5}x_5^{11}\\
&\quad + x_1^{3}x_2^{5}x_3^{7}x_4^{9}x_5^{7} + x_1^{3}x_2^{5}x_3^{9}x_4^{5}x_5^{9} + x_1^{3}x_2^{5}x_3^{11}x_4^{5}x_5^{7} + x_1^{3}x_2^{9}x_3^{7}x_4^{5}x_5^{7}\big)\\
&\quad + Sq^8\big( x_1^{3}x_2^{3}x_3^{9}x_4^{3}x_5^{9}\big),\ \text{ mod}\big(P_5^-(\omega_{(5)})\big).
\end{align*}
This equality shows that the monomial $x$ is strictly inadmissible.
\end{proof}

\begin{lems}\label{noz2} 
The class $[a_{215}]_{\omega_{(5)}}$ is non-zero in the vector space $QP_5(\omega_{(5)})$.
\end{lems}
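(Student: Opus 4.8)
The plan is to argue by contradiction in the spirit of Lemma \ref{noz1}. Suppose that $[a_{215}]_{\omega_{(5)}}=0$. Since $\max\{i:\omega_i(a_{215})>0\}=4$, this gives an expression
$$a_{215}=\sum_{u=0}^{4}Sq^{2^{u}}(B_u)\quad\bmod P_5^-(\omega_{(5)})$$
for suitable $B_u\in P_5$ with $\deg B_u=35-2^{u}$. First I would tidy up the top squaring operations $Sq^{8}$ and $Sq^{16}$: by exhibiting explicit relations of the form $Sq^{8}(y_t)=\sum_{u=0}^{2}Sq^{2^{u}}(\bar B_u)$ modulo $P_5^-(\omega_{(5)})+\mathcal B$, where $\mathcal B\subset(P_5)_{35}$ is spanned by the monomials with some exponent exceeding $14$ and the $y_t$ are the finitely many ``spike-adjacent'' witnesses of degree $27$, I may replace the $B_u$ by polynomials $A_u$ so that a prescribed finite list of monomials does not occur as a term of $A_3$. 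This yields
$$a_{215}=\sum_{u=0}^{4}Sq^{2^{u}}(A_u)\quad\bmod P_5^-(\omega_{(5)})+\mathcal B,$$
with the degree-$8$ part under control.

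The decisive step is to apply the composite $(Sq^2)^3$ to both sides. Because $(Sq^2)^3 Sq^1=(Sq^2)^3 Sq^2=0$ and $(Sq^2)^3$ carries $P_5^-(\omega_{(5)})$ into $P_5^-(1,4,4,2)$ (the degree count $2^0(-2)+2^3(+1)=6$ agrees with $\deg(Sq^2)^3$), the contributions of $A_0$ and $A_1$ drop out and we are left with
$$(Sq^2)^3(a_{215})=\sum_{u=2}^{4}(Sq^2)^3(Sq^{2^{u}}(A_u))\quad\bmod P_5^-(1,4,4,2)+(Sq^2)^3(\mathcal B).$$
I would then single out a small family of detector polynomials $f_{i,j}$, each a genuine term of $(Sq^2)^3(a_{215})$ lying outside $P_5^-(1,4,4,2)+(Sq^2)^3(\mathcal B)$, and verify by direct computation that no $f_{i,j}$ can be a term of $(Sq^2)^3 Sq^{16}(A_4)$ for $A_4\in(P_5)_{19}$, nor, after the normalization of $A_3$, of $(Sq^2)^3 Sq^{8}(A_3)$. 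Hence every $f_{i,j}$ must be produced by $(Sq^2)^3 Sq^{4}(A_2)$, and this pins down a concrete list of monomials $q_{i,j}$ of degree $31$ that are forced to occur as terms of $A_2$.

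From here I would run the same finite descent as in Lemma \ref{noz1}. Correcting $A_2$ by successively adjoining the forced monomials $q_{i,j}$ and re-examining which detectors survive, one shows after finitely many steps that the full family $\sum_{(i,j)}q_{i,j}$ is forced into $A_2$, yet the detector $f_{i,j}$ still appears as a term of $(Sq^2)^3(a_{215}+\sum_{(i,j)}q_{i,j})$ while it can no longer be a term of
$$(Sq^2)^3 Sq^{4}\big(A_2+\textstyle\sum_{(i,j)}q_{i,j}\big)+(Sq^2)^3 Sq^{8}(A_3)+(Sq^2)^3 Sq^{16}(A_4).$$
This contradiction shows $[a_{215}]_{\omega_{(5)}}\neq 0$.

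The hard part will be the explicit bookkeeping underlying the last two paragraphs: one must compute $(Sq^2)^3 Sq^{2^{u}}$ on a large number of monomials of degrees $19$, $27$ and $31$ and certify, for each detector $f_{i,j}$, its non-occurrence across every admissible source. The subtle points are the choice of detectors $f_{i,j}$ that are simultaneously outside $P_5^-(1,4,4,2)+(Sq^2)^3(\mathcal B)$ and stable under the corrections, and the verification that the descent terminates, that is, that only finitely many $q_{i,j}$ can be forced before the contradiction surfaces.
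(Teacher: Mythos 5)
Your skeleton does match the paper's strategy: assume $[a_{215}]_{\omega_{(5)}}=0$, write $a_{215}=\sum_{u=0}^{4}Sq^{2^u}(C_u)$ modulo $P_5^-(\omega_{(5)})$, apply $(Sq^2)^3$ using $(Sq^2)^3Sq^1=(Sq^2)^3Sq^2=0$ and $(Sq^2)^3\big(P_5^-(\omega_{(5)})\big)\subset P_5^-(1,4,4,2)$, and run a forced-term chain to a contradiction. But the specific shape you commit to — a descent that adjoins corrections only to the $Sq^4$-source $A_2$, "the same finite descent as in Lemma \ref{noz1}" — is where the argument actually breaks for $a_{215}$. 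In the paper the detector is $s=s_1+s_2+s_3+s_4$ (the permutations of $x_1^{15}x_2^{6}x_3^{6}x_4^{6}x_5^{8}$), and it forces either some $g_{i,j}$ ($x_1^{15}$ times two cubes and two fifth powers) or $r=\sum_u r_u$ into $C_2$; after $r$ is excluded and, say, $g_{1,2}=x_1^{15}x_2^{3}x_3^{3}x_4^{5}x_5^{5}$ is forced into $C_2$, the leftover degree-$35$ terms of $a_{215}+Sq^4(g_{1,2})$, such as $x_1^{15}x_2^{3}x_3^{3}x_4^{5}x_5^{9}$, cannot be produced by any further $Sq^4$-correction (that would require the already-excluded $r_u$), and the $(Sq^2)^3$ trick gives no control whatsoever over $Sq^1(C_0)+Sq^2(C_1)$. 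The paper's chain therefore has to switch back to the original degree-$35$ identity and force the monomials $q_1=x_1^{15}x_2^{3}x_3^{3}x_4^{3}x_5^{9},\ldots,q_4$ into the degree-$33$ part $C_1$, then bounce back to force the remaining $g_{i,j}$ into $C_2$, and only then reach the contradiction. A descent confined to $A_2$ stalls exactly at this hand-off; this is a genuine structural difference from Lemma \ref{noz1}, whose chain does stay inside $A_2$, so transplanting that proof verbatim leaves a hole.

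The other divergence is your preliminary normalization of $A_3$ via the subspace $\mathcal B$ and postulated relations $Sq^8(y_t)\equiv\sum_{u\leqslant 2}Sq^{2^u}(\bar B_u)$ modulo $P_5^-(\omega_{(5)})+\mathcal B$. This step is both unjustified (you would have to exhibit such relations in this weight, and nothing in the paper or in your sketch guarantees they exist) and unnecessary: in the $\omega_{(5)}$ case every detector carries the exponent $15$ on $x_1$, and a direct check — which is what the paper does — shows such terms cannot occur in $(Sq^2)^3(Sq^{8}(C_3))$ or $(Sq^2)^3(Sq^{16}(C_4))$ for \emph{any} $C_3,C_4$, so no pre-conditioning of the degree-$27$ part is needed. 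Finally, since the lemma is a finite, purely computational assertion, deferring the choice of detectors, all the non-occurrence verifications, and the termination of the chain means the substantive content is still missing; the paper's proof consists precisely of that explicit bookkeeping with the monomials $s_u$, $g_{i,j}$, $q_i$, $r_u$.
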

\begin{proof} Suppose the contrary that $[a_{215}]_{\omega_{(5)}} = 0$. Then
\begin{align}\label{ctno4} a_{215} = \sum_{u=0}^4Sq^{2^u}(C_u)  \ \text{mod}(P_5^-(\omega_{(5)})),
\end{align}
where $C_u$ are suitable polynomials in $P_5$. Let $(Sq^2)^3$ acts on the both sides of (\ref{ctno4}). Observe that if $x$ is a monomial in $P_5^-(\omega_{(5)})$ then, $(Sq^2)^3(x) \in P_5^-(1,4,4,2)$. Since $(Sq^2)^3Sq^1 = 0$ and $(Sq^2)^3Sq^2 = 0$, we get
\begin{align*}(Sq^2)^3(a_{215}) =  \sum_{u=2}^4(Sq^2)^3(Sq^{2^u}(C_u))  \ \text{mod}(P_5^-(1,4,4,2)).
\end{align*}
We denote
\begin{align*}
&g_{1,2} = x_1^{15}x_2^{3}x_3^{3}x_4^{5}x_5^{5},\  g_{1,3} = x_1^{15}x_2^{3}x_3^{5}x_4^{3}x_5^{5},\  g_{1,4} = x_1^{15}x_2^{3}x_3^{5}x_4^{5}x_5^{3},\\  
&g_{2,3} = x_1^{15}x_2^{5}x_3^{3}x_4^{3}x_5^{5},\  g_{2,4} = x_1^{15}x_2^{5}x_3^{3}x_4^{5}x_5^{3},\  g_{3,4} = x_1^{15}x_2^{5}x_3^{5}x_4^{3}x_5^{3}.\\ 
&q_1 = x_1^{15}x_2^{3}x_3^{3}x_4^{3}x_5^{9},\  q_2 = x_1^{15}x_2^{3}x_3^{3}x_4^{9}x_5^{3},\  q_3 = x_1^{15}x_2^{3}x_3^{9}x_4^{3}x_5^{3},\  q_4 = x_1^{15}x_2^{9}x_3^{3}x_4^{3}x_5^{3},\\
&r_1 = x_1^{15}x_2^{3}x_3^{3}x_4^{3}x_5^{7},\  r_2 = x_1^{15}x_2^{3}x_3^{3}x_4^{7}x_5^{3},\  r_3 = x_1^{15}x_2^{3}x_3^{7}x_4^{3}x_5^{3},\  r_t = x_1^{15}x_2^{7}x_3^{3}x_4^{3}x_5^{3},\\
&s_1 = x_1^{15}x_2^{6}x_3^{6}x_4^{6}x_5^{8},\  s_2 = x_1^{15}x_2^{6}x_3^{6}x_4^{8}x_5^{6},\  s_3 = x_1^{15}x_2^{6}x_3^{8}x_4^{6}x_5^{6},\  s_4 = x_1^{15}x_2^{8}x_3^{6}x_4^{6}x_5^{6}.
\end{align*}
Observe that the polynomial $s = \sum_{u=1}^4s_u$ is a term of $(Sq^2)^3(a_{215})$. It is not a term of $(P_5^-(1,4,4,2))$, $(Sq^2)^3(Sq^{8}(C_3))$ and $(Sq^2)^3(Sq^{16}(C_4))$ hence, it is a term of $(Sq^2)^3(Sq^{4}(C_2))$. Then, there is $(i,j)$ such that either $g_{i,j}$ or $r = \sum_{u=1}^4r_u$ is a term of $C_2$. If $r$ is a term of $C_2$, then $r_1$ is a term of $C_2$. Then, the monomial $\bar x = x_1^{15}x_2^3x_3^3x_4^3x_5^{11}$ is a term of $a_{215}+ Sq^4(r_1)$. So $\bar x$ is a term of $Sq^{4}(C_2 + r_1)$. So, $r_1$ is a term of  $C_2 + r_1$. This contradicts the fact that $r_1$ is a term of $C_2$. 

Thus, we have proved that $r$ is not a term of $C_2$, hence $g_{i,j}$ is a term of $C_2$. We can assume that $g_{1,2}$ is a term of $C_2$.

Now, $x_1^{15}x_2^3x_3^3x_4^9x_5^3$ and $x_1^{15}x_2^3x_3^3x_4^5x_5^9$ are the terms of $a_{215}+ Sq^4(g_{1,2})$. Since $r_u, u = 1,2,3,4$ is not a term of $C_2$, $q_1$ and $q_2$ are the term of $C_1$. Then,  the following monomials are the terms of $a_{215}+ Sq^4(g_{1,2}) + Sq^2(q_1 + q_2)$:
$$x_1^{15}x_2^{5}x_3^{3}x_4^{3}x_5^{9},\   x_1^{15}x_2^{3}x_3^{5}x_4^{3}x_5^{9},\   x_1^{15}x_2^{5}x_3^{3}x_4^{9}x_5^{3},\   x_1^{15}x_2^{3}x_3^{5}x_4^{9}x_5^{3}. $$
Since $r_u$ is not a term of $C_2$, the monomials $g_{1,3}, g_{1,4}, g_{2,3}, g_{2,4}$ are the terms of $C_2$. Then, $x_1^{15}x_2^{3}x_3^{9}x_4^{3}x_5^{5},\  x_1^{15}x_2^{9}x_3^{3}x_4^{5}x_5^{3}$ are the terms of 
$$a_{215}+ Sq^4(g_{1,2} + g_{1,3} + g_{1,4} + g_{2,3} + g_{2,4}) + Sq^2(q_1 + q_2).$$
So, $q_3,\ q_4$ are the terms of $C_1$. Then, $x_1^{15}x_2^{5}x_3^{9}x_4^{3}x_5^{3}$ is a term of 
$$a_{215}+ Sq^4(g_{1,2} + g_{1,3} + g_{1,4} + g_{2,3} + g_{2,4}) + Sq^2(q_1 + q_2+ q_3 + q_4).$$
Hence, $g_{3,4}$ is a term of $C_2$. Now, the monomial $x_1^{15}x_2^{6}x_3^{6}x_4^{6}x_5^{8}$ is a term of
$$(Sq^2)^3(a_{215}+ Sq^4(g_{1,2} + g_{1,3} + g_{1,4} + g_{2,3} + g_{2,4}+ g_{3,4})).$$
So, there is $(i,j)$ such that $g_{i,j}$ is a term of $C_2 + \sum_{(i,j)}g_{i,j}$. This contradicts the fact that $g_{i,j}$ is a term of $C_2$. The lemma is proved. 
\end{proof}
\begin{proof}[Proof of Proposition \ref{mdd65}] Let $x$ be an admissible monomial in $P_5$ such that $\omega(x) = \omega_{(5)}$. Then $x = x_jx_\ell x_ty^2$ with $y \in B_5(4,4,1)$. 

Let $z \in B_5(4,4,1)$.
By a direct computation using the results in Subsection \ref{subs51}, we see that if $x_jx_\ell x_tz^2 \ne a_{t}, \ 211 \leqslant t \leqslant 225$, then there is a monomial $w$ which is given in one of Lemmas \ref{bdd22} and \ref{inad65} such that $x_jx_\ell x_tz^2= wz_1^{2^{u}}$ with suitable monomial $z_1 \in P_5$, and $u = \max\{j \in \mathbb Z : \omega_j(w) >0\}$. By Theorem \ref{dlcb1}, $x_jx_\ell x_tz^2$ is inadmissible. Since $x = x_jx_\ell x_ty^2$ and $x$ is admissible, one gets $x= a_{t}$ for some $t,\ 211 \leqslant t \leqslant 225$. Hence, $B_5(\omega_{(5)}) \subset \{a_{t} : \ 211 \leqslant t \leqslant 225\}$. 

By a direct computation, we see that there is a direct summand decomposition of the $\Sigma_5$-modules:
$$ QP_5(\omega_{(5)}) =  [\Sigma_5(a_{214})]_{\omega_{(5)}}\bigoplus  [\Sigma_5(a_{225})]_{\omega_{(5)}}.$$
Since $a_{214} \equiv_{\omega_{(5)}} a_{225} + \rho_5(a_{225})$, $QP_5(\omega_{(5)})$ is the $GL_5$-module generated by the class $[a_{225}]_{\omega_{(5)}}$.

We now prove the set $\{[a_{t}]_{\omega_{(5)}} : \ 211 \leqslant t \leqslant 225\}$ is linearly independent in $QP_5(\omega_{(5)})$. Suppose there is a linear relation
\begin{equation}\mathcal S = \sum_{t = 211}^{225}\gamma_ta_{t} \equiv_{\omega_{(5)}}  0,\label{ct551}
\end{equation} 
where $\gamma_t \in \mathbb F_2$. We prove $\gamma_t = 0, \ \forall t,\ 211 \leqslant t \leqslant 225$.

Set $\mathcal S_1 = \rho_5(\mathcal S) + \mathcal S$. By a direct computation using (\ref{ct551}), we have
\begin{equation}(\rho_5\rho_2\rho_1\rho_2)(\mathcal S_1) + \mathcal S_1 \equiv_{\omega_{(5)}} \gamma_{215}a_{214} \equiv_{\omega_{(5)}}  0.\label{ct552}
\end{equation} 
From Lemma \ref{noz2}, we have $[a_{214}]_{\omega_{(5)}} = [\rho_1(a_{215}]_{\omega_{(5)}}) \ne 0$. Hence, the relation (\ref{ct552}) implies $\gamma_{215} = 0$.

Note that the action of $\Sigma_5$ on $QP_5(\omega_{(5)})$ induces the one of it on the set $\{[a_t]_{\omega_{(5)}} : 211 \leqslant t \leqslant 225\}$ and this action is transitive. So, we get $\gamma_t = 0$ for $211 \leqslant t \leqslant 225$. Then, we have 
$$\mathcal S_1 \equiv_{\omega_{(5)}} \gamma_{225}a_{214} + \gamma_{222}a_{219} +  \gamma_{223}a_{220} +  \gamma_{224}a_{221} \equiv_{\omega_{(5)}} 0.$$
The last equalities implies $\gamma_{225} = 0$. Since the action of $\Sigma_5$ on the set $\{[a_t]_{\omega_{(5)}} : 216 \leqslant t \leqslant 225\}$ is transitive, we obtain $\gamma_t = 0, \ \forall t,\ 216 \leqslant t \leqslant 225$.
The proposition is proved.
\end{proof}

\section{Proof of Theorem \ref{pthm3}}\label{s5}
\setcounter{equation}{0}

In this section, we prove Theorem \ref{pthm3} by using the results in Section \ref{s4}. From Theorem \ref{cthm}, we see that the homomorphism
$$(\widetilde{Sq}^0_*)^{s-3}: (\mathbb F_2 \otimes_{\mathcal A} P_5)_{5(2^s-1)}^{GL_5} \longrightarrow (\mathbb F_2 \otimes_{\mathcal A} P_k)_{35}^{GL_5}$$
is an isomorphism for every $s \geqslant 3$. So, we need only to prove the theorem for $s = 1,2,3$.

\medskip
\subsection{The Cases $s=1,2$}\

\medskip
For $s=1$, we have $5(2^s-1) = 5$.
\begin{props} $(\mathbb F_2 \otimes_{\mathcal A} P_5)_{5}^{GL_5} = 0$.
\end{props}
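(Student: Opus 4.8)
The plan is to exploit that $GL_5$ is generated by the adjacent transpositions $g_1,\dots,g_4$ (which generate $\Sigma_5$) together with the transvection $g_5$, so that a degree-$5$ class $[f]$ is $GL_5$-invariant if and only if it is $\Sigma_5$-invariant \emph{and} $g_5(f)\equiv f$. First I would record, from Proposition \ref{dl10} and the list in \ref{ss61}, the $46$ admissible monomials forming a basis of $(\mathbb F_2\otimes_{\mathcal A}P_5)_5$. The minimal spike of degree $5$ is $x_1^3x_2x_3$, of weight $(3,1)$, so by Theorem \ref{dlsig} every admissible monomial has weight $(5)$ or $(3,1)$: the former yields only the spike $x_1x_2x_3x_4x_5$, while the latter consists of the thirty monomials $x_i^3x_jx_k$ (exponent type $(3,1,1)$, all admissible) together with fifteen admissible monomials $x_i^2x_jx_kx_l$ (exponent type $(2,1,1,1)$; the five with the square on the smallest index of the support are inadmissible).

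Next I would compute $(\mathbb F_2\otimes_{\mathcal A}P_5)_5^{\Sigma_5}$. Since permutations preserve exponent type and the reductions stay within each type, as a $\Sigma_5$-module the degree-$5$ part splits off the spike, the permutation module on the thirty $x_i^3x_jx_k$, and the subquotient coming from the type-$(2,1,1,1)$ monomials. Two observations finish this: (i) the full symmetric sum of type $(2,1,1,1)$ equals $\sum_{|S|=4}Sq^1\!\big(\prod_{j\in S}x_j\big)$ and is therefore hit; and (ii) representing a type-$(2,1,1,1)$ class by the pair (squared index, absent index) and demanding that each transposition move it into the span of these $Sq^1$-relations forces the class to depend only on the absent index, i.e.\ to lie in the relation subspace, so the type-$(2,1,1,1)$ invariants vanish. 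Hence $(\mathbb F_2\otimes_{\mathcal A}P_5)_5^{\Sigma_5}$ is exactly two-dimensional, spanned by the genuinely symmetric classes $E:=[x_1x_2x_3x_4x_5]$ and $P:=\big[\sum x_i^3x_jx_k\big]$.

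Finally I would impose the $g_5$-condition on $\alpha E+\beta P$. Using $g_5(x_1)=x_1+x_2$ and reducing the resulting inadmissible monomials to the admissible basis (the types $(2,2,1),(3,2),(4,1)$ drop to weight below $(3,1)$ and vanish, the inadmissible type-$(2,1,1,1)$ terms reduce by the $Sq^1$-relations), one finds $g_5(E)-E\equiv x_2x_3^2x_4x_5+x_2x_3x_4^2x_5+x_2x_3x_4x_5^2$, supported entirely on monomials not involving $x_1$, whereas $g_5(P)-P$ contains the admissible monomial $x_1x_2x_3^2x_4$ with coefficient $1$. Thus in $\alpha\,(g_5(E)-E)+\beta\,(g_5(P)-P)\equiv0$ the coefficient of $x_1x_2x_3^2x_4$ is $\beta$, forcing $\beta=0$, and then $\alpha=0$ because $g_5(E)-E\ne0$. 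Therefore no nonzero combination is $g_5$-invariant and $(\mathbb F_2\otimes_{\mathcal A}P_5)_5^{GL_5}=0$.

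The main obstacle is the bookkeeping in the last step: computing $g_5(P)$ and reducing its many inadmissible terms to the admissible basis, then isolating a single $x_1$-term that separates $g_5(P)-P$ from $g_5(E)-E$. Alternatively, the projection maps $p_{(i;j)}$ of Lemma \ref{bdm1} can replace the explicit reductions, detecting directly that the two difference classes $g_5(E)-E$ and $g_5(P)-P$ are linearly independent in $QP_5(3,1)$ and hence that $g_5$ has no nonzero fixed vector on the two-dimensional $\Sigma_5$-invariant space.
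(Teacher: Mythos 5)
Your proposal is correct and takes essentially the same route as the paper: both proofs first cut the problem down to the two-dimensional $\Sigma_5$-invariant subspace spanned by $[x_1x_2x_3x_4x_5]$ and $\big[\sum x_i^3x_jx_k\big]$, and then show that $g_5$ fixes no nonzero combination by expressing $g_5(E)+E$ and $g_5(P)+P$ in the admissible basis and isolating suitable coefficients. The differences are only presentational: you derive the $\Sigma_5$-invariants conceptually from the permutation-module structure and the $Sq^1$-relations (where the paper computes directly with the $46$ admissible monomials), and you detect the coefficient of $P$ on $x_1x_2x_3^2x_4$ rather than on the paper's choices $x_2x_4x_5^3$ and $x_2x_3x_4x_5^2$.
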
 
\begin{proof} Denote by $a_t = a_{5,t},\ 1 \leqslant t \leqslant 46,$ the admissible monomials of degree 5 in $P_5$ (see Subsection \ref{ss61}). Suppose $f \in (P_5)_5$ such that $[f] \in (\mathbb F_2 \otimes_{\mathcal A} P_5)_{5}^{GL_5}$. Then, $f \equiv \sum_{t=1}^{46}\gamma_ta_t$ with $\gamma_t \in \mathbb F_2$ and $\rho_i(f) + f \equiv 0$ for $i=1,2,3,4,5.$ By computing directly from the relations $\rho_i(f) + f \equiv 0$, for $i=1,2,3,4,$ we easily obtain $\gamma_t = \gamma_1, \ 1 \leqslant t \leqslant 30,$ and $\gamma_t = 0$ for $31 \leqslant t \leqslant 45$. Now, by computing $\rho_5(f) +f$ in terms of the admissible monomials, we get
$$\rho_5(f) +f \equiv \gamma_1a_4 + \gamma_{46}a_{31} + \text{ other terms} \equiv 0.$$
The last equality implies $\gamma_1 = \gamma_{46} = 0$. The proposition follows.
\end{proof}

For $s=2$, the theorem has been proved in \cite{su5}. We have 

\begin{props}[See \cite{su5}]\label{mdds2} $(\mathbb F_2 \otimes_{\mathcal A} P_5)_{15}^{GL_5}$ is  an $\mathbb F_2$-vector space of dimension  $2$ with a basis consisting of the $2$ classes represented by the polynomials $p$ and $q$, which are determined as in Subsection $\ref{ss67}$.
\end{props}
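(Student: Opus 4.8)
The plan is to follow the direct linear-algebraic strategy already used for degree $5$: start from the explicit admissible basis $\{[a_{15,t}]\}_{t=1}^{432}$ of $(\mathbb F_2\otimes_{\mathcal A}P_5)_{15}$ supplied by Proposition \ref{dl11} and Subsection \ref{ss62}, write a prospective invariant as $f\equiv\sum_{t=1}^{432}\gamma_t a_{15,t}$ with $\gamma_t\in\mathbb F_2$, and translate the condition $[f]\in(\mathbb F_2\otimes_{\mathcal A}P_5)_{15}^{GL_5}$ into the system $g_i(f)\equiv f$ for $1\le i\le 5$, all taken modulo $\mathcal A^+P_5$. Since $\Sigma_5=\langle g_1,g_2,g_3,g_4\rangle$ and $GL_5=\langle\Sigma_5,g_5\rangle$, I would impose these equations in two stages, solving the symmetric-group part first and then the single remaining generator $g_5$.

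For the first stage I would compute each $g_i(f)+f$ for $1\le i<5$ and rewrite it in the admissible basis, using Theorem \ref{dlcb1} together with the strict-inadmissibility reductions (in the style of Lemma \ref{inad16}) to reduce the non-admissible monomials produced by $g_i$ back to admissible form. Setting every coefficient to zero forces the $\gamma_\bullet$ to be constant along the $\Sigma_5$-orbits of admissible monomials and to vanish on the orbits not closed under the reductions; concretely the surviving symmetric classes are spanned by a small set of orbit sums $p(z)$. Grouping the $432$ monomials by weight vector (whose admissible shapes are found exactly as in Lemma \ref{bdday61}, the minimal spike being $x_1^{15}$ with $\omega=(1,1,1,1)$) keeps the bookkeeping organized, since $g_i$ never raises the weight vector and hence the reductions stay within the weights already present. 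This reduces the problem to an explicit, short list of $\Sigma_5$-invariant generators.

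In the second stage I would expand $g_5(f)+f$ for $f$ ranging over these symmetric generators, again reduce to the admissible basis, and solve the resulting linear constraints. I expect this step to collapse the symmetric invariants down to a two-dimensional space; the representatives $p$ and $q$ recorded in Subsection \ref{ss67} are precisely the two combinations that survive, and a final rank computation confirms both that they are linearly independent in $(\mathbb F_2\otimes_{\mathcal A}P_5)_{15}$ and that no further invariant exists.

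The main obstacle is computational scale and accuracy, not concept. With $432$ admissible monomials distributed over several weight vectors, the crux is to expand $g_i(f)+f$ and $g_5(f)+f$ correctly in the admissible basis, which requires careful application of the strict-inadmissibility relations and, where helpful, of the projections $p_{(i;j)}$ to certify linear (in)dependence, exactly as in Propositions \ref{md611} and \ref{mdd64}. The $g_5$-equation is the decisive and most delicate point: it is there that the comparatively large space of $\Sigma_5$-invariants is forced down to dimension $2$, and one must verify carefully that $p$ and $q$ remain independent and genuinely $GL_5$-invariant after all reductions have been carried out.
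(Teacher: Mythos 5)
Your proposal is methodologically correct and coincides with how this result is actually established: the paper itself does not reprove Proposition \ref{mdds2} but cites \cite{su5}, and both that reference and the analogous computations carried out in this paper (the degree $5$ case and the degree $35$ components $QP_5(\omega_{(j)})^{GL_5}$) proceed exactly as you describe — expand $f$ in the admissible basis, split by weight vector, solve the $\Sigma_5$-invariance relations $g_i(f)+f\equiv 0$, $1\leqslant i\leqslant 4$, to reduce to a few orbit sums, and then impose the single relation $g_5(f)+f\equiv 0$ to cut the invariants down to $\langle [p],[q]\rangle$. The only caveat is the one you already acknowledge: the plan defers the large but routine $\mathbb F_2$-linear computations (including the independence check via the projections $p_{(i;j)}$), which is where all the actual work of \cite{su5} lies.
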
 

\subsection{The Case $s = 3$}\

\medskip
For $s = 3, \ 5(2^s-1) = 35$. From the results in Section \ref{s4}, we have 

\begin{align*}
&(\mathbb F_2 \otimes_{\mathcal A} P_5)_{35} \cong \text{Ker}(\widetilde{Sq}^0_*)_{(5,15)} \bigoplus (\mathbb F_2 \otimes_{\mathcal A} P_5)_{15},\\
&\text{\rm Ker}(\widetilde{Sq}^0_*)_{(5,15)} =  \bigoplus_{1\leqslant j \leqslant 5} QP_5(\omega_{(j)}).
\end{align*}

Since $QP_5(\omega_{(2)}) =0$ and $QP_5(\omega_{(3)}) = 0$, we need only to compute $QP_5(\omega_{(j)})^{GL_5}$ for $j = 1,4,5.$

\medskip
\subsubsection{Computation of $QP_5(\omega_{(1)})^{GL_5}$}\

\medskip
We prove the following.
\begin{props}\label{md551} $QP_5(\omega_{(1)})^{GL_5} = 0$.
\end{props}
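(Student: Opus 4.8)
The plan is to take an arbitrary class in $QP_5(\omega_{(1)})^{GL_5}$, represent it by a polynomial $f \in (P_5)_{35}$, and show $f \equiv_{\omega_{(1)}} 0$. Since $QP_5(\omega_{(1)}) = (QP_5^0)_{35} \oplus QP_5^+(\omega_{(1)})$ has an admissible basis consisting of $[B_5^0(35)]$ (the $460$ classes coming from $P_4$ via Proposition \ref{2.8}) together with $[B_5^+(\omega_{(1)})] = \{[a_t]_{\omega_{(1)}} : 1 \leqslant t \leqslant 160\}$ from Proposition \ref{mdd61}, I would write $f \equiv_{\omega_{(1)}} \sum_t \gamma_t a_t$ with $\gamma_t \in \mathbb F_2$. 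The invariance condition is $g_i(f) \equiv_{\omega_{(1)}} f$ for $1 \leqslant i \leqslant 5$; since $g_1,\ldots,g_4$ generate $\Sigma_5$, the strategy is to first solve the $\Sigma_5$-invariance equations $g_i(f) + f \equiv_{\omega_{(1)}} 0$ for $1 \leqslant i \leqslant 4$, cutting the $620$ unknowns down to the much smaller subspace $QP_5(\omega_{(1)})^{\Sigma_5}$, and then to impose the single remaining relation $g_5(f) + f \equiv_{\omega_{(1)}} 0$.

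For the $\Sigma_5$-step I would exploit that $QP_5(\omega_{(1)})$ is a $\Sigma_5$-module (the weight vector is permutation-invariant) and group the admissible basis into $\Sigma_5$-orbits, using the orbit sums $p(z)$ introduced in Section \ref{s2}. The transpositions $g_1,\ldots,g_4$ force the coefficients $\gamma_t$ to be constant along each orbit and to vanish on every orbit that is not $\Sigma_5$-stable modulo $\mathcal A^+P_5 + P_5^-(\omega_{(1)})$. To detect individual coefficients cleanly I would apply the homomorphisms $p_{(i;j)} : QP_5(\omega_{(1)}) \to QP_4(\omega_{(1)})$ of Lemma \ref{bdm1}, whose images are read off in the known admissible basis of $QP_4(\omega_{(1)})$; the resulting relations $p_{(i;j)}(g_m(f)+f) \equiv_{\omega_{(1)}} 0$ pin down which orbit-sums survive, reducing the problem to a short list of candidate $\Sigma_5$-invariant classes.

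Finally I would compute $g_5(f) + f$ for a general element $f$ of this $\Sigma_5$-invariant subspace, reduce it to admissible form modulo $\mathcal A^+P_5 + P_5^-(\omega_{(1)})$, and read off the coefficients. The point is that $g_5$, the map sending $x_1 \mapsto x_1 + x_2$, mixes the surviving orbits so as to produce an admissible monomial whose coefficient is unmatched, forcing every $\gamma_t = 0$ and hence $QP_5(\omega_{(1)})^{GL_5} = 0$.

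The hard part will be the admissible-form reduction of $g_5(f)$: expanding $g_5$ on each basis element produces many inadmissible monomials that must be rewritten using Steenrod operations, and the bookkeeping is governed by the strictly inadmissible monomials catalogued in Lemmas \ref{bdd21}, \ref{inadb61} and \ref{inab61}. Keeping track of the ``$\bmod P_5^-(\omega_{(1)})$'' terms throughout, so that each equality is genuinely an equality in $QP_5(\omega_{(1)})$ rather than in the larger $(\mathbb F_2 \otimes_{\mathcal A} P_5)_{35}$, is where the computation is most delicate; it is also where the large dimension ($620$) makes a simultaneous attack on all coefficients infeasible, which is precisely why the two-stage $\Sigma_5$-then-$g_5$ reduction is essential.
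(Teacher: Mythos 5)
Your proposal follows essentially the same route as the paper: the paper likewise splits $QP_5(\omega_{(1)})$ into $(QP_5^0)_{35}\oplus QP_5^+(\omega_{(1)})$, decomposes these into $\Sigma_5$-submodules, solves the relations $g_j(f)\equiv f$ for $1\leqslant j\leqslant 4$ to find $QP_5(\omega_{(1)})^{\Sigma_5}$ (spanned by the nine classes $[p_1],\ldots,[p_9]$ of Subsection \ref{ss66}), and then imposes $g_5(f)+f\equiv 0$ to kill all remaining coefficients. The only cosmetic difference is that you would use the maps $p_{(i;j)}$ to isolate coefficients in the $\Sigma_5$-step, whereas the paper reduces $g_j(f)+f$ to admissible form directly; this does not change the substance of the argument.
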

We denote the admissible monomials in $(QP_5^0)_{35}$ by $b_t,\ 1 \leqslant t \leqslant 460$, as given in Subsection \ref{ss65}. By a direct computation from Proposition \ref{mdd61}, there is a direct summand decomposition of the $\Sigma_5$-modules: 
\begin{align*} &QP_5(\omega_{(1)}) = (QP_5^0)_{35}\bigoplus QP_5^+(\omega_{(1)}),\\
&(QP_5^0)_{35} = [\Sigma_5(b_1)]  \bigoplus [\Sigma_5(b_{61})] \bigoplus [\Sigma_5(b_{91})] \\
&\hskip3cm\bigoplus[\Sigma_5(b_{141})] \bigoplus\langle\{[b_i] : 181 \leqslant i \leqslant 460\}\rangle ,\end{align*}

We prepare some lemmas for the proof of the proposition.
\begin{lems}\label{bd551} We have
\begin{align*}
&[\Sigma_5(b_1)]^{\Sigma_5} = \langle[p_1]\rangle, \ [\Sigma_5(b_{61})]^{\Sigma_5} = \langle[p_2]\rangle,\ [\Sigma_5(b_{91})]^{\Sigma_5} = \langle[p_3]\rangle,\\ 
&[\Sigma_5(b_{141})]^{\Sigma_5} = 0,\ \langle\{[b_i] : 181 \leqslant i \leqslant 460\} \rangle^{\Sigma_5} = \langle [p_4], [p_5], [p_6]\rangle.
\end{align*}
Here, 
$p_1 =\sum_{t=1}^{60}b_t$, $p_2=\sum_{t=61}^{90}b_t$,
$p_3=\sum_{t=61}^{130}b_t$, $p_4= \sum_{t=181}^{250}b_t + \sum_{t=371}^{415}b_t$,
$p_5= \sum_{t=251}^{265}b_t + \sum_{t=271}^{395}b_t$, $p_6 = \sum_{t=266}^{375}b_t + \sum_{t=396}^{415}b_t$.
\end{lems}
\begin{proof}[Outline of the proof] The set $B_1 := \{[b_t] : 1 \leqslant t \leqslant 60\}$ is a basis of $[\Sigma_5(b_1)]$. The action of $\Sigma_5$ on $QP_5(\omega_{(1)})$ induces the one of it on $B_1$. Furthermore, this action is transitive. Hence, if $f = \sum_{t=1}^{60}\gamma_tb_t$ with $\gamma_t \in \mathbb F_2$ and $[f] \in  [\Sigma_5(b_1)]^{\Sigma_5}$, then the relations $\rho_j(f) \equiv f, j = 1,2,3,4,$ imply $\gamma_t = \gamma_1, \forall t,\ 1 \leqslant t \leqslant 60$. The first part of the lemma is proved for $i=1$. The case $[\Sigma_5(b_{61})]$ is proved by a similar argument.

The set $B_3 := \{[b_t] : 91 \leqslant t \leqslant 140\}$ is the basis of  $[\Sigma_5(b_{91})]$. Suppose $f = \sum_{t=91}^{140}\gamma_tb_t$ with $\gamma_t \in \mathbb F_2$ and $[f] \in  [\Sigma_5(b_{91})]^{\Sigma_5}$. By computing directly the relations $\rho_j(f) \equiv f, j = 1,2,3,4$, we get $\gamma_t = 0$ for $131 \leqslant t \leqslant 140$ and $\gamma_t = \gamma_{91}$ for $91 \leqslant t \leqslant 130$. So, $f \equiv p_3$.

The set $B_4 := \{[b_t] : 141 \leqslant t \leqslant 180\}$ is the basis of  $[\Sigma_5(b_{141})]$. Suppose $f = \sum_{t=141}^{180}\gamma_tb_t$ with $\gamma_t \in \mathbb F_2$ and $[f] \in  [\Sigma_5(b_{141})]^{\Sigma_5}$. By computing from the relations $\rho_j(f) \equiv f, j = 1,2,3,4$, we obtain $\gamma_t = 0, \forall t, 141 \leqslant t \leqslant 180$. Hence, $f \equiv 0$.

Suppose $f = \sum_{t=181}^{460}\gamma_tb_t$ with $\gamma_t \in \mathbb F_2$ and $[f] \in  \langle\{[b_t] : 181 \leqslant t \leqslant 460\}\rangle^{\Sigma_5}$.  By a direct computation from the relations $\rho_j(f) \equiv f, j = 1,2,3,4$, we get $f \equiv \gamma_{181}p_4 + \gamma_{251}p_5 + \gamma_{266}p_6$. The second part is proved.
\end{proof}

\begin{lems}\label{bd552} We have 
$QP_5^+(\omega_{(1)})^{\Sigma_5} = \langle [p_7], [p_8], [p_9]\rangle.$
Here, $p_7 = \sum_{t=11}^{31}a_t + \sum_{t=38}^{51}a_t + \sum_{t=66}^{85}a_t$, $p_8 = \sum_{t=1}^{10}a_t + \sum_{t=52}^{85}a_t$, $p_9 = \sum_{t=32}^{85}a_t$.
\end{lems}
\begin{proof}[Outline of the proof] Let $[f] \in  QP_5^+(\omega_{(1)})^{\Sigma_5}$. By Proposition \ref{mdd61}, $f \equiv \sum_{t=1}^{160}\gamma_ta_t$. We compute $\rho_j(f) + f$ in terms of $a_t, 1\leqslant t \leqslant 160,$ (mod($\mathcal A^+P_5)$. By computing from the relations $\rho_j(f) + f \equiv 0$ with $j = 1,2,3,4,$ we get $f \equiv \gamma_1p_7 + \gamma_4p_8 + \gamma_{13}p_9$. Hence, the lemma follows.
\end{proof}

\begin{proof}[Proof of Proposition \ref{md551}] Combining Lemmas \ref{bd551} and \ref{bd552} gives
$$QP_5(\omega_{(1)})^{\Sigma_5} = \langle\{[p_j] : 1 \leqslant j \leqslant 9\}\rangle.$$ 

Let $f \in P_5(\omega_{(1)})$ such that $[f] \in QP_5(\omega_{(1)})^{GL_5}$. Then, $f \equiv \sum_{j=1}^9\gamma_jp_j$ with $\gamma_j \in \mathbb F_2$. By a direct computation using Theorem \ref{dlsig}, we  have
\begin{align*}\rho_5(f) + f &\equiv \gamma_1b_7 + \gamma_2b_{48} + \gamma_3b_{36} + \gamma_4b_{172} + \gamma_5b_{398} + (\gamma_1 + \gamma_5 + \gamma_6)b_{21}\\
&\quad + \gamma_7b_{371} + \gamma_8a_{136} + \gamma_9b_{182} + \text{ other terms} \equiv 0.
\end{align*}
The last equality implies $\gamma_t = 0$ for $1 \leqslant t \leqslant 9$. The proposition is proved.
\end{proof}

\medskip
\subsubsection{Computation of $QP_5(\omega_{(4)})^{GL_5}$}\

\medskip
In this part, we prove the following.
\begin{props}\label{md553} $QP_5(\omega_{(4)})^{GL_5} = 0$.
\end{props}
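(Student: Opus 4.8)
The plan is to follow the two-step pattern already used for Proposition \ref{md551}: first determine the $\Sigma_5$-invariants $QP_5(\omega_{(4)})^{\Sigma_5}$, and then cut these down to the full $GL_5$-invariants by imposing fixedness under the single extra generator $g_5$ (recall $\Sigma_5$ is generated by $g_1,\ldots,g_4$ while $GL_5$ needs $g_1,\ldots,g_5$, and $[f]_{\omega_{(4)}}$ is $GL_5$-invariant iff $g_i(f)\equiv_{\omega_{(4)}} f$ for all $i$). By Proposition \ref{mdd64}, $QP_5(\omega_{(4)})$ has the explicit monomial basis $\{[a_t]_{\omega_{(4)}}:161\leqslant t\leqslant 210\}$ and splits as a direct sum of the three $\Sigma_5$-submodules generated by $a_{162}$, $a_{175}$ and $a_{203}$, whose underlying basis blocks are $\{161,\ldots,170\}$, $\{171,\ldots,200\}$ and $\{201,\ldots,210\}$ respectively.

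First I would compute $QP_5(\omega_{(4)})^{\Sigma_5}$. The key simplification, already extracted in the proof of Proposition \ref{mdd64}, is that on each of the three blocks the induced $\Sigma_5$-action permutes the basis classes transitively. Hence, unlike the richer $\omega_{(1)}$-case of Lemmas \ref{bd551}--\ref{bd552}, each block is a transitive permutation module over $\mathbb F_2$ and contributes exactly one invariant, namely its orbit sum; concretely the three symmetrizations $[p(a_{162})]_{\omega_{(4)}}$, $[p(a_{175})]_{\omega_{(4)}}$, $[p(a_{203})]_{\omega_{(4)}}$. Writing a general element as $f=\sum_{t=161}^{210}\gamma_t a_t$ and imposing $g_i(f)\equiv_{\omega_{(4)}} f$ for $i=1,2,3,4$ pins down exactly these, so $\dim QP_5(\omega_{(4)})^{\Sigma_5}=3$. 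Their linear independence is guaranteed since the fifty classes form a basis, and their nonvanishing follows from Lemma \ref{noz1}, which shows $[a_{161}]_{\omega_{(4)}}\neq 0$ (whence the corresponding orbit sum, being the all-ones vector of a nonzero transitive module, is nonzero).

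The decisive step is to impose $g_5$-invariance on the three-dimensional space above. For $f=\gamma\,p(a_{162})+\delta\,p(a_{175})+\varepsilon\,p(a_{203})$ I would expand $g_5(f)+f$ and rewrite it in the admissible basis modulo $\mathcal A^+P_5+P_5^-(\omega_{(4)})$, straightening the inadmissible terms produced by $g_5$ via Singer's criterion (Theorem \ref{dlsig}) together with Theorem \ref{dlcb1} and the strictly inadmissible monomials catalogued in Lemmas \ref{inad64} and \ref{ina64}. Once $g_5(f)+f$ is displayed as an explicit $\mathbb F_2$-combination of admissible monomials, the relation $g_5(f)+f\equiv_{\omega_{(4)}}0$ becomes a linear system in $\gamma,\delta,\varepsilon$, which I expect to force $\gamma=\delta=\varepsilon=0$, exactly as in the closing display of the proof of Proposition \ref{md551}. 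This yields $QP_5(\omega_{(4)})^{GL_5}=0$.

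The main obstacle is computational rather than conceptual: although the invariant space being tested is only three-dimensional, the honest work is the $g_5$-straightening, since $g_5$ applied to each $a_t$ in the blocks generates many inadmissible monomials of weight vector $\omega_{(4)}$, each of which must be reduced using the specific strictly inadmissible monomials of Lemmas \ref{inad64} and \ref{ina64}; tracking which admissible monomial each term lands on is where care is needed. A secondary point to verify rigorously before the $g_5$ step is that the three $\Sigma_5$-submodules really are transitive on their basis blocks, so that no invariant is overlooked and the space of $\Sigma_5$-invariants is genuinely spanned by the three orbit sums alone.
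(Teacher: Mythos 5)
Your overall two-step strategy (first $\Sigma_5$-invariants via $g_1,\dots,g_4$, then cutting down by $g_5$) is exactly the paper's, but your determination of $QP_5(\omega_{(4)})^{\Sigma_5}$ contains a genuine error. You assert that \emph{all three} blocks $\{161,\dots,170\}$, $\{171,\dots,200\}$, $\{201,\dots,210\}$ are transitive permutation modules, so that each contributes its orbit sum and $\dim QP_5(\omega_{(4)})^{\Sigma_5}=3$. That is false for the third block. In the proof of Proposition \ref{mdd64} the paper claims the permutation/transitivity property only for the first two blocks; on $\langle[\Sigma_5(a_{203})]_{\omega_{(4)}}\rangle$ the generators $g_1,\dots,g_4$ send basis monomials to \emph{inadmissible} monomials whose straightening produces genuinely non-permutation behaviour. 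This is visible in the paper's Lemma \ref{bdd557}: for $f=\sum_{t=201}^{210}\gamma_t a_t$ one has, e.g., a coefficient $\gamma_{202}+\gamma_{203}+\gamma_{206}$ on $a_{202}$ in $g_3(f)+f$, and a coefficient $\gamma_{204}$ (not $\gamma_{201}+\gamma_{204}$) on $a_{201}$ in $g_1(f)+f$. In particular your proposed invariant $[p(a_{203})]_{\omega_{(4)}}$ is \emph{not} $\Sigma_5$-invariant: setting all $\gamma_t=1$ in the $g_1$-relation leaves the term $a_{201}$ with coefficient $1$. The correct statement (Lemma \ref{bdd557}) is that this block has \emph{zero} $\Sigma_5$-invariants, so $QP_5(\omega_{(4)})^{\Sigma_5}$ is $2$-dimensional, spanned by $[p(a_{162})]_{\omega_{(4)}}$ and $[p(a_{175})]_{\omega_{(4)}}$, and establishing this requires the explicit relation-by-relation computation that your plan defers to a "secondary point."

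Note that your final conclusion would not automatically collapse, because the true invariant space happens to be contained in your $3$-dimensional span, so showing that the larger space meets $\ker(g_5+1)$ trivially would still give $QP_5(\omega_{(4)})^{GL_5}=0$. But the containment itself is exactly what you have not proved: it holds only because the third block's invariants vanish, which is the computation you skipped by assuming transitivity. As written, the proposal rests on a false structural claim about $\langle[\Sigma_5(a_{203})]_{\omega_{(4)}}\rangle$, and any honest execution of your plan would fail at the verification step and force you to redo the analysis of that block along the lines of Lemma \ref{bdd557} before the $g_5$-step (which in the paper is then very short: $g_5(f)+f\equiv_{\omega_{(4)}}(\gamma_1+\gamma_2)a_{162}+\gamma_2a_{171}+\cdots$ forces $\gamma_1=\gamma_2=0$).
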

From Proposition \ref{mdd64}, we can see that there is a direct summand decomposition of the $\Sigma_5$-modules:  
$$ QP_5(\omega_{(4)}) =  [\Sigma_5(a_{161})]_{\omega_{(4)}}\bigoplus [\Sigma_5(a_{171})]_{\omega_{(4)}}\bigoplus  [\Sigma_5(a_{201})]_{\omega_{(4)}}.$$

\begin{lems}\label{bdd555} We have $ [\Sigma_5(a_{161})]_{\omega_{(4)}}^{\Sigma_5} =  \langle [p_{10}]_{\omega_{(4)}}\rangle$ with $p_{10} = \sum_{t=161}^{170}a_t$.
\end{lems}
\begin{proof}
The set $\{[a_{t}]_{\omega_{(4)}}: 161 \leqslant t \leqslant 170\}$ is a basis of $ [\Sigma_5(a_{161})]_{\omega_{(4)}}$. If $f \in P_5(\omega_{(4)})$ such that $[f]_{\omega_{(4)}} \in  [\Sigma_5(a_{161})]_{\omega_{(4)}}^{\Sigma_5}$, then $f \equiv_{\omega_{(4)}}p_{10} = \sum_{t=161}^{170}\gamma_ta_t$ with $\gamma_t \in \mathbb F_2$ and $\rho_j(f) + f \equiv_{\omega_{(4)}} 0$, for $j=1,2,3,4.$ Since the action of $\Sigma_5$ on $QP_5(\omega_{(4)})$ induces the one of it on $\{[a_{t}]_{\omega_{(4)}}: 161 \leqslant t \leqslant 170\}$ which is transitive. Hence, from the relation $\rho_j(f) + f \equiv_{\omega_{(4)}} 0$, with $j=1,2,3,4,$ we obtain $\gamma_t = \gamma_{161}, \forall t, 161 \leqslant t \leqslant 170$. The lemma follows. 
\end{proof}

By an argument similar to the proof of Lemma \ref{bdd555}, we get the following.
\begin{lems}\label{bdd556} We have $ [\Sigma_5(a_{171})]_{\omega_{(4)}}^{\Sigma_5} =  \langle [p_{11}]_{\omega_{(4)}}\rangle$ with $p_{11} = \sum_{t=171}^{200}a_t$.
\end{lems}

\begin{lems}\label{bdd557} $ [\Sigma_5(a_{203})]_{\omega_{(4)}}^{\Sigma_5} =  0$.
\end{lems}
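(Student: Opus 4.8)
The plan is to proceed exactly as in Lemmas \ref{bdd555} and \ref{bdd556}, reducing the claim to a homogeneous linear system produced by the $\Sigma_5$-invariance conditions, the difference being that here the system will have only the trivial solution. By Proposition \ref{mdd64} and the $\Sigma_5$-splitting of $QP_5(\omega_{(4)})$ displayed before Lemma \ref{bdd555}, the summand $\langle [\Sigma_5(a_{203})]_{\omega_{(4)}}\rangle$ has as basis the ten classes $\{[a_t]_{\omega_{(4)}} : 201 \leqslant t \leqslant 210\}$. Hence, if $f \in P_5(\omega_{(4)})$ satisfies $[f]_{\omega_{(4)}} \in \langle [\Sigma_5(a_{203})]_{\omega_{(4)}}\rangle^{\Sigma_5}$, I would write $f \equiv_{\omega_{(4)}} \sum_{t=201}^{210}\gamma_t a_t$ with $\gamma_t \in \mathbb F_2$, the invariance being equivalent to $g_j(f) + f \equiv_{\omega_{(4)}} 0$ for the four transpositions $g_1,g_2,g_3,g_4$ that generate $\Sigma_5$.

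The heart of the argument is the second step: for each $j$ I would compute $g_j(f)+f$ and re-express it in the admissible basis modulo $P_5^-(\omega_{(4)})$. This is precisely where the behaviour differs from the two preceding lemmas. In those cases $g_j$ permutes the bases $\{[a_t]_{\omega_{(4)}} : 161 \leqslant t \leqslant 170\}$ and $\{[a_t]_{\omega_{(4)}} : 171 \leqslant t \leqslant 200\}$ transitively, so the orbit sums $p(a_{162})$ and $p(a_{175})$ survive as genuine invariants. By contrast, the image $g_j(a_t)$ for $201 \leqslant t \leqslant 210$ is in general inadmissible, and must be rewritten using Theorem \ref{dlcb1} together with the strict-inadmissibility relations of Lemmas \ref{inad64} and \ref{ina64}. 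I expect these reductions to express $g_j(a_t)$ as a nontrivial linear combination of several basis classes rather than a single one, so that the four equations $g_j(f)+f \equiv_{\omega_{(4)}} 0$ form an overdetermined homogeneous system in $\gamma_{201},\ldots,\gamma_{210}$ with cross-terms linking distinct coefficients.

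Solving that system should then force $\gamma_t = 0$ for all $201 \leqslant t \leqslant 210$, whence $[f]_{\omega_{(4)}} = 0$ and $\langle [\Sigma_5(a_{203})]_{\omega_{(4)}}\rangle^{\Sigma_5} = 0$. Conceptually, the vanishing must come about because the only candidate that would survive a pure transitive permutation of the basis, namely the orbit sum $p(a_{203})$, satisfies $p(a_{203}) \equiv_{\omega_{(4)}} 0$; indeed, were the action a transitive permutation of ten admissible monomials its invariants would be one-dimensional and nonzero, so the mixing produced by the reductions is exactly what collapses the invariant space here. The main obstacle is the bookkeeping of step two: each transposition can send an admissible monomial onto one of the many strictly inadmissible patterns, and tracking these reductions and the resulting off-diagonal relations among the $\gamma_t$ without error is the delicate part, after which the final linear algebra is routine.
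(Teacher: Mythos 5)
Your plan coincides with the paper's own proof of this lemma: the paper likewise takes $\{[a_{t}]_{\omega_{(4)}}: 201 \leqslant t \leqslant 210\}$ as a basis of $\langle [\Sigma_5(a_{203})]_{\omega_{(4)}}\rangle$, writes an invariant class as $f \equiv_{\omega_{(4)}} \sum_{t=201}^{210}\gamma_t a_t$, imposes $g_j(f)+f \equiv_{\omega_{(4)}} 0$ for $j=1,2,3,4$, and reduces each $g_j(f)+f$ to the admissible basis modulo $P_5^-(\omega_{(4)})$. The resulting relations do contain exactly the cross-terms you predict (for instance $(\gamma_{203}+\gamma_{207}+\gamma_{208})a_{203}$ appears in $g_2(f)+f$, so the action is not a permutation of this basis), and the homogeneous system they form has only the trivial solution, giving $\gamma_t=0$ for all $t$.

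One remark in your final paragraph is false, however, and should be removed: the orbit sum does \emph{not} satisfy $p(a_{203}) \equiv_{\omega_{(4)}} 0$. By Proposition \ref{mdd64} the classes $[a_{t}]_{\omega_{(4)}}$, $201 \leqslant t \leqslant 210$, are linearly independent in $QP_5(\omega_{(4)})$, so $[p(a_{203})]_{\omega_{(4)}} = \sum_{t=201}^{210}[a_t]_{\omega_{(4)}} \neq 0$; if it vanished, it would contradict the very basis property you invoke in your first step. The correct conceptual explanation is the one you give immediately afterwards: because the reductions mix basis vectors rather than permuting them, the (nonzero) orbit sum is simply not invariant. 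This is visible directly in the paper's computation: substituting $\gamma_t = 1$ for all $t$ into the relation for $g_1(f)+f$ leaves $a_{201}+a_{202}+a_{203} \neq 0$, so $p(a_{203})$ fails $g_1$-invariance. With that aside corrected, your argument is the paper's argument, and the only remaining work is the bookkeeping you already identify.
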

\begin{proof} The set $\{[a_{t}]_{\omega_{(4)}}: 201 \leqslant t \leqslant 210\}$ is a basis of $ [\Sigma_5(a_{203})]_{\omega_{(4)}}$. If $f \in P_5(\omega_{(4)})$ such that $[f]_{\omega_{(4)}} \in  [\Sigma_5(a_{204})]_{\omega_{(4)}}^{\Sigma_5}$, then $f \equiv_{\omega_{(4)}} \sum_{t=201}^{210}\gamma_ta_t$ with $\gamma_t \in \mathbb F_2$ and $\rho_j(f) + f \equiv_{\omega_{(4)}} 0$, for $j=1,2,3,4.$ By a direct computation, we get
\begin{align*}
\rho_1(f) + f &\equiv_{\omega_{(4)}} \gamma_{204}a_{201} + \gamma_{205}a_{202} + \gamma_{206}a_{203} + (\gamma_{207} + \gamma_{209})a_{207}\\
& + (\gamma_{208} + \gamma_{210})a_{208} +  (\gamma_{207} + \gamma_{209})a_{209}+ (\gamma_{208} + \gamma_{210})a_{210} \equiv_{\omega_{(4)}} 0,\\
\rho_2(f) + f &\equiv_{\omega_{(4)}} (\gamma_{201} + \gamma_{204})a_{201} + (\gamma_{202} + \gamma_{205})a_{202} + (\gamma_{203} + \gamma_{207} + \gamma_{208})a_{203}\\
& + (\gamma_{201} + \gamma_{204})a_{204} + (\gamma_{202} + \gamma_{205})a_{205} + (\gamma_{206} + \gamma_{208})a_{206}\\
& + (\gamma_{203} + \gamma_{206} + \gamma_{207})a_{207} + (\gamma_{206} + \gamma_{208})a_{208} + \gamma_{210}a_{209} \equiv_{\omega_{(4)}} 0, \\
\rho_3(f) + f &\equiv_{\omega_{(4)}} \gamma_{204}a_{201} +  (\gamma_{202} + \gamma_{203} + \gamma_{206})a_{202} +   (\gamma_{202} + \gamma_{203} + \gamma_{205})a_{203}\\
& +  (\gamma_{205} + \gamma_{206})a_{205} +  (\gamma_{205} + \gamma_{206})a_{206} + (\gamma_{207} + \gamma_{208})a_{207}\\
& +  (\gamma_{207} + \gamma_{208})a_{208} +  (\gamma_{209} + \gamma_{210})a_{209} +  (\gamma_{209} + \gamma_{210})a_{210} \equiv_{\omega_{(4)}} 0, \\ 
\rho_4(f) + f &\equiv_{\omega_{(4)}} (\gamma_{201} + \gamma_{202})a_{201} +  (\gamma_{201} + \gamma_{202})a_{202} +   \gamma_{206}a_{203}\\
& +  (\gamma_{204} + \gamma_{205})a_{204} +  (\gamma_{204} + \gamma_{205})a_{205}  +  \gamma_{208}a_{207} +  \gamma_{210}a_{209} \equiv_{\omega_{(4)}} 0.
\end{align*}
From the above equalities, we obtain $\gamma_t = 0, \forall t, 201 \leqslant t \leqslant 210$. The lemma is proved. 
\end{proof}
\begin{proof}[Proof of Proposition \ref{md553}] By combining Lemmas \ref{bdd555}-\ref{bdd557}, we have
$$QP_5(\omega_{(4)})^{\Sigma_5} = \langle [p_{10}]_{\omega_{(4)}}, [p_{11}]_{\omega_{(4)}}\rangle,$$ 
where $p_{10} = \sum_{t=161}^{170}a_t$ and $p_{11} = \sum_{t=171}^{200}a_t$.

Let $f \in P_5(\omega_{(4)})$ such that $[f] \in QP_5(\omega_{(4)})^{GL_5}$. Then, $f \equiv_{\omega_{(4)}} \gamma_1 p_{10}+ \gamma_2p_{11}$ with $\gamma_1, \gamma_2 \in \mathbb F_2$. By a direct computation, we have
\begin{align*}\rho_5(f) + f &\equiv_{\omega_{(4)}} (\gamma_1 + \gamma_2)a_{162}+ \gamma_2a_{171} + \text{other terms} \equiv_{\omega_{(4)}} 0.
\end{align*}
This equality implies $\gamma_1 = \gamma_2 = 0$. The proposition is proved.
\end{proof}

\subsubsection{Computation of $QP_5(\omega_{(5)})^{GL_5}$}\

\medskip
In this part, we prove the following.
\begin{props}\label{md554} $QP_5(\omega_{(5)})^{GL_5} = 0$.
\end{props}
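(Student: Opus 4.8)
The plan is to mirror exactly the strategy used for Propositions \ref{md551} and \ref{md553}: first compute the $\Sigma_5$-invariants of $QP_5(\omega_{(5)})$, then cut these down to the $GL_5$-invariants by imposing the single extra relation coming from $g_5$. From Proposition \ref{mdd65} I already know that $QP_5(\omega_{(5)})$ has dimension $15$ with basis $\{[a_t]_{\omega_{(5)}} : 211 \leqslant t \leqslant 225\}$, and that there is a $\Sigma_5$-module splitting $QP_5(\omega_{(5)}) = \langle [\Sigma_5(a_{214})]_{\omega_{(5)}}\rangle \bigoplus \langle [\Sigma_5(a_{225})]_{\omega_{(5)}}\rangle$. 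So the first step is to determine the $\Sigma_5$-invariant subspace of each summand.

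First I would treat each $\Sigma_5$-orbit summand separately, exactly as in Lemmas \ref{bdd555}--\ref{bdd557}. For the summand $\langle[\Sigma_5(a_{225})]_{\omega_{(5)}}\rangle$, spanned by the orbit $\{[a_t]_{\omega_{(5)}} : 216 \leqslant t \leqslant 225\}$ (on which, by the proof of Proposition \ref{mdd65}, $\Sigma_5$ acts transitively), the transitivity forces any invariant element to have all coefficients equal; hence its $\Sigma_5$-invariants are one-dimensional, spanned by $[p(a_{225})]_{\omega_{(5)}} = \sum_{t=216}^{225}[a_t]_{\omega_{(5)}}$. For the smaller summand $\langle[\Sigma_5(a_{214})]_{\omega_{(5)}}\rangle$, spanned by $\{[a_t]_{\omega_{(5)}} : 211 \leqslant t \leqslant 215\}$, I would write a general element $f \equiv_{\omega_{(5)}} \sum_{t=211}^{215}\gamma_t a_t$, compute each $g_j(f) + f$ for $j = 1,2,3,4$ in terms of the admissible basis, and solve the resulting linear system. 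This yields the $\Sigma_5$-invariants of this summand, say spanned by one or two explicit symmetrized polynomials $[p(a_{214})]_{\omega_{(5)}}$, etc. Assembling the two pieces gives an explicit basis for $QP_5(\omega_{(5)})^{\Sigma_5}$.

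Having $QP_5(\omega_{(5)})^{\Sigma_5} = \langle [p(a_{214})]_{\omega_{(5)}}, [p(a_{225})]_{\omega_{(5)}}\rangle$ (a space of dimension at most $2$), the final step is to impose $GL_5$-invariance, which amounts to the one additional condition $g_5(f) \equiv_{\omega_{(5)}} f$. I would take $f \equiv_{\omega_{(5)}} \gamma_1 p(a_{214}) + \gamma_2 p(a_{225})$, compute $g_5(f) + f$ explicitly in terms of the admissible monomials $a_t$, and read off the coefficients of a few well-chosen basis monomials not fixed by $g_5$. As in the proofs of Propositions \ref{md551} and \ref{md553}, I expect the $g_5$-relation to produce equations of the form $\gamma_1 = 0$ and $\gamma_2 = 0$ (coming from monomials that appear in $g_5(f)$ but are absent from $f$), forcing $f \equiv_{\omega_{(5)}} 0$ and hence $QP_5(\omega_{(5)})^{GL_5} = 0$.

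The main obstacle is purely computational rather than conceptual: applying $g_5$ (the substitution $x_1 \mapsto x_1 + x_2$) to the symmetrized invariants and re-expressing the result in terms of the admissible basis $\{[a_t]_{\omega_{(5)}}\}$ modulo $\mathcal A^+P_5 + P_5^-(\omega_{(5)})$ requires careful bookkeeping, since the binomial expansion creates many monomials that must each be rewritten using the hit relations and the strict inadmissibility results of Lemmas \ref{bdd22}, \ref{inad65}, and \ref{inad651}. The key to keeping this manageable is to identify, in advance, one or two ``witness'' monomials whose coefficient in $g_5(f)+f$ is visibly $\gamma_1$ (respectively $\gamma_2$) and which do not cancel against other terms; this is precisely the role played by the displayed partial expansions with ``other terms'' in the earlier proofs. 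Once those witnesses are isolated, the vanishing of the coefficients is immediate and the proposition follows.
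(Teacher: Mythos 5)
Your proposal follows the paper's own proof essentially verbatim: the same $\Sigma_5$-splitting of $QP_5(\omega_{(5)})$ into the orbit summands of $a_{215}$ (equivalently $a_{214}$) and $a_{225}$, the same identification of the $\Sigma_5$-invariants of each summand with the orbit sums $[p(a_{215})]_{\omega_{(5)}}$ and $[p(a_{225})]_{\omega_{(5)}}$ via transitivity of the induced permutation action, and the same final step of imposing the $g_5$-relation on $f \equiv_{\omega_{(5)}} \gamma_1 p(a_{215}) + \gamma_2 p(a_{225})$. The witness terms you anticipate are exactly what the paper's computation produces, namely $g_5(f)+f \equiv_{\omega_{(5)}} (\gamma_1+\gamma_2)a_{214} + \gamma_2(a_{219}+a_{220}+a_{221})$, which forces $\gamma_1=\gamma_2=0$.
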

From Proposition \ref{mdd65}, there is a direct summand decomposition of the $\Sigma_5$-modules:  
$$ QP_5(\omega_{(5)}) =  [\Sigma_5(a_{211})]_{\omega_{(5)}}\bigoplus  [\Sigma_5(a_{216})]_{\omega_{(5)}}.$$

By an argument similar to the proof of Lemma \ref{bdd555}, we get the following.
\begin{lems}\label{bdd558} We have $ [\Sigma_5(a_{211})]_{\omega_{(5)}}^{\Sigma_5} =  \langle [p_{12}]_{\omega_{(5)}}\rangle$ and $ [\Sigma_5(a_{216})]_{\omega_{(5)}}^{\Sigma_5} =  \langle [p_{13}]_{\omega_{(5)}}\rangle$ with $p_{12} = \sum_{t=211}^{215}a_t$ and $p_{13} = \sum_{t=216}^{225}a_t$.
\end{lems}
\begin{proof}[Proof of Proposition \ref{md554}] From Lemma \ref{bdd558}, we have
$$QP_5(\omega_{(5)})^{\Sigma_5} = \langle [p_{12}]_{\omega_{(5)}}, [p_{13}]_{\omega_{(5)}}\rangle.$$ 

Let $f \in P_5(\omega_{(5)})$ such that $[f] \in QP_5(\omega_{(5)})^{GL_5}$. Then, $f \equiv_{\omega_{(5)}} \gamma_1 p_{12} + \gamma_2p_{13}$ with $\gamma_1, \gamma_2 \in \mathbb F_2$. A direct computation shows
\begin{align*}\rho_5(f) + f &\equiv_{\omega_{(5)}} (\gamma_1 + \gamma_2)a_{214}+ \gamma_2(a_{219} + a_{220} + a_{221}) \equiv_{\omega_{(5)}} 0.
\end{align*}
This equality implies $\gamma_1 = \gamma_2 = 0$. The proposition is proved.
\end{proof}

From the above results, we easily obtain the following.
\begin{corls} $\text{\rm Ker}(\widetilde{Sq}^0_*)_{(5,15)}^{GL_5} = 0$.
\end{corls}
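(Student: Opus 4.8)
The plan is to exploit the $GL_5$-equivariant direct sum decomposition established above,
$$\text{\rm Ker}(\widetilde{Sq}^0_*)_{(5,15)} = \bigoplus_{1\leqslant j \leqslant 5} QP_5(\omega_{(j)}),$$
in which each summand $QP_5(\omega_{(j)})$ is a $GL_5$-module by Lemma \ref{bdm}. Since passing to $GL_5$-fixed points is an additive functor on $\mathbb F_2[GL_5]$-modules, it commutes with finite direct sums, so that
$$\text{\rm Ker}(\widetilde{Sq}^0_*)_{(5,15)}^{GL_5} = \bigoplus_{1\leqslant j \leqslant 5} QP_5(\omega_{(j)})^{GL_5}.$$
Hence it suffices to verify that each of the five invariant subspaces on the right vanishes.

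Two of these vanish for a trivial reason: by Propositions \ref{mdd62} and \ref{mdd63} the whole modules $QP_5(\omega_{(2)})$ and $QP_5(\omega_{(3)})$ are zero, and so their invariant subspaces are zero as well. The remaining three have already been computed in the preceding part of this subsection: Proposition \ref{md551} gives $QP_5(\omega_{(1)})^{GL_5} = 0$, Proposition \ref{md553} gives $QP_5(\omega_{(4)})^{GL_5} = 0$, and Proposition \ref{md554} gives $QP_5(\omega_{(5)})^{GL_5} = 0$. Substituting these five vanishing results into the displayed sum yields $\text{\rm Ker}(\widetilde{Sq}^0_*)_{(5,15)}^{GL_5} = 0$.

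I expect no genuine obstacle at this assembly stage: all of the computational difficulty has been absorbed into Propositions \ref{md551}, \ref{md553} and \ref{md554}, each of which first reduces the $GL_5$-invariants to the more tractable $\Sigma_5$-invariants (a short explicit list of symmetrized classes $p_i$ or $p(a_t)$) and then eliminates those by applying the one remaining generator $g_5$ and reading off coefficients with the aid of Theorem \ref{dlsig}. The only point that must be checked here is the elementary claim that invariants distribute over the direct sum, and this is immediate precisely because the decomposition is one of $GL_5$-modules. I would therefore keep this final proof to a few lines, merely citing the relevant propositions and invoking the additivity of the fixed-point construction.
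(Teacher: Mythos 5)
Your citations and the overall assembly are exactly what the paper intends: the corollary follows from Propositions \ref{mdd62} and \ref{mdd63} (vanishing of $QP_5(\omega_{(2)})$ and $QP_5(\omega_{(3)})$) together with Propositions \ref{md551}, \ref{md553} and \ref{md554} (vanishing of the $GL_5$-invariants of the other three pieces). However, the step you dismiss as ``the only point that must be checked'' is precisely where your argument has a gap. The equality $\text{\rm Ker}(\widetilde{Sq}^0_*)_{(5,15)} = \bigoplus_{j=1}^{5}QP_5(\omega_{(j)})$ is a decomposition of $\mathbb F_2$-vector spaces (and of $\Sigma_5$-modules, since permutations preserve weight vectors), but it is \emph{not} established to be a decomposition of $GL_5$-modules, and Lemma \ref{bdm} does not say it is: that lemma only equips each subquotient $QP_5(\omega)$ with its own $GL_5$-action through the relation $\equiv_\omega$ of Definition \ref{dfn2}. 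The transvection $g_5$ sends a monomial of weight $\omega$ to a sum of monomials of weights $\leqslant \omega$ (this is what the proof of Lemma \ref{bdm} actually shows), so the $GL_5$-action preserves only the filtration by weight, not the weight grading. The paper's own relations (\ref{ct562})--(\ref{ct564}) exhibit this weight-lowering in degree $35$: every monomial of $\psi(p)$ has weight beginning with $5$, yet $g_2(\psi(p))+\psi(p)$ has nonzero components of weight $\omega_{(1)}$. Hence you cannot invoke ``fixed points commute with direct sums of $G$-modules''; for a module that is merely filtered, invariants of the whole and invariants of the associated graded need not coincide.

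Fortunately the inequality one does get goes in the direction you need, so the gap closes with a short argument --- the same one the paper runs inside the proof of Theorem \ref{pthm3} for $s=3$, where $g_i(\bar f)+\bar f \equiv_{\omega_{(t)}} 0$ is deduced from $g_i(f)+f\equiv 0$ before Propositions \ref{md553} and \ref{md554} are applied. The five weights are totally ordered, $\omega_{(1)}<\omega_{(2)}<\cdots<\omega_{(5)}$. Let $[f]\in \text{\rm Ker}(\widetilde{Sq}^0_*)_{(5,15)}^{GL_5}$ with $f$ a sum of admissible monomials, and write $f=\sum_j f_j$ with $f_j$ the part of weight $\omega_{(j)}$. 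If $f\ne 0$, let $j_0$ be maximal with $f_{j_0}\ne 0$. For $j<j_0$ both $f_j$ and $g_i(f_j)$ lie in $P_5^-(\omega_{(j_0)})$, and $\mathcal A^+P_5\subset \mathcal A^+P_5+P_5^-(\omega_{(j_0)})$; hence $g_i(f)+f\equiv 0$ forces $g_i(f_{j_0})+f_{j_0}\equiv_{\omega_{(j_0)}}0$ for all $i$, i.e.\ $[f_{j_0}]_{\omega_{(j_0)}}\in QP_5(\omega_{(j_0)})^{GL_5}$. Since the admissible classes $[B_5(\omega_{(j_0)})]_{\omega_{(j_0)}}$ form a basis of $QP_5(\omega_{(j_0)})$, the class $[f_{j_0}]_{\omega_{(j_0)}}$ is nonzero, contradicting $QP_5(\omega_{(j_0)})^{GL_5}=0$, which holds for every $j_0$ by the five propositions you cite. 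Hence $f=0$. (Equivalently: taking $GL_5$-invariants is left exact, so a module carrying a $GL_5$-stable filtration whose graded pieces have trivial invariants has trivial invariants.) With this replacement of your equivariance claim, your proof is correct and matches the paper's intended argument.
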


\subsubsection{Proof of Theorem \ref{pthm3} for $s=3$}\

\medskip
Let $f \in (P_5)_{35}$ such that $[f] \in (\mathbb F_2\otimes P_5)_{35}^{GL_5}$. Since Kameko's squaring operation 
$$(\widetilde{Sq}^0_*)_{(5,15)}: (\mathbb F_2 \otimes_{\mathcal A} P_5)_{35} \longrightarrow (\mathbb F_2 \otimes_{\mathcal A} P_k)_{15}$$
is an epimorphism of $GL_5$-modules, $(\widetilde{Sq}^0_*)_{(5,15)}([f]) \in  (\mathbb F_2\otimes P_5)_{15}^{GL_5}$. By Proposition~\ref{mdds2}, $(\widetilde{Sq}^0_*)_{(5,15)}([f]) = \lambda_1 [p]  + \lambda_2[q]$  with $\lambda_1, \lambda_2 \in \mathbb F_2$. Hence, we have
$$ f \equiv \lambda_1 \psi(p) + \lambda_2\psi(q) + \bar f,$$
where $\bar f \in (P_5)_{35}$ such that $[\bar f] \in \text{Ker}(\widetilde{Sq}^0_*)_{(5,15)}$, and $\psi : P_5 \to P_5$ is the $\mathbb F_2$-linear map determined by $\psi (y) = x_1x_2x_3x_4x_5y^2$ for any $y \in P_5$.

Now, we prove that if $[f] \ne 0$, then $\lambda_2 = 1$. 

Suppose the contrary, that $\lambda_2 = 0$. By a direct computation, we have
\begin{align}
\rho_1(\psi(p)) + \psi(p) &\equiv  0,\label{ct561}\\ 
\rho_2(\psi(p)) + \psi(p) &\equiv  a_{17} + a_{18} + a_{20} + a_{21} + a_{104} + a_{105} + a_{106} + a_{107} +  a_{108}\notag\\
&\quad  + a_{109} + a_{113} + a_{114} + a_{115} + a_{116} + a_{123} + a_{124},\label{ct562}\\
\rho_3(\psi(p)) + \psi(p) &\equiv a_{11} + a_{18} + a_{35} + a_{37} \notag\\
&\quad  + a_{52} + a_{54} + a_{91} + a_{93} + a_{95} + a_{105},\label{ct563}\\
\rho_4(\psi(p)) + \psi(p) &\equiv a_{1} + a_{4} + a_{11} + a_{12} + a_{15}\notag\\
&\quad   + a_{16} + a_{32} + a_{33} + a_{54} + a_{55} + a_{87}.\label{ct564}
\end{align}

From the relations (\ref{ct561})-(\ref{ct564}), we have $\rho_i(\bar f) + \bar f \equiv_{\omega_{(t)}} \rho_i(f) +  f \equiv_{\omega_{(t)}} 0$, for $i=1,2,3,4,5$ and $t = 4,5$. From this and Propositions \ref{md553} and \ref{md554}, we get $[\bar f]_{\omega_{(t)}} \in QP_5(\omega_{(t)})^{GL_5} = 0$. By combining this and the facts that $QP_5(\omega_{(2)}) =0$ and $QP_5(\omega_{(3)}) = 0$ we get $\bar f \equiv f' \in QP_5(\omega_{(1)})$. Now, by a direct computation from the relations $\rho_i(f) +f \equiv 0$, for $i = 1,2,3,4$, and using Proposition \ref{mdd61}, we get $\lambda_1 = 0$. Hence, by Proposition \ref{md551}, $[f] = [f'] \in QP_5(\omega_{(1)})^{GL_5} = 0$. This contradicts the hypothesis $[f] \ne 0$. Hence, $\lambda_2 = 1$ and $f \equiv \lambda_1\psi(p) + \psi(q) + \bar f$.

Suppose that $f^* = \lambda \psi(p) + \psi(q) + \bar f^*$ with $\lambda \in \mathbb F_2,\ [\bar f^*] \in \text{Ker}(\widetilde{Sq}^0_*)_{(5,15)}$ and $[f^*] \in (\mathbb F_2\otimes P_5)_{35}^{GL_5}$. Then, $(\lambda_1 + \lambda)\psi(p) + \bar f + \bar f^* = f + f^*$ and 
$$[f+f^*] = [f ] + [f^*] \in (\mathbb F_2\otimes P_5)_{35}^{GL_5}.$$ 
By the same argument as the previous one we obtain $\lambda + \lambda_1 = 0$. So, $[f + f^*] \in \text{Ker}(\widetilde{Sq}^0_*)_{(5,15)}^{GL_5} = 0$. This implies $[f] = [f^*]$.

Thus, we have proved that $\dim (\mathbb F_2\otimes P_5)_{35}^{GL_5} \leqslant 1$. 

In \cite{si1}, Singer showed that the Adams elements $h_i$ are in the image of $\varphi_1^*$. H\`a showed in \cite{ha} that the elements $d_i$ are in the image of $\varphi_4^*$. Since $\varphi^* = \{\varphi_k^*: k \geqslant 0\}$ is a homomorphism of graded algebras, we see that the element $h_2d_1$ is in the image of $\varphi_5^*$, hence $\varphi_5((h_2d_1)^*) \ne 0$. This implies $\dim (\mathbb F_2\otimes P_5)_{35}^{GL_5} \geqslant 1$. Theorem \ref{pthm3} is completely proved.

\section{Appendix}\label{s6}
\setcounter{equation}{0}

In the appendix, we list all admissible monomials of degrees 5, 7, 15, 16, 35 in $P_4$ and $P_5$. We order a set of some monomials in $P_k$ by using the order as in Definition \ref{defn3}.

\medskip 
\subsection{The admissible monomials of degree 5 in $\mathbf{P_5}$.}\label{ss61}\

\medskip 
$B_5(5)$ is the set of 46 monomials:

\medskip
 \centerline{\begin{tabular}{llll}
1. \ $x_3x_4x_5^{3} $& 2. \ $x_3x_4^{3}x_5 $& 3. \ $x_3^{3}x_4x_5 $& 4. \ $x_2x_4x_5^{3} $\cr 
5. \ $x_2x_4^{3}x_5 $& 6. \ $x_2x_3x_5^{3} $& 7. \ $x_2x_3x_4^{3} $& 8. \ $x_2x_3^{3}x_5 $\cr 
9. \ $x_2x_3^{3}x_4 $& 10. \ $x_2^{3}x_4x_5 $& 11. \ $x_2^{3}x_3x_5 $& 12. \ $x_2^{3}x_3x_4 $\cr 
13. \ $x_1x_4x_5^{3} $& 14. \ $x_1x_4^{3}x_5 $& 15. \ $x_1x_3x_5^{3} $& 16. \ $x_1x_3x_4^{3} $\cr 
17. \ $x_1x_3^{3}x_5 $& 18. \ $x_1x_3^{3}x_4 $& 19. \ $x_1x_2x_5^{3} $& 20. \ $x_1x_2x_4^{3} $\cr  
21. \ $x_1x_2x_3^{3} $& 22. \ $x_1x_2^{3}x_5 $& 23. \ $x_1x_2^{3}x_4 $& 24. \ $x_1x_2^{3}x_3 $\cr 
25. \ $x_1^{3}x_4x_5 $& 26. \ $x_1^{3}x_3x_5 $& 27. \ $x_1^{3}x_3x_4 $& 28. \ $x_1^{3}x_2x_5 $\cr 
29. \ $x_1^{3}x_2x_4 $& 30. \ $x_1^{3}x_2x_3 $& 31. \ $x_2x_3x_4x_5^{2} $& 32. \ $x_2x_3x_4^{2}x_5 $\cr 
33. \ $x_2x_3^{2}x_4x_5 $& 34. \ $x_1x_3x_4x_5^{2} $& 35. \ $x_1x_3x_4^{2}x_5 $& 36. \ $x_1x_3^{2}x_4x_5 $\cr 
37. \ $x_1x_2x_4x_5^{2} $& 38. \ $x_1x_2x_4^{2}x_5 $& 39. \ $x_1x_2x_3x_5^{2} $& 40. \ $x_1x_2x_3x_4^{2} $\cr  
41. \ $x_1x_2x_3^{2}x_5 $& 42. \ $x_1x_2x_3^{2}x_4 $& 43. \ $x_1x_2^{2}x_4x_5 $& 44. \ $x_1x_2^{2}x_3x_5 $\cr 
45. \ $x_1x_2^{2}x_3x_4 $& 46. \ $x_1x_2x_3x_4x_5 $.&
\end{tabular}}

\medskip
\subsection{The admissible monomials of degree 7 in $\mathbf {P_5}$.}\label{ss62}\

\medskip
\subsubsection{}
$B_4(7)$ is the set of 35 monomials:

\medskip
 \centerline{\begin{tabular}{lllll}
$1.\ x_4^{7}$ &  $2.\ x_3x_4^{6}$ &  $3.\ x_3^{7}$ &  $4.\ x_2x_4^{6}$ &  $5.\ x_2x_3^{2}x_4^{4}$\cr  
$6.\ x_2x_3^{6}$ &  $7.\ x_2^{7}$ &  $8.\ x_1x_4^{6}$ &  $9.\ x_1x_3^{2}x_4^{4}$ &  $10.\ x_1x_3^{6}$\cr  
$11.\ x_1x_2^{2}x_4^{4}$ &  $12.\ x_1x_2^{2}x_3^{4}$ &  $13.\ x_1x_2^{6}$ &  $14.\ x_1^{7}$ &  $15.\ x_1x_2^{2}x_3^{2}x_4^{2}$\cr  
$16.\ x_2x_3^{3}x_4^{3}$ &  $17.\ x_2^{3}x_3x_4^{3}$ &  $18.\ x_2^{3}x_3^{3}x_4$ &  $19.\ x_1x_3^{3}x_4^{3}$ &  $20.\ x_1x_2x_3^{2}x_4^{3}$\cr  
$21.\ x_1x_2x_3^{3}x_4^{2}$ &  $22.\ x_1x_2^{2}x_3x_4^{3}$ &  $23.\ x_1x_2^{2}x_3^{3}x_4$ &  $24.\ x_1x_2^{3}x_4^{3}$ &  $25.\ x_1x_2^{3}x_3x_4^{2}$\cr  
$26.\ x_1x_2^{3}x_3^{2}x_4$ &  $27.\ x_1x_2^{3}x_3^{3}$ &  $28.\ x_1^{3}x_3x_4^{3}$ &  $29.\ x_1^{3}x_3^{3}x_4$ &  $30.\ x_1^{3}x_2x_4^{3}$\cr  
$31.\ x_1^{3}x_2x_3x_4^{2}$ &  $32.\ x_1^{3}x_2x_3^{2}x_4$ &  $33.\ x_1^{3}x_2x_3^{3}$ &  $34.\ x_1^{3}x_2^{3}x_4$ &  $35.\ x_1^{3}x_2^{3}x_3$\cr 
\end{tabular}}

\medskip
\subsubsection{}
$B_5(7) = f(B_4(7))\cup B_5^+(3,2)\cup B_5(5,1)$, where

\medskip
$B_5^+(3,2) = \{x_1x_2x_3x_4^{2}x_5^{2},\  x_1x_2x_3^{2}x_4x_5^{2},\  x_1x_2x_3^{2}x_4^{2}x_5,\  x_1x_2^{2}x_3x_4x_5^{2},\  x_1x_2^{2}x_3x_4^{2}x_5\}$,

$B_5^+(5,1) = \{x_1x_2x_3x_4x_5^{3},\  x_1x_2x_3x_4^{3}x_5,\ x_1x_2x_3^{3}x_4x_5,\ x_1x_2^{3}x_3x_4x_5,\ x_1^{3}x_2x_3x_4x_5\}$.

\medskip
We have $|f(B_4(7))| = 100$. Hence, $\dim (\mathbb F_2\otimes_{\mathcal A}P_5)_7 = 110.$

\bigskip
\subsection{The admissible monomials of degree 15 in $\mathbf {P_4}$.}\label{ss63}\

 \medskip
\subsubsection{}
$B_4(15)$ is the set of  75 monomials:

\medskip
 \centerline{\begin{tabular}{lllll}
$1.\ x_4^{15}$ &  $2.\ x_3x_4^{14}$ &  $3.\ x_3^{15}$ &  $4.\ x_2x_4^{14}$ &  $5.\ x_2x_3^{2}x_4^{12}$\cr  
$6.\ x_2x_3^{14}$ &  $7.\ x_2^{15}$ &  $8.\ x_1x_4^{14}$ &  $9.\ x_1x_3^{2}x_4^{12}$ &  $10.\ x_1x_3^{14}$\cr  
$11.\ x_1x_2^{2}x_4^{12}$ &  $12.\ x_1x_2^{2}x_3^{4}x_4^{8}$ &  $13.\ x_1x_2^{2}x_3^{12}$ &  $14.\ x_1x_2^{14}$ &  $15.\ x_1^{15}$\cr  
$16.\ x_2x_3^{7}x_4^{7}$ &  $17.\ x_2^{3}x_3^{5}x_4^{7}$ &  $18.\ x_2^{3}x_3^{7}x_4^{5}$ &  $19.\ x_2^{7}x_3x_4^{7}$ &  $20.\ x_2^{7}x_3^{3}x_4^{5}$\cr  
$21.\ x_2^{7}x_3^{7}x_4$ &  $22.\ x_1x_3^{7}x_4^{7}$ &  $23.\ x_1x_2x_3^{6}x_4^{7}$ &  $24.\ x_1x_2x_3^{7}x_4^{6}$ &  $25.\ x_1x_2^{2}x_3^{5}x_4^{7}$\cr  
$26.\ x_1x_2^{2}x_3^{7}x_4^{5}$ &  $27.\ x_1x_2^{3}x_3^{4}x_4^{7}$ &  $28.\ x_1x_2^{3}x_3^{5}x_4^{6}$ &  $29.\ x_1x_2^{3}x_3^{6}x_4^{5}$ &  $30.\ x_1x_2^{3}x_3^{7}x_4^{4}$\cr  
$31.\ x_1x_2^{6}x_3x_4^{7}$ &  $32.\ x_1x_2^{6}x_3^{3}x_4^{5}$ &  $33.\ x_1x_2^{6}x_3^{7}x_4$ &  $34.\ x_1x_2^{7}x_4^{7}$ &  $35.\ x_1x_2^{7}x_3x_4^{6}$\cr  
$36.\ x_1x_2^{7}x_3^{2}x_4^{5}$ &  $37.\ x_1x_2^{7}x_3^{3}x_4^{4}$ &  $38.\ x_1x_2^{7}x_3^{6}x_4$ &  $39.\ x_1x_2^{7}x_3^{7}$ &  $40.\ x_1^{3}x_3^{5}x_4^{7}$\cr  
$41.\ x_1^{3}x_3^{7}x_4^{5}$ &  $42.\ x_1^{3}x_2x_3^{4}x_4^{7}$ &  $43.\ x_1^{3}x_2x_3^{5}x_4^{6}$ &  $44.\ x_1^{3}x_2x_3^{6}x_4^{5}$ &  $45.\ x_1^{3}x_2x_3^{7}x_4^{4}$\cr  
$46.\ x_1^{3}x_2^{3}x_3^{4}x_4^{5}$ &  $47.\ x_1^{3}x_2^{3}x_3^{5}x_4^{4}$ &  $48.\ x_1^{3}x_2^{4}x_3x_4^{7}$ &  $49.\ x_1^{3}x_2^{4}x_3^{3}x_4^{5}$ &  $50.\ x_1^{3}x_2^{4}x_3^{7}x_4$\cr  
$51.\ x_1^{3}x_2^{5}x_4^{7}$ &  $52.\ x_1^{3}x_2^{5}x_3x_4^{6}$ &  $53.\ x_1^{3}x_2^{5}x_3^{2}x_4^{5}$ &  $54.\ x_1^{3}x_2^{5}x_3^{3}x_4^{4}$ &  $55.\ x_1^{3}x_2^{5}x_3^{6}x_4$\cr  
$56.\ x_1^{3}x_2^{5}x_3^{7}$ &  $57.\ x_1^{3}x_2^{7}x_4^{5}$ &  $58.\ x_1^{3}x_2^{7}x_3x_4^{4}$ &  $59.\ x_1^{3}x_2^{7}x_3^{4}x_4$ &  $60.\ x_1^{3}x_2^{7}x_3^{5}$\cr  
$61.\ x_1^{7}x_3x_4^{7}$ &  $62.\ x_1^{7}x_3^{3}x_4^{5}$ &  $63.\ x_1^{7}x_3^{7}x_4$ &  $64.\ x_1^{7}x_2x_4^{7}$ &  $65.\ x_1^{7}x_2x_3x_4^{6}$\cr  
$66.\ x_1^{7}x_2x_3^{2}x_4^{5}$ &  $67.\ x_1^{7}x_2x_3^{3}x_4^{4}$ &  $68.\ x_1^{7}x_2x_3^{6}x_4$ &  $69.\ x_1^{7}x_2x_3^{7}$ &  $70.\ x_1^{7}x_2^{3}x_4^{5}$\cr  
$71.\ x_1^{7}x_2^{3}x_3x_4^{4}$ &  $72.\ x_1^{7}x_2^{3}x_3^{4}x_4$ &  $73.\ x_1^{7}x_2^{3}x_3^{5}$ &  $74.\ x_1^{7}x_2^{7}x_4$ &  $75.\ x_1^{7}x_2^{7}x_3$\cr   
\end{tabular}}

\medskip
$B_5(15) = f(B_4(15))\cup B_5(1,1,3)\cup B_5^+(3,2,2)\cup B_5(3,4,1)\cup \psi(B_5(5)$, where

\medskip
\subsubsection{}
$B_5(1,1,3) = \{x_1x_2^2x_3^4x_4^4x_5^4\}$;

\medskip
\subsubsection{}
$B_5^+(3,2,2)$ is the set of 75  monomials:

\medskip
 \centerline{\begin{tabular}{llll}
$1.\ x_1x_2x_3x_4^{6}x_5^{6}$ &  $2.\ x_1x_2x_3^{2}x_4^{4}x_5^{7}$ &  $3.\ x_1x_2x_3^{2}x_4^{5}x_5^{6}$ &  $4.\ x_1x_2x_3^{2}x_4^{6}x_5^{5}$\cr  
$5.\ x_1x_2x_3^{2}x_4^{7}x_5^{4}$ &  $6.\ x_1x_2x_3^{3}x_4^{4}x_5^{6}$ &  $7.\ x_1x_2x_3^{3}x_4^{6}x_5^{4}$ &  $8.\ x_1x_2x_3^{6}x_4x_5^{6}$\cr  
$9.\ x_1x_2x_3^{6}x_4^{2}x_5^{5}$ &  $10.\ x_1x_2x_3^{6}x_4^{3}x_5^{4}$ &  $11.\ x_1x_2x_3^{6}x_4^{6}x_5$ &  $12.\ x_1x_2x_3^{7}x_4^{2}x_5^{4}$\cr  
$13.\ x_1x_2^{2}x_3x_4^{4}x_5^{7}$ &  $14.\ x_1x_2^{2}x_3x_4^{5}x_5^{6}$ &  $15.\ x_1x_2^{2}x_3x_4^{6}x_5^{5}$ &  $16.\ x_1x_2^{2}x_3x_4^{7}x_5^{4}$\cr  
$17.\ x_1x_2^{2}x_3^{3}x_4^{4}x_5^{5}$ &  $18.\ x_1x_2^{2}x_3^{3}x_4^{5}x_5^{4}$ &  $19.\ x_1x_2^{2}x_3^{4}x_4x_5^{7}$ &  $20.\ x_1x_2^{2}x_3^{4}x_4^{3}x_5^{5}$\cr  
$21.\ x_1x_2^{2}x_3^{4}x_4^{7}x_5$ &  $22.\ x_1x_2^{2}x_3^{5}x_4x_5^{6}$ &  $23.\ x_1x_2^{2}x_3^{5}x_4^{2}x_5^{5}$ &  $24.\ x_1x_2^{2}x_3^{5}x_4^{3}x_5^{4}$\cr  
$25.\ x_1x_2^{2}x_3^{5}x_4^{6}x_5$ &  $26.\ x_1x_2^{2}x_3^{7}x_4x_5^{4}$ &  $27.\ x_1x_2^{2}x_3^{7}x_4^{4}x_5$ &  $28.\ x_1x_2^{3}x_3x_4^{4}x_5^{6}$\cr  
$29.\ x_1x_2^{3}x_3x_4^{6}x_5^{4}$ &  $30.\ x_1x_2^{3}x_3^{2}x_4^{4}x_5^{5}$ &  $31.\ x_1x_2^{3}x_3^{2}x_4^{5}x_5^{4}$ &  $32.\ x_1x_2^{3}x_3^{3}x_4^{4}x_5^{4}$\cr  
$33.\ x_1x_2^{3}x_3^{4}x_4x_5^{6}$ &  $34.\ x_1x_2^{3}x_3^{4}x_4^{2}x_5^{5}$ &  $35.\ x_1x_2^{3}x_3^{4}x_4^{3}x_5^{4}$ &  $36.\ x_1x_2^{3}x_3^{4}x_4^{6}x_5$\cr  
$37.\ x_1x_2^{3}x_3^{5}x_4^{2}x_5^{4}$ &  $38.\ x_1x_2^{3}x_3^{6}x_4x_5^{4}$ &  $39.\ x_1x_2^{3}x_3^{6}x_4^{4}x_5$ &  $40.\ x_1x_2^{6}x_3x_4x_5^{6}$\cr  
$41.\ x_1x_2^{6}x_3x_4^{2}x_5^{5}$ &  $42.\ x_1x_2^{6}x_3x_4^{3}x_5^{4}$ &  $43.\ x_1x_2^{6}x_3x_4^{6}x_5$ &  $44.\ x_1x_2^{6}x_3^{3}x_4x_5^{4}$\cr  
$45.\ x_1x_2^{6}x_3^{3}x_4^{4}x_5$ &  $46.\ x_1x_2^{7}x_3x_4^{2}x_5^{4}$ &  $47.\ x_1x_2^{7}x_3^{2}x_4x_5^{4}$ &  $48.\ x_1x_2^{7}x_3^{2}x_4^{4}x_5$\cr  
$49.\ x_1^{3}x_2x_3x_4^{4}x_5^{6}$ &  $50.\ x_1^{3}x_2x_3x_4^{6}x_5^{4}$ &  $51.\ x_1^{3}x_2x_3^{2}x_4^{4}x_5^{5}$ &  $52.\ x_1^{3}x_2x_3^{2}x_4^{5}x_5^{4}$\cr  
$53.\ x_1^{3}x_2x_3^{3}x_4^{4}x_5^{4}$ &  $54.\ x_1^{3}x_2x_3^{4}x_4x_5^{6}$ &  $55.\ x_1^{3}x_2x_3^{4}x_4^{2}x_5^{5}$ &  $56.\ x_1^{3}x_2x_3^{4}x_4^{3}x_5^{4}$\cr  
$57.\ x_1^{3}x_2x_3^{4}x_4^{6}x_5$ &  $58.\ x_1^{3}x_2x_3^{5}x_4^{2}x_5^{4}$ &  $59.\ x_1^{3}x_2x_3^{6}x_4x_5^{4}$ &  $60.\ x_1^{3}x_2x_3^{6}x_4^{4}x_5$\cr  
$61.\ x_1^{3}x_2^{3}x_3x_4^{4}x_5^{4}$ &  $62.\ x_1^{3}x_2^{3}x_3^{4}x_4x_5^{4}$ &  $63.\ x_1^{3}x_2^{3}x_3^{4}x_4^{4}x_5$ &  $64.\ x_1^{3}x_2^{4}x_3x_4x_5^{6}$\cr  
$65.\ x_1^{3}x_2^{4}x_3x_4^{2}x_5^{5}$ &  $66.\ x_1^{3}x_2^{4}x_3x_4^{3}x_5^{4}$ &  $67.\ x_1^{3}x_2^{4}x_3x_4^{6}x_5$ &  $68.\ x_1^{3}x_2^{4}x_3^{3}x_4x_5^{4}$\cr  
$69.\ x_1^{3}x_2^{4}x_3^{3}x_4^{4}x_5$ &  $70.\ x_1^{3}x_2^{5}x_3x_4^{2}x_5^{4}$ &  $71.\ x_1^{3}x_2^{5}x_3^{2}x_4x_5^{4}$ &  $72.\ x_1^{3}x_2^{5}x_3^{2}x_4^{4}x_5$\cr  
$73.\ x_1^{7}x_2x_3x_4^{2}x_5^{4}$ &  $74.\ x_1^{7}x_2x_3^{2}x_4x_5^{4}$ &  $75.\ x_1^{7}x_2x_3^{2}x_4^{4}x_5$ &  
\end{tabular}}

\medskip
\subsubsection{}
$B_5(3,4,1)$ is the set of 40 monomials:

\medskip
 \centerline{\begin{tabular}{llll}
$1.\ x_1x_2^{2}x_3^{2}x_4^{3}x_5^{7}$ &  $2.\ x_1x_2^{2}x_3^{2}x_4^{7}x_5^{3}$ &  $3.\ x_1x_2^{2}x_3^{3}x_4^{2}x_5^{7}$ &  $4.\ x_1x_2^{2}x_3^{3}x_4^{3}x_5^{6}$\cr  
$5.\ x_1x_2^{2}x_3^{3}x_4^{6}x_5^{3}$ &  $6.\ x_1x_2^{2}x_3^{3}x_4^{7}x_5^{2}$ &  $7.\ x_1x_2^{2}x_3^{7}x_4^{2}x_5^{3}$ &  $8.\ x_1x_2^{2}x_3^{7}x_4^{3}x_5^{2}$\cr  
$9.\ x_1x_2^{3}x_3^{2}x_4^{2}x_5^{7}$ &  $10.\ x_1x_2^{3}x_3^{2}x_4^{3}x_5^{6}$ &  $11.\ x_1x_2^{3}x_3^{2}x_4^{6}x_5^{3}$ &  $12.\ x_1x_2^{3}x_3^{2}x_4^{7}x_5^{2}$\cr  
$13.\ x_1x_2^{3}x_3^{3}x_4^{2}x_5^{6}$ &  $14.\ x_1x_2^{3}x_3^{3}x_4^{6}x_5^{2}$ &  $15.\ x_1x_2^{3}x_3^{6}x_4^{2}x_5^{3}$ &  $16.\ x_1x_2^{3}x_3^{6}x_4^{3}x_5^{2}$\cr  
$17.\ x_1x_2^{3}x_3^{7}x_4^{2}x_5^{2}$ &  $18.\ x_1x_2^{7}x_3^{2}x_4^{2}x_5^{3}$ &  $19.\ x_1x_2^{7}x_3^{2}x_4^{3}x_5^{2}$ &  $20.\ x_1x_2^{7}x_3^{3}x_4^{2}x_5^{2}$\cr  
$21.\ x_1^{3}x_2x_3^{2}x_4^{2}x_5^{7}$ &  $22.\ x_1^{3}x_2x_3^{2}x_4^{3}x_5^{6}$ &  $23.\ x_1^{3}x_2x_3^{2}x_4^{6}x_5^{3}$ &  $24.\ x_1^{3}x_2x_3^{2}x_4^{7}x_5^{2}$\cr  
$25.\ x_1^{3}x_2x_3^{3}x_4^{2}x_5^{6}$ &  $26.\ x_1^{3}x_2x_3^{3}x_4^{6}x_5^{2}$ &  $27.\ x_1^{3}x_2x_3^{6}x_4^{2}x_5^{3}$ &  $28.\ x_1^{3}x_2x_3^{6}x_4^{3}x_5^{2}$\cr  
$29.\ x_1^{3}x_2x_3^{7}x_4^{2}x_5^{2}$ &  $30.\ x_1^{3}x_2^{3}x_3x_4^{2}x_5^{6}$ &  $31.\ x_1^{3}x_2^{3}x_3x_4^{6}x_5^{2}$ &  $32.\ x_1^{3}x_2^{3}x_3^{5}x_4^{2}x_5^{2}$\cr  
$33.\ x_1^{3}x_2^{5}x_3^{2}x_4^{2}x_5^{3}$ &  $34.\ x_1^{3}x_2^{5}x_3^{2}x_4^{3}x_5^{2}$ &  $35.\ x_1^{3}x_2^{5}x_3^{3}x_4^{2}x_5^{2}$ &  $36.\ x_1^{3}x_2^{7}x_3x_4^{2}x_5^{2}$\cr  
$37.\ x_1^{7}x_2x_3^{2}x_4^{2}x_5^{3}$ &  $38.\ x_1^{7}x_2x_3^{2}x_4^{3}x_5^{2}$ &  $39.\ x_1^{7}x_2x_3^{3}x_4^{2}x_5^{2}$ &  $40.\ x_1^{7}x_2^{3}x_3x_4^{2}x_5^{2}$\cr
\end{tabular}}

\medskip
We have $|f(B_4(15)| = 270$. Hence, $\dim (\mathbb F_2\otimes_{\mathcal A}P_5)_{15} = 432.$

\medskip

\subsection{The admissible monomials of degree 16 in $\mathbf {P_5}$.}\label{ss64}\

\subsubsection{} 
$B_4(16)$ is the set of 73 monomials:

\medskip
 \centerline{\begin{tabular}{lllll}
$1.\ \   x_3x_4^{15}$& $2.\ \   x_3^{3}x_4^{13}$& $3.\ \   x_3^{15}x_4$& $4.\ \   x_2x_4^{15}$& $5.\ \   x_2x_3x_4^{14}$\cr 
$6.\ \   x_2x_3^{2}x_4^{13}$& $7.\ \   x_2x_3^{3}x_4^{12}$& $8.\ \   x_2x_3^{14}x_4$& $9.\ \   x_2x_3^{15}$& $10.\ \   x_2^{3}x_4^{13}$\cr 
$11.\ \   x_2^{3}x_3x_4^{12}$& $12.\ \   x_2^{3}x_3^{13}$& $13.\ \   x_2^{15}x_4$& $14.\ \   x_2^{15}x_3$& $15.\ \   x_1x_4^{15}$\cr 
$16.\ \   x_1x_3x_4^{14}$& $17.\ \   x_1x_3^{2}x_4^{13}$& $18.\ \   x_1x_3^{3}x_4^{12}$& $19.\ \   x_1x_3^{14}x_4$& $20.\ \   x_1x_3^{15}$\cr 
$21.\ \   x_1x_2x_4^{14}$& $22.\ \   x_1x_2x_3^{2}x_4^{12}$& $23.\ \   x_1x_2x_3^{14}$& $24.\ \   x_1x_2^{2}x_4^{13}$& $25.\ \   x_1x_2^{2}x_3x_4^{12}$\cr 
$26.\ \   x_1x_2^{2}x_3^{4}x_4^{9}$& $27.\ \   x_1x_2^{2}x_3^{5}x_4^{8}$& $28.\ \   x_1x_2^{2}x_3^{12}x_4$& $29.\ \   x_1x_2^{2}x_3^{13}$& $30.\ \   x_1x_2^{3}x_4^{12}$\cr 
$31.\ \   x_1x_2^{3}x_3^{4}x_4^{8}$& $32.\ \   x_1x_2^{3}x_3^{12}$& $33.\ \   x_1x_2^{14}x_4$& $34.\ \   x_1x_2^{14}x_3$& $35.\ \   x_1x_2^{15}$\cr 
$36.\ \   x_1^{3}x_4^{13}$& $37.\ \   x_1^{3}x_3x_4^{12}$& $38.\ \   x_1^{3}x_3^{13}$& $39.\ \   x_1^{3}x_2x_4^{12}$& $40.\ \   x_1^{3}x_2x_3^{4}x_4^{8}$\cr 
$41.\ \   x_1^{3}x_2x_3^{12}$& $42.\ \   x_1^{3}x_2^{13}$& $43.\ \   x_1^{15}x_4$& $44.\ \   x_1^{15}x_3$& $45.\ \   x_1^{15}x_2$\cr 
\end{tabular}}
\centerline{\begin{tabular}{lllll}
$46.\ \   x_1x_2^{3}x_3^{6}x_4^{6}$& $47.\ \   x_1^{3}x_2x_3^{6}x_4^{6}$& $48.\ \   x_1^{3}x_2^{5}x_3^{2}x_4^{6}$& $49.\ \   x_1^{3}x_2^{5}x_3^{6}x_4^{2}$& $50.\ \   x_1x_2x_3^{7}x_4^{7}$\cr 
$51.\ \   x_1x_2^{3}x_3^{5}x_4^{7}$& $52.\ \   x_1x_2^{3}x_3^{7}x_4^{5}$& $53.\ \   x_1x_2^{7}x_3x_4^{7}$& $54.\ \   x_1x_2^{7}x_3^{3}x_4^{5}$& $55.\ \   x_1x_2^{7}x_3^{7}x_4$\cr 
$56.\ \   x_1^{3}x_2x_3^{5}x_4^{7}$& $57.\ \   x_1^{3}x_2x_3^{7}x_4^{5}$& $58.\ \   x_1^{3}x_2^{3}x_3^{5}x_4^{5}$& $59.\ \   x_1^{3}x_2^{5}x_3x_4^{7}$& $60.\ \   x_1^{3}x_2^{5}x_3^{3}x_4^{5}$\cr 
$61.\ \   x_1^{3}x_2^{5}x_3^{7}x_4$& $62.\ \   x_1^{3}x_2^{7}x_3x_4^{5}$& $63.\ \   x_1^{3}x_2^{7}x_3^{5}x_4$& $64.\ \   x_1^{7}x_2x_3x_4^{7}$& $65.\ \   x_1^{7}x_2x_3^{3}x_4^{5}$\cr 
$66.\ \   x_1^{7}x_2x_3^{7}x_4$& $67.\ \   x_1^{7}x_2^{3}x_3x_4^{5}$& $68.\ \   x_1^{7}x_2^{3}x_3^{5}x_4$& $69.\ \   x_1^{7}x_2^{7}x_3x_4$& $70.\ \   x_1^{3}x_2^{3}x_3^{3}x_4^{7}$\cr 
$71.\ \   x_1^{3}x_2^{3}x_3^{7}x_4^{3}$& $72.\ \   x_1^{3}x_2^{7}x_3^{3}x_4^{3}$& $73.\ \   x_1^{7}x_2^{3}x_3^{3}x_4^{3}.$& 
\end{tabular}}

\bigskip 
$B_5(16) = B_5^0(16) \cup B_5^+(16)$, where $B_5^0(156) = f(B_4(16))$, $|B_5^0(16)| =255$ and 
$$B_5^+(16) = B_5^+(2,1,1,1)\cup B_5^+(2,1,3) \cup B_5^+(2,3,2) \cup B_5^+(4,2,2) \cup B_5^+(4,4,1).$$ 

\subsubsection{}
$B_5^+(2,1,1,1)$ is the set of 4 monomials $a_t = a_{16,t}, \ 1 \leqslant t \leqslant 4$:

\medskip
 \centerline{\begin{tabular}{llll}
$1.\ \   x_1x_2x_3^{2}x_4^{4}x_5^{8}$& $2.\ \   x_1x_2^{2}x_3x_4^{4}x_5^{8}$& $3.\ \   x_1x_2^{2}x_3^{4}x_4x_5^{8}$& $4.\ \   x_1x_2^{2}x_3^{4}x_4^{8}x_5$.\cr 
\end{tabular}}

\subsubsection{}
$B_5^+(2,1,3)$ is the set of 5 monomials $a_t = a_{16,t}, \ 5 \leqslant t \leqslant 9$:

\medskip
 \centerline{\begin{tabular}{llll}
$5.\ \   x_1x_2^{2}x_3^{4}x_4^{4}x_5^{5}$& $6.\ \   x_1x_2^{2}x_3^{4}x_4^{5}x_5^{4}$& $7.\ \   x_1x_2^{2}x_3^{5}x_4^{4}x_5^{4}$& $8.\ \   x_1x_2^{3}x_3^{4}x_4^{4}x_5^{4}$\cr 
$9.\ \   x_1^{3}x_2x_3^{4}x_4^{4}x_5^{4}$.& 
\end{tabular}}

\medskip
\subsubsection{}
$B_5^+(2,3,2)$ is the set of 20 monomials $a_t = a_{16,t}, \ 10 \leqslant t \leqslant 29$:

\medskip
 \centerline{\begin{tabular}{llll}
$10.\ \   x_1x_2x_3^{2}x_4^{6}x_5^{6}$& $11.\ \   x_1x_2x_3^{6}x_4^{2}x_5^{6}$& $12.\ \   x_1x_2x_3^{6}x_4^{6}x_5^{2}$& $13.\ \   x_1x_2^{2}x_3x_4^{6}x_5^{6}$\cr 
$14.\ \   x_1x_2^{2}x_3^{5}x_4^{2}x_5^{6}$& $15.\ \   x_1x_2^{2}x_3^{5}x_4^{6}x_5^{2}$& $16.\ \   x_1x_2^{3}x_3^{2}x_4^{4}x_5^{6}$& $17.\ \   x_1x_2^{3}x_3^{2}x_4^{6}x_5^{4}$\cr 
$18.\ \   x_1x_2^{3}x_3^{4}x_4^{2}x_5^{6}$& $19.\ \   x_1x_2^{3}x_3^{4}x_4^{6}x_5^{2}$& $20.\ \   x_1x_2^{3}x_3^{6}x_4^{2}x_5^{4}$& $21.\ \   x_1x_2^{3}x_3^{6}x_4^{4}x_5^{2}$\cr 
$22.\ \   x_1^{3}x_2x_3^{2}x_4^{4}x_5^{6}$& $23.\ \   x_1^{3}x_2x_3^{2}x_4^{6}x_5^{4}$& $24.\ \   x_1^{3}x_2x_3^{4}x_4^{2}x_5^{6}$& $25.\ \   x_1^{3}x_2x_3^{4}x_4^{6}x_5^{2}$\cr 
$26.\ \   x_1^{3}x_2x_3^{6}x_4^{2}x_5^{4}$& $27.\ \   x_1^{3}x_2x_3^{6}x_4^{4}x_5^{2}$& $28.\ \   x_1^{3}x_2^{5}x_3^{2}x_4^{2}x_5^{4}$& $29.\ \   x_1^{3}x_2^{5}x_3^{2}x_4^{4}x_5^{2}$.\cr 
\end{tabular}}

\medskip
\subsubsection{}
$B_5^+(4,2,2)$ is the set of 110 monomials $a_t = a_{16,t}, \ 30 \leqslant t \leqslant 139$:

\medskip
 \centerline{\begin{tabular}{llll}
$30.\ \   x_1x_2x_3x_4^{6}x_5^{7}$& $31.\ \   x_1x_2x_3x_4^{7}x_5^{6}$& $32.\ \   x_1x_2x_3^{6}x_4x_5^{7}$& $33.\ \   x_1x_2x_3^{6}x_4^{7}x_5$\cr 
$34.\ \   x_1x_2x_3^{7}x_4x_5^{6}$& $35.\ \   x_1x_2x_3^{7}x_4^{6}x_5$& $36.\ \   x_1x_2^{6}x_3x_4x_5^{7}$& $37.\ \   x_1x_2^{6}x_3x_4^{7}x_5$\cr 
$38.\ \   x_1x_2^{6}x_3^{7}x_4x_5$& $39.\ \   x_1x_2^{7}x_3x_4x_5^{6}$& $40.\ \   x_1x_2^{7}x_3x_4^{6}x_5$& $41.\ \   x_1x_2^{7}x_3^{6}x_4x_5$\cr 
$42.\ \   x_1^{7}x_2x_3x_4x_5^{6}$& $43.\ \   x_1^{7}x_2x_3x_4^{6}x_5$& $44.\ \   x_1^{7}x_2x_3^{6}x_4x_5$& $45.\ \   x_1x_2x_3^{2}x_4^{5}x_5^{7}$\cr 
$46.\ \   x_1x_2x_3^{2}x_4^{7}x_5^{5}$& $47.\ \   x_1x_2x_3^{7}x_4^{2}x_5^{5}$& $48.\ \   x_1x_2^{2}x_3x_4^{5}x_5^{7}$& $49.\ \   x_1x_2^{2}x_3x_4^{7}x_5^{5}$\cr 
$50.\ \   x_1x_2^{2}x_3^{5}x_4x_5^{7}$& $51.\ \   x_1x_2^{2}x_3^{5}x_4^{7}x_5$& $52.\ \   x_1x_2^{2}x_3^{7}x_4x_5^{5}$& $53.\ \   x_1x_2^{2}x_3^{7}x_4^{5}x_5$\cr 
$54.\ \   x_1x_2^{7}x_3x_4^{2}x_5^{5}$& $55.\ \   x_1x_2^{7}x_3^{2}x_4x_5^{5}$& $56.\ \   x_1x_2^{7}x_3^{2}x_4^{5}x_5$& $57.\ \   x_1^{7}x_2x_3x_4^{2}x_5^{5}$\cr 
$58.\ \   x_1^{7}x_2x_3^{2}x_4x_5^{5}$& $59.\ \   x_1^{7}x_2x_3^{2}x_4^{5}x_5$& $60.\ \   x_1x_2x_3^{3}x_4^{4}x_5^{7}$& $61.\ \   x_1x_2x_3^{3}x_4^{7}x_5^{4}$\cr 
$62.\ \   x_1x_2x_3^{7}x_4^{3}x_5^{4}$& $63.\ \   x_1x_2^{3}x_3x_4^{4}x_5^{7}$& $64.\ \   x_1x_2^{3}x_3x_4^{7}x_5^{4}$& $65.\ \   x_1x_2^{3}x_3^{4}x_4x_5^{7}$\cr 
$66.\ \   x_1x_2^{3}x_3^{4}x_4^{7}x_5$& $67.\ \   x_1x_2^{3}x_3^{7}x_4x_5^{4}$& $68.\ \   x_1x_2^{3}x_3^{7}x_4^{4}x_5$& $69.\ \   x_1x_2^{7}x_3x_4^{3}x_5^{4}$\cr 
$70.\ \   x_1x_2^{7}x_3^{3}x_4x_5^{4}$& $71.\ \   x_1x_2^{7}x_3^{3}x_4^{4}x_5$& $72.\ \   x_1^{3}x_2x_3x_4^{4}x_5^{7}$& $73.\ \   x_1^{3}x_2x_3x_4^{7}x_5^{4}$\cr 
$74.\ \   x_1^{3}x_2x_3^{4}x_4x_5^{7}$& $75.\ \   x_1^{3}x_2x_3^{4}x_4^{7}x_5$& $76.\ \   x_1^{3}x_2x_3^{7}x_4x_5^{4}$& $77.\ \   x_1^{3}x_2x_3^{7}x_4^{4}x_5$\cr 
$78.\ \   x_1^{3}x_2^{4}x_3x_4x_5^{7}$& $79.\ \   x_1^{3}x_2^{4}x_3x_4^{7}x_5$& $80.\ \   x_1^{3}x_2^{4}x_3^{7}x_4x_5$& $81.\ \   x_1^{3}x_2^{7}x_3x_4x_5^{4}$\cr 
$82.\ \   x_1^{3}x_2^{7}x_3x_4^{4}x_5$& $83.\ \   x_1^{3}x_2^{7}x_3^{4}x_4x_5$& $84.\ \   x_1^{7}x_2x_3x_4^{3}x_5^{4}$& $85.\ \   x_1^{7}x_2x_3^{3}x_4x_5^{4}$\cr 
$86.\ \   x_1^{7}x_2x_3^{3}x_4^{4}x_5$& $87.\ \   x_1^{7}x_2^{3}x_3x_4x_5^{4}$& $88.\ \   x_1^{7}x_2^{3}x_3x_4^{4}x_5$& $89.\ \   x_1^{7}x_2^{3}x_3^{4}x_4x_5$\cr 
$90.\ \   x_1x_2x_3^{3}x_4^{5}x_5^{6}$& $91.\ \   x_1x_2x_3^{3}x_4^{6}x_5^{5}$& $92.\ \   x_1x_2x_3^{6}x_4^{3}x_5^{5}$& $93.\ \   x_1x_2^{3}x_3x_4^{5}x_5^{6}$\cr 
$94.\ \   x_1x_2^{3}x_3x_4^{6}x_5^{5}$& $95.\ \   x_1x_2^{3}x_3^{5}x_4x_5^{6}$& $96.\ \   x_1x_2^{3}x_3^{5}x_4^{6}x_5$& $97.\ \   x_1x_2^{3}x_3^{6}x_4x_5^{5}$\cr 
$98.\ \   x_1x_2^{3}x_3^{6}x_4^{5}x_5$& $99.\ \   x_1x_2^{6}x_3x_4^{3}x_5^{5}$& $100.\ \   x_1x_2^{6}x_3^{3}x_4x_5^{5}$& $101.\ \   x_1x_2^{6}x_3^{3}x_4^{5}x_5$\cr 
$102.\ \   x_1^{3}x_2x_3x_4^{5}x_5^{6}$& $103.\ \   x_1^{3}x_2x_3x_4^{6}x_5^{5}$& $104.\ \   x_1^{3}x_2x_3^{5}x_4x_5^{6}$& $105.\ \   x_1^{3}x_2x_3^{5}x_4^{6}x_5$\cr 
$106.\ \   x_1^{3}x_2x_3^{6}x_4x_5^{5}$& $107.\ \   x_1^{3}x_2x_3^{6}x_4^{5}x_5$& $108.\ \   x_1^{3}x_2^{5}x_3x_4x_5^{6}$& $109.\ \   x_1^{3}x_2^{5}x_3x_4^{6}x_5$\cr 
\end{tabular}}
\centerline{\begin{tabular}{llll}
$110.\ \   x_1^{3}x_2^{5}x_3^{6}x_4x_5$& $111.\ \   x_1x_2^{2}x_3^{3}x_4^{5}x_5^{5}$& $112.\ \   x_1x_2^{2}x_3^{5}x_4^{3}x_5^{5}$& $113.\ \   x_1x_2^{3}x_3^{2}x_4^{5}x_5^{5}$\cr 
$114.\ \   x_1x_2^{3}x_3^{5}x_4^{2}x_5^{5}$& $115.\ \   x_1^{3}x_2x_3^{2}x_4^{5}x_5^{5}$& $116.\ \   x_1^{3}x_2x_3^{5}x_4^{2}x_5^{5}$& $117.\ \   x_1^{3}x_2^{5}x_3x_4^{2}x_5^{5}$\cr 
$118.\ \   x_1^{3}x_2^{5}x_3^{2}x_4x_5^{5}$& $119.\ \   x_1^{3}x_2^{5}x_3^{2}x_4^{5}x_5$& $120.\ \   x_1x_2^{3}x_3^{3}x_4^{4}x_5^{5}$& $121.\ \   x_1x_2^{3}x_3^{3}x_4^{5}x_5^{4}$\cr 
$122.\ \   x_1x_2^{3}x_3^{4}x_4^{3}x_5^{5}$& $123.\ \   x_1x_2^{3}x_3^{5}x_4^{3}x_5^{4}$& $124.\ \   x_1^{3}x_2x_3^{3}x_4^{4}x_5^{5}$& $125.\ \   x_1^{3}x_2x_3^{3}x_4^{5}x_5^{4}$\cr 
$126.\ \   x_1^{3}x_2x_3^{4}x_4^{3}x_5^{5}$& $127.\ \   x_1^{3}x_2x_3^{5}x_4^{3}x_5^{4}$& $128.\ \   x_1^{3}x_2^{3}x_3x_4^{4}x_5^{5}$& $129.\ \   x_1^{3}x_2^{3}x_3x_4^{5}x_5^{4}$\cr 
$130.\ \   x_1^{3}x_2^{3}x_3^{4}x_4x_5^{5}$& $131.\ \   x_1^{3}x_2^{3}x_3^{4}x_4^{5}x_5$& $132.\ \   x_1^{3}x_2^{3}x_3^{5}x_4x_5^{4}$& $133.\ \   x_1^{3}x_2^{3}x_3^{5}x_4^{4}x_5$\cr 
$134.\ \   x_1^{3}x_2^{4}x_3x_4^{3}x_5^{5}$& $135.\ \   x_1^{3}x_2^{4}x_3^{3}x_4x_5^{5}$& $136.\ \   x_1^{3}x_2^{4}x_3^{3}x_4^{5}x_5$& $137.\ \   x_1^{3}x_2^{5}x_3x_4^{3}x_5^{4}$\cr 
$138.\ \   x_1^{3}x_2^{5}x_3^{3}x_4x_5^{4}$& $139.\ \   x_1^{3}x_2^{5}x_3^{3}x_4^{4}x_5$.& 
\end{tabular}}

\medskip
\subsubsection{}
$B_5^+(4,4,1)$ is the set of 49 monomials $a_t = a_{16,t}, \ 140 \leqslant t \leqslant 188$:

\medskip
 \centerline{\begin{tabular}{llll}
$140.\ \   x_1x_2^{2}x_3^{3}x_4^{3}x_5^{7}$& $141.\ \   x_1x_2^{2}x_3^{3}x_4^{7}x_5^{3}$& $142.\ \   x_1x_2^{2}x_3^{7}x_4^{3}x_5^{3}$& $143.\ \   x_1x_2^{3}x_3^{2}x_4^{3}x_5^{7}$\cr 
$144.\ \   x_1x_2^{3}x_3^{2}x_4^{7}x_5^{3}$& $145.\ \   x_1x_2^{3}x_3^{3}x_4^{2}x_5^{7}$& $146.\ \   x_1x_2^{3}x_3^{3}x_4^{7}x_5^{2}$& $147.\ \   x_1x_2^{3}x_3^{7}x_4^{2}x_5^{3}$\cr 
$148.\ \   x_1x_2^{3}x_3^{7}x_4^{3}x_5^{2}$& $149.\ \   x_1x_2^{7}x_3^{2}x_4^{3}x_5^{3}$& $150.\ \   x_1x_2^{7}x_3^{3}x_4^{2}x_5^{3}$& $151.\ \   x_1x_2^{7}x_3^{3}x_4^{3}x_5^{2}$\cr 
$152.\ \   x_1^{3}x_2x_3^{2}x_4^{3}x_5^{7}$& $153.\ \   x_1^{3}x_2x_3^{2}x_4^{7}x_5^{3}$& $154.\ \   x_1^{3}x_2x_3^{3}x_4^{2}x_5^{7}$& $155.\ \   x_1^{3}x_2x_3^{3}x_4^{7}x_5^{2}$\cr 
$156.\ \   x_1^{3}x_2x_3^{7}x_4^{2}x_5^{3}$& $157.\ \   x_1^{3}x_2x_3^{7}x_4^{3}x_5^{2}$& $158.\ \   x_1^{3}x_2^{3}x_3x_4^{2}x_5^{7}$& $159.\ \   x_1^{3}x_2^{3}x_3x_4^{7}x_5^{2}$\cr 
$160.\ \   x_1^{3}x_2^{3}x_3^{7}x_4x_5^{2}$& $161.\ \   x_1^{3}x_2^{7}x_3x_4^{2}x_5^{3}$& $162.\ \   x_1^{3}x_2^{7}x_3x_4^{3}x_5^{2}$& $163.\ \   x_1^{3}x_2^{7}x_3^{3}x_4x_5^{2}$\cr 
$164.\ \   x_1^{7}x_2x_3^{2}x_4^{3}x_5^{3}$& $165.\ \   x_1^{7}x_2x_3^{3}x_4^{2}x_5^{3}$& $166.\ \   x_1^{7}x_2x_3^{3}x_4^{3}x_5^{2}$& $167.\ \   x_1^{7}x_2^{3}x_3x_4^{2}x_5^{3}$\cr 
$168.\ \   x_1^{7}x_2^{3}x_3x_4^{3}x_5^{2}$& $169.\ \   x_1^{7}x_2^{3}x_3^{3}x_4x_5^{2}$& $170.\ \   x_1x_2^{3}x_3^{3}x_4^{3}x_5^{6}$& $171.\ \   x_1x_2^{3}x_3^{3}x_4^{6}x_5^{3}$\cr 
$172.\ \   x_1x_2^{3}x_3^{6}x_4^{3}x_5^{3}$& $173.\ \   x_1x_2^{6}x_3^{3}x_4^{3}x_5^{3}$& $174.\ \   x_1^{3}x_2x_3^{3}x_4^{3}x_5^{6}$& $175.\ \   x_1^{3}x_2x_3^{3}x_4^{6}x_5^{3}$\cr 
\end{tabular}}
\centerline{\begin{tabular}{lllll}
$176.\ \   x_1^{3}x_2x_3^{6}x_4^{3}x_5^{3}$& $177.\ \   x_1^{3}x_2^{3}x_3x_4^{3}x_5^{6}$& $178.\ \   x_1^{3}x_2^{3}x_3x_4^{6}x_5^{3}$& $179.\ \   x_1^{3}x_2^{3}x_3^{3}x_4x_5^{6}$\cr 
$180.\ \   x_1^{3}x_2^{3}x_3^{3}x_4^{5}x_5^{2}$& $181.\ \   x_1^{3}x_2^{3}x_3^{5}x_4^{2}x_5^{3}$& $182.\ \   x_1^{3}x_2^{3}x_3^{5}x_4^{3}x_5^{2}$& $183.\ \   x_1^{3}x_2^{5}x_3^{2}x_4^{3}x_5^{3}$\cr 
$184.\ \   x_1^{3}x_2^{5}x_3^{3}x_4^{2}x_5^{3}$& $185.\ \   x_1^{3}x_2^{5}x_3^{3}x_4^{3}x_5^{2}$& $186.\ \   x_1^{3}x_2^{3}x_3^{3}x_4^{3}x_5^{4}$& $187.\ \   x_1^{3}x_2^{3}x_3^{3}x_4^{4}x_5^{3}$\cr 
$188.\ \   x_1^{3}x_2^{3}x_3^{4}x_4^{3}x_5^{3}$.& 
\end{tabular}}

\subsection{The admissible monomials of degree 35 in $\mathbf {P_5^0}$.}\label{ss65}

\medskip 
\subsubsection{}
$B_5^0(35)$ is the set of 460 monomials, $b_t = b_{35,t}, \ 1 \leqslant t \leqslant 460$, determined as follows:

\medskip
 \centerline{\begin{tabular}{llll}
1. \ $x_3x_4^{3}x_5^{31} $& 2. \ $x_3x_4^{31}x_5^{3} $& 3. \ $x_3^{3}x_4x_5^{31} $& 4. \ $x_3^{3}x_4^{31}x_5 $\cr  
5. \ $x_3^{31}x_4x_5^{3} $& 6. \ $x_3^{31}x_4^{3}x_5 $& 7. \ $x_2x_4^{3}x_5^{31} $& 8. \ $x_2x_4^{31}x_5^{3} $\cr  
9. \ $x_2x_3^{3}x_5^{31} $& 10. \ $x_2x_3^{3}x_4^{31} $& 11. \ $x_2x_3^{31}x_5^{3} $& 12. \ $x_2x_3^{31}x_4^{3} $\cr  
13. \ $x_2^{3}x_4x_5^{31} $& 14. \ $x_2^{3}x_4^{31}x_5 $& 15. \ $x_2^{3}x_3x_5^{31} $& 16. \ $x_2^{3}x_3x_4^{31} $\cr  
17. \ $x_2^{3}x_3^{31}x_5 $& 18. \ $x_2^{3}x_3^{31}x_4 $& 19. \ $x_2^{31}x_4x_5^{3} $& 20. \ $x_2^{31}x_4^{3}x_5 $\cr  
21. \ $x_2^{31}x_3x_5^{3} $& 22. \ $x_2^{31}x_3x_4^{3} $& 23. \ $x_2^{31}x_3^{3}x_5 $& 24. \ $x_2^{31}x_3^{3}x_4 $\cr  
25. \ $x_1x_4^{3}x_5^{31} $& 26. \ $x_1x_4^{31}x_5^{3} $& 27. \ $x_1x_3^{3}x_5^{31} $& 28. \ $x_1x_3^{3}x_4^{31} $\cr  
29. \ $x_1x_3^{31}x_5^{3} $& 30. \ $x_1x_3^{31}x_4^{3} $& 31. \ $x_1x_2^{3}x_5^{31} $& 32. \ $x_1x_2^{3}x_4^{31} $\cr  
33. \ $x_1x_2^{3}x_3^{31} $& 34. \ $x_1x_2^{31}x_5^{3} $& 35. \ $x_1x_2^{31}x_4^{3} $& 36. \ $x_1x_2^{31}x_3^{3} $\cr  
37. \ $x_1^{3}x_4x_5^{31} $& 38. \ $x_1^{3}x_4^{31}x_5 $& 39. \ $x_1^{3}x_3x_5^{31} $& 40. \ $x_1^{3}x_3x_4^{31} $\cr  
41. \ $x_1^{3}x_3^{31}x_5 $& 42. \ $x_1^{3}x_3^{31}x_4 $& 43. \ $x_1^{3}x_2x_5^{31} $& 44. \ $x_1^{3}x_2x_4^{31} $\cr  
45. \ $x_1^{3}x_2x_3^{31} $& 46. \ $x_1^{3}x_2^{31}x_5 $& 47. \ $x_1^{3}x_2^{31}x_4 $& 48. \ $x_1^{3}x_2^{31}x_3 $\cr  
49. \ $x_1^{31}x_4x_5^{3} $& 50. \ $x_1^{31}x_4^{3}x_5 $& 51. \ $x_1^{31}x_3x_5^{3} $& 52. \ $x_1^{31}x_3x_4^{3} $\cr  
53. \ $x_1^{31}x_3^{3}x_5 $& 54. \ $x_1^{31}x_3^{3}x_4 $& 55. \ $x_1^{31}x_2x_5^{3} $& 56. \ $x_1^{31}x_2x_4^{3} $\cr  
57. \ $x_1^{31}x_2x_3^{3} $& 58. \ $x_1^{31}x_2^{3}x_5 $& 59. \ $x_1^{31}x_2^{3}x_4 $& 60. \ $x_1^{31}x_2^{3}x_3 $\cr  
61. \ $x_3x_4^{7}x_5^{27} $& 62. \ $x_3^{7}x_4x_5^{27} $& 63. \ $x_3^{7}x_4^{27}x_5 $& 64. \ $x_2x_4^{7}x_5^{27} $\cr  
65. \ $x_2x_3^{7}x_5^{27} $& 66. \ $x_2x_3^{7}x_4^{27} $& 67. \ $x_2^{7}x_4x_5^{27} $& 68. \ $x_2^{7}x_4^{27}x_5 $\cr  
69. \ $x_2^{7}x_3x_5^{27} $& 70. \ $x_2^{7}x_3x_4^{27} $& 71. \ $x_2^{7}x_3^{27}x_5 $& 72. \ $x_2^{7}x_3^{27}x_4 $\cr  
73. \ $x_1x_4^{7}x_5^{27} $& 74. \ $x_1x_3^{7}x_5^{27} $& 75. \ $x_1x_3^{7}x_4^{27} $& 76. \ $x_1x_2^{7}x_5^{27} $\cr  
77. \ $x_1x_2^{7}x_4^{27} $& 78. \ $x_1x_2^{7}x_3^{27} $& 79. \ $x_1^{7}x_4x_5^{27} $& 80. \ $x_1^{7}x_4^{27}x_5 $\cr  
81. \ $x_1^{7}x_3x_5^{27} $& 82. \ $x_1^{7}x_3x_4^{27} $& 83. \ $x_1^{7}x_3^{27}x_5 $& 84. \ $x_1^{7}x_3^{27}x_4 $\cr  
85. \ $x_1^{7}x_2x_5^{27} $& 86. \ $x_1^{7}x_2x_4^{27} $& 87. \ $x_1^{7}x_2x_3^{27} $& 88. \ $x_1^{7}x_2^{27}x_5 $\cr  
89. \ $x_1^{7}x_2^{27}x_4 $& 90. \ $x_1^{7}x_2^{27}x_3 $& $91.\  x_3^{3}x_4^{29}x_5^{3}$ &  $92.\  x_2^{3}x_4^{29}x_5^{3}$\cr  
$93.\  x_2^{3}x_3^{29}x_5^{3}$ &  $94.\  x_2^{3}x_3^{29}x_4^{3}$ &  $95.\  x_1^{3}x_4^{29}x_5^{3}$ &  $96.\  x_1^{3}x_3^{29}x_5^{3}$\cr  
$97.\  x_1^{3}x_3^{29}x_4^{3}$ &  $98.\  x_1^{3}x_2^{29}x_5^{3}$ &  $99.\  x_1^{3}x_2^{29}x_4^{3}$ &  $100.\  x_1^{3}x_2^{29}x_3^{3}$\cr  
$101.\  x_3^{3}x_4^{5}x_5^{27}$ &  $102.\  x_2^{3}x_4^{5}x_5^{27}$ &  $103.\  x_2^{3}x_3^{5}x_5^{27}$ &  $104.\  x_2^{3}x_3^{5}x_4^{27}$\cr  
$105.\  x_1^{3}x_4^{5}x_5^{27}$ &  $106.\  x_1^{3}x_3^{5}x_5^{27}$ &  $107.\  x_1^{3}x_3^{5}x_4^{27}$ &  $108.\  x_1^{3}x_2^{5}x_5^{27}$\cr  
$109.\  x_1^{3}x_2^{5}x_4^{27}$ &  $110.\  x_1^{3}x_2^{5}x_3^{27}$ &  $111.\  x_3^{3}x_4^{7}x_5^{25}$ &  $112.\  x_3^{7}x_4^{3}x_5^{25}$\cr  
$113.\  x_2^{3}x_4^{7}x_5^{25}$ &  $114.\  x_2^{3}x_3^{7}x_5^{25}$ &  $115.\  x_2^{3}x_3^{7}x_4^{25}$ &  $116.\  x_2^{7}x_4^{3}x_5^{25}$\cr  
$117.\  x_2^{7}x_3^{3}x_5^{25}$ &  $118.\  x_2^{7}x_3^{3}x_4^{25}$ &  $119.\  x_1^{3}x_4^{7}x_5^{25}$ &  $120.\  x_1^{3}x_3^{7}x_5^{25}$\cr  
$121.\  x_1^{3}x_3^{7}x_4^{25}$ &  $122.\  x_1^{3}x_2^{7}x_5^{25}$ &  $123.\  x_1^{3}x_2^{7}x_4^{25}$ &  $124.\  x_1^{3}x_2^{7}x_3^{25}$\cr  
$125.\  x_1^{7}x_4^{3}x_5^{25}$ &  $126.\  x_1^{7}x_3^{3}x_5^{25}$ &  $127.\  x_1^{7}x_3^{3}x_4^{25}$ &  $128.\  x_1^{7}x_2^{3}x_5^{25}$\cr  
$129.\  x_1^{7}x_2^{3}x_4^{25}$ &  $130.\  x_1^{7}x_2^{3}x_3^{25}$ &  $131.\  x_3^{3}x_4^{3}x_5^{29}$ &  $132.\  x_2^{3}x_4^{3}x_5^{29}$\cr  
$133.\  x_2^{3}x_3^{3}x_5^{29}$ &  $134.\  x_2^{3}x_3^{3}x_4^{29}$ &  $135.\  x_1^{3}x_4^{3}x_5^{29}$ &  $136.\  x_1^{3}x_3^{3}x_5^{29}$\cr  
$137.\  x_1^{3}x_3^{3}x_4^{29}$ &  $138.\  x_1^{3}x_2^{3}x_5^{29}$ &  $139.\  x_1^{3}x_2^{3}x_4^{29}$ &  $140.\  x_1^{3}x_2^{3}x_3^{29}$\cr 
141. \ $x_2x_3x_4^{2}x_5^{31} $& 142. \ $x_2x_3x_4^{31}x_5^{2} $& 143. \ $x_2x_3^{2}x_4x_5^{31} $& 144. \ $x_2x_3^{2}x_4^{31}x_5 $\cr  
145. \ $x_2x_3^{31}x_4x_5^{2} $& 146. \ $x_2x_3^{31}x_4^{2}x_5 $& 147. \ $x_2^{31}x_3x_4x_5^{2} $& 148. \ $x_2^{31}x_3x_4^{2}x_5 $\cr  
149. \ $x_1x_3x_4^{2}x_5^{31} $& 150. \ $x_1x_3x_4^{31}x_5^{2} $& 151. \ $x_1x_3^{2}x_4x_5^{31} $& 152. \ $x_1x_3^{2}x_4^{31}x_5 $\cr  
153. \ $x_1x_3^{31}x_4x_5^{2} $& 154. \ $x_1x_3^{31}x_4^{2}x_5 $& 155. \ $x_1x_2x_4^{2}x_5^{31} $& 156. \ $x_1x_2x_4^{31}x_5^{2} $\cr  
157. \ $x_1x_2x_3^{2}x_5^{31} $& 158. \ $x_1x_2x_3^{2}x_4^{31} $& 159. \ $x_1x_2x_3^{31}x_5^{2} $& 160. \ $x_1x_2x_3^{31}x_4^{2} $\cr  
\end{tabular}}
\centerline{\begin{tabular}{llll}
161. \ $x_1x_2^{2}x_4x_5^{31} $& 162. \ $x_1x_2^{2}x_4^{31}x_5 $& 163. \ $x_1x_2^{2}x_3x_5^{31} $& 164. \ $x_1x_2^{2}x_3x_4^{31} $\cr  
165. \ $x_1x_2^{2}x_3^{31}x_5 $& 166. \ $x_1x_2^{2}x_3^{31}x_4 $& 167. \ $x_1x_2^{31}x_4x_5^{2} $& 168. \ $x_1x_2^{31}x_4^{2}x_5 $\cr  
169. \ $x_1x_2^{31}x_3x_5^{2} $& 170. \ $x_1x_2^{31}x_3x_4^{2} $& 171. \ $x_1x_2^{31}x_3^{2}x_5 $& 172. \ $x_1x_2^{31}x_3^{2}x_4 $\cr  
173. \ $x_1^{31}x_3x_4x_5^{2} $& 174. \ $x_1^{31}x_3x_4^{2}x_5 $& 175. \ $x_1^{31}x_2x_4x_5^{2} $& 176. \ $x_1^{31}x_2x_4^{2}x_5 $\cr  
177. \ $x_1^{31}x_2x_3x_5^{2} $& 178. \ $x_1^{31}x_2x_3x_4^{2} $& 179. \ $x_1^{31}x_2x_3^{2}x_5 $& 180. \ $x_1^{31}x_2x_3^{2}x_4 $\cr  
$181.\  x_2x_3x_4^{3}x_5^{30}$ &  $182.\  x_2^{3}x_3x_4x_5^{30}$ &  $183.\  x_1x_3x_4^{3}x_5^{30}$ &  $184.\  x_1x_2x_4^{3}x_5^{30}$\cr  
$185.\  x_1x_2x_3^{3}x_5^{30}$ &  $186.\  x_1x_2x_3^{3}x_4^{30}$ &  $187.\  x_1^{3}x_3x_4x_5^{30}$ &  $188.\  x_1^{3}x_2x_4x_5^{30}$\cr  
$189.\  x_1^{3}x_2x_3x_5^{30}$ &  $190.\  x_1^{3}x_2x_3x_4^{30}$ &  $191.\  x_2^{3}x_3^{29}x_4x_5^{2}$ &  $192.\  x_2^{3}x_3^{29}x_4^{2}x_5$\cr  
$193.\  x_1^{3}x_3^{29}x_4x_5^{2}$ &  $194.\  x_1^{3}x_3^{29}x_4^{2}x_5$ &  $195.\  x_1^{3}x_2^{29}x_4x_5^{2}$ &  $196.\  x_1^{3}x_2^{29}x_4^{2}x_5$\cr  
$197.\  x_1^{3}x_2^{29}x_3x_5^{2}$ &  $198.\  x_1^{3}x_2^{29}x_3x_4^{2}$ &  $199.\  x_1^{3}x_2^{29}x_3^{2}x_5$ &  $200.\  x_1^{3}x_2^{29}x_3^{2}x_4$\cr  
$201.\  x_2^{3}x_3x_4^{3}x_5^{28}$ &  $202.\  x_1^{3}x_3x_4^{3}x_5^{28}$ &  $203.\  x_1^{3}x_2x_4^{3}x_5^{28}$ &  $204.\  x_1^{3}x_2x_3^{3}x_5^{28}$\cr  
$205.\  x_1^{3}x_2x_3^{3}x_4^{28}$ &  $206.\  x_2x_3^{3}x_4^{5}x_5^{26}$ &  $207.\  x_2^{3}x_3x_4^{5}x_5^{26}$ &  $208.\  x_2^{3}x_3^{5}x_4x_5^{26}$\cr  
$209.\  x_2^{3}x_3^{5}x_4^{26}x_5$ &  $210.\  x_1x_3^{3}x_4^{5}x_5^{26}$ &  $211.\  x_1x_2^{3}x_4^{5}x_5^{26}$ &  $212.\  x_1x_2^{3}x_3^{5}x_5^{26}$\cr  
$213.\  x_1x_2^{3}x_3^{5}x_4^{26}$ &  $214.\  x_1^{3}x_3x_4^{5}x_5^{26}$ &  $215.\  x_1^{3}x_3^{5}x_4x_5^{26}$ &  $216.\  x_1^{3}x_3^{5}x_4^{26}x_5$\cr  
$217.\  x_1^{3}x_2x_4^{5}x_5^{26}$ &  $218.\  x_1^{3}x_2x_3^{5}x_5^{26}$ &  $219.\  x_1^{3}x_2x_3^{5}x_4^{26}$ &  $220.\  x_1^{3}x_2^{5}x_4x_5^{26}$\cr  
$221.\  x_1^{3}x_2^{5}x_4^{26}x_5$ &  $222.\  x_1^{3}x_2^{5}x_3x_5^{26}$ &  $223.\  x_1^{3}x_2^{5}x_3x_4^{26}$ &  $224.\  x_1^{3}x_2^{5}x_3^{26}x_5$\cr  
$225.\  x_1^{3}x_2^{5}x_3^{26}x_4$ &  $226.\  x_2x_3^{3}x_4^{6}x_5^{25}$ &  $227.\  x_2x_3^{6}x_4^{3}x_5^{25}$ &  $228.\  x_1x_3^{3}x_4^{6}x_5^{25}$\cr  
$229.\  x_1x_3^{6}x_4^{3}x_5^{25}$ &  $230.\  x_1x_2^{3}x_4^{6}x_5^{25}$ &  $231.\  x_1x_2^{3}x_3^{6}x_5^{25}$ &  $232.\  x_1x_2^{3}x_3^{6}x_4^{25}$\cr  
$233.\  x_1x_2^{6}x_4^{3}x_5^{25}$ &  $234.\  x_1x_2^{6}x_3^{3}x_5^{25}$ &  $235.\  x_1x_2^{6}x_3^{3}x_4^{25}$ &  $236.\  x_2^{3}x_3^{3}x_4^{4}x_5^{25}$\cr  
$237.\  x_2^{3}x_3^{4}x_4^{3}x_5^{25}$ &  $238.\  x_1^{3}x_3^{3}x_4^{4}x_5^{25}$ &  $239.\  x_1^{3}x_3^{4}x_4^{3}x_5^{25}$ &  $240.\  x_1^{3}x_2^{3}x_4^{4}x_5^{25}$\cr  
$241.\  x_1^{3}x_2^{3}x_3^{4}x_5^{25}$ &  $242.\  x_1^{3}x_2^{3}x_3^{4}x_4^{25}$ &  $243.\  x_1^{3}x_2^{4}x_4^{3}x_5^{25}$ &  $244.\  x_1^{3}x_2^{4}x_3^{3}x_5^{25}$\cr  
$245.\  x_1^{3}x_2^{4}x_3^{3}x_4^{25}$ &  $246.\  x_2^{3}x_3^{3}x_4^{5}x_5^{24}$ &  $247.\  x_1^{3}x_3^{3}x_4^{5}x_5^{24}$ &  $248.\  x_1^{3}x_2^{3}x_4^{5}x_5^{24}$\cr  
$249.\  x_1^{3}x_2^{3}x_3^{5}x_5^{24}$ &  $250.\  x_1^{3}x_2^{3}x_3^{5}x_4^{24}$ &  $251.\  x_2x_3^{3}x_4x_5^{30}$ &  $252.\  x_1x_3^{3}x_4x_5^{30}$\cr  
$253.\  x_1x_2^{3}x_4x_5^{30}$ &  $254.\  x_1x_2^{3}x_3x_5^{30}$ &  $255.\  x_1x_2^{3}x_3x_4^{30}$ &  $256.\  x_2^{3}x_3^{3}x_4x_5^{28}$\cr  
$257.\  x_2^{3}x_3^{3}x_4^{28}x_5$ &  $258.\  x_1^{3}x_3^{3}x_4x_5^{28}$ &  $259.\  x_1^{3}x_3^{3}x_4^{28}x_5$ &  $260.\  x_1^{3}x_2^{3}x_4x_5^{28}$\cr  
$261.\  x_1^{3}x_2^{3}x_4^{28}x_5$ &  $262.\  x_1^{3}x_2^{3}x_3x_5^{28}$ &  $263.\  x_1^{3}x_2^{3}x_3x_4^{28}$ &  $264.\  x_1^{3}x_2^{3}x_3^{28}x_5$\cr  
$265.\  x_1^{3}x_2^{3}x_3^{28}x_4$ &  $266.\  x_2x_3^{3}x_4^{30}x_5$ &  $267.\  x_1x_3^{3}x_4^{30}x_5$ &  $268.\  x_1x_2^{3}x_4^{30}x_5$\cr  
$269.\  x_1x_2^{3}x_3^{30}x_5$ &  $270.\  x_1x_2^{3}x_3^{30}x_4$ &  $271.\  x_2x_3^{30}x_4x_5^{3}$ &  $272.\  x_2x_3^{30}x_4^{3}x_5$\cr  
$273.\  x_1x_3^{30}x_4x_5^{3}$ &  $274.\  x_1x_3^{30}x_4^{3}x_5$ &  $275.\  x_1x_2^{30}x_4x_5^{3}$ &  $276.\  x_1x_2^{30}x_4^{3}x_5$\cr  
$277.\  x_1x_2^{30}x_3x_5^{3}$ &  $278.\  x_1x_2^{30}x_3x_4^{3}$ &  $279.\  x_1x_2^{30}x_3^{3}x_5$ &  $280.\  x_1x_2^{30}x_3^{3}x_4$\cr  
$281.\  x_2x_3^{7}x_4^{26}x_5$ &  $282.\  x_2^{7}x_3x_4^{26}x_5$ &  $283.\  x_1x_3^{7}x_4^{26}x_5$ &  $284.\  x_1x_2^{7}x_4^{26}x_5$\cr  
$285.\  x_1x_2^{7}x_3^{26}x_5$ &  $286.\  x_1x_2^{7}x_3^{26}x_4$ &  $287.\  x_1^{7}x_3x_4^{26}x_5$ &  $288.\  x_1^{7}x_2x_4^{26}x_5$\cr  
$289.\  x_1^{7}x_2x_3^{26}x_5$ &  $290.\  x_1^{7}x_2x_3^{26}x_4$ &  $291.\  x_2x_3^{2}x_4^{3}x_5^{29}$ &  $292.\  x_2x_3^{2}x_4^{29}x_5^{3}$\cr  
$293.\  x_2x_3^{3}x_4^{2}x_5^{29}$ &  $294.\  x_2^{3}x_3x_4^{2}x_5^{29}$ &  $295.\  x_1x_3^{2}x_4^{3}x_5^{29}$ &  $296.\  x_1x_3^{2}x_4^{29}x_5^{3}$\cr  
$297.\  x_1x_3^{3}x_4^{2}x_5^{29}$ &  $298.\  x_1x_2^{2}x_4^{3}x_5^{29}$ &  $299.\  x_1x_2^{2}x_4^{29}x_5^{3}$ &  $300.\  x_1x_2^{2}x_3^{3}x_5^{29}$\cr  
$301.\  x_1x_2^{2}x_3^{3}x_4^{29}$ &  $302.\  x_1x_2^{2}x_3^{29}x_5^{3}$ &  $303.\  x_1x_2^{2}x_3^{29}x_4^{3}$ &  $304.\  x_1x_2^{3}x_4^{2}x_5^{29}$\cr  
$305.\  x_1x_2^{3}x_3^{2}x_5^{29}$ &  $306.\  x_1x_2^{3}x_3^{2}x_4^{29}$ &  $307.\  x_1^{3}x_3x_4^{2}x_5^{29}$ &  $308.\  x_1^{3}x_2x_4^{2}x_5^{29}$\cr  
$309.\  x_1^{3}x_2x_3^{2}x_5^{29}$ &  $310.\  x_1^{3}x_2x_3^{2}x_4^{29}$ &  $311.\  x_2x_3^{2}x_4^{5}x_5^{27}$ &  $312.\  x_1x_3^{2}x_4^{5}x_5^{27}$\cr  
$313.\  x_1x_2^{2}x_4^{5}x_5^{27}$ &  $314.\  x_1x_2^{2}x_3^{5}x_5^{27}$ &  $315.\  x_1x_2^{2}x_3^{5}x_4^{27}$ &  $316.\  x_2x_3^{2}x_4^{7}x_5^{25}$\cr  
$317.\  x_2x_3^{7}x_4^{2}x_5^{25}$ &  $318.\  x_2^{7}x_3x_4^{2}x_5^{25}$ &  $319.\  x_1x_3^{2}x_4^{7}x_5^{25}$ &  $320.\  x_1x_3^{7}x_4^{2}x_5^{25}$\cr  
$321.\  x_1x_2^{2}x_4^{7}x_5^{25}$ &  $322.\  x_1x_2^{2}x_3^{7}x_5^{25}$ &  $323.\  x_1x_2^{2}x_3^{7}x_4^{25}$ &  $324.\  x_1x_2^{7}x_4^{2}x_5^{25}$\cr  
$325.\  x_1x_2^{7}x_3^{2}x_5^{25}$ &  $326.\  x_1x_2^{7}x_3^{2}x_4^{25}$ &  $327.\  x_1^{7}x_3x_4^{2}x_5^{25}$ &  $328.\  x_1^{7}x_2x_4^{2}x_5^{25}$\cr  
$329.\  x_1^{7}x_2x_3^{2}x_5^{25}$ &  $330.\  x_1^{7}x_2x_3^{2}x_4^{25}$ &  $331.\  x_2x_3^{3}x_4^{28}x_5^{3}$ &  $332.\  x_2^{3}x_3x_4^{28}x_5^{3}$\cr  
$333.\  x_1x_3^{3}x_4^{28}x_5^{3}$ &  $334.\  x_1x_2^{3}x_4^{28}x_5^{3}$ &  $335.\  x_1x_2^{3}x_3^{28}x_5^{3}$ &  $336.\  x_1x_2^{3}x_3^{28}x_4^{3}$\cr  
$337.\  x_1^{3}x_3x_4^{28}x_5^{3}$ &  $338.\  x_1^{3}x_2x_4^{28}x_5^{3}$ &  $339.\  x_1^{3}x_2x_3^{28}x_5^{3}$ &  $340.\  x_1^{3}x_2x_3^{28}x_4^{3}$\cr  
$341.\  x_2x_3^{3}x_4^{4}x_5^{27}$ &  $342.\  x_1x_3^{3}x_4^{4}x_5^{27}$ &  $343.\  x_1x_2^{3}x_4^{4}x_5^{27}$ &  $344.\  x_1x_2^{3}x_3^{4}x_5^{27}$\cr  
$345.\  x_1x_2^{3}x_3^{4}x_4^{27}$ &  $346.\  x_2x_3^{3}x_4^{7}x_5^{24}$ &  $347.\  x_2x_3^{7}x_4^{3}x_5^{24}$ &  $348.\  x_2^{3}x_3^{7}x_4x_5^{24}$\cr  
$349.\  x_2^{7}x_3x_4^{3}x_5^{24}$ &  $350.\  x_2^{7}x_3^{3}x_4x_5^{24}$ &  $351.\  x_1x_3^{3}x_4^{7}x_5^{24}$ &  $352.\  x_1x_3^{7}x_4^{3}x_5^{24}$\cr  
\end{tabular}}
\centerline{\begin{tabular}{llll}
$353.\  x_1x_2^{3}x_4^{7}x_5^{24}$ &  $354.\  x_1x_2^{3}x_3^{7}x_5^{24}$ &  $355.\  x_1x_2^{3}x_3^{7}x_4^{24}$ &  $356.\  x_1x_2^{7}x_4^{3}x_5^{24}$\cr  
$357.\  x_1x_2^{7}x_3^{3}x_5^{24}$ &  $358.\  x_1x_2^{7}x_3^{3}x_4^{24}$ &  $359.\  x_1^{3}x_3^{7}x_4x_5^{24}$ &  $360.\  x_1^{3}x_2^{7}x_4x_5^{24}$\cr  
$361.\  x_1^{3}x_2^{7}x_3x_5^{24}$ &  $362.\  x_1^{3}x_2^{7}x_3x_4^{24}$ &  $363.\  x_1^{7}x_3x_4^{3}x_5^{24}$ &  $364.\  x_1^{7}x_3^{3}x_4x_5^{24}$\cr  
$365.\  x_1^{7}x_2x_4^{3}x_5^{24}$ &  $366.\  x_1^{7}x_2x_3^{3}x_5^{24}$ &  $367.\  x_1^{7}x_2x_3^{3}x_4^{24}$ &  $368.\  x_1^{7}x_2^{3}x_4x_5^{24}$\cr  
$369.\  x_1^{7}x_2^{3}x_3x_5^{24}$ &  $370.\  x_1^{7}x_2^{3}x_3x_4^{24}$ &  $371.\  x_2^{3}x_3x_4^{30}x_5$ &  $372.\  x_1^{3}x_3x_4^{30}x_5$\cr  
$373.\  x_1^{3}x_2x_4^{30}x_5$ &  $374.\  x_1^{3}x_2x_3^{30}x_5$ &  $375.\  x_1^{3}x_2x_3^{30}x_4$ &  $376.\  x_2x_3x_4^{6}x_5^{27}$\cr  
$377.\  x_1x_3x_4^{6}x_5^{27}$ &  $378.\  x_1x_2x_4^{6}x_5^{27}$ &  $379.\  x_1x_2x_3^{6}x_5^{27}$ &  $380.\  x_1x_2x_3^{6}x_4^{27}$\cr  
$381.\  x_2x_3x_4^{7}x_5^{26}$ &  $382.\  x_1x_3x_4^{7}x_5^{26}$ &  $383.\  x_1x_2x_4^{7}x_5^{26}$ &  $384.\  x_1x_2x_3^{7}x_5^{26}$\cr  
$385.\  x_1x_2x_3^{7}x_4^{26}$ &  $386.\  x_2^{3}x_3^{4}x_4x_5^{27}$ &  $387.\  x_2^{3}x_3^{4}x_4^{27}x_5$ &  $388.\  x_1^{3}x_3^{4}x_4x_5^{27}$\cr  
$389.\  x_1^{3}x_3^{4}x_4^{27}x_5$ &  $390.\  x_1^{3}x_2^{4}x_4x_5^{27}$ &  $391.\  x_1^{3}x_2^{4}x_4^{27}x_5$ &  $392.\  x_1^{3}x_2^{4}x_3x_5^{27}$\cr  
$393.\  x_1^{3}x_2^{4}x_3x_4^{27}$ &  $394.\  x_1^{3}x_2^{4}x_3^{27}x_5$ &  $395.\  x_1^{3}x_2^{4}x_3^{27}x_4$ &  $396.\  x_2x_3^{6}x_4x_5^{27}$\cr  
$397.\  x_2x_3^{6}x_4^{27}x_5$ &  $398.\  x_1x_3^{6}x_4x_5^{27}$ &  $399.\  x_1x_3^{6}x_4^{27}x_5$ &  $400.\  x_1x_2^{6}x_4x_5^{27}$\cr  
$401.\  x_1x_2^{6}x_4^{27}x_5$ &  $402.\  x_1x_2^{6}x_3x_5^{27}$ &  $403.\  x_1x_2^{6}x_3x_4^{27}$ &  $404.\  x_1x_2^{6}x_3^{27}x_5$\cr  
$405.\  x_1x_2^{6}x_3^{27}x_4$ &  $406.\  x_2^{3}x_3x_4^{4}x_5^{27}$ &  $407.\  x_1^{3}x_3x_4^{4}x_5^{27}$ &  $408.\  x_1^{3}x_2x_4^{4}x_5^{27}$\cr  
$409.\  x_1^{3}x_2x_3^{4}x_5^{27}$ &  $410.\  x_1^{3}x_2x_3^{4}x_4^{27}$ &  $411.\  x_2^{3}x_3x_4^{7}x_5^{24}$ &  $412.\  x_1^{3}x_3x_4^{7}x_5^{24}$\cr  
$413.\  x_1^{3}x_2x_4^{7}x_5^{24}$ &  $414.\  x_1^{3}x_2x_3^{7}x_5^{24}$ &  $415.\  x_1^{3}x_2x_3^{7}x_4^{24}$ &  $416.\  x_2x_3x_4^{30}x_5^{3}$\cr  
$417.\  x_1x_3x_4^{30}x_5^{3}$ &  $418.\  x_1x_2x_4^{30}x_5^{3}$ &  $419.\  x_1x_2x_3^{30}x_5^{3}$ &  $420.\  x_1x_2x_3^{30}x_4^{3}$\cr  
$421.\  x_2x_3^{7}x_4x_5^{26}$ &  $422.\  x_2^{7}x_3x_4x_5^{26}$ &  $423.\  x_1x_3^{7}x_4x_5^{26}$ &  $424.\  x_1x_2^{7}x_4x_5^{26}$\cr  
$425.\  x_1x_2^{7}x_3x_5^{26}$ &  $426.\  x_1x_2^{7}x_3x_4^{26}$ &  $427.\  x_1^{7}x_3x_4x_5^{26}$ &  $428.\  x_1^{7}x_2x_4x_5^{26}$\cr  
$429.\  x_1^{7}x_2x_3x_5^{26}$ &  $430.\  x_1^{7}x_2x_3x_4^{26}$ &  $431.\  x_2x_3^{3}x_4^{29}x_5^{2}$ &  $432.\  x_2^{3}x_3x_4^{29}x_5^{2}$\cr  
$433.\  x_1x_3^{3}x_4^{29}x_5^{2}$ &  $434.\  x_1x_2^{3}x_4^{29}x_5^{2}$ &  $435.\  x_1x_2^{3}x_3^{29}x_5^{2}$ &  $436.\  x_1x_2^{3}x_3^{29}x_4^{2}$\cr  
$437.\  x_1^{3}x_3x_4^{29}x_5^{2}$ &  $438.\  x_1^{3}x_2x_4^{29}x_5^{2}$ &  $439.\  x_1^{3}x_2x_3^{29}x_5^{2}$ &  $440.\  x_1^{3}x_2x_3^{29}x_4^{2}$\cr  
$441.\  x_2x_3^{3}x_4^{3}x_5^{28}$ &  $442.\  x_1x_3^{3}x_4^{3}x_5^{28}$ &  $443.\  x_1x_2^{3}x_4^{3}x_5^{28}$ &  $444.\  x_1x_2^{3}x_3^{3}x_5^{28}$\cr  
$445.\  x_1x_2^{3}x_3^{3}x_4^{28}$ &  $446.\  x_2^{3}x_3x_4^{6}x_5^{25}$ &  $447.\  x_1^{3}x_3x_4^{6}x_5^{25}$ &  $448.\  x_1^{3}x_2x_4^{6}x_5^{25}$\cr  
$449.\  x_1^{3}x_2x_3^{6}x_5^{25}$ &  $450.\  x_1^{3}x_2x_3^{6}x_4^{25}$ &  $451.\  x_2^{3}x_3^{5}x_4^{2}x_5^{25}$ &  $452.\  x_1^{3}x_3^{5}x_4^{2}x_5^{25}$\cr  
$453.\  x_1^{3}x_2^{5}x_4^{2}x_5^{25}$ &  $454.\  x_1^{3}x_2^{5}x_3^{2}x_5^{25}$ &  $455.\  x_1^{3}x_2^{5}x_3^{2}x_4^{25}$ &  $456.\  x_2^{3}x_3^{5}x_4^{3}x_5^{24}$\cr  
$457.\  x_1^{3}x_3^{5}x_4^{3}x_5^{24}$ &  $458.\  x_1^{3}x_2^{5}x_4^{3}x_5^{24}$ &  $459.\  x_1^{3}x_2^{5}x_3^{3}x_5^{24}$ &  $460.\  x_1^{3}x_2^{5}x_3^{3}x_4^{24}$\cr  
\end{tabular}}

\bigskip 
We have $B_5(35) = B_5^0(35) \cup B_5^+(\omega_{(1)})\cup B_5(\omega_{(4)})\cup B_5(\omega_{(5)})\cup\psi(B_5(15))$.

\subsubsection{}
$B_5^+(\omega_{(1)})$ is the set of 160 monomials $a_t = a_{35,t}, \ 1 \leqslant t \leqslant 160$:

\medskip
 \centerline{\begin{tabular}{lll}
$1.\  x_1x_2x_3^{2}x_4^{3}x_5^{28}$ &  $2.\  x_1x_2x_3^{2}x_4^{4}x_5^{27}$ &  $3.\  x_1x_2x_3^{2}x_4^{7}x_5^{24}$\cr  
$4.\  x_1x_2x_3^{2}x_4^{28}x_5^{3}$ &  $5.\  x_1x_2x_3^{3}x_4^{2}x_5^{28}$ &  $6.\  x_1x_2x_3^{7}x_4^{2}x_5^{24}$\cr  
$7.\  x_1x_2^{3}x_3x_4^{2}x_5^{28}$ &  $8.\  x_1x_2^{7}x_3x_4^{2}x_5^{24}$ &  $9.\  x_1^{3}x_2x_3x_4^{2}x_5^{28}$\cr  
$10.\  x_1^{7}x_2x_3x_4^{2}x_5^{24}$ &  $11.\  x_1x_2x_3^{2}x_4^{5}x_5^{26}$ &  $12.\  x_1x_2x_3^{2}x_4^{6}x_5^{25}$\cr  
$13.\  x_1x_2x_3^{6}x_4x_5^{26}$ &  $14.\  x_1x_2x_3^{6}x_4^{26}x_5$ &  $15.\  x_1x_2^{2}x_3x_4^{5}x_5^{26}$\cr  
$16.\  x_1x_2^{2}x_3x_4^{6}x_5^{25}$ &  $17.\  x_1x_2^{2}x_3^{5}x_4^{2}x_5^{25}$ &  $18.\  x_1x_2^{2}x_3^{5}x_4^{25}x_5^{2}$\cr  
$19.\  x_1x_2^{3}x_3^{3}x_4^{4}x_5^{24}$ &  $20.\  x_1x_2^{3}x_3^{4}x_4^{2}x_5^{25}$ &  $21.\  x_1x_2^{3}x_3^{4}x_4^{25}x_5^{2}$\cr  
$22.\  x_1x_2^{3}x_3^{5}x_4^{2}x_5^{24}$ &  $23.\  x_1x_2^{3}x_3^{5}x_4^{24}x_5^{2}$ &  $24.\  x_1^{3}x_2x_3^{3}x_4^{4}x_5^{24}$\cr  
$25.\  x_1^{3}x_2x_3^{4}x_4^{2}x_5^{25}$ &  $26.\  x_1^{3}x_2x_3^{4}x_4^{25}x_5^{2}$ &  $27.\  x_1^{3}x_2x_3^{5}x_4^{2}x_5^{24}$\cr  
$28.\  x_1^{3}x_2x_3^{5}x_4^{24}x_5^{2}$ &  $29.\  x_1^{3}x_2^{3}x_3x_4^{4}x_5^{24}$ &  $30.\  x_1^{3}x_2^{3}x_3^{4}x_4x_5^{24}$\cr  
$31.\  x_1^{3}x_2^{3}x_3^{4}x_4^{24}x_5$ &  $32.\  x_1x_2^{2}x_3x_4^{3}x_5^{28}$ &  $33.\  x_1x_2^{2}x_3x_4^{28}x_5^{3}$\cr  
$34.\  x_1x_2^{2}x_3^{3}x_4x_5^{28}$ &  $35.\  x_1x_2^{2}x_3^{3}x_4^{28}x_5$ &  $36.\  x_1x_2^{2}x_3^{28}x_4x_5^{3}$\cr  $37.\  x_1x_2^{2}x_3^{28}x_4^{3}x_5$ &  $38.\  x_1x_2^{2}x_3x_4^{4}x_5^{27}$ &  $39.\  x_1x_2^{2}x_3x_4^{7}x_5^{24}$\cr  
$40.\  x_1x_2^{2}x_3^{4}x_4x_5^{27}$ &  $41.\  x_1x_2^{2}x_3^{4}x_4^{27}x_5$ &  $42.\  x_1x_2^{2}x_3^{7}x_4x_5^{24}$\cr  
$43.\  x_1x_2^{2}x_3^{7}x_4^{24}x_5$ &  $44.\  x_1x_2^{3}x_3^{2}x_4x_5^{28}$ &  $45.\  x_1x_2^{3}x_3^{2}x_4^{28}x_5$\cr  
$46.\  x_1x_2^{7}x_3^{2}x_4x_5^{24}$ &  $47.\  x_1x_2^{7}x_3^{2}x_4^{24}x_5$ &  $48.\  x_1^{3}x_2x_3^{2}x_4x_5^{28}$\cr  
$49.\  x_1^{3}x_2x_3^{2}x_4^{28}x_5$ &  $50.\  x_1^{7}x_2x_3^{2}x_4x_5^{24}$ &  $51.\  x_1^{7}x_2x_3^{2}x_4^{24}x_5$\cr  
$52.\  x_1x_2^{2}x_3^{3}x_4^{4}x_5^{25}$ &  $53.\  x_1x_2^{2}x_3^{3}x_4^{5}x_5^{24}$ &  $54.\  x_1x_2^{2}x_3^{4}x_4^{3}x_5^{25}$\cr  
\end{tabular}}
\centerline{\begin{tabular}{lll}
$55.\  x_1x_2^{2}x_3^{4}x_4^{25}x_5^{3}$ &  $56.\  x_1x_2^{2}x_3^{5}x_4^{3}x_5^{24}$ &  $57.\  x_1x_2^{2}x_3^{5}x_4^{24}x_5^{3}$\cr  
$58.\  x_1x_2^{3}x_3^{2}x_4^{4}x_5^{25}$ &  $59.\  x_1x_2^{3}x_3^{2}x_4^{5}x_5^{24}$ &  $60.\  x_1x_2^{3}x_3^{4}x_4^{3}x_5^{24}$\cr  
$61.\  x_1x_2^{3}x_3^{4}x_4^{24}x_5^{3}$ &  $62.\  x_1^{3}x_2x_3^{2}x_4^{4}x_5^{25}$ &  $63.\  x_1^{3}x_2x_3^{2}x_4^{5}x_5^{24}$\cr  
$64.\  x_1^{3}x_2x_3^{4}x_4^{3}x_5^{24}$ &  $65.\  x_1^{3}x_2x_3^{4}x_4^{24}x_5^{3}$ &  $66.\  x_1x_2^{2}x_3^{4}x_4^{9}x_5^{19}$\cr  
$67.\  x_1x_2^{2}x_3^{4}x_4^{11}x_5^{17}$ &  $68.\  x_1x_2^{2}x_3^{5}x_4^{8}x_5^{19}$ &  $69.\  x_1x_2^{2}x_3^{5}x_4^{11}x_5^{16}$\cr  
$70.\  x_1x_2^{2}x_3^{7}x_4^{8}x_5^{17}$ &  $71.\  x_1x_2^{2}x_3^{7}x_4^{9}x_5^{16}$ &  $72.\  x_1x_2^{3}x_3^{4}x_4^{8}x_5^{19}$\cr  
$73.\  x_1x_2^{3}x_3^{4}x_4^{11}x_5^{16}$ &  $74.\  x_1x_2^{3}x_3^{7}x_4^{8}x_5^{16}$ &  $75.\  x_1x_2^{7}x_3^{2}x_4^{8}x_5^{17}$\cr  
$76.\  x_1x_2^{7}x_3^{2}x_4^{9}x_5^{16}$ &  $77.\  x_1x_2^{7}x_3^{3}x_4^{8}x_5^{16}$ &  $78.\  x_1^{3}x_2x_3^{4}x_4^{8}x_5^{19}$\cr  
$79.\  x_1^{3}x_2x_3^{4}x_4^{11}x_5^{16}$ &  $80.\  x_1^{3}x_2x_3^{7}x_4^{8}x_5^{16}$ &  $81.\  x_1^{3}x_2^{7}x_3x_4^{8}x_5^{16}$\cr  
$82.\  x_1^{7}x_2x_3^{2}x_4^{8}x_5^{17}$ &  $83.\  x_1^{7}x_2x_3^{2}x_4^{9}x_5^{16}$ &  $84.\  x_1^{7}x_2x_3^{3}x_4^{8}x_5^{16}$\cr  
$85.\  x_1^{7}x_2^{3}x_3x_4^{8}x_5^{16}$ &  $86.\  x_1x_2x_3x_4^{2}x_5^{30}$ &  $87.\  x_1x_2x_3x_4^{6}x_5^{26}$\cr  
$88.\  x_1x_2x_3x_4^{30}x_5^{2}$ &  $89.\  x_1x_2x_3^{2}x_4x_5^{30}$ &  $90.\  x_1x_2x_3^{2}x_4^{2}x_5^{29}$\cr  $91.\  x_1x_2x_3^{2}x_4^{29}x_5^{2}$ &  $92.\  x_1x_2x_3^{2}x_4^{30}x_5$ &  $93.\  x_1x_2x_3^{3}x_4^{4}x_5^{26}$\cr  
$94.\  x_1x_2x_3^{3}x_4^{6}x_5^{24}$ &  $95.\  x_1x_2x_3^{3}x_4^{28}x_5^{2}$ &  $96.\  x_1x_2x_3^{6}x_4^{2}x_5^{25}$\cr  
$97.\  x_1x_2x_3^{6}x_4^{3}x_5^{24}$ &  $98.\  x_1x_2x_3^{30}x_4x_5^{2}$ &  $99.\  x_1x_2x_3^{30}x_4^{2}x_5$\cr  
$100.\  x_1x_2^{2}x_3x_4x_5^{30}$ &  $101.\  x_1x_2^{2}x_3x_4^{2}x_5^{29}$ &  $102.\  x_1x_2^{2}x_3x_4^{29}x_5^{2}$\cr  
$103.\  x_1x_2^{2}x_3x_4^{30}x_5$ &  $104.\  x_1x_2^{2}x_3^{5}x_4x_5^{26}$ &  $105.\  x_1x_2^{2}x_3^{5}x_4^{9}x_5^{18}$\cr  
$106.\  x_1x_2^{2}x_3^{5}x_4^{10}x_5^{17}$ &  $107.\  x_1x_2^{2}x_3^{5}x_4^{26}x_5$ &  $108.\  x_1x_2^{2}x_3^{29}x_4x_5^{2}$\cr  
$109.\  x_1x_2^{2}x_3^{29}x_4^{2}x_5$ &  $110.\  x_1x_2^{3}x_3x_4^{4}x_5^{26}$ &  $111.\  x_1x_2^{3}x_3x_4^{6}x_5^{24}$\cr  
$112.\  x_1x_2^{3}x_3x_4^{28}x_5^{2}$ &  $113.\  x_1x_2^{3}x_3^{4}x_4x_5^{26}$ &  $114.\  x_1x_2^{3}x_3^{4}x_4^{9}x_5^{18}$\cr  
$115.\  x_1x_2^{3}x_3^{4}x_4^{10}x_5^{17}$ &  $116.\  x_1x_2^{3}x_3^{4}x_4^{26}x_5$ &  $117.\  x_1x_2^{3}x_3^{5}x_4^{8}x_5^{18}$\cr  
$118.\  x_1x_2^{3}x_3^{5}x_4^{10}x_5^{16}$ &  $119.\  x_1x_2^{3}x_3^{6}x_4x_5^{24}$ &  $120.\  x_1x_2^{3}x_3^{6}x_4^{8}x_5^{17}$\cr  
$121.\  x_1x_2^{3}x_3^{6}x_4^{9}x_5^{16}$ &  $122.\  x_1x_2^{3}x_3^{6}x_4^{24}x_5$ &  $123.\  x_1x_2^{3}x_3^{28}x_4x_5^{2}$\cr  
$124.\  x_1x_2^{3}x_3^{28}x_4^{2}x_5$ &  $125.\  x_1x_2^{6}x_3x_4x_5^{26}$ &  $126.\  x_1x_2^{6}x_3x_4^{2}x_5^{25}$\cr  
$127.\  x_1x_2^{6}x_3x_4^{3}x_5^{24}$ &  $128.\  x_1x_2^{6}x_3x_4^{26}x_5$ &  $129.\  x_1x_2^{6}x_3^{3}x_4x_5^{24}$\cr  
$130.\  x_1x_2^{30}x_3x_4x_5^{2}$ &  $131.\  x_1x_2^{30}x_3x_4^{2}x_5$ &  $132.\  x_1^{3}x_2x_3x_4^{4}x_5^{26}$\cr  
$133.\  x_1^{3}x_2x_3x_4^{6}x_5^{24}$ &  $134.\  x_1^{3}x_2x_3x_4^{28}x_5^{2}$ &  $135.\  x_1^{3}x_2x_3^{4}x_4x_5^{26}$\cr  
$136.\  x_1^{3}x_2x_3^{4}x_4^{9}x_5^{18}$ &  $137.\  x_1^{3}x_2x_3^{4}x_4^{10}x_5^{17}$ &  $138.\  x_1^{3}x_2x_3^{4}x_4^{26}x_5$\cr  
$139.\  x_1^{3}x_2x_3^{5}x_4^{8}x_5^{18}$ &  $140.\  x_1^{3}x_2x_3^{5}x_4^{10}x_5^{16}$ &  $141.\  x_1^{3}x_2x_3^{6}x_4x_5^{24}$\cr  
$142.\  x_1^{3}x_2x_3^{6}x_4^{8}x_5^{17}$ &  $143.\  x_1^{3}x_2x_3^{6}x_4^{9}x_5^{16}$ &  $144.\  x_1^{3}x_2x_3^{6}x_4^{24}x_5$\cr  
$145.\  x_1^{3}x_2x_3^{28}x_4x_5^{2}$ &  $146.\  x_1^{3}x_2x_3^{28}x_4^{2}x_5$ &  $147.\  x_1^{3}x_2^{3}x_3^{4}x_4^{8}x_5^{17}$\cr  
$148.\  x_1^{3}x_2^{3}x_3^{4}x_4^{9}x_5^{16}$ &  $149.\  x_1^{3}x_2^{3}x_3^{5}x_4^{8}x_5^{16}$ &  $150.\  x_1^{3}x_2^{4}x_3x_4x_5^{26}$\cr  
$151.\  x_1^{3}x_2^{4}x_3x_4^{2}x_5^{25}$ &  $152.\  x_1^{3}x_2^{4}x_3x_4^{3}x_5^{24}$ &  $153.\  x_1^{3}x_2^{4}x_3x_4^{26}x_5$\cr  
$154.\  x_1^{3}x_2^{4}x_3^{3}x_4x_5^{24}$ &  $155.\  x_1^{3}x_2^{5}x_3x_4^{2}x_5^{24}$ &  $156.\  x_1^{3}x_2^{5}x_3^{2}x_4x_5^{24}$\cr  
$157.\  x_1^{3}x_2^{5}x_3^{2}x_4^{8}x_5^{17}$ &  $158.\  x_1^{3}x_2^{5}x_3^{2}x_4^{9}x_5^{16}$ &  $159.\  x_1^{3}x_2^{5}x_3^{2}x_4^{24}x_5$\cr  
$160.\  x_1^{3}x_2^{5}x_3^{3}x_4^{8}x_5^{16}$ &  
\end{tabular}}

\medskip 
\subsubsection{}
$B_5^+(\omega_{(4)})$  is the set of 50 monomials $$a_t = a_{35,t},\ 161 \leqslant t \leqslant 210$$

\medskip
\centerline{\begin{tabular}{lll}
161. $x_1x_2^{2}x_3^{7}x_4^{11}x_5^{14} $& 162. $x_1x_2^{7}x_3^{2}x_4^{11}x_5^{14} $& 163. $x_1x_2^{7}x_3^{11}x_4^{2}x_5^{14} $\cr  
164. $x_1x_2^{7}x_3^{11}x_4^{14}x_5^{2} $& 165. $x_1^{7}x_2x_3^{2}x_4^{11}x_5^{14} $& 166. $x_1^{7}x_2x_3^{11}x_4^{2}x_5^{14} $\cr  
167. $x_1^{7}x_2x_3^{11}x_4^{14}x_5^{2} $& 168. $x_1^{7}x_2^{11}x_3x_4^{2}x_5^{14} $& 169. $x_1^{7}x_2^{11}x_3x_4^{14}x_5^{2} $\cr  
170. $x_1^{7}x_2^{11}x_3^{13}x_4^{2}x_5^{2} $& 171. $x_1x_2^{3}x_3^{6}x_4^{11}x_5^{14} $& 172. $x_1^{3}x_2x_3^{6}x_4^{11}x_5^{14} $\cr  
173. $x_1x_2^{3}x_3^{7}x_4^{10}x_5^{14} $& 174. $x_1x_2^{3}x_3^{7}x_4^{14}x_5^{10} $& 175. $x_1x_2^{7}x_3^{3}x_4^{10}x_5^{14} $\cr  
176. $x_1x_2^{7}x_3^{3}x_4^{14}x_5^{10} $& 177. $x_1^{3}x_2x_3^{7}x_4^{10}x_5^{14} $& 178. $x_1^{3}x_2x_3^{7}x_4^{14}x_5^{10} $\cr  
179. $x_1^{3}x_2^{7}x_3x_4^{10}x_5^{14} $& 180. $x_1^{3}x_2^{7}x_3x_4^{14}x_5^{10} $& 181. $x_1^{7}x_2x_3^{3}x_4^{10}x_5^{14} $\cr  
182. $x_1^{7}x_2x_3^{3}x_4^{14}x_5^{10} $& 183. $x_1^{7}x_2^{3}x_3x_4^{10}x_5^{14} $& 184. $x_1^{7}x_2^{3}x_3x_4^{14}x_5^{10} $\cr  
\end{tabular}}
\centerline{\begin{tabular}{lll}
185. $x_1x_2^{7}x_3^{11}x_4^{6}x_5^{10} $& 186. $x_1^{7}x_2x_3^{11}x_4^{6}x_5^{10} $& 187. $x_1^{7}x_2^{11}x_3x_4^{6}x_5^{10} $\cr  
188. $x_1^{3}x_2^{5}x_3^{2}x_4^{11}x_5^{14} $& 189. $x_1^{3}x_2^{5}x_3^{11}x_4^{2}x_5^{14} $& 190. $x_1^{3}x_2^{5}x_3^{11}x_4^{14}x_5^{2} $\cr  
191. $x_1^{3}x_2^{7}x_3^{9}x_4^{2}x_5^{14} $& 192. $x_1^{3}x_2^{7}x_3^{9}x_4^{14}x_5^{2} $& 193. $x_1^{7}x_2^{3}x_3^{9}x_4^{2}x_5^{14} $\cr  
194. $x_1^{7}x_2^{3}x_3^{9}x_4^{14}x_5^{2} $& 195. $x_1^{3}x_2^{7}x_3^{13}x_4^{2}x_5^{10} $& 196. $x_1^{3}x_2^{7}x_3^{13}x_4^{10}x_5^{2} $\cr  
197. $x_1^{7}x_2^{3}x_3^{13}x_4^{2}x_5^{10} $& 198. $x_1^{7}x_2^{3}x_3^{13}x_4^{10}x_5^{2} $& 199. $x_1^{7}x_2^{11}x_3^{5}x_4^{2}x_5^{10} $\cr  
200. $x_1^{7}x_2^{11}x_3^{5}x_4^{10}x_5^{2} $& 201. $x_1^{3}x_2^{3}x_3^{5}x_4^{10}x_5^{14} $& 202. $x_1^{3}x_2^{3}x_3^{5}x_4^{14}x_5^{10} $\cr  
203. $x_1^{3}x_2^{5}x_3^{3}x_4^{10}x_5^{14} $& 204. $x_1^{3}x_2^{5}x_3^{3}x_4^{14}x_5^{10} $& 205. $x_1^{3}x_2^{3}x_3^{13}x_4^{6}x_5^{10} $\cr  
206. $x_1^{3}x_2^{5}x_3^{11}x_4^{6}x_5^{10} $& 207. $x_1^{3}x_2^{7}x_3^{5}x_4^{10}x_5^{10} $& 208. $x_1^{7}x_2^{3}x_3^{5}x_4^{10}x_5^{10} $\cr  
209. $x_1^{3}x_2^{7}x_3^{9}x_4^{6}x_5^{10} $& 210. $x_1^{7}x_2^{3}x_3^{9}x_4^{6}x_5^{10} $&\cr
\end{tabular}}

\medskip 
\subsubsection{}
$B_5^+(\omega_{(5)})$  is the set of 15 monomials $$a_t = a_{35,t},\ 211 \leqslant t \leqslant 225$$

\medskip
\centerline{\begin{tabular}{lll}
211. $x_1^{3}x_2^{5}x_3^{6}x_4^{6}x_5^{15} $& 212. $x_1^{3}x_2^{5}x_3^{6}x_4^{15}x_5^{6} $& 213. $x_1^{3}x_2^{5}x_3^{15}x_4^{6}x_5^{6} $\cr  
214. $x_1^{3}x_2^{15}x_3^{5}x_4^{6}x_5^{6} $& 215. $x_1^{15}x_2^{3}x_3^{5}x_4^{6}x_5^{6} $& 216. $x_1^{3}x_2^{5}x_3^{6}x_4^{7}x_5^{14} $\cr  
217. $x_1^{3}x_2^{5}x_3^{7}x_4^{6}x_5^{14} $& 218. $x_1^{3}x_2^{5}x_3^{7}x_4^{14}x_5^{6} $& 219. $x_1^{3}x_2^{7}x_3^{5}x_4^{6}x_5^{14} $\cr  
220. $x_1^{3}x_2^{7}x_3^{5}x_4^{14}x_5^{6} $& 221. $x_1^{3}x_2^{7}x_3^{13}x_4^{6}x_5^{6} $& 222. $x_1^{7}x_2^{3}x_3^{5}x_4^{6}x_5^{14} $\cr  
223. $x_1^{7}x_2^{3}x_3^{5}x_4^{14}x_5^{6} $& 224. $x_1^{7}x_2^{3}x_3^{13}x_4^{6}x_5^{6} $& 225. $x_1^{7}x_2^{11}x_3^{5}x_4^{6}x_5^{6} $\cr 
\end{tabular}}

\medskip
We have $f(B_4(35)) = QP_5^0(\omega_{(1)})$ and $|f(B_4(35))| = 460$. Hence, 
$$\dim QP_5(\omega_{(1)}) = 620.$$

\subsection{$GL_5$-invariants of $\mathbb F_2\otimes_{\mathcal A}P_5$ in degree $15$}\label{ss67}\

\medskip
$(\mathbb F_2\otimes_{\mathcal A}P_5)_{15}^{GL_5} = \langle [p], [q]\rangle$, where
\begin{align*}p & = x_1^{15} + x_2^{15} + x_3^{15} + x_4^{15} + x_5^{15} + x_1x_2^{14} + x_1x_3^{14} + x_1x_4^{14} + x_1x_5^{14}\\
&\quad + x_2x_3^{14} + x_2x_4^{14} + x_2x_5^{14} + x_3x_4^{14} + x_3x_5^{14} + x_4x_5^{14} + x_1x_2^{2}x_3^{12}\\
&\quad + x_1x_2^{2}x_4^{12} + x_1x_2^{2}x_5^{12} + x_1x_3^{2}x_4^{12} + x_1x_3^{2}x_5^{12} + x_1x_4^{2}x_5^{12} + x_2x_3^{2}x_4^{12}\\
&\quad + x_2x_3^{2}x_5^{12} + x_2x_4^{2}x_5^{12} + x_3x_4^{2}x_5^{12} + x_1x_2^{2}x_3^{4}x_4^{8} + x_1x_2^{2}x_3^{4}x_5^{8}\\
&\quad + x_1x_2^{2}x_4^{4}x_5^{8} + x_1x_3^{2}x_4^{4}x_5^{8} + x_2x_3^{2}x_4^{4}x_5^{8} + x_1x_2^{2}x_3^{4}x_4^{4}x_5^{4}, \\
q&= x_1x_2x_3x_4^{6}x_5^{6} + x_1x_2x_3^{6}x_4x_5^{6} + x_1x_2x_3^{6}x_4^{6}x_5 + x_1x_2^{6}x_3x_4x_5^{6}\\
&\quad + x_1x_2^{6}x_3x_4^{6}x_5 + x_1x_2^{3}x_3^{6}x_4x_5^{4} + x_1x_2^{3}x_3^{6}x_4^{4}x_5 + x_1x_2^{6}x_3^{3}x_4x_5^{4}\\
&\quad + x_1x_2^{6}x_3^{3}x_4^{4}x_5 + x_1^{3}x_2x_3x_4^{4}x_5^{6} + x_1^{3}x_2x_3x_4^{6}x_5^{4} + x_1^{3}x_2x_3^{4}x_4x_5^{6}\\
&\quad + x_1^{3}x_2x_3^{4}x_4^{6}x_5 + x_1^{3}x_2^{4}x_3x_4x_5^{6} + x_1^{3}x_2^{4}x_3x_4^{6}x_5 + x_1x_2^{3}x_3^{3}x_4^{4}x_5^{4}\\
&\quad + x_1^{3}x_2x_3^{3}x_4^{4}x_5^{4} + x_1^{3}x_2^{3}x_3x_4^{4}x_5^{4} + x_1^{3}x_2^{3}x_3^{4}x_4x_5^{4} + x_1^{3}x_2^{3}x_3^{4}x_4^{4}x_5\\
&\quad + x_1^{3}x_2^{4}x_3^{3}x_4x_5^{4} + x_1^{3}x_2^{4}x_3^{3}x_4^{4}x_5.
\end{align*}

\section*{Acknowledgment} 
The first version of this paper was written when the author was visiting Viet Nam Institute for Advanced Study in Mathematics (VIASM). He would like to thank the VIASM for supporting the visit and kind hospitality. 

The author was supported in part by the National Foundation for Science and Technology Development (NAFOSTED) of Viet Nam under the grant number 101.04-2017.05.

\bigskip
{}

\end{document}